\documentclass[reqno]{amsart}  
\usepackage{amsmath,amssymb,mathrsfs}

\theoremstyle{plain}
\newtheorem{theorem}{Theorem}[section]
\newtheorem{lemma}[theorem]{Lemma}
\newtheorem{remark}[theorem]{Remark}
\newtheorem{proposition}[theorem]{Proposition}

\numberwithin{equation}{section}

\numberwithin{equation}{section} \allowdisplaybreaks[1]

\theoremstyle{definition}

\theoremstyle{remark}

\newcommand{\C}{{\mathbb C}}

\newcommand{\bbZ}{{\mathbb Z}}

\newcommand{\sbm}[1]{\left[\begin{smallmatrix} #1
        \end{smallmatrix}\right]}

\newcommand{\BB}{{\mathbb B}}
\newcommand{\BC}{{\mathbb C}}\newcommand{\BD}{{\mathbb D}}

\newcommand{\BK}{{\mathbb K}}

\newcommand{\BP}{{\mathbb P}}

\newcommand{\BT}{{\mathbb T}}

\newcommand{\BZ}{{\mathbb Z}}
\newcommand{\cA}{{\mathcal A}}\newcommand{\cB}{{\mathcal B}}
\newcommand{\cC}{{\mathcal C}}\newcommand{\cD}{{\mathcal D}}
\newcommand{\cE}{{\mathcal E}}\newcommand{\cF}{{\mathcal F}}
\newcommand{\cG}{{\mathcal G}}\newcommand{\cH}{{\mathcal H}}

\newcommand{\cK}{{\mathcal K}}\newcommand{\cL}{{\mathcal L}}
\newcommand{\cM}{{\mathcal M}}\newcommand{\cN}{{\mathcal N}}

\newcommand{\cQ}{{\mathcal Q}}\newcommand{\cR}{{\mathcal R}}
\newcommand{\cS}{{\mathcal S}}\newcommand{\cT}{{\mathcal T}}
\newcommand{\cU}{{\mathcal U}}\newcommand{\cV}{{\mathcal V}}
\newcommand{\cW}{{\mathcal W}}\newcommand{\cX}{{\mathcal X}}
\newcommand{\cY}{{\mathcal Y}}\newcommand{\cZ}{{\mathcal Z}}
\newcommand{\al}{\alpha}
\newcommand{\be}{\beta}
\newcommand{\ga}{\gamma}\newcommand{\Ga}{\Gamma}
\newcommand{\de}{\delta}

\newcommand{\ze}{\zeta}

\newcommand{\la}{\lambda}

\newcommand{\si}{\sigma}

\newcommand{\vph}{\varphi}
\newcommand{\om}{\omega}\newcommand{\Om}{\Omega}
\newcommand{\mat}[2]{\ensuremath{\left[\begin{array}{#1}
#2
\end{array} \right]}}
\newcommand{\ov}[1]{{\overline{#1}}}

\newcommand{\inn}[2]{\ensuremath{\langle #1,#2 \rangle}}
\newcommand{\tu}[1]{\textup{#1}}
\newcommand{\half}{\frac{1}{2}}
\newcommand{\ands}{\quad\mbox{and}\quad}

\newcommand{\im}{\textup{Im\,}}

\newcommand{\diag}{\textup{diag\,}}
\newcommand{\spec}{r_\textup{spec}}

\newcommand{\tilG}{{\widetilde G}}

\newcommand{\tilW}{{\widetilde W}}
\newcommand{\fL}{{\mathfrak L}}
\newcommand{\fD}{{\mathfrak D}}
\newcommand{\sm}[1]{\begin{smallmatrix} #1
        \end{smallmatrix}}


\begin{document}

\begin{abstract}
The theory of Nevanlinna-Pick and Carath\'eodory-Fej\'er
interpolation for matrix- and operator-valued Schur class functions
on the unit disk is now well established.  Recent work has produced
extensions of the theory to a variety of multivariable settings,
including the ball and the polydisk (both commutative and
noncommutative versions), as well as a time-varying analogue.
Largely independent of this is the recent Nevanlinna-Pick
interpolation theorem by P.S. Muhly and B. Solel for an abstract
Hardy algebra set in the context of a Fock space built from a
$W^*$-correspondence $E$ over a $W^{*}$-algebra $\cA$ and a
$*$-representation $\sigma$ of $\cA$.  In this review we provide an
exposition of the Muhly-Solel interpolation theory accessible to
operator theorists, and explain more fully the connections with the
already existing interpolation literature.
The abstract point evaluation first introduced by Muhly-Solel leads to
a tensor-product type functional calculus in the main examples.
A second kind of point-evaluation for the $W^*$-correspondence
Hardy algebra, also introduced by Muhly and Solel, is here further
investigated, and a Nevanlinna-Pick theorem in this setting is proved.
It turns out that, when specified for examples, this alternative
point-evaluation leads to an operator-argument functional calculus
and corresponding Nevanlinna-Pick interpolation.
We also discuss briefly several Nevanlinna-Pick interpolation results
for Schur classes that do not fit into the Muhly-Solel
$W^*$-correspondence formalism.
\end{abstract}

\title[Nevanlinna-Pick interpolation: a survey]
{Multivariable operator-valued Nevanlinna-Pick interpolation: a survey}

\author[J.A.~Ball]{Joseph A. Ball}
\address{Department of Mathematics,
Virginia Tech, Blacksburg, VA 24061-0123, USA}
\email{ball@math.vt.edu}

\author[S.~ter Horst]{Sanne ter Horst}
\address{Department of Mathematics,
Virginia Tech, Blacksburg, VA 24061-0123, USA}
\email{terhorst@math.vt.edu}

\subjclass{Primary 47A57; Secondary 42A83, 47A48}
\keywords{(completely) positive kernel, (completely) positive map,
left/right-tangential, operator-argument, Drury-Arveson space, Schur
class, Schur-Agler class, Nevanlinna class, positive-real-odd class,
Toeplitz operators, $W^{*}$-corres\-pondence, Fock space}

\maketitle

\section{Introduction} \label{S:intro}
\setcounter{equation}{0}

The classical interpolation theorems of Nevanlinna \cite{Nev19} and Pick \cite{Pick},
now approaching the age of one hundred years, can be stated as follows:
{\em Given $N$ distinct points $\lambda_{1}$, $\dots$, $\lambda_{N}$ in the
unit disk ${\mathbb D} = \{ \lambda \in {\mathbb C} \colon
|\lambda|<1\}$ together with $N$ complex numbers $w_{1}$, $\dots$,
$w_{N}$, there exists a Schur class function $s$} (i.e., $s$ holomorphic
from the unit disk ${\mathbb D}$ to the closed disk
$\overline{\mathbb D}$) {\em such that
$$
  s(\lambda_{i}) = w_{i} \text{ for } i = 1, \dots, N
$$
if and only if the so-called Pick matrix
$$
   {\mathbb P} := \left[ \frac{1 - w_{i} \overline{w_{j}}}{1 -
   \lambda_{i} \overline{\lambda_{j}}} \right]_{i,j=1}^N
$$
is positive semidefinite.}  There is a parallel result usually
attributed to Carath\'eodory and Fej\'er as well as to Schur (see \cite{Caratheodory,
Fejer, SchurI, SchurII}) for the case
where one prescribes an initial segment of Taylor coefficients at the
origin rather than functional values at distinct points: {\em  Given
$N+1$ complex numbers $s_{0}, \dots, s_{N}$, there exists a Schur class
function $s$ such that
$$
   \frac{1}{ i!} \frac{d^{i}s}{d \lambda^i} (0) = s_{i} \text{ for } i
   = 0, \dots, N
$$
if and only if the $(N+1) \times (N+1)$ lower-triangular Toeplitz matrix
$$
    \begin{bmatrix} s_{0} &0 &\cdots &0 \\ s_{1} & s_{0} &\ddots &\vdots \\ \vdots & \ddots
    & \ddots & 0\\ s_{N} &  s_{N-1}  & \cdots   & s_{0} \end{bmatrix}
$$
is contractive.}

Over the years these results have been an inspiration for new mathematics,
often interacting symbiotically with assorted engineering applications.
We mention in particular that the operator-theoretic formulation of the
Nevanlinna-Pick/Carath\'e\-odory-Fej\'er interpolation problem due to Sarason
\cite{Sarason} led to the advance in operator theory known as commutant lifting
theory introduced by Sz.-Nagy-Foias \cite{NF68}; see also \cite{FF, FFGK98}.
Commutant lifting in turn provides a unifying framework for the handling of a
variety of interpolation problems of Nevanlinna-Pick/Carath\'eodory-Fej\'er
type for matrix- and operator-valued functions on the disk, and, more generally,
for left- and right-tangential interpolation with operator-argument
(\textbf{LTOA/RTOA}); cf., \cite{FFGK98}.
The operator-argument approach to interpolation emerged as one of the most popular
ways of handling Nevanlinna-Pick and Carath\'eodory-Fej\'er interpolation
conditions in a unified way and can be seen to be equivalent to the Sarason
formulation (see \cite{BGR, BB-AIP}).
We mention that it is the Sarason formulation which plays a prominent role
in connection with the $H^{\infty}$-control theory (see e.g.~\cite{BGR}) where
it takes the form of a model matching problem.

Our goal here is to survey recent generalizations of
the theory of Nevanlinna-Pick interpolation to several-variable
contexts. In addition there has been a lot of recent work on
noncommutative function theory, where one plugs in a tuple of
noncommuting operators (or matrices) as the function arguments and
outputs a matrix or operator.  In this regard, we mention the papers
\cite{Ag88, Ag90, AgMcC99, AgMcC00, AP00, BB02, BB04, BB05, BB-AIP,
BT, BTV, Bolo03, DP98, EP, mt, Quiggen, RR}  for the commutative setting and
\cite{BB07, CJ, Popescu98, Popescu03, Popescu06} for the
noncommutative setting.  We mention yet another direction having an
analogue of the Schur class and of Nevanlinna-Pick interpolation,
namely: the unit ball of the algebra of lower triangular matrices
acting as operators on $\ell^{2}({\mathbb Z})$; in the 1990s there was a lot of
activity on Beurling-Lax representation and an analogue of
\textbf{LTOA/RTOA} interpolation theory having connections with
robust control and model reduction for time-varying systems
(see \cite{ADD90, BGK92, DD92, DvdV,SCK}).

Going beyond these types of results is the
recent generalized Nevanlinna-Pick interpolation theorem
\cite{MS04} and the generalized Schur class  \cite{MSSchur} of
Muhly-Solel, where the concept of {\em completely positive kernel} or
{\em completely positive map} enters into various characterizations.
In particular, the Muhly-Solel result, when specialized to various
specific settings, is different from the standard
\textbf{LTOA} or \textbf{RTOA} formulation, in that the
point-evaluation is actually with a tensor-type functional calculus.
The simplest instance of this is what we call {\em Riesz-Dunford}
interpolation, where one is given operators $Z$ and $W$ on a Hilbert
space $\cZ$ and one seeks a scalar Schur class function $s$ so that
$s(Z) = W$.  We discuss how a solution of this problem can be had by
using existing theory for \textbf{LTOA}, for both the unit disk
setting and the right half-plane setting.  In particular, the theorem
for the right half-plane setting gives a solution which appears quite
different from that obtained by Cohen-Lewkowicz \cite{CL}.  We also
show how the Muhly-Solel solution criterion (involving complete
positivity of a kernel or of a map between $C^{*}$-algebras) can be
seen to be equivalent to the criterion (involving positivity of a
single block matrix) obtained from the \textbf{LTOA} theory.
A similar story holds in the setting of the unit ball (commutative
and noncommutative).  There is an analogue of the Riesz-Dunford
Nevanlinna-Pick interpolation problem in the setting of the unit ball
which is handled by the Muhly-Solel theory; to the uninitiated the
solution criterion looks somewhat strange since it involves {\em
complete positivity} (in the sense of \cite{BBLS})
rather than merely {\em positivity} of a
kernel, or, in another formulation, {\em complete positivity} of a
map between $C^{*}$-algebras rather than merely {positive
semidefiniteness} of a single block operator matrix.
We resolve this situation by giving a direct proof that the two
solution criteria are equivalent.
In the recent paper \cite{MSPoisson} Muhly and Solel introduced a second kind of
point-evaluation in the context of a generalized Poisson kernel. We develop here
some further properties of this point-evaluation and derive the corresponding
Nevanlinna-Pick interpolation theorem.
For instance, it is shown that the Muhly-Solel Hardy algebra with respect to this
point-evaluation is in general not multiplicative; a multiplicative law reminiscent
of that in the operator-argument functional calculus and time-varying system theory
literature is proved. This connection with time-varying systems was already hinted
upon in \cite{MSPoisson}. In fact, when specified for the standard one-variable
Schur class, the Nevanlinna-Pick interpolation theorem for this alternative
point-evaluation gives us precisely the corresponding left-tangential
operator-argument result. The precise connections with the time-varying-system
interpolation theory remain to be worked out.


There is yet another generalized theory of Schur class (or, more
precisely,  Schur-Agler
class) and Nevanlinna-Pick interpolation where one defines a
Schur class starting from a family of test functions (see
\cite{DMMcC, DMcC, McCS} and see also
\cite[Chapter 13]{AgMcC} for an introduction to this approach).
It is well appreciated that the Agler theory for the polydisk is an
example for this theory (indeed, this is the motivating example);
precise specification of what other examples can be covered, such as
the higher rank graph algebras of \cite{KP06} and the Hardy algebras
associated with product decompositions along semigroups more general
than ${\mathbb Z}$ (see \cite{S06}), is an ongoing area of
investigation \cite{BBDtHT}.

In this survey of interpolation problems of Nevanlinna-Pick type in a
variety of settings,  we discuss only criteria for existence of solutions;
we do not discuss characterizations of the various Schur classes via
realization as the transfer function of a conservative linear system
or of the construction of solutions or parametrization of solutions
of interpolation problems via linear-fractional formulas, although in
the various  cases often such topics are worked out in the literature.
In general we do not discuss the techniques used for establishing
these solution criteria;  let us only mention here that,
just as in the classical case,
there are a variety of techniques for analyzing these types of
multivariable interpolation problems.  We mention specifically
commutant lifting theory \cite{Popescu89a, Popescu89b, BT, BTV},
the ``lurking isometry'' method \cite{AgMcC99, AgMcC00, BT, BTV, BB02, BB04, BB05},
the Fundamental-Matrix-Inequality method of Potapov \cite{Bolo03, BB-AIP} as well as
the Ball-Helton Grassmannian Kre\u{\i}n-space method \cite{Fang}.

The paper \cite{BB-AIP} also surveys the various types of
operator-valued interpolation problems but with a focus on the
Drury-Arveson Schur-multiplier class.  In addition to \textbf{LTOA},
the paper \cite{BB-AIP} treats a more general version of
\textbf{LTOA} (where the joint spectra of $Z^{(1)}$, $\dots$,
    $Z^{(N)}$ are no longer required to be in the open ball ${\mathbb
    B}^{d}$), and a still more general Abstract Interpolation Problem
    (see \cite{KKY} for the single-variable version). These more
    general formalisms handle more general interpolation problems
    (e.g., boundary interpolation) which go beyond the original
    results based on the commutant lifting approach.  We do not
    discuss these more general problems here.

    With one exception (namely, the paper \cite{CL} in connection
    with Riesz-Dunford interpolation conditions),
    we focus on interpolation theory on the unit disk
    and multivariable generalizations of the unit disk; this means we
    ignore all the activity that has been going on of late on
    interpolation theory for the Nevanlinna class (holomorphic functions
    taking the right half plane into itself) and its multivariable
    generalizations.

  The paper is organized as follows. In Section \ref{S:1var} we review
  Nevanlinna-Pick interpolation for the single-variable case.
  Perhaps new is the derivation of the result for the tangential
  Riesz-Dunford interpolation problem due to Rosenblum-Rovnyak \cite[Section
  2.3]{RR} and for the full Riesz-Dunford interpolation problem as an
  application of known results for \textbf{LTOA}.
  In the third section various extensions of the one-variable theory
  for functions of several variables are discussed. We consider the
  unit ball case (both commutative and noncommutative) in Subsections
  \ref{subS:comball} and \ref{subS:noncomball}, and the generalization
  to so-called free semigroupoid algebras \cite{Muhly97, MS99, KP04a,
  KP04b} in Subsection \ref{subS:quiver1} where the starting point is a
  directed graph (also called a quiver). New in the free semigroupoid algebra
  setting is a more explicit formula for the point-evaluation and of the
  criterion for existence of solutions of the associated Nevanlinna-Pick
  interpolation problem. We also work out the Nevanlinna-Pick result for a
  specific quiver. This example corresponds to Nevanlinna-Pick interpolation
  for functions in a certain subalgebra of a matrix-valued $H^\infty$-space.
  In Subsection \ref{subS:polydisk} we develop
  parallel results for the Schur-Agler class in the polydisk setting
  for both the commutative and noncommutative settings. Most of the
  material in Sections \ref{S:1var} and \ref{S:multivar} should be well known
  to the experts in the interpolation theory community; the focus here
  is on the connections among the various results.
  Section \ref{S:C*-NP} contains the relatively new results on a
  generalized Schur class and Nevanlinna-Pick interpolation obtained
  by P.S. Muhly and B. Solel \cite{MS04,MSSchur}.
  The material is presented for the expert in operator
  theory not familiar with such notions as ``$W^*$-correspondence'' and
  ``Hilbert module'' which are more common in the operator algebra
  literature.
  The novel part of the exposition here is to define the point-evaluation
  in a direct way without making explicit use of the dual correspondence.
  Moreover, we prove a Nevanlinna-Pick theorem for the second kind of
  point-evaluation for the $W^*$-correspondence Schur class that was
  recently introduced in \cite{MSPoisson}.
  We recover the results for the single-variable case (Section \ref{S:1var})
  and the free semigroupoid algebra setting (Subsection \ref{subS:quiver1})
  as an application of the general Muhly-Solel theory; a key ingredient
  here is careful understanding of the connections between completely
  positive kernels and completely positive maps versus merely
  positive semidefinite operator matrices, including an application of
  Choi's theorem \cite{Choi75}.
  A final subsection gives some perspective on connections
  of the Muhly-Solel theory with the time-varying interpolation theory
  of the 1990s. The last section discusses the
  test-function approach and directions for future work.

The notation is mostly standard but we mention here a few conventions for reference.
For $\Omega$ any index set and $B$ a Banach space with norm $\|\ \|_B$, the symbol
$\ell^{2}_B(\Omega)$ denotes the space of $B$-valued norm-squared summable sequences
indexed by $\Omega$:
\[
\ell^{2}_B(\Omega) = \{ \xi \colon \Omega \to B \colon
\sum_{\omega \in \Omega} \| \xi(\omega) \|_B^{2} < \infty\}.
\]
Most often the choice $\Omega = {\mathbb Z}$ (the integers) or $\Omega = {\mathbb Z}_{+}$
(the nonnegative integers) appears. For Hilbert spaces $\cU$ and $\cY$ the symbol
$\cL(\cU,\cY)$ stands for the space of bounded linear operators from $\cU$ into $\cY$.
We write $H^2_\cU(\BD)$ for the Hardy space of analytic functions $f:\BD\to\cU$ that can
be extended to square integrable functions on the unit circle $\BT:=\{\la\in\BC\colon|\la|=1\}$.
With $H^\infty_{\cL(\cU,\cY)}(\BD)$ we denote the Banach space of uniformly bounded
analytic functions on the unit disc $\BD$ with values in $\cL(\cU,\cY)$.

\section{Operator-valued Nevanlinna-Pick interpolation: the one-variable case}
\label{S:1var}

Let $\cU$ and $\cY$ be Hilbert spaces. With $\cS(\cU,\cY)$ we denote the
$\cL(\cU,\cY)$-valued Schur class, i.e., the set of all holomorphic
functions $S$ on the unit disc $\BD$ whose values are contractive operators
in $\cL(\cU,\cY)$:
\begin{equation}  \label{contract-values}
\|S(\la)\|\leq 1 \text{ for all } \lambda \in\BD.
\end{equation}
In this section we consider variations on the classical Nevanlinna-Pick interpolation
problem for functions in the Schur class $\cS(\cU,\cY)$.

\subsection{Standard functional calculus Nevanlinna-Pick interpolation problems}
\label{subS:standard-1var}
The standard operator-valued Nevanlinna-Pick interpolation problem and the left-
and right-tangential versions are the following problems:
\begin{enumerate}
\item[(1)] The Full Operator-Valued Nevanlinna-Pick ({\bf FOV-NP}) interpolation problem:
{\it Given $\la_1,\ldots,\la_N$ in $\BD$ and operators $W_1,\ldots,W_N$ in $\cL(\cU,\cY)$,
determine when there exists a Schur class function $S\in\cS(\cU,\cY)$ such that
$S(\la_i)=W_i$ for $i=1,\ldots,N$.}

\item[(2)] The Left-Tangential Nevanlinna-Pick ({\bf LT-NP}) interpolation problem:
{\it
Given $\la_1,\ldots,\la_N$ in $\BD$, an auxiliary Hilbert space $\cC$ and operators
$X_1,\ldots,X_N$ in $\cL(\cY,\cC)$ and $Y_1,\ldots,Y_N$ in $\cL(\cU,\cC)$, determine
when there exists a Schur class function $S\in\cS(\cU,\cY)$ such that $X_iS(\la_i)=Y_i$
for $i=1,\ldots,N$.}

\item[(3)] The Right-Tangential Nevanlinna-Pick ({\bf RT-NP}) interpolation problem:
{\em Given $\la_1,\ldots,\la_N$ in $\BD$, an auxiliary Hilbert space $\cC$ and operators
$U_1$, $\ldots$, $U_N$ in $\cL(\cC,\cU)$ and $V_1$, $\ldots$, $V_N$ in $\cL(\cC,\cY)$,
determine when there exists a Schur class function $S\in\cS(\cU,\cY)$ such that
$S(\la_i)U_i=V_i$ for $i=1,\ldots,N$.}
\end{enumerate}

Note that the {\bf FOV-NP} interpolation problem is a particular case of the {\bf LT-NP}
interpolation problem, namely with $\cC=\cY$ and $X_i=I_\cY$ for $i=1,\ldots,N$. Moreover,
{\bf LT-NP} reduces to {\bf RT-NP}, and vice versa, as follows.
A function $S:\BD\to\cL(\cU,\cY)$ is in the Schur class $\cS(\cU,\cY)$ if and
only if the function $S^\sharp:\BD\to\cL(\cY,\cU)$ defined by
\begin{equation}\label{Ssharp}
S^\sharp(\la)=S(\ov{\la})^*\quad (\la\in\BD)
\end{equation}
is in $\cS(\cY,\cU)$. In particular, $S$ is a solution to the {\bf LT-NP} interpolation problem,
with data as above, if and only if $S^\sharp:\BD\to\cL(\cY,\cU)$ is a solution to the {\bf RT-NP}
with data $\ov{\la}_1,\ldots,\ov{\la}_N\in\BD$, $X_1^*,\ldots,X_N^*\in\cL(\cC,\cY)$ and
$Y_1^*,\ldots,Y_N^*\in\cL(\cC,\cU)$.

The relations among these variations on the classical Nevanlinna-Pick interpolation problem
are also exhibited in their solutions; see e.g.~\cite{BGR,FF}.

\begin{theorem}\label{T:FOV/LT/RT}
Let the data for the {\bf FOV-NP}, {\bf LT-NP} and the {\bf RT-NP} interpolation problem
be as given in (1), (2) and (3) above.
\begin{enumerate}
\item A solution to the {\bf FOV-NP} interpolation problem exists if and only if the
associated Pick matrix
\begin{equation}\label{Pick-FOV}
\BP_\tu{FOV}:=\mat{c}{\frac{I_\cY-W_iW_j^*}{1-\la_i\ov{\la}_j}}_{i,j=1}^N
\end{equation}
is positive semidefinite.

\item A solution to the {\bf LT-NP} interpolation problem exists if and only if the
associated Pick matrix
\begin{equation}\label{Pick-LT}
\BP_\tu{LT}:=\mat{c}{\frac{X_iX_j^*-Y_iY_j^*}{1-\la_i\ov{\la}_j}}_{i,j=1}^N
\end{equation}
is positive semidefinite.

\item A solution to the {\bf RT-NP} interpolation problem exists if and only if the
associated Pick matrix
\begin{equation}\label{Pick-RT}
\BP_\tu{RT}:=\mat{c}{\frac{U_i^*U_j-V_i^*V_j}{1-\ov{\la}_i\la_j}}_{i,j=1}^N
\end{equation}
is positive semidefinite.
\end{enumerate}
\end{theorem}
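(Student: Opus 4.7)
The plan is to exploit the reductions already recorded in the excerpt and then prove only the \textbf{LT-NP} case. Indeed, \textbf{FOV-NP} is the specialization of \textbf{LT-NP} with $\cC=\cY$, $X_i=I_\cY$, $Y_i=W_i$; and the involution $S\mapsto S^\sharp$ shows that $S$ solves \textbf{LT-NP} with data $(\la_i,X_i,Y_i)$ iff $S^\sharp$ solves \textbf{RT-NP} with data $(\ov{\la}_i,X_i^*,Y_i^*)$, under which the two Pick matrices coincide entry-by-entry. So parts (1) and (3) will follow once part (2) is established.

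For necessity in (2), suppose $S\in\cS(\cU,\cY)$ satisfies $X_iS(\la_i)=Y_i$. The defining property of the Schur class furnishes a positive semidefinite de Branges-Rovnyak kernel
\[
K_S(\la,\mu)=\frac{I_\cY-S(\la)S(\mu)^*}{1-\la\ov{\mu}} \quad \text{on } \BD\times\BD.
\]
Substituting the interpolation conditions yields $[\BP_\tu{LT}]_{ij}=X_iK_S(\la_i,\la_j)X_j^*$, which is a congruence of a positive kernel and is therefore positive semidefinite.

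For sufficiency, I would use the lurking-isometry method. A Kolmogorov factorization of $\BP_\tu{LT}\succeq 0$ furnishes operators $\Ga_i\in\cL(\cC,\cG)$ into some auxiliary Hilbert space $\cG$ with $[\BP_\tu{LT}]_{ij}=\Ga_i^*\Ga_j$. Clearing the denominator $1-\la_i\ov{\la}_j$ rearranges this to
\[
\bbm{X_i & \la_i\Ga_i^*}\bbm{X_j^* \\ \ov{\la}_j\Ga_j}=\bbm{Y_i & \Ga_i^*}\bbm{Y_j^* \\ \Ga_j} \quad (1\le i,j\le N),
\]
which is exactly the statement that the assignment $V_0:\sbm{Y_j^*c \\ \Ga_j c}\mapsto \sbm{X_j^*c \\ \ov{\la}_j\Ga_j c}$ ($c\in\cC$, $1\le j\le N$) extends by linearity to a well-defined isometry from a subspace of $\cU\oplus\cG$ into $\cY\oplus\cG$. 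Extending $V_0$ (enlarging $\cG$ if necessary) to a unitary colligation $V=\sbm{A & B \\ C & D}:\cG\oplus\cU\to\cG\oplus\cY$, its transfer function $S(\la)=D+\la C(I_\cG-\la A)^{-1}B$ lies in $\cS(\cU,\cY)$. Unwinding the intertwinings built into $V_0$ against the Neumann series for $(I-\la_iA)^{-1}$ then produces $X_iS(\la_i)=Y_i$ for each $i$.

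The main obstacle is the sufficiency step: both the isometric extension and the final verification that the transfer function truly interpolates require some bookkeeping, in particular ensuring that the defining vectors of $V_0$ are mapped into a state space compatible with the resolvent $(I-\la_iA)^{-1}$. An alternative route is commutant lifting: interpret $(X_i,Y_i,\la_i)$ as prescribing an intertwiner between compressions of the backward shift on vector Hardy spaces, observe that positivity of $\BP_\tu{LT}$ is equivalent to this compressed intertwiner having norm at most one, and invoke Sz.-Nagy-Foias to lift it to the full shift. Either route is classical; the details are in the references already cited, in particular \cite{BGR,FF,FFGK98}.
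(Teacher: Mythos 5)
The paper itself offers no proof of Theorem \ref{T:FOV/LT/RT}: it is stated as classical and referred to \cite{BGR,FF} (and, for the operator-argument generalization, \cite{FFGK98}). Your proposal therefore supplies an actual argument where the paper only cites, and the argument you sketch is the correct classical one: the reductions \textbf{FOV}$\subset$\textbf{LT} and \textbf{LT}$\leftrightarrow$\textbf{RT} via $S\mapsto S^\sharp$ are exactly those recorded in the paper and do preserve the Pick matrices entry-by-entry; necessity is the congruence $[\BP_\tu{LT}]_{ij}=X_iK_S(\la_i,\la_j)X_j^*$, which uses the (standard, and stated at the end of Section \ref{S:1var}) equivalence between contractive values and positivity of the kernel $k_S$; and sufficiency by Kolmogorov factorization plus a lurking isometry, or alternatively by commutant lifting, is one of the routes the survey itself lists in the introduction.

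One small directional point is worth fixing in the write-up of the sufficiency step. Your identity
\[
X_iX_j^*+\la_i\ov{\la}_j\,\Ga_i^*\Ga_j=Y_iY_j^*+\Ga_i^*\Ga_j
\]
is correct and does make $V_0\colon \sbm{Y_j^*c\\ \Ga_jc}\mapsto\sbm{X_j^*c\\ \ov{\la}_j\Ga_jc}$ a well-defined isometry, but if you extend $V_0$ itself to a unitary colligation $V=\sbm{A&B\\C&D}\colon\cG\oplus\cU\to\cG\oplus\cY$ and try to unwind the relations for $V$ directly, you are led to $(\ov{\la}_jI-A)\Ga_jc=BY_j^*c$, and $\ov{\la}_jI-A$ need not be invertible for $|\la_j|<1$. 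The interpolation conditions are instead recovered by applying $V^*$ (which inverts $V_0$ on its range): from $V^*\sbm{\ov{\la}_j\Ga_jc\\ X_j^*c}=\sbm{\Ga_jc\\ Y_j^*c}$ one gets $\Ga_jc=(I-\ov{\la}_jA^*)^{-1}C^*X_j^*c$ and then $Y_j^*=\bigl(D^*+\ov{\la}_jB^*(I-\ov{\la}_jA^*)^{-1}C^*\bigr)X_j^*=S(\la_j)^*X_j^*$, i.e.\ $X_jS(\la_j)=Y_j$; equivalently, extend $V_0^{-1}$ and realize $S^\sharp\in\cS(\cY,\cU)$ first. So the resolvent that actually appears is $(I-\ov{\la}_iA^*)^{-1}$, not $(I-\la_iA)^{-1}$ as written; with that adjustment (and the routine unitary extension after enlarging $\cG$), your sketch closes correctly.
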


\subsection{Operator-argument functional calculus Nevanlinna-Pick interpolation problems}
\label{subS:OA-1var}

Let $S$ be a Schur class function in $\cS(\cU,\cY)$ with Taylor coefficients
$S_0,S_1,\ldots$ in $\cL(\cU,\cY)$, i.e.,
\[
S(\la)=\sum_{n=0}^\infty\la^n S_n\quad (\la\in\BD).
\]
For an auxiliary Hilbert space $\cC$ and operators $X$ in $\cL(\cY,\cC)$ and
$T$ in $\cL(\cC)$ with $\spec(T)<1$ the left-tangential operator-argument point-evaluation
$(XS)^{\wedge L}(T)$ is given by
\begin{equation}\label{lefteval}
(XS)^{\wedge L}(T)=\sum_{n=0}^\infty T^nXS_n.
\end{equation}
Similarly, for an auxiliary Hilbert space $\cC$ and operators $U$ in $\cL(\cC,\cU)$ and
$A$ in $\cL(\cC)$ with $\spec(A)<1$ the right-tangential operator-argument point-evaluation
$(SU)^{\wedge R}(A)$ is given by
\begin{equation}\label{righteval}
(SU)^{\wedge R}(A)=\sum_{n=0}^\infty S_nUA^n.
\end{equation}

With respect to these operator-argument functional calculi we consider the following
tangential Nevanlinna-Pick interpolation problems.
\begin{enumerate}
\item[(1)] The Left-Tangential Nevanlinna-Pick interpolation problem with Operator-Argument
({\bf LTOA-NP}): {\em Given an auxiliary Hilbert space $\cC$ together with operators
$T_1,\ldots,T_N$ in $\cL(\cC)$ with $\spec(T_i)<1$ for $i=1,\ldots,N$ and operators
$X_1,\ldots,X_N$ in $\cL(\cY,\cC)$ and $Y_1,\ldots,Y_N$ in $\cL(\cU,\cC)$, determine when
there exists a Schur class function $S$ in $\cS(\cU,\cY)$ so that $(X_i S)^{\wedge L}(T_i)=Y_i$
for $i=1,\ldots,N$.}

\item[(2)] The Right-Tangential Nevanlinna-Pick interpolation problem with Operator-Argument
({\bf RTOA-NP}): {\em Given an auxiliary Hilbert space $\cC$ together with operators
$A_1,\ldots,A_N$ in $\cL(\cC)$ with $\spec(A_i)<1$ for $i=1,\ldots,N$ and operators
$U_1,\ldots,U_N$ in $\cL(\cC,\cU)$ and $V_1,\ldots,V_N$ in $\cL(\cC,\cY)$, determine when
there exists a Schur class function $S$ in $\cS(\cU,\cY)$ so that $(S U_i)^{\wedge R}(A_i)=V_i$
for $i=1,\ldots,N$.}
\end{enumerate}

The left- and right-tangential point-evaluation with operator-argument
and the corresponding Nevanlinna-Pick interpolation theory also provide
a convenient formalism for encoding tangential interpolation conditions
of Carath\'eodory-Fej\'er type; we refer to \cite[Sections 16.8-9]{BGR}
for the single-variable case and \cite{BB05} for multivariable examples.

The {\bf LT-NP} ({\bf RT-NP}) interpolation problem is the special case of
the {\bf LTOA-NP} ({\bf RTOA-NP}) interpolation problem with $T_i=\la_i I_\cU$
($A_i=\la_i I_\cU$).
As in the case of the standard functional calculus left- and right-tangential
interpolation problems, here also {\bf LTOA-NP} reduces to {\bf RTOA-NP} and
vice versa; again via the transformation $S\mapsto S^\sharp$ in (\ref{Ssharp})
and a similar transformation of the data.
Moreover, it suffices to consider the {\bf LTOA-NP} interpolation problem for the case $N=1$,
since the general case, with data as above, is covered by the
{\bf LTOA-NP} interpolation problem with data
\[
T=\mat{cccc}{T_1&&&\\&T_2&&\\&&\ddots&\\&&&T_N},\quad X=\mat{c}{X_1\\X_2\\\vdots\\X_N}\ands
Y=\mat{c}{Y_1\\Y_2\\\vdots\\Y_N}.
\]
See \cite[Section I.3]{FFGK98} for more details. However, we write out
the results without this reduction for better comparison with the
classical case.

The solutions to the {\bf LTOA-NP} and {\bf RTOA-NP} interpolation problems are given in the
following theorem.

\begin{theorem}\label{T:LTOA/RTOA}
Let the data for the {\bf LTOA-NP} and {\bf RTOA-NP} interpolation problems be as
given in (1) and (2) above.
\begin{enumerate}
\item A solution to the {\bf LTOA-NP} interpolation problem exists if and only if the
associated Pick matrix
\begin{equation}\label{Pick-LTOA}
\BP_\tu{LTOA}:=\mat{c}{\sum_{n=0}^{\infty}T_{i}^{n}(X_i X_j^*- Y_i Y_j^*) T_j^{*n}}_{i,j=1}^N
\end{equation}
is positive semidefinite.

\item A solution to the {\bf RTOA-NP} interpolation problem exists if and only if the
associated Pick matrix
\begin{equation}\label{Pick-RTOA}
\BP_\tu{RTOA}:=\mat{c}{\sum_{n=0}^{\infty} A_i^{*n}(U_i^*U_j-V_i^*V_j)A_j^n}_{i,j=1}^N
\end{equation}
is positive semidefinite.
\end{enumerate}
\end{theorem}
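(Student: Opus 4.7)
The plan is to reduce (2) to (1), and then reduce (1) to the case $N=1$, before tackling the single-block case operator-theoretically. For the first reduction, the map $S\mapsto S^\sharp$ from \eqref{Ssharp} is a bijection $\cS(\cU,\cY)\to\cS(\cY,\cU)$ with $(S^\sharp)_n = S_n^*$. A direct check shows that $S$ solves the \textbf{RTOA-NP} problem for data $(A_i,U_i,V_i)$ iff $S^\sharp$ solves the \textbf{LTOA-NP} problem for data $(A_i^*,U_i^*,V_i^*)$, and $\BP_\tu{RTOA}$ for $(A_i,U_i,V_i)$ coincides with $\BP_\tu{LTOA}$ for $(A_i^*,U_i^*,V_i^*)$. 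So (2) follows from (1). The block-diagonal aggregation described in the text before the theorem reduces (1) to $N=1$: given $T\in\cL(\cC)$ with $\spec(T)<1$, $X\in\cL(\cY,\cC)$, $Y\in\cL(\cU,\cC)$, find $S\in\cS(\cU,\cY)$ with $(XS)^{\wedge L}(T)=Y$.

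For the operator-theoretic reformulation, define the observability-type map $\Gamma_X\colon H^2_\cY(\BD)\to \cC$ by $\Gamma_X\!\left(\sum_{n\geq 0}\la^n f_n\right) = \sum_{n\geq 0} T^n X f_n$. The series converges absolutely since $\spec(T)<1$, so $\Gamma_X$ is bounded, and a short calculation yields the intertwining $\Gamma_X M_\la = T\Gamma_X$. Writing $\iota\colon\cU\hookrightarrow H^2_\cU$ for the isometric embedding as constants, the interpolation condition becomes $\Gamma_X M_S \iota = Y$, and $\Gamma_X\Gamma_X^* = \sum_{n\geq 0} T^n XX^* T^{*n}$; upon unpacking the $N\geq 2$ data in block form, this gives $\BP_\tu{LTOA} = \Gamma_X\Gamma_X^* - YY^*$. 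Necessity then follows at once: if $S\in\cS(\cU,\cY)$ satisfies $\Gamma_X M_S \iota = Y$, then using $\iota\iota^*\leq I_{H^2_\cU}$ and $M_S M_S^*\leq I_{H^2_\cY}$,
\[
YY^* \;=\; \Gamma_X M_S\, \iota\iota^*\, M_S^*\Gamma_X^* \;\leq\; \Gamma_X M_S M_S^* \Gamma_X^* \;\leq\; \Gamma_X\Gamma_X^*,
\]
which is $\BP_\tu{LTOA}\geq 0$.

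The main obstacle is sufficiency: given $\Gamma_X\Gamma_X^*\geq YY^*$, produce $S\in\cS(\cU,\cY)$ with $\Gamma_X M_S\iota = Y$. This is a tangential commutant-lifting problem, and the Sz.-Nagy--Foias commutant lifting theorem is the natural tool. The intertwining $M_\la^*\Gamma_X^* = \Gamma_X^* T^*$ shows that $\Gamma_X^*$ intertwines $T^*$ with the coisometry $M_\la^*$ on $H^2_\cY$; since $\spec(T^*)<1$, a Stein-equation renorming of $\cC$ makes $T^*$ into a strict contraction with a standard minimal isometric dilation, and the Pick inequality $\Gamma_X\Gamma_X^*\geq YY^*$ is precisely the contractivity of the candidate interpolating map assembled on the range of $\Gamma_X^*$ and the constants $\iota\cU$. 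Applying the commutant lifting theorem produces a contractive lifting intertwining the backward shifts on $H^2_\cY$ and $H^2_\cU$; such a lifting is automatically of the form $M_S^*$ with $S\in\cS(\cU,\cY)$ by the Fourier characterization of analytic Toeplitz operators, and the interpolation condition is preserved by construction. The technical heart of the argument---verifying that the Pick inequality is exactly what is needed for the commutant lifting hypothesis and that the lifted operator is analytic Toeplitz---is carried out in detail in \cite[Ch.~XIV--XVI]{FFGK98} and \cite{BGR}, to which we defer.
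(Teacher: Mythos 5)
Your reductions are fine: the passage from \textbf{RTOA} to \textbf{LTOA} via $S\mapsto S^{\sharp}$ with data $(A_i^*,U_i^*,V_i^*)$, and the block-diagonal aggregation to $N=1$, are exactly the reductions the paper itself indicates, and your operators $\Gamma_X$ with $\Gamma_X M_\la=T\Gamma_X$ and the translation $(XS)^{\wedge L}(T)=Y \Leftrightarrow \Gamma_X M_S\iota=Y$ are correct. The genuine error is the identification $\BP_\tu{LTOA}=\Gamma_X\Gamma_X^*-YY^*$. With $T=\diag(T_i)$, $X=\mathrm{col}(X_i)$, $Y=\mathrm{col}(Y_i)$ one has $\BP_\tu{LTOA}=\sum_{n\ge0}T^n(XX^*-YY^*)T^{*n}=\Gamma_X\Gamma_X^*-\Gamma_Y\Gamma_Y^*$, where $\Gamma_Y\colon H^2_\cU(\BD)\to\cC$ is the analogous map built from $Y$ and $T$; since $\Gamma_Y\Gamma_Y^*\succeq YY^*$ (the $n=0$ term), your inequality chain, while valid, only proves the strictly weaker statement $\Gamma_X\Gamma_X^*\succeq YY^*$, not $\BP_\tu{LTOA}\succeq0$. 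A scalar example shows the two are not equivalent: for $\cU=\cY=\cC=\BC$, $T=t\in(0,1)$ and data $x,y$ with $|x|<|y|\le |x|/\sqrt{1-t^2}$, one has $|x|^2/(1-t^2)-|y|^2\ge0$ although $(|x|^2-|y|^2)/(1-t^2)<0$ and no Schur function satisfies $x\,s(t)=y$. The same misidentification enters your sufficiency sketch: the contractivity needed to define the candidate map $\Gamma_X^*c\mapsto\Gamma_Y^*c$ on $\overline{\ran}\,\Gamma_X^*$ (via Douglas factorization, prior to commutant lifting) is exactly $\Gamma_Y\Gamma_Y^*\preceq\Gamma_X\Gamma_X^*$, not $YY^*\preceq\Gamma_X\Gamma_X^*$.

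The repair is standard and short: the interpolation condition is equivalent to the operator identity $\Gamma_X M_S=\Gamma_Y$ on all of $H^2_\cU(\BD)$, not merely on constants, because both sides intertwine $M_\la$ with $T$ and agree on $\iota\,\cU$, hence agree on polynomials and therefore everywhere. Necessity then reads $\Gamma_Y\Gamma_Y^*=\Gamma_X M_SM_S^*\Gamma_X^*\preceq\Gamma_X\Gamma_X^*$, i.e.\ $\BP_\tu{LTOA}\succeq0$, and the same identity is the correct hypothesis for the lifting step. With that correction, your Sarason/commutant-lifting route (deferring the lifting itself to \cite{FFGK98,BGR}) is in keeping with how the survey treats this theorem: it is quoted in Section \ref{S:1var} from those references, and re-derived later only as the specialization $E=\cA=\cL(\cV)$ of Theorem \ref{T:MSOA-NP}, whose proof likewise proceeds by Douglas factorization followed by a commutant lifting theorem.
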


\subsection{Riesz-Dunford functional calculus Nevanlinna-Pick interpolation problems}
\label{subS:RD-1var}
As a third variant we consider Nevanlinna-Pick interpolation problems for Schur class
functions with a Riesz-Dunford functional calculus. In this case the Hilbert spaces
$\cU$ and $\cY$ are both equal to $\BC$, i.e., the Schur class functions are scalar-valued,
but the unit disc is replaced by strict contractions on some fixed Hilbert space $\cZ$. Given a Schur class
function $s$ in $\cS(\BC,\BC)$ and a strict contraction $T$ in $\cL(\cZ)$ we define $s(Z)$
via the Riesz-Dunford functional calculus:
\begin{equation}\label{RD-func}
s(Z)=\frac{1}{2\pi i}\int_{\rho\BT}
(\zeta I_{\cZ}-Z)^{-1}s(\zeta)\, d\zeta=\sum_{n=0}^\infty s_n\cdot Z^n\ \text{ if }\ s(\la)=
\sum_{n=0}^\infty s_n\la^n.
\end{equation}
Here the multiplication $s_{n} \cdot Z^{n}$ is scalar multiplication and $\rho < 1$ is
chosen large enough so that the circle $\rho\BT$ encloses the spectrum of $Z$.

With respect to this functional calculus we consider the following Nevanlinna-Pick
interpolation problems.
\begin{enumerate}
\item[(1)] The Full Riesz-Dunford Nevanlinna-Pick ({\bf FRD-NP}) interpolation problem:
{\em Given a Hilbert space $\cZ$, strict contractions $Z_1,\ldots,Z_N$ in $\cL(\cZ)$ and
operators $W_1,\ldots,W_N$ in $\cL(\cZ)$, determine when there exists a Schur class function $s$
in $\cS(\BC,\BC)$ such that $s(Z_i)=W_i$ for $i=1,\ldots,N$.}

\item[(2)] The Left-Tangential Riesz-Dunford Nevanlinna-Pick ({\bf LTRD-NP}) interpolation problem:
{\em Given Hilbert spaces $\cZ$ and $\cC$, strict contractions $Z_1,\ldots,Z_N$ in $\cL(\cZ)$ and
operators $X_1,\ldots,X_N,Y_1,\ldots,Y_N$ in $\cL(\cZ,\cC)$, determine when there exists a Schur
class function $s$ in $\cS(\BC,\BC)$ such that $X_i s(Z_i)=Y_i$ for $i=1,\ldots,N$.}

\item[(3)] The Right-Tangential Riesz-Dunford Nevanlinna-Pick ({\bf RTRD-NP}) interpolation problem:
{\em Given Hilbert spaces $\cZ$ and $\cC$, strict contractions $Z_1,\ldots,Z_N$ in $\cL(\cZ)$ and
operators $U_1,\ldots,U_N,V_1,\ldots,V_N$ in $\cL(\cC,\cZ)$, determine when there exists a Schur
class function $s$ in $\cS(\BC,\BC)$ such that $s(Z_i)U_i=V_i$ for $i=1,\ldots,N$.}
\end{enumerate}

The {\bf FRD-NP} interpolation problem is the special case of the {\bf RTRD-NP} interpolation
problem with $\cC=\cZ$ and $U_1=\cdots=U_N=I_\cZ$. Let us first consider the {\bf RTRD-NP}
interpolation problem with $\cC=\BC$, that is, the operators $U_1,\ldots,U_N,V_1,\ldots,V_N$
are vectors $u_1,\ldots,u_N,v_1,\ldots,v_N$ in $\cZ$. This type of Nevanlinna-Pick interpolation
problem is studied in the book \cite{RR} (in a slightly  more general form where only a local weak
version of the Riesz-Dunford functional calculus is required). In this case $s(Z_i)u_i$ is equal
to $(u_i s)^{\wedge L}(Z_i)$ as defined in (\ref{lefteval}), and thus, the {\bf RTRD-NP}
interpolation problem is a {\bf LTOA-NP} interpolation problem with the same data set.
If $\cC\not=\BC$ is finite dimensional with orthonormal basis
$\{e_1,\ldots,e_\kappa\}$, then the
{\bf RTRD-NP} interpolation problem can still be seen as a {\bf LTOA-NP} interpolation problem.
In this case the identity $s(Z_i)U_i=V_i$ holds if and only if
\begin{equation*}
s(Z_i)U_ie_j=V_ie_j\text{ for }j=1,\ldots,\kappa.
\end{equation*}
Thus the {\bf RTRD-NP} interpolation problem becomes a {\bf LTOA-NP} interpolation problem with
tangential interpolation conditions indexed by the Cartesian product set
\begin{equation*}
\{1, \dots, N\}\times\{1, \dots, \kappa\}
\end{equation*}
and with interpolation data
\begin{equation*}
Z_{i,j}:=Z_i,\quad x_{i,j}:=U_ie_j,\quad y_{i,j}:=V_i e_j\text{ for
}i=1,\ldots,N\text{ and }j=1,\ldots,\kappa.
\end{equation*}
The {\bf LTRD-NP} interpolation problem reduces to a {\bf RTOA-NP} interpolation
problem in a similar way.

In conclusion, we have the following result.

\begin{theorem}\label{T:RD-NP}
Let the data for the {\bf FRD-NP}, {\bf LTRD-NP} and {\bf RTRD-NP} interpolation problems be as
given in (1) and (2) above with, for the {\bf LTRD-NP} and {\bf RTRD-NP} interpolation problems
$\dim \cC=\kappa$ and $\{e_1,\ldots,e_\kappa\}$
an orthonormal basis for $\cC$, and for the {\bf FRD-NP}
interpolation problem $\dim \cZ=\kappa$ and
$\{e_1,\ldots,e_\kappa\}$ an orthonormal basis for $\cZ$.
\begin{enumerate}
\item A solution to the {\bf FRD-NP} interpolation problem exists if and only if the
associated Pick matrix
\begin{equation}\label{Pick-FRD}
\BP_\tu{FRD}:=\mat{c}{\sum_{n=0}^{\infty} Z_{i}^{n}(e_{i'}e_{j'}^{*}-W_{i} e_{i'}
e_{j'}^{*} W_{j}^{*} )
 Z_{j}^{*n}}_{(i,i'), (j,j') \in \{1, \dots, N\} \times \{1, \dots,
 \kappa \}}
\end{equation}
is positive semidefinite.

\item A solution to the {\bf LTRD-NP} interpolation problem exists if and only if the
associated Pick matrix
\begin{equation}\label{Pick-LTRD}
\BP_\tu{LTRD}:=
\mat{c}{\sum_{n=0}^{\infty} Z_{i}^{*n}(X_i^* e_{i'}e_{j'}^*X_j-Y_{i}^* e_{i'} e_{j'}^*Y_{j})Z_{j}^{n}
}_{(i,i'), (j,j') \in \{1, \dots, N\} \times \{1, \dots, \kappa \}}
\end{equation}
is positive semidefinite.

\item A solution to the {\bf RTRD-NP} interpolation problem exists if and only if the
associated Pick matrix
\begin{equation}\label{Pick-RTRD}
\BP_\tu{RTRD}:=\mat{c}{\sum_{n=0}^{\infty} Z_i^n(U_ie_{i'}e_{j'}^*U_j^*-V_ie_{i'}e_{j'}^*V_j^*)Z_j^{*n}
}_{(i,i'),(j,j')\in\{1,\dots,N\}\times\{1,\dots,\kappa \}}
\end{equation}
is positive semidefinite.
\end{enumerate}
\end{theorem}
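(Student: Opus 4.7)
The plan is to treat parts (1), (2), (3) not as three independent results, but by pushing the reductions already outlined in the discussion preceding the theorem and invoking Theorem \ref{T:LTOA/RTOA}. First, since \textbf{FRD-NP} is the special case of \textbf{RTRD-NP} with $\cC=\cZ$ and $U_i=I_\cZ$, it suffices to prove (2) and (3); then (1) will follow by substituting $U_i=I_\cZ$ into the \textbf{RTRD-NP} Pick matrix, which turns $U_ie_{i'}e_{j'}^*U_j^*$ into $e_{i'}e_{j'}^*$ and $V_i$ into $W_i$, yielding (\ref{Pick-FRD}).

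For part (3), fix the orthonormal basis $\{e_1,\ldots,e_\kappa\}$ of $\cC$. The equation $s(Z_i)U_i=V_i$ in $\cL(\cC,\cZ)$ is equivalent to the family of vector identities $s(Z_i)(U_ie_{i'})=V_ie_{i'}$ in $\cZ$ for $i'=1,\ldots,\kappa$. Since $s$ is scalar-valued, the Riesz-Dunford definition (\ref{RD-func}) gives $s(Z_i)u=\sum_n s_n Z_i^n u=\sum_n Z_i^n\, u\, s_n$ for every $u\in\cZ$, where in the rightmost expression we view $u\in\cZ$ as an operator in $\cL(\BC,\cZ)$. Comparing with (\ref{lefteval}), this means that each condition $s(Z_i)(U_ie_{i'})=V_ie_{i'}$ is exactly a \textbf{LTOA-NP} interpolation condition of the form $(X_{(i,i')}s)^{\wedge L}(T_{(i,i')})=Y_{(i,i')}$ with index set $\{1,\ldots,N\}\times\{1,\ldots,\kappa\}$ and data
\[
T_{(i,i')}=Z_i,\qquad X_{(i,i')}=U_ie_{i'},\qquad Y_{(i,i')}=V_ie_{i'}.
\]
Applying Theorem \ref{T:LTOA/RTOA}(1) and computing $X_{(i,i')}X_{(j,j')}^*=U_ie_{i'}e_{j'}^*U_j^*$ and $Y_{(i,i')}Y_{(j,j')}^*=V_ie_{i'}e_{j'}^*V_j^*$ produces precisely the Pick matrix (\ref{Pick-RTRD}).

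Part (2) is handled in the mirror-image fashion. The operator equality $X_is(Z_i)=Y_i$ in $\cL(\cZ,\cC)$ holds if and only if the covector equalities $(e_{i'}^*X_i)s(Z_i)=e_{i'}^*Y_i$ in $\cL(\cZ,\BC)$ hold for all $i'=1,\ldots,\kappa$, and the identity $s(Z_i)=\sum_n s_n Z_i^n$ shows that, for scalar $s$, this is an \textbf{RTOA-NP} condition $(sU_{(i,i')})^{\wedge R}(A_{(i,i')})=V_{(i,i')}$ with
\[
A_{(i,i')}=Z_i,\qquad U_{(i,i')}=e_{i'}^*X_i,\qquad V_{(i,i')}=e_{i'}^*Y_i.
\]
Invoking Theorem \ref{T:LTOA/RTOA}(2), and noting that $U_{(i,i')}^*U_{(j,j')}=X_i^*e_{i'}e_{j'}^*X_j$ and $V_{(i,i')}^*V_{(j,j')}=Y_i^*e_{i'}e_{j'}^*Y_j$, delivers (\ref{Pick-LTRD}). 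There is no substantive obstacle here: once the Riesz-Dunford functional calculus is matched with the operator-argument formula by exploiting that scalar multiplication commutes with operator composition, the argument is purely bookkeeping in indices and adjoints. The only point to be attentive to is the placement of adjoints when translating an identity in $\cL(\cZ,\cC)$ versus $\cL(\cC,\cZ)$ into the one-sided tangential format required by Theorem \ref{T:LTOA/RTOA}.
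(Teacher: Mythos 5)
Your proof is correct and takes essentially the same route as the paper: decompose the Riesz--Dunford conditions componentwise along the orthonormal basis of $\cC$ (resp.\ $\cZ$), recognize each family of vector/covector identities as an \textbf{LTOA-NP} (for part (3)) or \textbf{RTOA-NP} (for part (2)) problem indexed by $\{1,\dots,N\}\times\{1,\dots,\kappa\}$, apply Theorem \ref{T:LTOA/RTOA}, and obtain part (1) from part (3) via the specialization $\cC=\cZ$, $U_i=I_\cZ$. The bookkeeping of the data and the resulting Pick matrices matches the paper's reduction exactly.
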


The statements in Theorem \ref{T:RD-NP} remain true for the case that $\cC$ is a separable
Hilbert space (i.e., $\kappa=\infty$). The Pick matrices $\BP_\tu{FRD}$, $\BP_\tu{LTRD}$
and $\BP_\tu{RTRD}$ in this case are infinite operator matrices; positivity is then
to be interpreted as positivity of all $M\times M$-finite sections for each $M\in\BZ_+$.

It is possible to study the Full Riesz-Dunford interpolation
 problem for the Nevanlinna  class ${\mathfrak N}$ (holomorphic
 functions $f$ mapping the right half plane into itself) in place of
 the Schur class $\cS$.  For $\cZ$ a coefficient Hilbert space, we
 let ${\mathfrak N}(\cZ)$ be the Nevanlinna class of $\cL(\cZ)$-valued functions
 $F(\lambda)$ holomorphic on the right half plane such that
 $F(\lambda) + F(\lambda)^{*} \geq 0$ for $\lambda +
 \overline{\lambda} \ge 0$.
 If we assume that the coefficient space
 $\cZ$ is finite-dimensional, then the solution of the left-tangential
 interpolation problem with operator-argument for the Nevanlinna
 class is given by Theorem 22.2.2 in \cite{BGR}:  {\em given an
 operator $Z$ in $\cL(\cZ)$ with spectrum in the right half plane
 and given direction
 operators $X$ and $Y$ in $\cL(\cZ, \cC)$, then there exists a
 function $F$ in the operator-valued Nevanlinna class ${\mathfrak N}
 (\cZ)$ satisfying the left-tangential interpolation condition with
 operator-argument}
 $$
 (X F)^{\wedge L}(Z): = \frac{1}{2 \pi i} \int_{C} (\zeta I
 - Z)^{-1} X F(\zeta)\, d\zeta  = Y
 $$
 (where $C$ is a simple closed curve in the right half plane
 with the spectrum of $Z$ in its interior)
 {\em if and only if the unique solution ${\mathbb P}$ of the
 Lyapunov equation
 $$   {\mathbb P} Z^{*} + Z {\mathbb P} = X Y^{*}
 + Y X^{*}
 $$
 is positive semidefinite.}
(This statement is somewhat rough; in
 general one must go to a projective completion of ${\mathfrak
 N}(\cZ)$ and allow functions which are identically equal to $\infty$
 on a subset, but we ignore this technicality for the present
 discussion.)  To solve the full Riesz-Dunford interpolation problem
 for the class ${\mathfrak N}$, we can reduce full Riesz-Dunford interpolation
 to left-tangential operator-argument interpolation by the same trick used
 above for the Schur class. We state the result for the case $N=1$ in the
 following theorem.

 \begin{theorem}  \label{T:RD-PRO}
 Given a finite-dimensional
 coefficient space $\cZ$
 with orthonormal basis $\{e_{1}, \dots, e_{\kappa}\}$ together with
 an operator $Z$ in $\cL(\cZ)$ with spectrum in the right half plane
 and an operator $W$ in $\cL(\cZ)$, there is a scalar
 Nevanlinna class function $f \in {\mathfrak N}$ such that
 $$
    f(Z) = W \quad \text{\rm (Riesz-Dunford functional calculus)}
 $$
 if and only if matrix ${\mathbb P} = \left[ {\mathbb
 P}_{i'j'}\right]_{i',j'=1}^\kappa$ with block entry ${\mathbb
 P}_{i'j'}$ in $\cL(\cZ)$ determined as the unique solution of the Lyapunov equation
 $$
   {\mathbb P}_{i'j'} Z^{*} + Z {\mathbb P}_{i'j'} = e_{i'}e_{j'}^{*}
   W^{*} + W e_{i'} e_{j'}^{*}
 $$
 is positive semidefinite in $\cL(\cZ)^{\kappa \times \kappa}$.
 \end{theorem}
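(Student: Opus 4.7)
The plan is to reduce the Riesz--Dunford condition $f(Z)=W$ to a single left-tangential operator-argument (LTOA) interpolation condition for the scalar Nevanlinna class, and then invoke the Lyapunov-equation criterion (Theorem 22.2.2 of \cite{BGR}) recalled just above the statement. This is the direct analogue, for the Nevanlinna class $\mathfrak{N}$, of the stacking trick that yields Theorem \ref{T:RD-NP}(1) from Theorem \ref{T:LTOA/RTOA}(1).

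First I would observe that $f(Z)=W$ in $\cL(\cZ)$ is equivalent, via the orthonormal basis $\{e_1,\ldots,e_\kappa\}$, to the $\kappa$ column conditions $f(Z)\,e_{j'} = W e_{j'}$ for $j'=1,\ldots,\kappa$; each of these is of LTOA form, since for scalar $f$ the Riesz--Dunford integral gives $(e_{j'} f)^{\wedge L}(Z) = f(Z)\,e_{j'}$. These $\kappa$ tangential conditions can be packaged into a single LTOA datum on $\cC := \cZ^{\kappa}$ by taking
$$
Z^{(\kappa)} := \operatorname{diag}(Z,\ldots,Z)\in\cL(\cC),\qquad
X^{(\kappa)} := \bbm{e_{1}\\ \vdots\\ e_{\kappa}},\qquad
Y^{(\kappa)} := \bbm{W e_{1}\\ \vdots\\ W e_{\kappa}}
$$
in $\cL(\BC,\cC)$, so that the single condition $f(Z^{(\kappa)})\,X^{(\kappa)}=Y^{(\kappa)}$ is equivalent to $f(Z)=W$; note that $Z^{(\kappa)}$ has the same right-half-plane spectrum as $Z$.

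Applying the scalar case of Theorem 22.2.2 of \cite{BGR} to this stacked LTOA datum then yields a solution $f\in\mathfrak{N}$ if and only if the unique solution $\mathbb{P}\in\cL(\cC)$ of
$$
\mathbb{P}\,(Z^{(\kappa)})^{*} + Z^{(\kappa)}\,\mathbb{P}
\;=\; X^{(\kappa)}(Y^{(\kappa)})^{*} + Y^{(\kappa)}(X^{(\kappa)})^{*}
$$
is positive semidefinite. Writing $\mathbb{P}=[\mathbb{P}_{i'j'}]_{i',j'=1}^{\kappa}$ with blocks in $\cL(\cZ)$ and using the block-diagonal form of $Z^{(\kappa)}$, the $(i',j')$-block of this Lyapunov equation collapses to exactly
$$
\mathbb{P}_{i'j'}Z^{*} + Z\,\mathbb{P}_{i'j'} \;=\; e_{i'}e_{j'}^{*}W^{*} + W e_{i'}e_{j'}^{*},
$$
as asserted, and positivity of $\mathbb{P}$ on $\cC=\cZ^{\kappa}$ coincides with positivity of $[\mathbb{P}_{i'j'}]$ in $\cL(\cZ)^{\kappa\times\kappa}$. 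The only nonroutine issue is to verify that Theorem 22.2.2 of \cite{BGR} applies in its scalar version with arbitrary finite-dimensional tangential space $\cC$; this is essentially contained in that theorem, modulo the projective-completion technicality (allowing $f\equiv\infty$ on a subset) that the authors already flag and which we likewise suppress.
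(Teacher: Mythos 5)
Your argument is correct and is essentially the proof the paper intends: the paper also reduces $f(Z)=W$ to the $\kappa$ columnwise \textbf{LTOA} conditions $(e_{j'}f)^{\wedge L}(Z)=We_{j'}$ (``the same trick used above for the Schur class'') and then invokes Theorem 22.2.2 of \cite{BGR}, whose block Lyapunov equation collapses exactly as you compute. Your stacking onto $\cC=\cZ^{\kappa}$ with $Z^{(\kappa)}=\operatorname{diag}(Z,\dots,Z)$ and the remark on the projective-completion caveat match the paper's treatment, so nothing further is needed.
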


 In \cite{CL} Cohen and Lewkowicz solve a full Riesz-Dunford
 interpolation problem for a somewhat smaller class than the Nevanlinna class
 ${\mathfrak N}$, namely, the class of {\em positive-real-odd} functions
 $\mathcal{PRO}$.  We define the class $\mathcal{PR}$ to be those
 Nevanlinna class functions $f \in {\mathfrak N}$ which are {\em
 real} in the sense that $f(x)$ is real for $x>0$, or, equivalently,
 $\overline{f(\lambda)} = f(\overline{\lambda})$ for $\lambda$ in the
 right half plane.  We say that the function $f$ is in the class
 $\mathcal{PRO}$ of {\em positive-real-odd} functions if in addition
 $f$ has a meromorphic pseudocontinuation to the left half plane which
 satisfies $f(-\overline{\lambda}) = -\overline{f(\lambda)}$; when
 combined with the real property, this then forces $f$ to be odd:
 $f(-\lambda) = -f(\lambda)$.
  If one does a  change of variable  of both the
  independent  and the dependent variables via a
  linear-fractional-transformation mapping the right half plane to the
  unit disk, then the Nevanlinna class transforms into the Schur class $\cS$,
  the positive-real class $\mathcal{PR}$ transforms into the set of
  Schur class functions $f$
  which are real on the unit interval $(-1, 1)$, and the
  positive-real-odd class $\mathcal{PRO}$ transforms into the class of inner functions
  which are real on the unit interval. Using techniques from
  \cite{BGR93} and \cite{ABGR}, it is not difficult to see that, in
  Theorem \ref{T:RD-PRO}, one
  can arrange for the solution $f$ to be in $\mathcal{PRO}$ if $Z$ and $W$ are taken to
  be real matrices.
  The result of Cohen-Lewkowicz in
  \cite{CL} gives a seemingly quite different criterion for the same
  problem which can be stated as follows.

  \begin{theorem}  \label{T:CL} (See \cite{CL}.)
       Given real $\kappa \times
  \kappa$ matrices $Z$ and $W$ with $Z$ ``Lyapunov regular'' {\em (no
  pair $(\lambda, \mu)$ of eigenvalues of $Z$ satisfies $\lambda +
  \overline{\mu} = 0$)},
  then there exists an $f \in \mathcal{PRO}$ such that
  $$
  f(Z) = W \quad \text{\rm (Riesz-Dunford functional calculus)}
  $$
  if and only if either of the following equivalent conditions holds:
  \begin{enumerate}
      \item  there exists a real vector $u$ and a symmetric
  matrix $S$
 (called the {\em Cauchy matrix})
   so that
  \begin{enumerate}
      \item the maximum controllability rank
      $$
      \operatorname{max} \{\operatorname{rank} \begin{bmatrix}
      v & Z^{\top}v & \cdots & (Z^{\top})^{\kappa-1}v \end{bmatrix}
      \colon v \in {\mathbb R}^{\kappa}\}
      $$
   is attained for $v = u$, and
   \item  $SZ + Z^{\top} S = uu^{\top}$ and $SW + W^{\top} S :=
   {\mathbb P}
   \geq 0$ (with ${\mathbb P}$ called the {\em Pick matrix})
   \end{enumerate}
   and in addition  $W$ is in the double commutant $\{Z\}''$ of $Z$, or
   \item $W$ is in the smallest cone containing $Z$ that is closed
   under inversion.
   \end{enumerate}
   \end{theorem}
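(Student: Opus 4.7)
The plan is to derive both criteria from Theorem \ref{T:RD-PRO} combined with the symmetry constraints defining $\mathcal{PRO}$. Since $\mathcal{PRO}\subset\mathcal{PR}\subset\mathfrak{N}$, any $f\in\mathcal{PRO}$ satisfies $\ov{f(\la)}=f(\ov{\la})$ and $f(-\la)=-f(\la)$, so any necessary condition for solvability in $\mathcal{PRO}$ must combine three ingredients: (i) the Pick-type positivity of Theorem \ref{T:RD-PRO}, (ii) the observation that $f(Z)$ defined via the Riesz-Dunford calculus must lie in the bicommutant $\{Z\}''$, and (iii) a real/odd symmetric-matrix equation reflecting the extra structure beyond mere membership in $\mathfrak{N}$.

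I would attack criterion (1) first. Lyapunov regularity is exactly the condition under which $SZ+Z^\top S=uu^\top$ has a unique symmetric solution $S$; this is the ``Cauchy matrix.'' The maximal-controllability-rank choice of $u$ ensures that the pair $(Z^\top,u)$ captures $Z$ as fully as a single direction can, which is what allows the $\kappa\times\kappa$ block Pick matrix of Theorem \ref{T:RD-PRO} to collapse to the single matrix $\BP=SW+W^\top S$. The key computation is to show that under the constraint $W\in\{Z\}''$ plus the real/odd symmetry, $W$ may be written as a limit of polynomials in $Z$, and then the block Pick matrix of Theorem \ref{T:RD-PRO} may be rewritten as a congruence transform of $SW+W^\top S$. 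Positivity of one then transfers to positivity of the other, with the controllability-maximality condition ensuring no information is lost in passing between the two formulations.

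For criterion (2), the strategy is conceptual rather than computational. The set $\cA(Z):=\{f(Z)\colon f\in\mathcal{PRO}\}$ has three structural properties: (a) it is a convex cone, since $f,g\in\mathcal{PRO}$ and $\al,\be\ge 0$ imply $\al f+\be g\in\mathcal{PRO}$; (b) it contains $Z$, by taking $f(\la)=\la$; and (c) it is closed under inversion, because $f\in\mathcal{PRO}$ implies $1/f\in\mathcal{PRO}$ (a classical reactance/lossless-network fact). Hence $\cA(Z)$ contains the smallest cone containing $Z$ that is closed under inversion. The reverse inclusion is the substantive direction: I would invoke the Foster/Cauer canonical decomposition $f(\la)=\al\la+\be/\la+\sum_k \ga_k \la/(\la^2+\om_k^2)$ available for every $f\in\mathcal{PRO}$, and observe that each summand evaluated at $Z$ is either $\al Z$, $\be Z^{-1}$, or the conic combination $\ga_k[(Z-i\om_k I)^{-1}+(Z+i\om_k I)^{-1}]/2$, each of which lies in the cone generated by $Z$ under inversion and conic combination (the shifts $Z\pm i\om_k I$ being reachable from $Z$ by conic combinations of inverses).

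The main obstacle I anticipate is the equivalence between (1) and (2), which packages rather different mathematical content: (1) is an explicit matrix-inequality tied to Lyapunov/Pick machinery, while (2) is an intrinsic order-theoretic description of the admissible targets. Bridging them would run through the Foster canonical form together with a careful accounting of how the Cauchy matrix $S$ encodes the cyclic structure of $Z$; the reality/oddness constraints force the coefficients $\al,\be,\ga_k$ to be precisely the degrees of freedom controlled by positivity of $\BP$. The hardest step, I expect, will be showing that $W\in\{Z\}''$ together with $\BP\ge 0$ actually produces a $\mathcal{PRO}$-interpolant rather than merely a $\mathcal{PR}$-interpolant --- this is where the pseudocontinuation across the imaginary axis (oddness) must be enforced, and is the genuinely novel ingredient of the Cohen-Lewkowicz argument that is not immediately visible from Theorem \ref{T:RD-PRO}.
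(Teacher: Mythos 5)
Before assessing the details, note what the paper itself does here: Theorem \ref{T:CL} is not proved in this survey at all --- it is quoted from Cohen--Lewkowicz \cite{CL}, and immediately after stating it the authors remark that it ``would be of interest to understand more directly the equivalence between the criteria of Theorem \ref{T:CL} versus those in Theorem \ref{T:RD-PRO}.'' In other words, the route you propose --- deriving the Cohen--Lewkowicz criteria from the \textbf{LTOA}-based Theorem \ref{T:RD-PRO} --- is precisely the bridge the paper leaves open, so there is no in-paper proof to match, and a completed argument along your lines would be new content rather than a reconstruction.

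As written, however, your proposal is an outline whose load-bearing steps are asserted rather than proved. For criterion (1), the entire content is the claim that the $\kappa^2\times\kappa^2$ block Pick matrix of Theorem \ref{T:RD-PRO} (whose blocks solve $\mathbb{P}_{i'j'}Z^*+Z\mathbb{P}_{i'j'}=e_{i'}e_{j'}^*W^*+We_{i'}e_{j'}^*$) can be collapsed, via the Cauchy matrix $S$ and the maximal-controllability choice of $u$, to a congruence transform of the single $\kappa\times\kappa$ matrix $SW+W^\top S$; you give no computation showing this, no explanation of how the bicommutant hypothesis enters the sufficiency direction, and you yourself flag the production of a $\mathcal{PRO}$ (rather than merely $\mathfrak{N}$ or $\mathcal{PR}$) interpolant as the ``hardest step'' without supplying it. For criterion (2), the necessity direction invokes the Foster canonical form, which is valid only for \emph{rational} positive-real-odd functions; a general $f\in\mathcal{PRO}$ requires an integral representation, so $f(Z)$ is only a \emph{limit} of cone elements and you need either a closedness property of the cone or a reduction to rational interpolants, neither of which is addressed. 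The sufficiency direction needs $(1/f)(Z)=f(Z)^{-1}$ via the Riesz--Dunford calculus, which requires $f$ holomorphic and nonvanishing on a neighborhood of $\operatorname{spec}(Z)$; since Theorem \ref{T:CL} assumes only Lyapunov regularity (so $\operatorname{spec}(Z)$ may meet the left half-plane, where $f$ is defined only by pseudocontinuation and may have poles), these domain issues are not automatic, whereas Theorem \ref{T:RD-PRO} assumes $\operatorname{spec}(Z)$ in the right half-plane --- a mismatch your reduction never reconciles. Finally, the step writing $\gamma_k\lambda/(\lambda^2+\omega_k^2)$ at $Z$ as a combination of the complex resolvents $(Z\pm i\omega_k I)^{-1}$ and claiming the shifts $Z\pm i\omega_kI$ are ``reachable by conic combinations of inverses'' is incorrect as stated (these are not real matrices and are not obviously in the cone); the identity you want is $Z(Z^2+\omega_k^2I)^{-1}=(Z+\omega_k^2Z^{-1})^{-1}$, which stays inside the real cone since Lyapunov regularity makes $Z$ invertible. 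In short, the strategy is plausible and the cone heuristics are repairable, but the equivalence of the block-Pick positivity with $SW+W^\top S\succeq 0$, the construction of a genuine $\mathcal{PRO}$ interpolant, and the rational-approximation/closure issues are exactly the substance of the theorem and are missing.
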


   It would be of interest to understand more
   directly the equivalence between the criteria of Theorem \ref{T:CL} versus those
   in Theorem \ref{T:RD-PRO}.\smallskip

Besides the standard definition of the Schur class $\cS(\cU,\cY)$ used in this section
there are various other ways to define $\cS(\cU,\cY)$. One of these
definitions is: {\em  A function $S:\BD\to\cL(\cU,\cY)$ is in the Schur class $\cS(\cU,\cY)$
in case $S$ defines a contractive multiplication operator $M_S$ from the Hardy space $H^2_\cU(\BD)$
into the Hardy space $H^2_\cY(\BD)$ via}
\[
M_S: f(\la)=S(\la)f(\la)\quad (\la\in\BD).
\]

We also remark that the Hardy space $H^2_\cU(\BD)$ is a reproducing kernel Hilbert space, whose
reproducing kernel is the classical Szeg\"o kernel
\[
k(\la,\zeta)=\frac{1}{1-\la\ov{\zeta}}\quad(\la,\zeta\in\BD).
\]
Another characterization of Schur class functions in $\cS(\cU,\cY)$ is based on an associated kernel
function:
{\em A function $S:\BD\to\cL(\cU,\cY)$ is in the Schur class $\cS(\cU,\cY)$ in case the function
$k_S:\BD\times\BD\to\cL(\cY)$ given by
\[
k_S(\la,\zeta)=\frac{I_\cY-S(\la)S(\zeta)^*}{1-\la\ov{\zeta}}\quad(\la,\zeta\in\BD)
\]
is a positive kernel in the sense of Aronszajn \cite{Aron} adapted to the operator-valued setting:
For any finite collection of points $\om_1,\ldots,\om_M\in\BD$ and vectors $y_1,\ldots,y_M\in\cY$
it holds that}
\[
\sum_{i,j=1}^M\inn{k_S(\om_i,\om_j)y_j}{y_i}\geq0.
\]
The extensions of Nevanlinna-Pick interpolation theory for functions of several variables
that we consider in the following sections are based on Schur-class functions defined as
``contractive multipliers'' or via ``associated positive kernels'', rather than as functions
having contractive values as in \eqref{contract-values}.

\section{Operator-valued Nevanlinna-Pick interpolation: the multivariable case}
 \label{S:multivar}

In this section we consider Nevanlinna-Pick interpolation problems for Schur-class function of
several variables. The notion of Schur class function in these cases generalizes the definition
via ``contractive multipliers'' or ``positive kernels'' mentioned at
the end of Section \ref{S:1var}, rather that the standard
definition via ``contractive values'' \eqref{contract-values} as
given in Section \ref{S:1var}.

\subsection{The commutative unit ball setting}\label{subS:comball}

 A much studied multivariable analogue of the classical Szeg\"o
 kernel is the Drury-Arveson kernel $k_{d}$ on the unit ball ${\mathbb B}^{d} = \{
 \lambda = (\lambda_{1}, \dots, \lambda_{d}) \in {\mathbb C}^{d}
 \colon \sum_{i=1}^{d} |\lambda_{i}|^{2} < 1\}$ given by
 $$
 k_{d}(\lambda, \zeta) = \frac{1}{1-\langle \lambda, \zeta \rangle}
 = \frac{1}{1 - \lambda_{1} \overline{\zeta}_{1} - \cdots -
 \lambda_{d} \overline{\zeta}_{d}}\quad(\la,\zeta\in\BB^d).
 $$
 The associated reproducing kernel space $\cH(k_{d})$ is called
 the {\em Drury-Arveson space} which is the prototype for a
 reproducing kernel space with a {\em complete Pick kernel};
 some of the seminal references on this topic are
 \cite{Dr, Quiggen, Arv98, Arv00, AgMcC00, BTV, EP, GRS, mt, McC00}.

For a Hilbert space $\cU$ we let $\cH_\cU(k_{d})$ be the space
$\cH(k_{d})\otimes\cU$ of Drury-Arveson space functions with values
in $\cU$. It can be shown that a holomorphic function
$h \colon {\mathbb B}^{d} \to \cU$ with power series representation
$$
 h(\lambda) = \sum_{n \in {\mathbb Z}^{d}_{+}} h_{n} \lambda^{n}\
 \text{ where } h_{n} \in \cU \text{ for } n \in {\mathbb Z}^{d}_{+}\
$$
is in the vector-valued Drury-Arveson space $\cH_{\cU}(k_{d})$ if
and only if
$$
   \|h\|^{2}_{\cH_{\cU}(k_{d})} = \sum_{n \in {\mathbb Z}^{d}}
   \frac{n!}{|n|!} \|h_{n}\|^{2}_{\cU} < \infty.
$$
Here we use standard multivariable notation: For
\[
n = (n_{1},\dots, n_{d}) \in {\mathbb Z}^{d}_{+}\ands\lambda = (\lambda_{1},
 \dots, \lambda_{d}) \in {\mathbb B}^{d} \subset {\mathbb C}^{d},
\]
we set
\[
\lambda^{n} = \lambda_{1}^{n_{1}} \cdots \lambda_{d}^{n_{d}},\quad
n! = n_{1}! \cdots n_{d}!\ands|n| = n_{1} + \cdots + n_{d}.
\]

For coefficient Hilbert spaces $\cU$ and $\cY$ we define the operator-valued
Drury-Arveson Schur-multiplier class $\cS_{d}(\cU, \cY)$ to be the space of holomorphic
functions $S \colon{\mathbb B}^{d} \to \cL(\cU, \cY)$ such that the
multiplication operator
 $$
   M_{S} \colon f(\lambda) \mapsto S(\lambda) f(\lambda)\quad (\la\in\BB)
 $$
 maps $\cH_{\cU}(k_{d})$ contractively into $\cH_{\cY}(k_{d})$.
 We can then pose the Drury-Arveson space versions of the problems
 formulated in Section \ref{S:1var} by simply replacing the unit disk
 ${\mathbb D}$ by the unit ball ${\mathbb B}^{d}$ and the Schur class
 $\cS(\cU, \cY)$ by the Drury-Arveson Schur-multiplier class
 $\cS_{d}(\cU, \cY)$.

All results except the ``right-tangential'' versions extend in a natural way.
The trick to reduce a right-tangential problem to a left-tangential problem
via \eqref{Ssharp} fails to generalize to the Drury-Arveson setting;
if $S \in \cS_{d}(\cU,\cY)$ and we set
$S^{\sharp}(\la):= S(\ov{\la})^{*} =S(\ov{\la_{1}},\dots,\ov{\la_{d}})^{*}$
for each $\la\in\BB^d$, then it is usually not the case that $S^{\sharp}$ is
in $\cS_{d}(\cY,\cU)$. Results on right-tangential Nevanlinna-pick interpolation
in the Drury-Arveson space exist (see \cite{BB04, BB05} for a general
setting), but they are of the flavor of the results
for Nevanlinna-Pick interpolation for the Schur-Agler class to be discussed in
Subsection \ref{subS:polydisk} below: rather than having a test with a single
Pick matrix, the criterion involves being able to solve certain
equations for a family of Pick matrices. This makes the theory for
$\cS_{d}(\cU, \cY)$ completely asymmetric
with respect to left versus right. We summarize the results in the following
theorem, with the more complicated statement for right-tangential
interpolation problems omitted.

\begin{theorem}\label{T:DAint}
\textbf{(1) Drury-Arveson Full Operator-Valued Nevanlinna-Pick interpolation:}
Suppose that we are given points $\lambda^{(1)},\dots,\lambda^{(N)}$ in
${\mathbb B}^{d}$ together with operators $W_{1},\dots,W_{N}$ in
$\cL(\cU,\cY)$.  Then there exists an $S \in \cS_{d}(\cU, \cY)$ with
$S(\lambda^{(i)}) = W_{i}$ for $i=1,\ldots,N$ if and only if the Pick matrix
     $$
     {\mathbb P}_{\tu{FOV}} := \left[ \frac{I_{\cY} - W_{i} W_{j}^{*}}{
     1 - \langle \lambda^{(i)}, \lambda^{(j)} \rangle}
     \right]_{i,j=1}^N
     $$
     is positive semidefinite.

\medskip\noindent
\textbf{(2) Drury-Arveson Left-Tangential Nevanlinna-Pick interpolation:}
Suppose that we are given an auxiliary Hilbert space $\cC$, points
$\lambda^{(1)},\dots,\lambda^{(N)}$ in ${\mathbb B}^{d}$, operators
$X_{1},\dots,X_{N}$ in $\cL(\cY,\cC)$ and operators $Y_{1},\dots,Y_{N}$ in
$\cL(\cU, \cC)$. Then there exists an $S \in \cS_{d}(\cU, \cY)$ such that
$X_{i} S(\lambda^{(i)}) = Y_{i} \text { for } i = 1, \dots, N$ if and only
if the Pick matrix
     $$
     {\mathbb P}_{\tu{LT}}:= \left[\frac{ X_{i}X_{j}^{*} - Y_{i} Y_{j}^{*}}{1
     - \langle \lambda^{(i)}, \lambda^{(j)} \rangle} \right]_{i,j=1}^N
     $$
     is positive semidefinite.

\medskip\noindent
\textbf{(3) Drury-Arveson Left-Tangential Nevanlinna-Pick interpolation with Operator-Argument:}
Suppose that we are given an auxiliary Hilbert space $\cC$ together with commutative
$d$-tuples
     $$
     Z^{(1)} = (Z^{(1)}_{1}, \dots, Z^{(1)}_{d}), \dots,
      Z^{(N)} = (Z^{(N)}_{1}, \dots, Z^{(N)}_{d})\in\cL(\cC)^d,
     $$
i.e., $Z^{(i)}_{k} \in \cL(\cC)$ for $i=1,\dots,N$ and $k=1,\dots,d$ and for
each fixed $i$, the operators $Z^{(i)}_{1},\dots, Z^{(i)}_{d}$ commute pairwise,
with the property that each $d$-tuple $ Z^{(i)}$ has joint spectrum contained in ${\mathbb B}^{d}$.
Assume in addition that we are given operators $X_{1},\dots,X_{N}$ in $\cL(\cY, \cC)$
and operators $Y_{1},\dots,Y_{N}$ in $\cL(\cU, \cC)$. Then there is an
$S \in \cS_{d}(\cU, \cY)$ so that
     $$ (X_{i} S)^{\wedge L}(Z^{(i)}) : = \sum_{n \in {\mathbb
     Z}^{d}_{+}} (Z^{(i)})^{n} X_{i} S_{n} = Y_{i}\quad\text{for}\quad i = 1, \dots, N
     $$
     if and only if the associated Pick matrix
     $$
     {\mathbb P}_{\tu{LTOA}}:= \left[  \sum_{n \in {\mathbb
     Z}^{d}_{+}} (Z^{(i)})^{n} (X_{i} X_{j}^{*} - Y_{i} Y_{j}^{*})
     (Z^{(j)})^{n*} \right]_{i,j=1}^N
     $$
     is positive semidefinite.
     Here we use the multivariable notation:
     $Z^{n}$ $=$ $Z_{1}^{n_{1}} \cdots Z_{d}^{n_{d}}$ if
     $Z =(Z_{1}, \dots, Z_{d}) \in \cL(\cC)^d$
     and $n = (n_{1}, \dots, n_{d}) \in {\mathbb Z}^{d}_{+}$.

\medskip\noindent
\textbf{(4) Full Drury-Arveson Riesz-Dunford Nevanlinna-Pick interpolation:}
Suppose that we are given commutative $d$-tuples $Z^{(1)},\ldots,Z^{(N)}$ as in statement
(3) above, acting on a separable auxiliary Hilbert space $\cZ$ with
orthonormal basis $\{e_{1}, \dots,e_{\kappa}\}$ (with possibly $\kappa=\infty$).
We also assume that we are given operators $W_{1}, \dots,W_{N}$ in $\cL(\cZ)$.
Then there exists a scalar Drury-Arveson Schur class function $s \in \cS_{d}(\BC,\BC)$ so that
   $$
     s(Z^{(i)}): = \sum_{n \in {\mathbb Z}^{d}_{+}} s_{n}
     (Z^{(i)})^{n} = W_{i} \quad\text{for}\quad i = 1, \dots, N
   $$
   if and only if the Pick matrix
   $$
   {\mathbb P}_{\tu{FRD}} :=\left[ \sum_{N \in {\mathbb Z}^{d}_{+}}
   (Z^{(i)})^{n} (e_{i'} e_{j'}^{*} - W_{i} e_{i'} e_{j'}^{*}
   W_{j}^{*}) (Z^{(j)})^{n*} \right]_{(i,i'), (j,j') \in \{1, \dots,
   N\} \times \{1, \dots, \kappa \} }
   $$
   is positive semidefinite.
 \end{theorem}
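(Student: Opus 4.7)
The four statements share a common proof strategy and a web of reductions, so I would begin by setting those up. Part (1) is the special case of (2) with $\cC=\cY$, $X_i=I_\cY$, $Y_i=W_i$. Part (4) reduces to (3) by the orthonormal-basis trick of Subsection \ref{subS:RD-1var}: the operator identity $s(Z^{(i)})=W_i$ on $\cZ$ is equivalent to the system of vector identities $s(Z^{(i)})e_{i'}=W_ie_{i'}$ for $i'=1,\dots,\kappa$, and since $s$ is scalar this in turn is an \textbf{LTOA-NP} system with data
\[
Z^{(i,i')}:=Z^{(i)},\quad x_{(i,i')}:=e_{i'},\quad y_{(i,i')}:=W_ie_{i'}
\]
indexed by $\{1,\dots,N\}\times\{1,\dots,\kappa\}$, whose associated LTOA Pick matrix agrees term-by-term with $\mathbb{P}_\tu{FRD}$. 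Thus only (2) and (3) need genuine proof.

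For both I would use the positive-kernel characterization of the Drury-Arveson Schur-multiplier class recorded at the end of Section \ref{S:1var}: $S\in\cS_d(\cU,\cY)$ iff the operator kernel $K_S(\la,\zeta)=(I_\cY-S(\la)S(\zeta)^*)/(1-\inn{\la}{\zeta})$ is positive on $\BB^d\times\BB^d$, equivalently $M_SM_S^*\leq I$ on $\cH_\cY(k_d)$. \emph{Necessity} for (2) is a one-line computation: sandwich $K_S(\la^{(i)},\la^{(j)})$ between test vectors $X_i^*c_i$ and $X_j^*c_j$ for $c_i,c_j\in\cC$, use $X_iS(\la^{(i)})=Y_i$ to rewrite the numerator as $X_iX_j^*-Y_iY_j^*$, and read off $\inn{\mathbb{P}_\tu{LT}\vec c}{\vec c}\geq 0$. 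For (3) the corresponding step is operator-theoretic: introduce for each $i$ the ``generalized point-evaluation'' $E_i:\cH_\cU(k_d)\to\cC$ defined by $E_i:\sum_n f_n\la^n\mapsto\sum_n(Z^{(i)})^nX_if_n$ (absolutely convergent because the joint spectrum of $Z^{(i)}$ lies strictly inside $\BB^d$); the interpolation condition $(X_iS)^{\wedge L}(Z^{(i)})=Y_i$ then becomes an identity for $E_i\circ M_S$ applied to constant functions, and sandwiching the defect $I-M_SM_S^*\geq 0$ between the $E_i$'s produces $\mathbb{P}_\tu{LTOA}\geq 0$.

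For \emph{sufficiency} I would run a lurking-isometry/commutant-lifting argument. Factor the given positive Pick matrix as $\mathbb{P}=GG^*$; combine this factorization with the denominator structure (the DA kernel in (2), or the Stein-type series $\sum_n(Z^{(i)})^nA(Z^{(j)})^{*n}$ in (3)) to define a partial isometry between two subspaces of Drury-Arveson space tensored with auxiliary Hilbert spaces; extend to a full coisometry by the commutant-lifting theorem for the Drury-Arveson $d$-shift (see \cite{Arv98, BTV, Popescu06}). The intertwining structure of the extension then delivers multiplication by a function $S\in\cS_d(\cU,\cY)$ satisfying the prescribed interpolation conditions.

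The main obstacle is the sufficiency step in part (3): the operator-argument functional calculus and the DA kernel must be aligned so that the commutant-lifting machinery applies cleanly. Concretely, one has to verify that the ``generalized kernel spaces'' generated by images of the $E_i^*$ embed canonically into auxiliary Drury-Arveson spaces on which the $d$-shift acts, and that the partial isometry constructed from the factorization $\mathbb{P}_\tu{LTOA}=FF^*$ intertwines the correct tuples. Once that bookkeeping is done, (1), (2), and (4) follow from (2) and (3) by the reductions already described.
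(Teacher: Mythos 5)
Your reductions are exactly the ones the paper uses: (1) is the special case $\cC=\cY$, $X_i=I_\cY$ of (2), and (4) is obtained from (3) by the same orthonormal-basis trick as in Subsection \ref{subS:RD-1var}, with LTOA data $(Z^{(i)},e_{i'},W_ie_{i'})$ indexed by $\{1,\dots,N\}\times\{1,\dots,\kappa\}$. Where you diverge is in (2) and (3): the paper does not prove these at all, but derives them by citation --- either by abelianizing the noncommutative results (Davidson--Pitts, Arias--Popescu, Popescu; cf.\ Theorem \ref{T:ball-int}) or by pointing to the direct proofs of Agler--McCarthy, Ball--Trent--Vinnikov and Ball--Bolotnikov --- whereas you sketch the direct positive-kernel/lurking-isometry/commutant-lifting proof yourself. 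That is a legitimate route and is essentially what the cited direct references carry out; your necessity computations are sound in outline.

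Three points need attention, however. First, a bookkeeping slip: the evaluation operator $E_i$ must be defined on $\cH_\cY(k_d)$, not $\cH_\cU(k_d)$, since $X_i\in\cL(\cY,\cC)$; and for the sandwich you need the full operator identity $E_iM_S=\widetilde E_i$ (the analogous evaluation built from $Y_i$), which does follow from the interpolation condition via $E_iM_Sf=\sum_k (Z^{(i)})^k\bigl(\sum_m (Z^{(i)})^mX_iS_m\bigr)f_k$, not merely its action on constants. Second, and more substantively, the Drury--Arveson norm weights enter when you compute adjoints: $(E_j^*c)_n=\frac{|n|!}{n!}\,X_j^*(Z^{(j)})^{*n}c$, so sandwiching $I-M_SM_S^*$ yields $\bigl[\sum_{n}\frac{|n|!}{n!}(Z^{(i)})^n(X_iX_j^*-Y_iY_j^*)(Z^{(j)})^{*n}\bigr]_{i,j}$, i.e.\ the Pick matrix with multinomial weights (equivalently a sum over words $\gamma\in\cF_d$, as in Theorem \ref{T:ball-int}); note that only this weighted form specializes to $\mathbb{P}_{\tu{LT}}$ when $Z^{(i)}=\la^{(i)}I_\cC$, so you should either carry the weights explicitly or work at the free-semigroup level and abelianize rather than identify the sandwich with the unweighted display. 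Third, your sufficiency argument for (3) is only a plan: you yourself flag the alignment of the operator-argument calculus with the commutant-lifting machinery as the main obstacle and leave the construction and intertwining verification unexecuted, so as a self-contained proof this half is incomplete. Since the paper handles precisely this step by citation, either do likewise or write out that bookkeeping (the factorization $\mathbb{P}_{\tu{LTOA}}=GG^*$, the isometry on the span of the $E_i^*\cC$, and the extension intertwining the $d$-shift) in full.
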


 \begin{proof}
     Statement (1) \textbf{FOV}-interpolation)
     was obtained in \cite{AP00} and \cite{DP98} via the
     method of descending from the corresponding result for the
     noncommutative case (see Theorem \ref{T:ball-int} below).
     Similarly the second and third statements (\textbf{LT} and
     \text{LTOA} interpolation) can be obtained as a
     consequence of the corresponding noncommutative result in
     \cite{Popescu98} and \cite{Popescu03}. Alternatively, one can
     obtain the results directly without reference to the
     noncommutative theory, as is done in \cite{AgMcC00, BTV, BB02}.
     The fourth statement (\textbf{RD}-interpolation) follows from the result on the
     \textbf{LTOA}-problem in the same way as was done for the
      single-variable case in Section \ref{S:1var}.  We note also that
   \textbf{FOV} is a special case of \textbf{LT} and that  \textbf{LT} is a
   special case of \textbf{LTOA} just as in the single-variable case.
   We mention that an analogue of the Rosenblum-Rovnyak theory for
   the tangential Riesz-Dunford interpolation for this setting
   appears in \cite{Popescu98}, again via the connection with the
   noncommutative theory.
     \end{proof}

\subsection{The noncommutative unit ball setting}\label{subS:noncomball}

There is also a noncommutative version of the Drury-Arveson
Schur class.  To describe this Schur class, let $\{1, \dots, d\}$ be an
alphabet consisting of $d$ letters and let $\cF_{d}$ be the
associated free semigroup generated by the letters $1, \dots, d$
consisting of all words $\gamma$ of the form $\gamma = i_{N}
\cdots i_{1}$, where each $i_{k} \in \{1, \dots, d\}$ and where
$N =1,2,\dots$. For $\gamma = i_{N}\cdots i_{1}\in\cF_{d}$ we
set $|\gamma|:=N$ to be the {\em length} of the word $\gamma$.
Multiplication of two words $\gamma = i_{N} \cdots i_{1}$ and
$\gamma' = j_{N'}\cdots j_{1}$ is defined via concatenation:
$$
\gamma \gamma' = i_{N} \cdots i_{1} j_{N'} \cdots j_{1}.
$$
The empty word $\emptyset$ is included in $\cF_{d}$ and acts as the
unit element for this multiplication; by definition $| \emptyset|= 0$.
We set
\[
\gamma \gamma^{\prime -1} = \begin{cases} \gamma'' &\text{if
   there is a } \gamma'' \in \cF_{d} \text{ so that } \gamma =
   \gamma'' \gamma', \\
   \text{undefined} & \text{otherwise.}
   \end{cases}
\]

For a Hilbert space $\cU$, the associated {\em Fock space} $\ell^2_\cU(\cF_d)$ is
the $\cU$-valued $\ell^{2}$ space indexed by the free semigroup $\cF_{d}$:
$$
\ell^{2}_\cU(\cF_{d}) = \{ u \colon \cF_{d} \to \cU \colon
\sum_{\gamma \in \cF_{d}} \| u(\gamma)\|_\cU^{2} < \infty \}.
$$
Given two coefficient Hilbert spaces $\cU$ and $\cY$ we define the
{\em noncommutative $d$-variable Schur class} $\cS_{nc,d}(\cU,\cY)$
to be the set consisting of all formal power series
$s(z) = \sum_{\gamma \in \cF_{d}} S_{\gamma} z^{\gamma}$
in noncommuting indeterminates $z = (z_{1}, \dots, z_{d})$ (where we think
of $z^{\gamma} = z_{i_{N}} \cdots z_{i_{1}}$ if $\gamma = i_{N} \cdots i_{1}$)
with coefficients $S_{\gamma} \in \cL(\cU, \cY)$ such that the associated
Toeplitz matrix
$$ R = \left[ S_{\gamma \gamma^{\prime -1}} \right]_{\gamma,
\gamma' \in \cF_{d}}, \text{ where we set }S_\tu{undefined}=0,
$$
defines a contraction operator from $\ell^{2}_{\cU}(\cF_{d})$ into
$\ell^{2}_{\cY}(\cF_{d})$.

   There has been a variety of interpolation results
   for this setting, for instance Sarason type, Rosenblum-Rovnyak or
   tangential Riesz-Dunford type and left-tangential operator-argument type;
   we refer to \cite{Popescu98, DP98, AP00, Popescu03, Popescu06}.
   As already mentioned, from results for the noncommutative Schur class
   $\cS_{nc,d}(\cU,\cY)$ one can arrive at interpolation results for the
   Drury-Arveson space as in Theorem \ref{T:DAint} by abelianizing
   the variables (see \cite{Arv98, DP98, AP00, BBF1}).
   We state here only the analogues of
   left-tangential operator-argument Nevanlinna-Pick and full
   Riesz-Dunford Nevanlinna-Pick interpolation for this
   noncommutative setting. The right-tangential versions again do not
   reduce to the left-tangential problems
   due to the same phenomenon discussed for the commutative case.
     In the noncommutative setting we
   distinguish between the {\em Riesz-Dunford functional calculus}
   which uses
   $$  f(Z) := \sum_{\gamma \in \cF_{d}} f_{\gamma} Z^{\gamma}
   $$
   where we set
   $
      Z^{\gamma} = Z_{i_{N}} \cdots Z_{i_{1}} \text{ if } \gamma =
      i_{N} \cdots i_{1}
    $,
   and the {\em transposed Riesz-Dunford functional calculus} which
   uses
   $$
   f^{\top}(Z): = \sum_{\gamma \in \cF_{d}} f_{\gamma} Z^{\gamma^{\top}}
   $$
   where $\gamma^{\top} = i_{1}, \dots, i_{N}$ if
   $\gamma = i_{N} \cdots i_{1}$ and
   $
    Z^{\gamma^{\top}} = Z_{i_{1}}  \cdots
  Z_{i_{N}} \text{ if } Z = (Z_{1}, \dots, Z_{d}).
   $

\begin{theorem}\label{T:ball-int}
\textbf{(1) Free-semigroup algebra Left-Tangential Nevanlinna-Pick
interpolation with Operator-Argument:}
Suppose that we are given a coefficient Hilbert space $\cC$ and a collection
$$
Z^{(1)} = (Z^{(1)}_{1}, \dots, Z^{(1)}_{d}), \dots,Z^{(N)} = (Z^{(N)}_{1}, \dots, Z^{(N)}_{d})
$$
of (not necessarily commutative) $d$-tuples of operators on $\cC$ such that, for each
fixed $i = 1, \dots, N$, the block row matrix
$\sbm{Z^{(i)}_{1} & \cdots &Z^{(i)}_{d}}$
defines a strict contraction operator from $\cC^{d} =\oplus_{k=1}^{d} \cC$ into $\cC$.
In addition, suppose that we are given operators $X_{1},\dots,X_{N}$ in $\cL(\cY,\cC)$ and
operators $Y_{1},\dots,Y_{N}$ in $\cL(\cU, \cC)$.  Then there exists a formal power series
$S(z) = \sum_{\gamma \in\cF_{d}} S_{\gamma} z^{\gamma}$ in the noncommutative Schur class
$\cS_{nc, d}(\cU, \cY)$ such that
       $$
         (X_{i} S)^{\wedge L}(Z^{(i)}): = \sum_{\gamma \in \cF_{d}}
     (Z^{(i)})^{\gamma^{\top}} X_{i} S_{\gamma} = W_{i}\quad\text{for}\quad i = 1, \dots, N
        $$
    if and only if the associated Pick matrix
    $$
    {\mathbb P}_{\tu{LTOA}} := \left[ \sum_{\gamma \in \cF_{d}} (Z^{(i)})^{\gamma} (X_{i}
    X_{j}^{*} - Y_{i} Y_{j}^{*}) (Z^{(j)})^{\gamma *}
    \right]_{i,j = 1}^N
    $$
    is positive semidefinite.

\medskip\noindent
\textbf{(2) Free-semigroup algebra full transposed Riesz-Dunford Nevanlinna-Pick
interpolation:}
Suppose that we are given $d$-tuples of operators $Z^{(1)},\ldots,Z^{(N)}$ as in
statement (1) above, acting on a separable auxiliary Hilbert space
$\cZ$ with orthonormal basis $\{e_{1}, \dots, e_{\kappa}\}$ (with possibly $\kappa=\infty$).
Suppose also that we are given operators $W_{1},\dots,W_{N}$ in $\cL(\cZ)$.
Then there exists a formal power series $s(z) = \sum_{\gamma \in\cF_{d}} s_{\gamma} z^{\gamma}$
in the scalar noncommutative Schur class $\cS_{nc, d}(\BC,\BC)$ satisfying the
transposed Riesz-Dunford interpolation conditions
    \begin{equation}  \label{ball-transRDint}
    s^{\top}(Z^{(i)}) : = \sum_{\gamma \in \cF_{d}} s_{\gamma}
    (Z^{(i)})^{\gamma^{\top}} = W_{i}\quad\text{for}\quad i = 1, \dots N
    \end{equation}
    if and only if the associated Pick matrix
    \begin{equation}   \label{ballP-transRD}
    {\mathbb P}_{\tu{FRD}} := \left[ \sum_{\gamma \in \cF_{d}}
    (Z^{(i)})^{\gamma} (e_{i'} e_{j'}^{*} - W_{i} e_{i'}
    e_{j'}^{*} W_{j}^{*}) (Z^{(j)})^{\gamma *} \right]_{(i,i'),
    (j,j') \in \{1, \dots, N\} \times \{1, \dots,\kappa\}}
    \end{equation}
    is positive semidefinite.

\medskip\noindent
\textbf{(3) Free-semigroup algebra full Riesz-Dunford Nevanlinna-Pick interpolation:}
Suppose that we are given $d$-tuples of operators $Z^{(1)},\ldots,Z^{(N)}$ as in
statement (2) above acting on a finite-dimensional auxiliary Hilbert space $\cZ$
with orthonormal basis $\{e_{1}, \dots, e_{\kappa}\}$ along with operators
$W_{1},\dots,W_{N}$ in $\cL(\cZ)$.  Then there exists a formal power series
$s(z) = \sum_{\gamma \in\cF_{d}} s_{\gamma} z^{\gamma}$ in the scalar noncommutative
Schur class $\cS_{nc, d}(\BC,\BC)$ satisfying the
    Riesz-Dunford-type interpolation conditions
    \begin{equation}   \label{ballRDint}
    s(Z^{(i)}) : = \sum_{\gamma \in \cF_{d}} s_{\gamma}
    (Z^{(i)})^{\gamma} = W_{i} \quad\text{for}\quad i = 1, \dots N
    \end{equation}
    if and only if the associated Pick matrix
    \begin{equation}   \label{ballP-RD}
    {\mathbb P}_{\tu{FRD*}} := \left[ \sum_{\gamma \in \cF_{d}}
    (Z^{(i)})^{\gamma*} (e_{i'} e_{j'}^{*} - W_{i}^{*} e_{i'}
    e_{j'}^{*} W_{j}) (Z^{(j)})^{\gamma} \right]_{(i,i'),
            (j,j') \in \{1, \dots, N\} \times \{1, \dots,\kappa\}}
    \end{equation}
    is positive semidefinite.
    \end{theorem}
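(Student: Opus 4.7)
The plan is to reduce statement (3) to statement (2) via a natural duality transformation, analogous to the trick used in the single-variable case to connect left- and right-tangential problems. The key observation is that taking adjoints of the interpolation equation $s(Z^{(i)})=W_{i}$ and applying the identity $(Z)^{\gamma *}=(Z^{*})^{\gamma^{\top}}$ (where $Z^{*}:=(Z_{1}^{*},\dots,Z_{d}^{*})$) yields the equivalent condition
\[
\widetilde{s}^{\top}(Z^{(i)*})=W_{i}^{*}, \qquad i=1,\dots,N,
\]
where $\widetilde{s}(z):=\sum_{\gamma\in\cF_{d}}\overline{s_{\gamma}}\,z^{\gamma}$. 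This is precisely the transposed Riesz-Dunford condition of statement (2), now applied to the adjoint data $(Z^{(i)*},W_{i}^{*})$.

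The steps would then be as follows. First, verify that the coefficient-conjugation map $s\mapsto\widetilde{s}$ is a bijective involution of $\cS_{nc,d}(\BC,\BC)$; this is immediate since entrywise complex conjugation preserves the operator norm of the defining Toeplitz matrix on $\ell^{2}(\cF_{d})$. Second, compute by a direct reindexing argument that the Pick matrix $\BP_{\tu{FRD}}$ from statement (2) applied to $(Z^{(i)*},W_{i}^{*})$ coincides with $\BP_{\tu{FRD*}}$ from statement (3) applied to the original data $(Z^{(i)},W_{i})$; this follows from the involution $\gamma\mapsto\gamma^{\top}$ on $\cF_{d}$ together with the identities $(Z^{*})^{\gamma *}=Z^{\gamma^{\top}}$ and $(Z^{*})^{\gamma}=Z^{\gamma^{\top}*}$. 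Third, invoke statement (2) to produce $\widetilde{s}\in\cS_{nc,d}(\BC,\BC)$ interpolating the transformed data, from which $s:=\widetilde{\widetilde{s}}$ interpolates the original data and lies in $\cS_{nc,d}(\BC,\BC)$ by the involution property.

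The main obstacle is that statement (2) requires the block row $\sbm{Z^{(i)*}_{1} & \cdots & Z^{(i)*}_{d}}$ to be a strict contraction from $\cC^{d}$ to $\cC$, whereas the hypothesis of statement (3) only guarantees that $\sbm{Z^{(i)}_{1} & \cdots & Z^{(i)}_{d}}$ is a strict contraction. In the noncommutative setting these two conditions, $\|\sum_{k} Z_{k} Z_{k}^{*}\|<1$ versus $\|\sum_{k} Z_{k}^{*} Z_{k}\|<1$, are genuinely distinct even in finite dimensions. The natural resolutions are either (a) to impose simultaneous row- and column-contractivity of $Z^{(i)}$ as the real hypothesis (which is the natural joint convergence condition for the transposed and non-transposed functional calculi to coexist), or (b) to bypass statement (2) altogether and give a direct proof mirroring Popescu's commutant-lifting/lurking-isometry construction \cite{Popescu03,Popescu06}, using the positivity of $\BP_{\tu{FRD*}}$ to build a contractive multiplier on Fock space whose Taylor coefficients produce the required $s$. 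Route (b) is my preferred approach, as it would yield a complete argument without any strengthened hypothesis on the $Z^{(i)}$.
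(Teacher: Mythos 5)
For part (3) your reduction is precisely the argument the paper gives: take adjoints in \eqref{ballRDint}, use $(Z^{\gamma})^{*}=(Z^{*})^{\gamma^{\top}}$, pass to the coefficient-conjugate power series (which stays in $\cS_{nc,d}(\BC,\BC)$ because the conjugation $\tau$ on $\ell^{2}(\cF_{d})$ preserves the norm of the associated Toeplitz matrix), and observe that the Pick matrix \eqref{ballP-transRD} for the data $(Z^{(i)*},W_{i}^{*})$ becomes \eqref{ballP-RD} after the reindexing $\gamma\mapsto\gamma^{\top}$. Two caveats, though. You prove only part (3): the paper obtains (1) from the literature (Popescu, Ball--Bolotnikov) and gets (2) from (1) by the same orthonormal-basis tangential trick used in the single-variable passage from \textbf{RTRD} to \textbf{LTOA} (test $s^{\top}(Z^{(i)})e_{j'}=W_{i}e_{j'}$ against each basis vector, so the directions are $e_{j'}$ and $W_{i}e_{j'}$); since you invoke (2), you should at least record that reduction.

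The ``main obstacle'' you raise is a fair reading of the literal hypotheses (the paper is silent on it), but your dichotomy is too pessimistic: option (a) would prove a weaker theorem, and option (b) is not carried out, so as written your argument stops exactly at the point you flag. The missing observation is that the finite-dimensionality of $\cZ$ in part (3) --- the one hypothesis distinguishing it from part (2) --- is what rescues the reduction. From $\sum_{|\gamma|=n}Z^{(i)\gamma}Z^{(i)\gamma*}\le r_{i}^{2n}I$ with $r_{i}=\|\sbm{Z^{(i)}_{1}&\cdots&Z^{(i)}_{d}}\|<1$, taking traces gives $\|\sum_{|\gamma|=n}(Z^{(i)\gamma})^{*}Z^{(i)\gamma}\|\le\operatorname{tr}\sum_{|\gamma|=n}Z^{(i)\gamma}Z^{(i)\gamma*}\le\kappa\,r_{i}^{2n}$, so the adjoint tuples $Z^{(i)*}$ have joint spectral radius at most $r_{i}<1$ and every series in the transformed interpolation condition and in \eqref{ballP-RD} converges absolutely. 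Moreover, with $Q_{i}:=\sum_{\gamma}(Z^{(i)\gamma})^{*}Z^{(i)\gamma}\ge I$ one checks $\sum_{k}Z^{(i)*}_{k}Q_{i}Z^{(i)}_{k}=Q_{i}-I$, so the tuple $Q_{i}^{-1/2}Z^{(i)*}_{k}Q_{i}^{1/2}$ is a strict row contraction (a multivariable Rota argument). Running the adjointed problem through part (1) in its tangential form with directions $e_{i'}$ and targets $W_{i}^{*}e_{i'}$, this joint similarity replaces the directions by $Q_{i}^{-1/2}e_{i'}$ and targets by $Q_{i}^{-1/2}W_{i}^{*}e_{i'}$, and the resulting Pick matrix is $\operatorname{diag}(Q_{i}^{-1/2})\,{\mathbb P}_{\tu{FRD*}}\,\operatorname{diag}(Q_{j}^{-1/2})$, an invertible block-diagonal congruence which does not affect positive semidefiniteness. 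So the paper's reduction goes through under the stated hypotheses; no strengthened assumption and no fresh commutant-lifting construction are needed, and supplying this convergence/similarity remark would complete your proposal along the route you already chose.
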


    \begin{proof}  The first statement can be found in
        \cite{Popescu03} as well as in \cite{BB07}.  A tangential
        version of the second statement (i.e. tangential
        Riesz-Dunford interpolation) analogous to the
        Rosenblum-Rovnyak theory for the single-variable case is
        given in \cite{Popescu98}.
        The full transpose Riesz-Dunford interpolation result in the
        second statement follows from the first statement on
        \textbf{LTOA} in the same way as in Section \ref{S:1var}
        for the single-variable case.

      We reduce the third statement to the second as follows.
      By taking adjoints, we may rewrite \eqref{ball-transRDint} as
      \begin{equation}   \label{ballRDint1}
      \sum_{\gamma \in \cF_{d}} \overline{s}_{\gamma}
      (Z^{(i)})^{\gamma^{*}} = \sum_{\gamma \in \cF_{d}}
      \overline{s}_{\gamma} (Z^{(i)*})^{\gamma^{\top}} = W_{i}^{*}
      \end{equation}
      where we set
    $$
    Z^{(i)*} : = (Z_{1}^{(i)*}, \dots, Z_{d}^{(i)*}) \text{ for }
    i = 1, \dots, N.
    $$
      Note that
    $$
    \tau \colon \{ h_{\gamma}\}_{\gamma \in \cF_{d}} \mapsto
    \{ \overline{h}_{\gamma}\}_{\gamma \in \cF_{d}}
    $$
    is a conjugation (conjugate-linear norm-preserving
    involution) on $\ell^{2}(\cF_{d})$ and, if $R = \left[
    s_{\gamma \gamma^{\prime -1}}\right]_{\gamma, \gamma' \in \cF_{d}}$
    is the Toeplitz operator associated with the formal power
    series $s(z) = \sum_{\gamma \in \cF_{d}} s_{\gamma}
    z^{\gamma}$ acting on $\ell^{2}(\cF_{d})$, then
    $\tau \circ R \circ \tau = \overline{R}$ where
    $\overline{R} = \left[ \overline{s}_{\gamma \gamma^{\prime -1}}
    \right]_{\gamma, \gamma' \in \cF_{d}}$.
    We conclude that  $s(z) = \sum_{\gamma \in \cF_{d}} s_{\gamma}
    z^{\gamma}$ is in the noncommutative Schur-multiplier class $\cS_{nc,d}$
    if and only if $\overline{s}(z) = \sum_{\gamma \in \cF_{d}}
    \overline{s}_{\gamma} z^{\gamma}$ is in $\cS_{nc,d}$.  Thus
    there is an $s(z) = \sum_{\gamma \in \cF_{d}} s_{\gamma}
    z^{\gamma}$ in $\cS_{nc,d}$ satisfying \eqref{ballRDint1} if and only if
    there is another $s \in \cS_{nc,d}$ (namely, $\overline{s}$) satisfying
    \begin{equation}   \label{ballRDint2}
        s^{\top}(Z^{(i)*}): = \sum_{\gamma \in \cF_{d}}
        s_{\gamma} (Z^{(i)*})^{\gamma^{\top}} = W_{i}^{*} \text{
        for } i = 1, \dots, N.
    \end{equation}
    By part (2) of Theorem \ref{T:ball-int}, this last condition
    is equivalent to the positive-semidefiniteness of the matrix
    ${\mathbb P}_{\tu{FRD*}}$ in Part (3).  This completes the proof of Theorem
    \ref{T:ball-int}.
    \end{proof}

\subsection{Nevanlinna-Pick interpolation for Toeplitz algebras associated with directed graphs}
\label{subS:quiver1}

This example is discussed in \cite[Example 4.3 and pages 52--53]{MS04} and \cite[Section 5]{MSSchur}.
Here we work out the example, in particular, the nature of the point-evaluation and the Nevanlinna-Pick
theorem, in more detail.
The papers \cite{Muhly97, MS99, S04} introduced this class of algebras and studied the uniform
closure of the algebra (called the quiver algebra) generated by creation operators.
The associated weak-$*$ closed Toeplitz algebra obtained here is also known as
a {\em free semigroupoid algebra}; the papers \cite{KP04a, KP04b} give concrete
representations (some as explicit matrix-function algebras)
arising from particular choices of the quiver (i.e., directed
graph).

\paragraph{Quivers}
Formally a {\em quiver} $G$ is a quadruple $\{Q_0,Q_1,s,r\}$ that consists of two
finite sets $Q_0$ and $Q_1$ and two maps $s$ and $r$ from $Q_1$ to $Q_0$.
We think of the elements of $Q_0$ as vertices and those of $Q_1$ as arrows;
for any $\al\in Q_1$ we think of $\al$ as an arrow from $s(\al)$ to $r(\al)$.
It is possible to work with infinite sets $Q_0$ and $Q_1$ in which case they
need to be equipped with a topology \cite{KP04a,KP04b}, but we shall not consider
that case here. With the quiver $G$ we associate the {\em transposed
quiver} $\tilG=\{Q_0,Q_1,r,s\}$ (with respect to $G$), i.e., where the source and range maps are interchanged.
As a particular example, the reader is invited to take $Q_{0} = \{v_{0}\}$
(i.e., the set of vertices is a singleton) and $Q_{1} =\{\alpha_{1}, \dots, \alpha_{d}\}$
with $s(\alpha_{k}) = r(\alpha_{k}) = v_{0}$ for $k = 1, \dots, d$. The
present example then collapses to the free semigroup algebra example
$\cL_{d}$ discussed in Subsection \ref{subS:noncomball}. The difference
between the standard and the transposed Riesz-Dunford functional calculus
depends on whether one starts with the quiver $G$ or its reverse $\tilG$.

\paragraph{Paths}
For each nonnegative integer $n$ we write $Q_n$ for the paths of length $n$ and $\Gamma$
for the collection $\cup_{n=0}^{\infty} Q_{n}$ of all finite paths of whatever length.
Thus a $\gamma \in Q_n$ is an $n$-tuple $(\al_n,\ldots,\al_1)$ consisting of arrows
$\al_k\in Q_1$ with the property that $r(\al_k)=s(\al_{k+1})$ for $k=1,\ldots n-1$.
In that case we write $s_n(\gamma)$ for $s(\al_1)$ and $r_n(\gamma)$ for $r(\al_n)$.
Note that for $n=1$ this definition is consistent with that of $Q_1$ if the elements of
$Q_1$ are seen as paths of length 1. For $n=0$ we can view the elements of $Q_0$ as paths
of length 0, with for any $v\in Q_0$, $r_0(v)=s_0(v)=v$. If the length $n$ of $\gamma$
in $\Gamma$ is not specified, we write simply $s(\gamma)$ and $r(\gamma)$ rather than
$s_{n}(\gamma)$ and $r_{n}(\gamma)$.
The set of paths $\Ga$ associated with the quiver $G$ forms a semigroupoid when
multiplication is defined via concatenation: For
\[
\ga=(\al_n,\ldots,\al_1),\ga'=(\al'_m,\ldots,\al'_1)\in\Gamma\text{ with }
s(\ga)=r(\ga')
\]
we set
\[
\ga\cdot\ga':=(\al_n,\ldots,\al_1,\al'_m,\ldots,\al'_1).
\]
Inversion is then given by
\begin{equation}\label{QuivInv}
\ga\cdot\ga^{\prime -1} = \begin{cases} \gamma'' &\text{if } \gamma'' \in \Gamma
\text{ so that } \gamma =\gamma''\cdot\gamma', \\
   \text{undefined} & \text{otherwise.}
   \end{cases}
\end{equation}

\paragraph{The Fock space and Toeplitz algebra associated with the quiver $G$}

For a Hilbert space $\cU$ with direct sum decomposition $\cU=\oplus_{v\in Q_0}\cU_v$
we write, with some abuse of notation, $\ell^2_\cU(\Ga)$ for the space
$\oplus_{\ga\in\Ga}\cU_{r(\ga)}$; the $\cU$-valued {\em Fock space defined by the
quiver $G$}.

Given two coefficient Hilbert spaces $\cU=\oplus_{v\in Q_0}\cU_v$ and $\cY=\oplus_{v\in Q_0}\cY_v$,
the associated Banach space of Toeplitz operators $\fL_\Gamma(\cU,\cY)$ consists of operators $R$ from
$\ell^{2}_\cU(\Gamma)$ to $\ell^{2}_\cY(\Gamma)$ and hence can be given in terms of an
infinite operator-matrix with rows and columns indexed by $\Gamma$:
$$
  R = [ R_{\gamma, \gamma'}]_{\gamma, \gamma' \in \Gamma}
  \quad\text{with}\quad R_{\ga,\ga'}\in\cL(\cU_{r(\ga')},\cY_{r(\ga)}).
$$
The Toeplitz structure for this setting means that the matrix entries
$R_{\gamma, \gamma'}$ are completely determined from the particular
entries $R_\ga:=R_{\gamma, s(\ga)}$ according to the rule
$$
  R_{\gamma, \gamma'} =   R_{\gamma \gamma^{\prime -1},
  r( \gamma')}
$$
where we take $R_{\text{undefined},v} = 0$ for each $v\in Q_0$.

If $\cU=\cY$, then we write $\fL_\Gamma(\cU)$ instead of $\fL_\Gamma(\cU,\cU)$,
and $\fL_\Gamma(\cU)$ is an algebra that we call the {\em Toeplitz algebra}
associated with $G$ and $\cU$.
In the special case that $\cU_v=\cY_v=\BC$ for all $v\in Q_0$, this Toeplitz algebra
$\fL_\Gamma$ is otherwise known as the (weak-$*$ closed)
{\em path algebra} corresponding to the quiver $G$. The algebra $\fL_\Gamma$ also appears as
the weak-$*$ closed unital subalgebra of $\cL(\ell^2_\BC(\Ga))$ generated by the
creation operators $C_{\al}$, for $\al\in Q_1$, given by
\[
C_{\al}(\oplus_{\ga\in\Ga}f_\ga)=\oplus_{\ga'\in\Ga}\tilde f_{\ga'}\text{ where }
\tilde f_{\ga'}=\begin{cases} f_\ga &\text{if } \ga\in\Ga
\text{ so that } \al =\gamma'\cdot\gamma^{-1}, \\
   0 & \text{otherwise,}
   \end{cases}
\]
or, more succinctly, where
$$
 \tilde f_{\gamma'} = f_{\alpha^{-1} \gamma'}
$$
and we use the analogue of the convention \eqref{QuivInv} for the
case where the inverse path is on the left.

\paragraph{The Schur class associated with the quiver $G$}

Given a quiver $G$ and coefficient spaces $\cU$ and $\cY$ we define
the noncommutative Schur class $\cS_G(\cU,\cY)$  to be the set of
formal power series $S(z)=\sum_{\ga\in\Ga}S_\ga z^\ga$ in noncommutative
indeterminates $z=(z_\al\colon \al\in Q_1)$ (where $z^\ga=z_{\al_n}\cdots z_{\al_1}$
in case $\ga=(\al_n,\ldots,\al_1)\in\Ga$) such that the sequence of Taylor coefficients
$S_\ga\in\cL(\cU_{s(\ga)},\cY_{r(\ga)})$ defines an element $S=\mat{c}{S_{\ga,\ga'}}_{\ga,\ga'\in\Ga}$
(with $S_{\ga,\ga'}=S_{\ga\ga'^{-1}}$ and $S_\tu{undefined}=0$) in  $\fL_\Ga(\cU,\cY)$ of norm at most one.
One can also define a commutative analog of the Schur class
$\cS_G(\cU,\cY)$, but we will not develop this here.

Now assume we are also given an auxiliary Hilbert space $\cZ$ with direct sum decomposition
$\cZ=\oplus_{v\in Q_0}\cZ_v$. We then define the generalized unit disc $\BD_{G,\cZ}$ associated
with $\cZ$ and the quiver $G$ to be the set of tuples of operators of the form
$(Z_\al\in\cL(\cZ_{s(\al)},\cZ_{r(\al)})\colon\al\in Q_1)$ such that  for each $v\in Q_0$ the
row matrix formed by all $Z_\al$ with $r(\al)=v$ is a strict contraction:
\[
Z_v:=\mat{c}{Z_\al}_{\sm{\al\in Q_1\\r(\al)=v}}
:\oplus_{\sm{\al\in Q_1\\r(\al)=v}}\cZ_{s(\al)}\to\cZ_v\text{ satisfies }\|Z_v\|<1.
\]
In other words, the operator matrix
\[
\mat{c}{Z_{v,\al}}_{\sm{v\in Q_0\\\al\in Q_1}}:\oplus_{\al\in Q_1}\cZ_{s(\al)}\to\cZ
\text{ with }
Z_{v,\al}=\begin{cases} Z_\al &\text{if }v=r(\al), \\
   0 & \text{otherwise,}
   \end{cases}
\]
is a strict contraction. For $Z\in\BD_{G,\cZ}$ given by the tuple $(Z_\al\colon\al\in Q_1)$
and $\ga=(\al_n,\ldots,\al_1)\in\Ga$ we introduce the notation $Z^\ga$ for the operator
\begin{equation}\label{quiverpowers}
Z^\ga=Z_{\al_n}\cdots Z_{\al_1}:\cZ_{s_n(\ga)}\to\cZ_{r_n(\ga)}.
\end{equation}
In the sequel we shall use the abbreviations: $\cR_v=\cU_v\otimes\cZ_v$ and $\cQ_v=\cY_v\otimes\cZ_v$
for each $v\in Q_0$, and $\cR=\oplus_{v\in Q_0} \cR_v$ and $\cQ=\oplus_{v\in Q_0} \cQ_v$.

Given a Schur class function $S$ with Taylor coefficients $\{S_\ga\colon \ga\in\Ga\}$
and $Z\in\BD_{G,\cZ}$ we define the value of $S$ at $Z$ to be given by the tensor
functional-calculus:
\begin{equation}\label{Quiv-TFC}
S(Z)=\sum_{\ga\in\Ga} i_{\cQ_{r(\ga)}}(S_\ga\otimes Z^\ga)
i^*_{\cR_{s(\ga)}}
\in\cL(\cR,\cQ).
\end{equation}
Here we use the standard notation that for a subspace $\cV$ of a Hilbert space $\cW$ we write
$i_\cV$ for the canonical embedding of $\cV$ into $\cW$.

With respect to this tensor-product point-evaluation we consider what we will call the
{\em Quiver Left-Tangential Tensor functional calculus Nevanlinna-Pick
interpolation problem} ({\bf QLTT-NP}): {\em Given a data set
\[
\fD:\ Z^{(1)},\ldots,Z^{(N)}\in\BD_{G,\cZ},\
X_1,\ldots,X_N\in\cL(\cQ,\cC),\
Y_1,\ldots,Y_N\in\cL(\cR,\cC),
\]
where $\cC$ is an auxiliary Hilbert space, determine when there exists a Schur class
function $S$ in $\cS_G(\cU,\cY)$ that satisfies
\[
X_i S(Z^{(i)})=Y_i\quad\text{for}\quad i=1,\ldots,N.
\]}

The Riesz-Dunford functional calculus for this setting is then just the
tensor functional calculus for the special case that the coefficient spaces
$\cU$ and $\cY$ are both equal to $\oplus_{v\in Q_0}\BC$. Hence $\cR=\cQ=\cZ$
and for $Z\in\BD_{G,\cZ}$ and a Schur class function $s$ in $\cS_G$ with
Taylor coefficients $\{s_\ga\colon \ga\in\Ga\}$ the value of $s$ at $Z$ is
given by
\[
s(Z)=\sum_{\ga\in\Ga}s_\ga\cdot i_{\cZ_{r(\ga)}}Z^\ga i_{\cZ_{s(\ga)}}\in\cL(\cZ).
\]
The corresponding Nevanlinna-Pick problem is referred to as the {\em Quiver
Left-Tangential Riesz-Dunford Nevanlinna-Pick interpolation problem} ({\bf QLTRD-NP}).

We can also define an operator-argument functional-calculus for the Schur class
$\cS_G(\cU,\cY)$. For this purpose, assume we have another Hilbert space $\cX$,
again admitting an orthogonal direct sum decomposition of the form
$\cX=\oplus_{v\in Q_0}\cX_v$. The points in this case come from the generalized
disk $\BD_{\tilG,\cX}$, where $\tilG$ is the transposed quiver of $G$.
Thus a $T\in\BD_{\tilG,\cX}$ corresponds to a tuple
$(T_\al\in\cL(\cX_{r(\al)},\cX_{s(\al)})\colon\al\in Q_1)$ with the
property that the operator matrix
\[
\mat{c}{T_{v,\al}}_{\sm{v\in Q_0\\\al\in Q_1}}:\oplus_{\al\in Q_1}\cX_{r(\al)}\to\cX
\text{ with }
T_{v,\al}=\begin{cases} T_\al &\text{if }v=s(\al), \\
   0 & \text{otherwise}
   \end{cases}
\]
is a strict contraction. Given a $T=(T_\al\colon\al\in Q_1)\in\BD_{\tilG,\cX}$ and
$\ga=(\al_n,\ldots,\al_1)\in\Ga$, then $\gamma^{top}: = (\alpha_{1},
\dots, \alpha_{n}) \in \widetilde \Gamma$ and we set
\begin{equation}\label{QOAgenpow}
T^{\ga^{\top}}=T_{\al_1}\cdots T_{\al_n}:\cX_{r(\ga)}\to\cX_{s(\ga)}.
\end{equation}
Then for a Schur class function $S\in\cS_G(\cU,\cY)$, a $T\in\BD_{\tilG,\cX}$ and
a block diagonal operator $X=\diag_{v\in Q_0}(X_v)$, with $X_v\in\cL(\cY_v,\cX_v)$,
we define the left-tangential operator-argument point-evaluation $(XS)^{\wedge L}(T)$ by
\[
(XS)^{\wedge L}(T)
=\sum_{\ga\in\Ga} i_{\cX_{s(\ga)}}T^{\ga^{\top}} X_{r(\ga)}S_\ga i^*_{\cU_{s(\ga)}}
 = \sum_{\widetilde \gamma \in \widetilde \Gamma}
i_{\cX_{r(\widetilde \ga)}}T^{\widetilde \ga} X_{s(\widetilde
\ga)}S_{\widetilde \ga^{\top}} i^*_{\cU_{r( \widetilde \ga)}}
\in\cL(\cU,\cX).
\]
Notice that $(XS)^{\wedge L}(T)$ is a block diagonal operator in $\cL(\cU,\cX)$,
that is, the operator $(XS)^{\wedge L}(T)$ maps $\cU_v$ into $\cX_v$ for each $v\in Q_0$.

We then consider the {\em quiver Left-Tangential Nevanlinna-Pick interpolation problem with
Operator Argument} ({\bf QLTOA-NP}): {\em Given a data set
\[
T^{(i)}\in\BD_{\tilG,\cX},\
X^{(i)}=\diag_{v\in Q_0}(X^{(i)}_v),\
Y^{(i)}=\diag_{v\in Q_0}(Y^{(i)}_v),\text{ for }i=1,\ldots,N,
\]
where $X^{(i)}_v \in \cL(\cY_v,\cX_v)$ and $Y^{(i)}_v \in \cL(\cU_v,\cX_v)$, determine when there
exists a Schur class function $S$ in $\cS_G(\cU,\cY)$ that satisfies
\[
(X^{(i)}S)^{\wedge L}(T^{(i)})=Y^{(i)}\quad\text{for}\quad i=1,\ldots,N.
\]}

The solutions to these interpolation problems are given in the following theorem.
The proofs of these statements are given in Subsection \ref{subS:quiver2} below.

\begin{theorem}\label{T:Q-NP}
Let the data for the {\bf QLTT-NP}, {\bf QLTRD-NP} and {\bf QLTOA-NP} problems be as given above.
\begin{itemize}
\item[1.] Assume that $\cZ$ is separable, and that $\{e^{(v)}_1,\ldots,e_{\kappa_v}^{(v)}\}$
is an orthonormal basis for $\cZ_v$ for each $v\in Q_0$ (with possibly $\kappa_v=\infty$).
Then a solution to the {\bf QLTT-NP} interpolation problem exists if and only if for each
$v\in Q_0$ the associated Pick matrix $\BP_{QLTT}^{(v)}\in\cL(\cC)^{\kappa_v N\times\kappa_v N}$
given by
\[
\left[
\sum_{\ga\in\Ga,s(\ga)=v}\!\!\!\!\!\!\!
\begin{array}{l}
X_i i_{\cQ_{r(\ga)}}
\left(I_{\cY_{r(\ga)}}\otimes(Z^{(i)})^\ga e^{(v)}_{i'}e^{(v)*}_{j'}(Z^{(j)})^{\ga*}
\right)
i^*_{\cQ_{r(\ga)}}X_j^*+\\
\hspace*{.6cm}-Y_i i_{\cQ_{r(\ga)}}\left(I_{\cU_{r(\ga)}}\otimes(Z^{(i)})^\ga e^{(v)}_{i'}e^{(v)*}_{j'}(Z^{(j)})^{\ga*}
\right)i^*_{\cQ_{r(\ga)}}Y_j^*
\end{array}
\right]_{(i,i'),(j,j')}
\]
where $(i,i')$ and $(j,j')$ range over $\{1,\ldots,N\}\times\{1,\ldots\kappa_v\}$, is positive
semidefinite.

\item[2.] Assume that $\cC$ is separable, and that $\{e_1,\ldots,e_\kappa\}$ is an orthonormal basis
for $\cC$ (with possibly $\kappa=\infty$). Then a solution to the {\bf QLTRD-NP} interpolation problem exists if and only if
the Pick matrix $\BP_{QLTRD}\in\cL(\cZ)^{\kappa N\times\kappa N}$, for which the entry
corresponding to the pairs $(i,i'),(j,j',)\in\{1,\ldots,N\}\times\{1,\ldots,\kappa\}$ is given by
\[
\begin{array}{l}
\mat{c}{\BP_{QLTRD}}_{(i,i'),(j,j')}=\\[.2cm]
     \hspace*{.6cm}=\displaystyle\sum_{\ga\in\Ga}
i_{\cZ_{s(\ga)}}
(Z^{(i)})^{\ga*} i^*_{\cZ_{r(\ga)}}
(X_i^*e_{i'}e_{j'}^*X_j-Y_i^*e_{i'}e_{j'}^*Y_j)
i_{\cZ_{r(\ga)}}(Z^{(j)})^\ga i^*_{\cZ_{s(\ga)}}
\end{array}
\]
is positive semidefinite.

\item[3.] A solution to the {\bf QLTOA-NP} interpolation problem exists if and only if
the Pick matrix
\[
\BP_{QLTOA}=\mat{c}{\displaystyle\sum_{\ga\in\Ga}
i_{\cX_{s(\ga)}}(T^{(i)})^\ga(X^{(i)}_{r(\ga)}X_{r(\ga)}^{(j)*}
-Y^{(i)}_{r(\ga)}Y_{r(\ga)}^{(j)*})(T^{(j)})^{\ga*}i_{\cX_{s(\ga)}}^*}_{i,j=1}^N
\]
is positive semidefinite.
\end{itemize}
\end{theorem}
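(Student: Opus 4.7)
The plan is to prove parts 1 and 3 of Theorem \ref{T:Q-NP} by parallel lurking-isometry arguments adapted to the quiver semigroupoid framework, and to obtain part 2 as the scalar specialization of part 1 with $\cU_v = \cY_v = \BC$ for all $v \in Q_0$. The logical order is QLTOA-NP, then QLTT-NP, then QLTRD-NP as a corollary.

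For part 3 (QLTOA-NP), the necessity direction is a direct computation: given a solution $S$ with Toeplitz realization $R \in \fL_\Ga(\cU,\cY)$ of norm at most one, the defect $I - R R^* \geq 0$ combined with the OA identity $(X^{(i)} S)^{\wedge L}(T^{(i)}) = Y^{(i)}$ allows one to rewrite the block entries of $\BP_{QLTOA}$ as absolutely convergent sums of manifestly positive-semidefinite terms of the form $A_\ga (I - R R^*) A_\ga^*$, with convergence guaranteed by the strict-contraction hypothesis on each vertex row of $T^{(i)}$. Sufficiency is obtained from the Kolmogorov decomposition of $\BP_{QLTOA}$ in the standard way: the resulting partial isometry is extended, via the quiver analogue of Popescu's noncommutative commutant lifting, to a contractive Toeplitz multiplier in $\fL_\Ga(\cU,\cY)$ whose associated Schur-class element solves the interpolation problem by construction.

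For part 1 (QLTT-NP) one runs a parallel argument, but using the tensor-product point-evaluation \eqref{Quiv-TFC} in place of the OA one. Necessity: for each $v \in Q_0$ the Pick matrix $\BP^{(v)}_{QLTT}$, after expansion along the basis $\{e^{(v)}_{i'}\}$ of $\cZ_v$, rewrites as a sum of terms of the form $B_\ga (I - R R^*) B_\ga^*$ (indexed by paths $\ga$ with $s(\ga)=v$) and is therefore positive semidefinite. Sufficiency: the positivity of each $\BP^{(v)}_{QLTT}$ produces, via Kolmogorov decomposition, a family of partial isometries indexed by $v \in Q_0$ that can be simultaneously extended, through the quiver analogue of Popescu's theorem, to a contractive Toeplitz multiplier in $\fL_\Ga(\cU,\cY)$ yielding the desired Schur solution. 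Part 2 follows as the scalar specialization: when $\cU_v = \cY_v = \BC$ one has $\cR = \cQ = \cZ$, and the vertex-indexed Pick matrices of part 1 assemble into the single matrix $\BP_{QLTRD}$ once the orthonormal basis $\{e_{i'}\}$ of the auxiliary space $\cC$ replaces the vertex bases $\{e^{(v)}_{i'}\}$ of the $\cZ_v$.

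The principal obstacle lies in the sufficiency arguments: the lurking-isometry extension must produce a contraction that lies in the Toeplitz subalgebra $\fL_\Ga(\cU,\cY)$ rather than merely in $\cL(\ell^2_\cU(\Ga), \ell^2_\cY(\Ga))$, and must intertwine every creation operator $C_\al$, $\al \in Q_1$, simultaneously while respecting the vertex-labelled fiber structure $\cU = \oplus_v \cU_v$, $\cY = \oplus_v \cY_v$. In the free-semigroup case of Subsection \ref{subS:noncomball} this is handled by Popescu's theorem; for an arbitrary quiver a careful vertex-by-vertex bookkeeping is required. A cleaner (though less self-contained) alternative is to appeal directly to the Muhly-Solel $W^*$-correspondence Nevanlinna-Pick theorem developed in Section \ref{S:C*-NP}: the quiver $G$ naturally encodes a $W^*$-correspondence $E$ over $\cA = \oplus_{v \in Q_0} \BC$, so that parts 1 and 2 follow from the Nevanlinna-Pick theorem for the first (tensor-product) kind of point-evaluation, while part 3 follows from the corresponding theorem for the second (generalized Poisson) kind introduced in \cite{MSPoisson}; translating either abstract criterion to the concrete block-matrix form in the statement is then a routine computation using Choi's theorem and the vertex-wise direct-sum structure.
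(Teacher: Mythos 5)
Your ``cleaner alternative'' is in fact the route the paper takes: parts 1 and 2 are obtained from the Muhly--Solel Nevanlinna--Pick theorem for the first point-evaluation (Theorem \ref{T:MS-NP}, specialized in Theorems \ref{T:MS-Quiv1}--\ref{T:MS-Quiv3}), and part 3 from the theorem for the second point-evaluation (Theorem \ref{T:MSOA-NP}), whose proof rests on the $W^*$-correspondence commutant lifting theorem (Theorem \ref{T:CLT}). Your primary route, however, has a genuine gap precisely where you locate the ``principal obstacle'': the ``quiver analogue of Popescu's noncommutative commutant lifting'' that is supposed to turn the Kolmogorov decomposition into a contraction lying in $\fL_\Ga(\cU,\cY)$ and intertwining all creation operators $C_\al$ is asserted, not proved, and it is exactly the content that the Muhly--Solel machinery supplies. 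Calling the subsequent translation ``routine'' also understates what is needed: the abstract criteria are stated as complete positivity of maps on $C_{\tilG}(Q_0)^{N\times N}$, and converting them to the block Pick matrices of the statement requires extending the map to all of $\cL(\cZ)^{N\times N}$ (via the conditional-expectation Lemma \ref{L:composmap} and Lemma \ref{L:ExtMap}) and an extension of Choi's theorem to weak-$*$ continuous maps on $\cL(\cH)$ with $\cH$ separable infinite-dimensional (Theorem \ref{T:ChoiExt}); moreover the correspondence is over $\cA=C_G(Q_0)\cong\oplus_{v}\cL(\cV_v)$, not $\oplus_v\BC$ (except in the scalar-fiber case), and the $W^*$-theorems deliver the ``square'' case $\cU_v=\cY_v=\cV_v$, so the non-square statement of part 1 needs the additional square-to-nonsquare reduction indicated in Subsection \ref{subS:MS-1var}.

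A second concrete gap is your derivation of part 2 from part 1. Specializing part 1 to $\cU_v=\cY_v=\BC$ gives, for each $v\in Q_0$, a Pick matrix with entries in $\cL(\cC)$ indexed by an orthonormal basis of $\cZ_v$, under the hypothesis that $\cZ$ is separable; the stated matrix $\BP_{QLTRD}$ instead has entries in $\cL(\cZ)$, is indexed by an orthonormal basis of $\cC$, and assumes $\cC$ (not $\cZ$) separable. These are dual criteria, not the same criterion in a different basis, and one cannot pass from one to the other by ``replacing'' the vertex bases of the $\cZ_v$ by a basis of $\cC$. The missing step is the duality argument showing that the map $\vph$ is (completely) positive if and only if the adjoint-type map $\vph_*$ is, proved via the trace and invariance under cyclic permutations (the argument of Theorem \ref{T:TFC-NP2}, carried out for the quiver setting in Theorem \ref{T:MS-Quiv3}); only after this swap of the roles of $\cZ$ and $\cC$ does Choi's theorem produce $\BP_{QLTRD}$. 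Finally, a small inaccuracy in your necessity argument for part 3: the Pick matrix is not a path-indexed sum of terms $A_\ga(I-RR^*)A_\ga^*$ but a single congruence $\bigl[W_i(I-RR^*)W_j^*\bigr]_{i,j}$ with $W_i$ a row operator built from the data (compare the first half of the proof of Theorem \ref{T:MSOA-NP}); the conclusion is the same, but the bookkeeping you describe would not come out as stated.
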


We conclude this subsection with an example for a concrete quiver; an example also considered
in \cite{KP04a}. Let $G=\{Q_0,Q_1,s,r\}$ be the quiver with two vertices
$Q_0=\{a,b\}$, two arrows $Q_1=\{\al,\be\}$, and source and range map given by
\[
s(\al)=r(\al)=a,\quad s(\be)=a\ands r(\be)=b.
\]
With the vertices $a$ and $b$ we associate Hilbert spaces $\cA$ and $\cB$, and we consider
the Toeplitz algebra $\mathfrak{L}_\Ga(\cA\oplus\cB)$. Here $\Ga$ is the path
semigroupoid of $G$ which equals
\[
\Ga=\{\al^n,\be\al^n,b\colon n\in\BZ_+\},
\]
where $\al^n$ and $\be\al^n$ are abbreviations for $(\al,\ldots,\al)$ (with length $n$)
and $(\be,\al,\ldots,\al)$ (with length $n+1$), respectively, and $\al^0=a$. The elements
in the Toeplitz algebra are then given by infinite tuples
\begin{equation}\label{QexToep}
R=(V_n,\,W_n,\,B_0\colon n\in\BZ_+, V_n\in\cL(\cA),W_n\in\cL(\cA,\cB),B_0\in\cL(\cB))
\end{equation}
with the property that the infinite operator matrix
\begin{equation}\label{Toep1}
\mat{ccccccc}{
V_0&0&0&0&0&0&\cdots\\
0&B_0&0&0&0&0&\cdots\\
V_1&0&V_0&0&0&0&\cdots\\
W_0&0&0&B_0&0&0&\cdots\\
V_2&0&V_1&0&V_0&0&\cdots\\
W_1&0&W_0&0&0&B_0&\\
\vdots&\vdots&\vdots&\vdots&\vdots&&\ddots}
\end{equation}
defines a bounded operator on $\ell^2_{\cA\oplus\cB}(\BZ_+)$; the norm of $R$ in
$\mathfrak{L}_\Ga(\cA\oplus\cB,\cA\oplus\cB)$ is equal to the operator norm of
the operator matrix in $\cL(\ell^2_{\cA\oplus\cB}(\BZ_+))$. After rearranging
rows and columns it follows that the operator norm of (\ref{Toep1}) is the same
as that of
\[
\mat{ccccccc}{
B_0&0&0&0&0&0&\cdots\\
0&V_0&0&0&0&0&\cdots\\
0&W_0&B_0&0&0&0&\cdots\\
0&V_1&0&V_0&0&0&\cdots\\
0&W_1&0&W_0&B_0&0&\cdots\\
0&V_2&0&V_1&0&V_0&\\
\vdots&\vdots&\vdots&\vdots&\vdots&\vdots&\ddots}
\]
as an element in $\cL(\cB\oplus\ell^2_{\cA\oplus\cB}(\BZ_+))$, and one can even leave out
the first row and column. Thus $R$ can be identified with the functions
$V\in H_{\cL(\cA)}^\infty(\BD)$ and $W\in H_{\cL(\cA,\cB)}^\infty(\BD)$ given by
\begin{equation}\label{VandW}
V(\la)=\sum_{n=0}^\infty \la^nV_n,\quad W(\la)=\sum_{n=0}^\infty \la^nW_n,
\end{equation}
and the constant function with value $B_0$, which we also denote by $B_0$.
The norm of $R$ is then equal to the norm of the multiplication operator
\begin{equation}\label{multop}
\mat{cc}{M_V&0\\M_W&M_{B_0}}\text{ on }\mat{c}{H^2_\cA(\BD)\\H^2_\cB(\BD)},
\end{equation}
where $M_V$, $M_W$ and $M_{B_0}$ denote the multiplication operators for the
functions $V$, $W$ and $B_0$, respectively. The Toeplitz algebra
$\mathfrak{L}_\Ga(\cA\oplus\cB)$ can thus be identified with the algebra
\[
\mat{cc}{H_{\cA}^\infty(\BD)&0\\H_{\cL(\cA,\cB)}^\infty(\BD)&\cL(\cB)},
\]
(with $\cL(\cB)$ identified with the space of constant functions), which
is easily seen to be isometrically isomorphic to the algebra
\[
\mat{cc}{H_{\cA}^\infty(\BD)&0\\H_{\cL(\cA,\cB),0}^\infty(\BD)&\cL(\cB)}.
\]
Here $H_{\cL(\cA,\cB),0}^\infty(\BD)$ is the Banach space of functions
$W\in H_{\cL(\cA,\cB)}^\infty(\BD)$ with $W(0)=0$.
The identification with the latter algebra was already obtained in \cite{KP04a}.

We first consider Nevanlinna-Pick interpolation for the Riesz-Dunford
functional calculus, i.e., when $\cA=\cB=\BC$. Let $\cZ=\cZ_a\oplus\cZ_b$
be an auxiliary Hilbert space. The generalized unit disk $\BD_{G,\cZ}$
then consists of all pairs of operators $(Z_\al,Z_\be)$ with
$Z_\al\in\cL(\cZ_a)$ and $Z_\be\in\cL(\cZ_a,\cZ_b)$ such that $\sbm{Z_a\\Z_b}$
is a strict contraction. Given a point $(Z_\al,Z_\be)$ in $\BD_{G,\cZ}$
and an
\[
R=(v_n,w_n,b_0\in\BC\colon n\in\BZ_+)\in\mathfrak{L}_\Ga,
\]
the point evaluation of $R$ in $(Z_\al,Z_\be)$ is given by
\[
R(Z_\al,Z_\be)=\mat{cc}{\sum_{n=0}^\infty v_nZ_\al^n&0\\[.1cm]
Z_\be\sum_{n=0}^\infty w_nZ_\al^n&b_0I_{\cZ_b}}
\text{ on }\mat{c}{\cZ_a\\\cZ_b}.
\]
Assume we are given data for the {\bf QLTRD-NP} problem:
\[
(Z_{\al}^{(i)},Z_{\be}^{(i)})\in\BD_{G,\cZ},\
X^{(i)}=\mat{cc}{X_{a}^{(i)}&X_{b}^{(i)}},\ Y^{(i)}=\mat{cc}{Y_{a^{(i)}}&Y_{b}^{(i)}}\text{ for }i=1,\ldots,N,
\]
with $X^{(i)},Y^{(i)}\in\cL(\cZ,\cC)$, where $\cC$ is some separable Hilbert space with orthonormal
basis $\{e_1\ldots,e_\kappa\}$. It then follows from Theorem \ref{T:Q-NP} that there
exists an $S\in\mathfrak{L}_\Ga$ with $\|S\|\leq 1$ such that
\[
X^{(i)}S(Z_{\al}^{(i)},Z_{\be}^{(i)})=Y^{(i)}\text{ for }i=1,\ldots,N
\]
if and only if the Pick matrices $\BP_{QLTRD}^{(1)}$ and $\BP_{QLTRD}^{(2)}$,
with $\BP_{QLTRD}^{(1)}$ given by
\[
\mat{c}{\displaystyle\sum_{n=0}^\infty (Z_{\al}^{(i)})^{n*}\left(\begin{array}{l}
X_{a}^{(i)*}e_{i'}e_{j'}^*X_{a}^{(j)}
+Z_{\be}^{(i)*}X_{b}^{(i)*}e_{i'}e_{j'}^*X_{b}^{(j)}Z_{\be}^{(j)}+\\[.1cm]
-Y_{a}^{(i)*}e_{i'}e_{j'}^*Y_{a}^{(j)}
-Z_{\be}^{(i)*}Y_{b}^{(i)*}e_{i'}e_{j'}^*Y_{b}^{(j)}Z_{\be}^{(j)}
\end{array}\right)(Z_{\al}^{(j)})^{n}}_{(i,i'),(j,j')}
\]
and
\[
\BP_{QLTRD}^{(2)}=\mat{c}{X_{b}^{(i)*}e_{i'}e_{j'}^*X_{b}^{(j)}-Y_{b}^{(i)*}e_{i'}e_{j'}^*Y_{b}^{(j)}}_{(i,i'),(j,j')},
\]
are both positive semidefinite. The range of the pairs $(i,i')$ and $(j,j')$ in the definition of
the Pick matrices is $\{1,\ldots,N\}\times\{1,\ldots,\kappa\}$.

Now assume that $S=(v_n,w_n,b_0\in\BC\colon n\in\BZ_+)\in\mathfrak{L}_\Ga$ is a solution. Then $b_0$
necessarily satisfies $|b_0|\leq1$ and $b_0 X_b^{(i)}=Y_b^{(i)}$ for $i=1,\ldots,N$. The existence of
a number $b_0$ with these properties turns out to be equivalent to the positive semidefiniteness of
the pick matrix $\BP_{QLTRD}^{(2)}$, which is the content of the following lemma.

\begin{lemma}\label{L:ConstMult}
For $i=1,\ldots,N$ let $X_i,Y_i\in\cL(\cH,\cK)$, where $\cH$ and $\cK$
are Hilbert spaces and $\cH$ is separable with orthonormal basis
$\{e_1,\ldots,e_\kappa\}$. Then
\begin{equation}\label{XYmat}
\mat{c}{X_ie_{i'}e_{j'}X_j^*-Y_ie_{i'}e_{j'}Y_j^*}_{(i,i'),(j,j')\in\{1,\ldots,N\}\times\{1,\ldots,\kappa\}}
\end{equation}
is a positive semidefinite if and only if there exist a
$\de\in\BC$ with $|\de|\leq1$ such that $\de X_i=Y_i$ for all $i=1,\ldots,N$.
\end{lemma}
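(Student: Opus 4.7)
The plan is to recognize the matrix in \eqref{XYmat} as the difference $\xi\xi^* - \eta\eta^*$ of two manifestly positive rank-one operators, and then to reduce the question to an elementary scalar estimate. The first step is to introduce the column vectors
\[
\xi = \mat{c}{X_ie_{i'}}_{(i,i')} \ands \eta = \mat{c}{Y_ie_{i'}}_{(i,i')},
\]
indexed by $(i,i')\in\{1,\ldots,N\}\times\{1,\ldots,\kappa\}$, which we view as operators from $\BC$ into $\bigoplus_{(i,i')}\cK$. A direct computation shows that the $(i,i'),(j,j')$-block of $\xi\xi^*$ equals $X_ie_{i'}(X_je_{j'})^* = X_ie_{i'}e_{j'}^*X_j^*$, and similarly for $\eta\eta^*$, so the block matrix in \eqref{XYmat} coincides with $\xi\xi^*-\eta\eta^*$.

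Once this identification is in place the ``if'' direction is immediate: if $Y_i = \delta X_i$ with $|\delta|\leq 1$ for every $i$, then $\eta = \delta\xi$ and $\eta\eta^* = |\delta|^2\xi\xi^*\leq\xi\xi^*$.

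For the converse, assume $\xi\xi^*-\eta\eta^*\geq 0$. I would first dispose of the trivial case $\xi = 0$, where $X_ie_{i'}=0$ on the basis forces $X_i=0$ for every $i$ and the positivity assumption then forces $\eta=0$ and hence $Y_i = 0$, so any $\delta$ works. If $\xi\neq 0$, orthogonally decompose $\eta = \alpha\xi + \zeta$ with $\alpha\in\BC$ and $\zeta\perp\xi$ in $\bigoplus_{(i,i')}\cK$. The key step is to test the nonnegative form against $\zeta$: since $\xi^*\zeta=\langle\zeta,\xi\rangle=0$, all cross terms involving $\xi^*\zeta$ vanish and one is left with $\langle(\xi\xi^*-\eta\eta^*)\zeta,\zeta\rangle=-\|\zeta\|^4\geq 0$, which forces $\zeta = 0$. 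Hence $\eta = \alpha\xi$, and substituting back gives $(1-|\alpha|^2)\xi\xi^*\geq 0$ with $\xi\neq0$, so $|\alpha|\leq 1$. Finally, $\eta = \alpha\xi$ translates to $Y_ie_{i'} = \alpha X_ie_{i'}$ for each basis vector $e_{i'}$ of $\cH$ and each $i$, whence $Y_i = \alpha X_i$, and taking $\delta = \alpha$ completes the proof.

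I expect no genuine obstacle; the only bookkeeping is the index-matching in the initial identification of the block matrix with $\xi\xi^*-\eta\eta^*$, after which the argument is essentially Douglas' lemma specialized to operators with one-dimensional domain.
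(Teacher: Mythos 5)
Your proof is correct, and it follows the paper's reduction exactly up to the last step: like the paper, you identify \eqref{XYmat} (up to a permutation of the index set, i.e.\ of rows and columns) with the rank-one difference $\xi\xi^*-\eta\eta^*$, where $\xi$ and $\eta$ are the columns built from the vectors $X_ie_{i'}$ and $Y_ie_{i'}$. Where you diverge is in how you finish: the paper at this point simply invokes the Douglas factorization lemma, which for operators $\BC\to\bigoplus_{(i,i')}\cK$ immediately produces the scalar contraction $\de$, whereas you prove that scalar special case by hand, writing $\eta=\alpha\xi+\zeta$ with $\zeta\perp\xi$, testing the positive form against $\zeta$ to force $\zeta=0$, and then reading off $|\alpha|\leq 1$ from $(1-|\alpha|^2)\xi\xi^*\geq 0$. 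Your computation is sound (the cross terms vanish because $\langle\zeta,\xi\rangle=0$, and $\langle\eta\eta^*\zeta,\zeta\rangle=\|\zeta\|^4$), and the trivial case $\xi=0$ is handled correctly. The trade-off is the expected one: the paper's route is a one-line appeal to a standard lemma, while yours is self-contained and makes transparent why the intertwiner must be a scalar of modulus at most one. One shared caveat, not a defect of your argument relative to the paper: when $\kappa=\infty$ the columns $\xi,\eta$ need not be square-summable and positivity of \eqref{XYmat} is only positivity of all finite sections, so strictly speaking both proofs should be run on finite sections (where your argument applies verbatim) followed by a routine limiting step to extract a single $\de$.
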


\begin{proof}
Set
\[
X=\mat{c}{X_1\\\vdots\\X_N}\ands Y=\mat{c}{Y_1\\\vdots\\Y_N}.
\]
Then (\ref{XYmat}) is unitarily equivalent, via a permutation matrix, to
\begin{equation}\label{rank1s}
\tu{col}_{i'\in\{1,\ldots,\kappa\}}(Xe_{i'})(\tu{col}_{i'\in\{1,\ldots,\kappa\}}(Xe_{i'}))^*
-\tu{col}_{i'\in\{1,\ldots,\kappa\}}(Ye_{i'})(\tu{col}_{i'\in\{1,\ldots,\kappa\}}(Ye_{i'}))^*.
\end{equation}
Thus positive semidefiniteness of (\ref{XYmat}) corresponds to positive semidefiniteness
of (\ref{rank1s}). It follows right away from the Douglas factorization lemma \cite{D66}
that (\ref{rank1s}) being positive semidefinite is equivalent to the existence of a $\de\in\BC$
with $|\de|\leq1$ such that
$\de \tu{col}_{i'\in\{1,\ldots,\kappa\}}(Xe_{i'})=\tu{col}_{i'\in\{1,\ldots,\kappa\}}(Ye_{i'})$,
which is the same as $\de X_i=Y_i$ for all $i=1,\ldots,N$.
\end{proof}

Next assume that both Pick matrices $\BP_{QLTRD}^{(1)}$ and $\BP_{QLTRD}^{(2)}$ are positive
semidefinite. According to Lemma \ref{L:ConstMult} there exists a $b_0$ with $|b_0|\leq1$ such
that $b_0 X_b^{(i)}=Y_b^{(i)}$ for $i=1,\ldots,N$. Using this number $b_0$ we can rewrite
$\BP_{QLTRD}^{(2)}$ as
\[
\mat{c}{\displaystyle\sum_{n=0}^\infty (Z_{\al}^{(i)})^{n*}\left(\begin{array}{l}
X_{a}^{(i)*}e_{i'}e_{j'}^*X_{a}^{(j)}-Y_{a}^{(i)*}e_{i'}e_{j'}^*Y_{a}^{(j)}+\\[.1cm]
\hspace*{1cm}+(1-|b_0|^2)Z_{\be}^{(i)*}X_{b}^{(i)*}e_{i'}e_{j'}^*X_{b}^{(j)}Z_{\be}^{(j)}
\end{array}\right)(Z_{\al}^{(j)})^{n}}_{(i,i'),(j,j')},
\]
where again the range of the pairs $(i,i')$ and $(j,j')$ in the definition of
the Pick matrices is $\{1,\ldots,N\}\times\{1,\ldots,\kappa\}$. We then distinguish between
two case, namely (1) $|b_0|=1$ and (2) $|b_0|<1$. In case $|b_0|=1$, for
$S=(v_n,w_n,b_0\in\BC\colon n\in\BZ_+)\in\mathfrak{L}_\Ga$ to be a solution it is
necessary that $w_n=0$ for all $n\in\BZ_+$ due to the norm constraint $\|S\|\leq 1$.
On the other hand, the Pick matrix $\BP_{QLTRD}^{(1)}$ reduces to
\[
\mat{c}{\displaystyle\sum_{n=0}^\infty (Z_{\al}^{(i)})^{n*}\left(
X_{a}^{(i)*}e_{i'}e_{j'}^*X_{a}^{(j)}-Y_{a}^{(i)*}e_{i'}e_{j'}^*Y_{a}^{(j)}
\right)(Z_{\al}^{(j)})^{n}}_{(i,i'),(j,j')\in\{1,\ldots,N\}\times\{1,\ldots,\kappa\}}
\]
which is a Pick matrix of the form appearing in Part 2 of Theorem \ref{T:RD-NP}. In fact,
it is the pick matrix for  the {\bf LTRD-NP} problem for the scalar-valued Schur class $\cS$
with data
\[
Z_i=Z_\al^{(i)},\ X_i=X_a^{(i)},\ Y_i=Y_a^{(i)},\quad i=1,\ldots,N.
\]
Applying Theorem \ref{T:RD-NP} to this data set we obtain a $v\in\cS$ with
$X_a^{(i)} v(Z_\al^{(i)})=Y_a^{(i)}$ for $i=1,\ldots,N$. Let $v_0,v_1,\ldots$ be the
Taylor coefficients of $v$. It is then not difficult to see
that $S=(v_n,w_n,b_0\in\BC\colon n\in\BZ_+,\, w_n=0)$ is a solution.
The case that $|b_0|<1$ does not reduce to a {\bf LTRD-NP} problem, but rather to
a left-tangential tensor functional-calculus Nevanlinna-Pick ({\bf LTT-NP}) problem
which is a type of problem we discuss in Subsection \ref{subS:MS-1var} below. The problem
can then be solved directly using some of the techniques developed there, but we will
not work out the details here.

Next we specify the left-tangential operator-argument Nevanlinna-Pick result for
our example. Let $\cX=\cX_\al\oplus\cX_\be$ be a given
Hilbert space. in this case, points are elements of the generalized
disk $\BD_{\tilG,\cX}$ (with $\tilG$ the transposed quiver of $G$), which corresponds
to the set of pairs $(T_\al,T_\be)$ with $T_\al\in\cL(\cX_a,\cX_a)$ and
$T_\be\in\cL(\cX_b,\cX_a)$ such that the row operator $\sbm{T_\al&T_\be}$ is a strict
contraction. Now let $R\in\mathfrak{L}_\Ga(\cA\oplus\cB)$ be given by (\ref{QexToep})
and $(T_\al,T_\be)\in\BD_{\tilG,\cX}$. If in addition we are also given a block
diagonal operator $X=\diag(X_a,X_b)$ with $X_a\in\cL(\cA,\cX_a)$ and $X_b\in\cL(\cB,\cX_b)$,
then the left-tangential operator-argument point evaluation $(XR)^{\wedge L}(T_\al,T_\be)$
is given by
\[
(XR)^{\wedge L}(T_\al,T_\be)
=\mat{cc}{\displaystyle\sum_{n=0}^\infty T_\al^n(X_aV_n+T_\be X_bW_n)&0\\0&X_bB_0}.
\]
The data for the {\bf QLTOA-NP} problem
in then given by
\[
(T_{\al}^{(i)},T_{\be}^{(i)})\in\BD_{\tilG,\cX},\ X^{(i)}=\diag(X_{a}^{(i)},X_{b}^{(i)}),\
Y^{(i)}=\diag(Y_{a}^{(i)},Y_{b}^{(i)})
\text{ for }i=1,\ldots,N
\]
with $X_{a}^{(i)},Y_{a}^{(i)}\in\cL(\cA,\cX_a)$ and $X_{b}^{(i)},Y_{b}^{(i)}\in\cL(\cB,\cX_b)$,
and it follows from Part 3 of Theorem \ref{T:Q-NP} that there exists an
$S\in\mathfrak{L}_\Ga(\cA\oplus\cB)$ with $\|S\|\leq1$ such that
\begin{equation}\label{QexLOA}
(X^{(i)}S)^{\wedge L}(T_{\al}^{(i)},T_{\be}^{(i)})=Y^{(i)}\quad\text{for}\quad i=1,\ldots,N
\end{equation}
if and only if the Pick matrix $\BP_{QLTOA}$ in Part 3 of Theorem \ref{T:Q-NP} is positive
semidefinite. In this case, after rearranging columns and rows, $\BP_{QLTOA}$ can be identified
with
\[
\BP_{QLTOA}=\sbm{\BP_{QLTOA}^{(1)}&0\\0&\BP_{QLTOA}^{(2)}}
\]
where $\BP_{QLTOA}^{(1)}$ is the Pick matrix given by
\[\small
\mat{c}{\displaystyle\sum_{n=0}^\infty (T_{\al}^{(i)})^n
(X_{a}^{(i)}X_{a}^{(j)*}
+T_{\be}^{(i)}X_{b}^{(i)}X_{b}^{(j)*}T_{\be}^{(j)*}
-Y_{a}^{(i)}Y_{a}^{(j)*}
-T_{\be}^{(i)}Y_{b}^{(i)}Y_{b}^{(j)*}T_{\be}^{(j)*})(T_{\al}^{(j)})^{n*}}_{i,j=1}^N
\]
and
\[
\BP_{QLTOA}^{(2)}=\mat{c}{X_{b}^{(i)}X_{b}^{(j)*}-Y_{b}^{(i)}Y_{b}^{(j)*}}_{i,j=1}^N.
\]

To see the sufficiency of this Pick matrix criterion, assume that $\BP_{QLTOA}$ is
positive semidefinite, and thus, equivalently, that $\BP_{QLTOA}^{(1)}$ and $\BP_{QLTOA}^{(2)}$
are positive semidefinite. Notice that $\BP_{QLTOA}^{(2)}$ can also be written as
\[
\mat{c}{X^{(1)}_b\\\vdots\\X^{(N)}_b}\mat{ccc}{X^{(1)*}_b&\cdots&X^{(N)*}_b}
-\mat{c}{Y^{(1)}_b\\\vdots\\Y^{(N)}_b}\mat{ccc}{Y^{(1)*}_b&\cdots&Y^{(N)*}_b}.
\]
Hence the positive semidefiniteness of $\BP_{QLTOA}^{(2)}$, again using Douglas factorization
lemma, corresponds to the existence of a contraction $B_0\in\cL(\cB)$ with
\[
X^{(i)}_bB_0=Y^{(i)}_b\quad\text{for}\quad i=1,\ldots,N.
\]
Let $D_{B_0^*}$ denote the defect operator of $B_0^*$, that is, $D_{B_0^*}$ is the positive
quare root of $I_\cB-B_0B_0^*$. We can then rewrite the first Pick
matrix $\BP_{QLTOA}^{(1)}$ as
\[
\mat{c}{\displaystyle\sum_{n=0}^\infty (T_{\al}^{(i)})^n
(X_{a}^{(i)}X_{a}^{(j)*}
+T_{\be}^{(i)}X_{b}^{(i)}D_{B_0^*}^2X_{b}^{(j)*}T_{\be}^{(j)*}
-Y_{a}^{(i)}Y_{a}^{(j)*})(T_{\al}^{(j)})^{n*}}_{i,j=1}^N.
\]
In this form $\BP_{QLTOA}^{(1)}$ is a Pick matrix of the type appearing in Part 1 of
Theorem~\ref{T:LTOA/RTOA}. In fact, it is the Pick matrix for the {\bf LTOA-NP} problem for functions
from the Schur class $\cS(\cA,\cA\oplus\cB)$ with data
\begin{equation}\label{QexData}
T_i=T_\al^{(i)},\ X_i=\mat{cc}{X_a^{(i)}&T_\be^{(i)}X_b^{(i)}D_{B_0^*}},\
Y_i=Y_a^{(i)},\ \text{ for }i=1,\ldots,N.
\end{equation}
Applying Theorem \ref{T:LTOA/RTOA} to this data set, we obtain a function
\begin{equation}\label{H}
H=\mat{c}{V\\\tilW}\in\cS(\cA,\cA\oplus\cB)
\end{equation}
with
\[
(X_iH)^{\wedge L}(T_i)=Y_i\quad\text{for}\quad i=1,\ldots,N.
\]
In other words, the Taylor coefficients $V_0,V_1,\ldots$ of $V$ and $\tilW_0,\tilW_1,\ldots$
of $\tilW$ satisfy
\[
\sum_{n=0}^\infty (T_\al^{(i)})^n(X_a^{(i)}V_n+T_\be^{(i)} X_b^{(i)}D_{B_0^*}\tilW_n)=Y_a^{(i)}\text{ for }i=1,\ldots,N.
\]
Now set $W_n=D_{B_0^*}\tilW_n$ for each $n\in\BZ_+$.
It follows that $S=(V_n,W_n,B_0\colon n\in\BZ_+)$ is in $\mathfrak{L}_\Ga(\cA\oplus\cB)$
and satisfies the interpolation conditions (\ref{QexLOA}), and it is not difficult to see
that $\|S\|\leq 1$. Thus $S$ is a solution to the {\bf QLTOA-NP} problem for the quiver
$G$ considered in this example. It is also possible to provide a direct proof of the
necessity of the Pick matrix condition; we leave the details as an exercise for the
interested reader.

\subsection{The polydisk setting:~commutative and noncommutative}
\label{subS:polydisk}

For the setting of the polydisk ${\mathbb D}^{d} = \{ \lambda =
(\lambda_{1}, \dots, \lambda_{d}) \colon |\lambda_{k}| < 1 \text{ for
} k=1, \dots, d\}$, the results concerning Nevanlinna-Pick-like
interpolation are of a different flavor.  We define what is now
called the $d$-variable {\em Schur-Agler class} $\mathcal{SA}_{d}$ to
consist of those holomorphic complex-valued functions $s(\lambda) =
\sum_{n \in {\mathbb Z}^{d}_{+}} s_{n} \lambda^{n}$ on ${\mathbb
D}^{d}$ with the property that, for every commutative $d$-tuple $Z =
(Z_{1}, \dots, Z_{d})$ of strict contraction operators ($\|Z_{k}\| <
1$ for $k = 1, \dots, d$) on a Hilbert space $\cK$, it happens that
the resulting operator
$$
  s(Z) = \sum_{n \in {\mathbb Z}^{d}_{+}} s_{n} Z^{n}
$$
($d$-variable Riesz-Dunford functional calculus) has $\| s(Z)\| \le 1$.
The special choice $Z=(\lambda_{1}I_{\cK}, \dots, \lambda_{d}I_{\cK})$
with $\lambda = (\lambda_{1}, \dots, \lambda_{d}) \in {\mathbb
D}^{d}$ shows that the Schur-Agler class is a subset of the Schur
class (defined to be the set of holomorphic functions mapping
${\mathbb D}^{d}$ into the closed unit disk $\overline{\mathbb D}$).
The converse holds for the cases $d=1$ and $d=2$ as a consequence of
the von Neumann inequality holding for these two cases, as can be
seen from the Sz.-Nagy dilation theorem for the case $d=1$ and from
the And\^o dilation theorem for the case $d=2$---see
e.g.~\cite{AgMcC}.

In fact, as a consequence of the Drury-von
Neumann inequality/dilation
theorem for commutative row contractions $Z = (Z_{1}, \dots, Z_{d})$
\cite{Dr}
and the Popescu-von Neumann inequality/dilation theorem
\cite{Popescu91}, a Schur-Agler type characterization also holds for
the Drury-Arveson Schur-multiplier class $\cS_{d}$ and the
free-semigroup algebra $\cL_{d}$:  {\em a holomorphic function $s$ on
the ball ${\mathbb B}^{d}$ is in the Drury-Arveson Schur-multiplier
class $\cS_{d}$ if and only if $\|s(Z) \| \le 1$ for all commutative
$d$-tuples $Z = (Z_{1}, \dots, Z_{d})$ for which the block row matrix
${\mathbf Z} = \begin{bmatrix} Z_{1} & \cdots & Z_{d} \end{bmatrix}$ is a
strict contraction}, and, similarly, {\em a formal power series $s(z)
= \sum_{\gamma \in \cF_{d}} s_{\gamma} z^{\gamma}$ is in the
noncommutative Schur-multiplier class $\cS_{nc, d}$ if and only if
$s(Z) = \sum_{\gamma \in \cF_{d}} s_{\gamma} Z^{\gamma}$ has
$\|s(Z)\| \le 1$ for any (not necessarily commutative) $d$-tuple
$Z = (Z_{1}, \dots, Z_{d})$ for which the row matrix ${\mathbf Z}: =
\begin{bmatrix} Z_{1} & \cdots & Z_{d} \end{bmatrix}$ is a strict
contraction.}

The main result on interpolation for the Schur-Agler
class is the following result of Agler.

\begin{theorem}  \label{T:Agler} (See \cite{Ag88, Ag90, AgMcC99, AgMcC}.)
    Suppose that we are given a subset $X$ of ${\mathbb D}^{d}$ and a
    function $f \colon X \to {\mathbb C}$.  Then there exists a
    function $s \colon {\mathbb D}^{d} \to {\mathbb C}$ in the
    Schur-Agler class $\mathcal{SA}_{d}$ such that
    $$
      s|_{X} = f
    $$
    if and only if there exist $d$ positive kernels $K_{1}, \dots,
    K_{d}$ on $X \times X$ so that
    \begin{equation}  \label{Agler-decom}
    1-f(\lambda) \overline{f(\zeta)} =
    \sum_{k=1}^{d} (1 - \lambda_{k} \overline{\zeta_{k}})
    K_{k}(\lambda, \zeta) \text{ for all } \lambda, \zeta \in X.
    \end{equation}
    \end{theorem}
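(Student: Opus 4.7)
The plan is to prove the two directions separately, after first establishing the auxiliary characterization that a scalar function $s$ on $\BD^d$ lies in the Schur-Agler class $\mathcal{SA}_d$ if and only if it admits a \emph{global Agler decomposition}
\[
1-s(\lambda)\overline{s(\zeta)}=\sum_{k=1}^d (1-\lambda_k\overline{\zeta_k})K_k^s(\lambda,\zeta),\qquad \lambda,\zeta\in\BD^d,
\]
for some positive kernels $K_1^s,\dots,K_d^s$ on $\BD^d\times\BD^d$. Granting this, necessity is immediate: if $s\in\mathcal{SA}_d$ extends $f$, simply restrict each $K_k^s$ to $X\times X$.

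For sufficiency I would apply the \emph{lurking isometry} technique. Let $\cH_k$ be the reproducing kernel Hilbert space on $X$ with kernel $K_k$, and set $k_\lambda^{(k)}:=K_k(\cdot,\lambda)\in\cH_k$. Rearranging \eqref{Agler-decom} gives
\[
f(\lambda)\overline{f(\zeta)}+\sum_{k=1}^d \langle k_\zeta^{(k)},k_\lambda^{(k)}\rangle_{\cH_k}=1+\sum_{k=1}^d \lambda_k\overline{\zeta_k}\langle k_\zeta^{(k)},k_\lambda^{(k)}\rangle_{\cH_k},
\]
which says that the densely defined map
\[
V\colon \mat{c}{1\\ \lambda_1 k_\lambda^{(1)}\\ \vdots\\ \lambda_d k_\lambda^{(d)}}\mapsto \mat{c}{f(\lambda)\\ k_\lambda^{(1)}\\ \vdots\\ k_\lambda^{(d)}}\qquad (\lambda\in X)
\]
extends by linearity and continuity to an isometry between two subspaces of $\BC\oplus\bigoplus_{k=1}^d \cH_k$.

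Next, after enlarging the ambient space if necessary, extend $V$ to a unitary $U=\sbm{D & C\\ B & A}\colon\BC\oplus\cH\to\BC\oplus\cH$, where $\cH:=\bigoplus_k\cH_k$, and form the transfer function
\[
s(\lambda):=D+C\Lambda(\lambda)(I-A\Lambda(\lambda))^{-1}B,\qquad \Lambda(\lambda):=\bigoplus_{k=1}^d \lambda_k I_{\cH_k}.
\]
A direct computation using the unitarity relation $U^*U=I$ produces a global Agler decomposition for this $s$ with explicit kernels $K_k^s(\lambda,\zeta)=\langle P_k(I-A\Lambda(\zeta))^{-1}B,P_k(I-A\Lambda(\lambda))^{-1}B\rangle$ ($P_k$ the orthogonal projection of $\cH$ onto $\cH_k$), so $s\in\mathcal{SA}_d$ by the auxiliary characterization. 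The interpolation $s|_X=f$ is obtained by reading off $V$ on its spanning vectors: writing $k_\lambda:=k_\lambda^{(1)}\oplus\cdots\oplus k_\lambda^{(d)}$ one derives $k_\lambda=B+A\Lambda(\lambda)k_\lambda$ and $D+C\Lambda(\lambda)k_\lambda=f(\lambda)$, whence $k_\lambda=(I-A\Lambda(\lambda))^{-1}B$ and $s(\lambda)=f(\lambda)$ for $\lambda\in X$.

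The main obstacle is proving the auxiliary global Agler decomposition in the first place. The standard route (\cite{Ag88, AgMcC99}) is to apply the defining inequality $\|s(Z)\|\le 1$ to a universal commutative tuple $Z$ of coordinate multiplication operators on an appropriate Hilbert space of functions, and then invoke Hahn-Banach to separate a cone of sums of the form $\sum_k (1-\lambda_k\overline{\zeta_k})K_k(\lambda,\zeta)$ from any candidate counterexample to \eqref{Agler-decom}, producing the required positive kernels. A secondary technical issue is the extension of $V$ to a unitary $U$ when $X$ is infinite, handled by standard defect-space constructions allowing enlargement of the ambient space.
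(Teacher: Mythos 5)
The paper does not prove Theorem \ref{T:Agler} at all---it is quoted with attribution to \cite{Ag88, Ag90, AgMcC99, AgMcC}---and your outline is exactly the standard argument of those sources: necessity by restricting a global Agler decomposition, sufficiency by the lurking-isometry/transfer-function realization built from the kernels $K_1,\dots,K_d$ on $X$, with the global decomposition itself obtained from the $\|s(Z)\|\le 1$ condition via a cone-separation (Hahn--Banach/GNS) argument. Your proposal is correct in substance; the steps you leave as black boxes (closedness of the cone and the GNS construction of the separating commuting tuple, the passage from finite subsets of $X$ to all of $X$ by a compactness argument, the color-respecting enlargement when extending $V$ to a unitary, and the easy verification that a unitary realization, equivalently a global decomposition, puts $s$ in $\mathcal{SA}_{d}$) are precisely the standard technicalities carried out in the cited references.
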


    Given two Hilbert spaces $\cU$ and $\cY$, the operator-valued
    version $\mathcal{SA}_{d}(\cU, \cY)$ of the Schur-Agler class can
    be defined via tensor functional calculus as follows.  Given a
    holomorphic function $S(\lambda) = \sum_{n \in {\mathbb Z}^{d}_{+}}
    S_{n} \lambda^{n}$ with coefficients (and hence also values) in
    $\cL(\cU, \cY)$ and given a commutative $d$-tuple $Z: =
    (Z_{1}, \dots, Z_{d})$ of operators on another auxiliary Hilbert
    space $\cK$  such that each $Z_{k}$ is a strict contraction for
    $k = 1, \dots, d$, we define $S(Z)$ via
    $$
      S(Z) = \sum_{n \in {\mathbb Z}^{d}_{+}} S_{n} \otimes Z^{n} \in
      \cL(\cU \otimes \cK, \cY \otimes \cK),
    $$
    using the (commutative) multivariable notation for $Z^n$ defined in
    Part 3 of Theorem \ref{T:DAint}.
    We then say that $S \in \mathcal{SA}_{d}(\cU, \cY)$ if $\|S(Z)\|
    \le 1$ whenever $Z = (Z_{1}, \dots, Z_{d})$ is a commutative
    $d$-tuple of strict contraction operators on $\cK$.  We will not
    state the results precisely here, but rather merely mention that
    the extensions to left- and right-tangential interpolation have
    been given in  \cite{BT}.  The theory can be generalized to an
    arbitrary domain $\cD$ with polynomial-matrix defining function
    ($\cD = \cD_{Q} = \{ \lambda \in {\mathbb C}^{d} \colon \| Q(\lambda) \| <
    1\}$ for a fixed matrix polynomial $Q(\lambda)$)---see \cite{AT,
    BB04}; note that the polydisk corresponds to the case
    $$
     Q(\lambda) = \begin{bmatrix}  \lambda_{1} & & \\ & \ddots & \\ &
     & \lambda_{d} \end{bmatrix}.
     $$
    In this general setting the analysis of the left- and right-tangential
    Nevanlinna-Pick problem with operator-argument (using
    an adaptation of the Taylor-Vasilescu functional calculus) has
    been worked out in \cite{BB05}.

    A noncommutative version of the Schur-Agler class can be defined
    as follows.  Given coefficient Hilbert spaces $\cU$ and $\cY$ and
    a formal power series $S$ of the form $S(z) = \sum_{\gamma \in
    \cF_{d}} S_{\gamma} z^{\gamma}$, where for each $\ga$ in the free
    semigroup $\cF_d$ defined in Subsection \ref{subS:noncomball} the coefficient
    $S_{\gamma}$ is in $\cL(\cU, \cY)$, we say that $S$ is in the
    noncommutative operator-valued $d$-variable Schur-Agler class
    $\mathcal{SA}_{nc,d}(\cU, \cY)$ if, for any (not necessarily
    commutative) $d$-tuple $Z =
    (Z_{1}, \dots, Z_{d})$ of strict contraction operators, it is the
    case that
    $$
    S(Z) = \sum_{\gamma \in \cF_{d}} S_{\gamma} \otimes Z^{\gamma}
    $$
    has $\|S(Z)\| \le 1$. Here we use the noncommutative multivariable notation
    of Subsection \ref{subS:comball}: $Z^\ga=Z_{i_N}\cdots Z_{i_1}$ if $\ga=i_N\ldots i_1$.
It is one of the results of \cite{A-KV} that
    in fact one need only check $\|S(Z)\| \le 1$ for $Z = (Z_{1},
    \dots, Z_{d})$ a $d$-tuple of matrices of finite size $\kappa\times\kappa$ for
    arbitrary $\kappa = 1,2,\dots$.

    We then have the following result for
    left-tangential Nevanlinna-Pick interpolation with operator-argument
    on the noncommutative polydisk.

    \begin{theorem}  \label{T:NCpolydisk-int}
   {\em \textbf{(1) Left-Tangential interpolation
    with Operator-Argument for the noncommutative polydisk:}}
    Suppose that we are given $N$ $d$-tuples of (not necessarily
    commutative) strict contraction operators
    \[
    T^{(1)}=(T^{(1)}_{1}, \dots, T^{(1)}_{d}),\ldots,
    T^{(N)}=(T^{(N)}_{1}, \dots, T^{(N)}_{d})
    \]
    on a Hilbert space $\cC$
    together with operators $X_{1}$, $\dots$, $X_{N}$ in
    $\cL(\cY,\cC)$ and $Y_{1}$, $\dots$, $Y_{N}$ in $\cL(\cU,
    \cC)$.  Then there exists a formal power series $S(z) =
    \sum_{\gamma \in \cF_{d}} S_{\gamma} z^{\gamma}$ in the
    noncommutative $d$-variable operator-valued Schur class $\cS_{nc,d}(\cU,
    \cY)$ such that
    $$
    (X_{i}S)^{\wedge L}(T^{(i)}) :=
    \sum_{\gamma \in \cF_{d}} (T^{(i)})^{\gamma^{\top}} X_{i}
    S_{\gamma} = Y_{i} \text{ for } i=1, \dots, N
    $$
    if and only if there exist $d$ positive semidefinite block
    matrices  $K_{1}, \dots, K_{d}$ with entries in $\cL(\cC)$
    of the form
    $$
      K_{k} = \left[ K_{k}(i,j) \right]_{i,j=1, \dots,
      N}
    $$
    so that the noncommutative Agler decomposition
    $$
      X_{i} X_{j}^{*} - Y_{i} Y_{j}^{*} =
      \sum_{k=1}^{d} \left( K_{k}(i,j) - T_{k}^{(i)} K_{k}(i,j)
      (T_{k}^{(j)})^{*} \right)
    $$
    holds.

    \medskip
    \noindent
    {\em \textbf{(2) Riesz-Dunford interpolation
    for the noncommutative polydisk.}}  Suppose that $\cZ$ is a
    separable Hilbert space with orthonormal basis
    $\{e_{1}, \dots, e_{\kappa}\}$ (with possibly $\kappa=\infty$) and that we are given $N$ (not
    necessarily commutative) $d$-tuples of strictly contractive
    operators $Z^{(1)}$ $=$
    $(Z^{(1)}_{1}, \dots, Z^{(1)}_{d})$, $\dots$, $Z^{(N)}$ $=$
    $(Z^{(N)}_{1}, \dots, Z^{(N)}_{d})$ in $\cL(\cZ)$.  Assume
    also that we are given operators $W_{1}$, $\dots$, $W_{N}$ in
    $\cL(\cZ)$.  Then there exists a formal power series $s(z) =
    \sum_{\gamma \in \cF_{d}} s_{\gamma} z^{\gamma}$ in the
    scalar noncommutative Schur-Agler class $\mathcal{SA}_{nc,d}$
    such that
    $$
    s(Z^{(i)}) : = \sum_{\gamma \in \cF_{d}}
    s_{\gamma}(Z^{(i)})^{\gamma^{\top}} = W_{i} \text{ for } i = 1, \dots, N
    $$
    if and only if there exist $d$ positive semidefinite operator
    matrices $K_{1}, \dots, K_{d}$ of size $(N \cdot \kappa) \times
    (N \cdot \kappa)$ with entries in $\cL(\cZ)$ written in the form
    $$
    K_{k} = \left[ K_{k}( (i,i'),\, (j,j') ) \right]_{(i,i'),
    (j,j') \in \{1, \dots, N\} \times \{1, \dots, \kappa\}}
    $$
    so that the following noncommutative Agler decomposition
    holds:
    $$
       e_{i'} e_{j'}^{*}  - W_{i} e_{i'} e_{j'}^{*}
    W_{j}^{*} = \sum_{k=1}^{d} \left( K_{k} \left((i,i'), (j,j')\right)
    - Z_{k}^{(i)} K_{k}\left( (i,i'), (j,j') \right)
    (Z_{k}^{(j)})^{*} \right).
    $$
   \end{theorem}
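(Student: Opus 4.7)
My plan is to prove part (1) directly by a lurking-isometry / transfer-function argument adapted to the noncommutative polydisk Schur--Agler class, and then reduce part (2) to part (1) using the same basis-indexing trick employed for the Riesz--Dunford reductions elsewhere in the paper (e.g., in Section \ref{S:1var} and in Theorem \ref{T:ball-int}(3)). The key analytic ingredient for part (1) is the noncommutative-polydisk Schur--Agler realization, which characterizes membership in $\cS_{nc,d}(\cU,\cY)$ via a structured contractive transfer-function representation together with an Agler-type decomposition; this machinery is available through the Popescu and Ball--Trent work used in \cite{Popescu03, BB07}, extending the commutative analogue of \cite{BT,BB04}.

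For sufficiency in part (1), factor each positive semidefinite block matrix $K_k = [K_k(i,j)]_{i,j=1}^N$ as $K_k(i,j) = H_k(i) H_k(j)^*$ with $H_k(i) \in \cL(\cH_k,\cC)$. Rearranging the Agler decomposition yields the Gram identity
$$
X_i X_j^* + \sum_{k=1}^d H_k(i) H_k(j)^* = Y_i Y_j^* + \sum_{k=1}^d T_k^{(i)} H_k(i) H_k(j)^* (T_k^{(j)})^*,
$$
which defines a partial isometry
$$
V\colon \sbm{ Y_j^* \xi \\ \oplus_k H_k(j)^* (T_k^{(j)})^* \xi } \mapsto \sbm{ X_j^* \xi \\ \oplus_k H_k(j)^* \xi }
$$
from a subspace of $\cU \oplus \bigoplus_k \cH_k$ into $\cY \oplus \bigoplus_k \cH_k$. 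Extending $V$ to a contractive colligation $V = \sbm{ D & C \\ B & A }$ with $A$ block-structured along $\bigoplus_k \cH_k$, the associated noncommutative transfer-function formal power series $S$ lies in $\cS_{nc,d}(\cU,\cY)$, and iterating the intertwining built into $V$ on the interpolation data recovers $(X_i S)^{\wedge L}(T^{(i)}) = Y_i$. For necessity, the same realization applied to a given solution $S \in \cS_{nc,d}(\cU,\cY)$ produces ``observability'' maps $L_k$ satisfying
$$
I - S(Z)S(W)^* = \sum_{k=1}^d L_k(Z)(I - Z_k W_k^*) L_k(W)^*
$$
for every pair $Z,W$ of $d$-tuples of strict contractions. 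Specializing $Z = T^{(i)}$, $W = T^{(j)}$, sandwiching with $X_i$ on the left and $X_j^*$ on the right, and using the interpolation conditions (repackaged as $X_i S(T^{(i)}) = Y_i$ in the transposed-functional-calculus sense) lets one read off $K_k(i,j) := X_i L_k(T^{(i)}) L_k(T^{(j)})^* X_j^*$, which is manifestly a positive semidefinite block matrix as a Gramian.

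For part (2), I set $T^{(i,i')} := Z^{(i)}$, $X^{(i,i')} := e_{i'} \in \cL(\BC,\cZ)$, and $Y^{(i,i')} := W_i e_{i'} \in \cL(\BC,\cZ)$ for $(i,i') \in \{1,\ldots,N\} \times \{1,\ldots,\kappa\}$. The direct computation
$$
(X^{(i,i')} s)^{\wedge L}(T^{(i,i')}) = \sum_{\gamma \in \cF_d} (Z^{(i)})^{\gamma^{\top}} e_{i'} s_\gamma = s(Z^{(i)}) e_{i'}
$$
shows that the Riesz--Dunford conditions $s(Z^{(i)}) = W_i$ are equivalent to the full family of LTOA conditions, and the substitutions $X^{(i,i')}(X^{(j,j')})^* = e_{i'} e_{j'}^*$ and $Y^{(i,i')}(Y^{(j,j')})^* = W_i e_{i'} e_{j'}^* W_j^*$ convert the Agler decomposition of part (1) into that of part (2). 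The main technical obstacle lies squarely in the noncommutative-polydisk Schur--Agler realization invoked in part (1): producing a genuine element of $\cS_{nc,d}(\cU,\cY)$ from a contractive colligation, and extracting analytic observability maps in the converse direction, in a manner that makes precise sense of the evaluations $L_k(T^{(i)})$ and $S(T^{(i)})$ at operator tuples rather than scalar points. Once that is in place, the remaining bookkeeping---in particular tracking the transposition $\gamma \mapsto \gamma^{\top}$ in the LTOA definition against the rightward order of operator products in the transfer-function recursion---is routine.
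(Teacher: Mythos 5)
Your treatment of part (2) is exactly the paper's: the paper obtains (2) from (1) by re-indexing the conditions over $\{1,\dots,N\}\times\{1,\dots,\kappa\}$ with $X^{(i,i')}=e_{i'}$ and $Y^{(i,i')}=W_i e_{i'}$, and your verification $(X^{(i,i')}s)^{\wedge L}(Z^{(i)})=s(Z^{(i)})e_{i'}$ is the right one (for $\kappa=\infty$ one strictly needs (1) for countably many conditions or a weak-$*$ limiting argument, a point the paper also passes over). For part (1), however, the paper gives no argument beyond citing \cite[Theorem 7.9]{BB07}, so your lurking-isometry sketch amounts to re-proving the cited result, and there your key algebraic step is wrong. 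Rearranging the stated Agler decomposition gives
\[
X_iX_j^*+\sum_{k=1}^d T_k^{(i)}K_k(i,j)\,(T_k^{(j)})^*\;=\;Y_iY_j^*+\sum_{k=1}^d K_k(i,j),
\]
i.e.\ the terms carrying the $T_k$'s sit on the $X$-side, not where you put them. Consequently the map you call $V$ is in general not isometric (nor even well defined as a contraction) under the hypothesis; it would be isometric precisely when the decomposition holds with the opposite sign. The lurking isometry must send $\sbm{X_j^*\xi \\ \oplus_k H_k(j)^*(T_k^{(j)})^*\xi}$ to $\sbm{Y_j^*\xi \\ \oplus_k H_k(j)^*\xi}$, hence map (a subspace of) $\cY\oplus\bigoplus_k\cH_k$ into $\cU\oplus\bigoplus_k\cH_k$, and the colligation one feeds into the structured transfer-function formula is the adjoint of a contractive extension of this map; with that orientation, iterating the two intertwining relations does yield $\sum_{\gamma\in\cF_d}(T^{(j)})^{\gamma^\top}X_jS_\gamma=Y_j$, whereas your orientation would at best solve the problem with the roles of $X_j$ and $Y_j$ interchanged.

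The necessity half also has a gap that you flag but do not close: the decomposition $I-S(Z)S(W)^*=\sum_k L_k(Z)(I-Z_kW_k^*)L_k(W)^*$ is a statement in the tensor functional calculus, where $S(Z)\in\cL(\cU\otimes\cK,\cY\otimes\cK)$, while the interpolation conditions are expressed in the left operator-argument calculus; ``sandwiching with $X_i$ and $X_j^*$'' does not even match dimensions, and $X_iS(T^{(i)})$ in the tensor sense is not $Y_i$. What actually works (and is what \cite{BB07} does) is to take a contractive structured realization of a solution $S$, define the factors $H_k(i)$ directly by operator-argument series in $T^{(i)}$ built from the colligation entries, and verify the Stein-type identity $X_iX_j^*-Y_iY_j^*=\sum_k\bigl(K_k(i,j)-T_k^{(i)}K_k(i,j)(T_k^{(j)})^*\bigr)$ coefficientwise. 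Since both halves of your argument ultimately lean on the noncommutative Schur--Agler realization theorem (which is also why the class in part (1) should be read as the Schur--Agler class $\mathcal{SA}_{nc,d}(\cU,\cY)$, as in \cite{BB07}), the economical route is the paper's: cite \cite[Theorem 7.9]{BB07} for (1) and keep your reduction for (2); a self-contained proof requires fixing the orientation above and carrying out the necessity computation at the level of the realization coefficients.
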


   \begin{proof}  Statement (1) is a particular case of \cite[Theorem
       7.9]{BB07}.  Statement (2) then follows from statement (1) in
       the same way as was done for the single-variable case in
       Section \ref{S:1var}.
    \end{proof}

    As an example we next  discuss how the Schur interpolation problem for the noncommutative polydisk
    can be handled as an application of Theorem \ref{T:NCpolydisk-int}.
     We first say that a subset $\Gamma$
    of $\cF_{d}$ is {\em lower inclusive} if, whenever $\gamma \in
    \Gamma$ and $\gamma$ factors as $\gamma = \gamma' \gamma''$, then
    also $\gamma' \in \Gamma$.  The Carath\'eodory-Fej\'er
    interpolation problem for the noncommutative polydisk can be
    formulated as follows:  {\em Given a lower inclusive subset
    $\Gamma$ of $\cF_{d}$ and given a collection of operators
    $\{F_{\gamma} \colon \gamma \in \Gamma\} \subset \cL(\cU, \cY)$,
    find a formal power series $S(z) = \sum_{\gamma \in \cF_{d}}
    S_{\gamma} z^{\gamma}$ in the noncommutative operator-valued
    Schur class $\mathcal{SA}_{nc,d}(\cU, \cY)$ such that}
    $$
      S_{\gamma} = F_{\gamma} \text{ for } \gamma \in \Gamma.
    $$
    In \cite[Section 7.4]{BB07} it is shown how to choose the data set
    $$
    {\mathfrak D}: \quad T^{(1)}, \dots, T^{(N)}, \quad
      X_{1}, \dots, X_{d}, \quad Y_{1}, \dots, Y_{d}
     $$
     so that the associated left-tangential interpolation problem
     with  operator-argu\-ment handled by statement (1) in Theorem
     \ref{T:NCpolydisk-int} is equivalent to the
     Carath\'eodory-Fej\'er interpolation problem for the
     noncommutative polydisk.  We note that this
     Carath\'eodory-Fej\'er problem was handled directly earlier in \cite{KV}.

     Just as was the case for the commutative case,  the
     theory for the noncommutative polydisk can be extended to more
     general noncommutative domains.  This is done in \cite{BGM2}
     for noncommutative operator domains with  a certain type of linear defining
     function $Q$: $Z = (Z_{1}, \dots, Z_{d}) \in \cD_{Q}$ if $\|
     Q(Z) \| < 1$ where $Q(z) = Q_{1} z_{1} + \cdots Q_{d} z_{d}$ and
     $ Q(Z) = Q_{1} \otimes Z_{1} + \cdots + Q_{d} \otimes Z_{d}$.
     The results on Left (and/or Right) Tangential interpolation with Operator-Argument
     in \cite{BB07} are actually given for this level of
     generality.

\section{$W^*$-correspondence Nevanlinna-Pick theorems} \label{S:C*-NP}

\subsection{Preliminaries}

Let $\cA$ and $\cB$ be $C^*$-algebras and $E$ a linear space. We say
that $E$ is an {\em $(\cA,\cB)$-correspondence} when $E$ is a bi-module
with respect to a given right $\cB$-action and a left $\cA$-action,
and $E$ is endowed with a $\cB$-valued inner product $\inn{\ }{\ }_E$
satisfying the following axioms: For any $\la,\mu\in\C$,
$\xi,\eta,\ze\in E$, $a\in\cA$ and $b\in\cB$
\begin{enumerate}
\item $\inn{\la\xi+\mu\ze}{\eta}_E =\la\inn{\xi}{\eta}_E+\mu\inn{\ze}{\eta}_E;$
\item $\inn{\xi\cdot b}{\eta}_E=\inn{\xi}{\eta}_E b;$
\item $\inn{a\cdot\xi}{\eta}_E=\inn{\xi}{a^*\cdot\eta}_E;$
\item $\inn{\xi}{\eta}_E^*=\inn{\eta}{\xi}_E;$
\item $\inn{\xi}{\xi}_E\geq 0 \ (\text{in }\cB);$
\item $\inn{\xi}{\xi}_E=0\mbox{ implies that }\xi=0$;
\end{enumerate}
and such that $E$ is a Banach space with respect to the norm $\|\ \|_E$
defined by
\[
\|\xi\|_E=\|\inn{\xi}{\xi}_E\|^\half_\cB\quad(\xi\in E),
\]
where $\|\ \|_\cB$ denotes the norm of $\cB$. We also impose that
\begin{equation*}
(\la\xi)\cdot b=\xi\cdot(\la b)\ands (\la a)\cdot\xi=a\cdot(\la\xi)\quad
(\la\in\C,a\in\cA,b\in\cB,\xi\in E).
\end{equation*}
In practice we usually write $\inn{\ }{\ }$ and $\|\ \|$
for the inner product and norm on $E$, and in case $\cA=\cB$ we say that
$E$ is an $\cA$-correspondence.

Given two $(\cA,\cB)$-correspondences $E$ and $F$,
the set of bounded linear operators from $E$ to $F$
is denoted by $\cL(E,F)$.
It may happen that a $T\in\cL(E,F)$ is not adjointable, i.e.,
it is not necessarily the case that
\[
\inn{T\xi}{\ga}=\inn{\xi}{T^*\ga}\quad(\xi\in E,\ga\in F)
\]
for some $T^*\in\cL(F,E)$. We will write $\cL^a(E,F)$ for the
set of adjointable operators in $\cL(E,F)$. As usual we have the
abbreviations $\cL(E)$ and $\cL^a(E)$ in case $F=E$.

The third inner-product axiom implies that the left $\cA$-action can
be identified with a $*$-homomorphism $\vph$ of $\cA$ into the
$C^*$-algebra $\cL^a(E)$. In case this $*$-homomorphism $\vph$ is
specified we will occasionally write $\vph(a) \xi$ instead of
$a\cdot \xi$.

Furthermore, an operator $T\in\cL(E,F)$ is said to be a {\em right $\cB$-module
map} if
\[
T(\xi\cdot b)=T(\xi)\cdot b\quad(\xi\in E, b\in\cB),
\]
and a {\em left $\cA$-module map} whenever
\[
T(a\cdot\xi)=a\cdot T(\xi)\quad(\xi\in E, a\in\cA).
\]
It is easily checked that an adjointable map $T \in \cL^{a}(E,F)$ is
automatically a right module map.
Occasionally we leave out the $\cB$ and $\cA$, and just say left or right
module map.
In case $T$ is both a left and right module map we also say that $T$
is a {\em bi-module map}. Notice that the product of two left (right) module
maps is again a left (right) module map, and the adjoint of an adjointable
left (right) module map, as also a left (right) module map.

We will have a need for various constructions which create new correspondences
out of given correspondences.

Given two $(\cA, \cB)$-correspondences $E$ and $F$, we define the
{\em direct-sum correspondence} $E \oplus F$ to be the direct-sum vector
space $E \oplus F$ together with the diagonal left $\cA$-action and
right $\cB$-action and the direct-sum $\cB$-valued inner-product
defined by setting for each $\xi,\xi'\in E$, $\ga,\ga'\in F$,
$a\in\cA$ and $b\in\cB$:
\[
\begin{array}{c}
a\cdot(\xi\oplus\ga)=(a\cdot\xi) \oplus (a\cdot\ga),\qquad
(\xi\oplus\ga)\cdot b=(\xi\cdot b)\oplus(\ga\cdot b),\\[.1cm]
\inn{\xi\oplus\ga}{\xi'\oplus\ga'}_{E\oplus F}=
\inn{\xi}{\xi'}_E+\inn{\ga}{\ga'}_F.
\end{array}
\]
Bounded linear operators between direct-sum correspondences admit
operator matrix decompositions in precisely the same way as in the
Hilbert space case ($\cB=\C$), while adjointability and the left and
right module map property of such an operator corresponds to the
operators in the decomposition being adjointable, or left or
right module maps, respectively.

Now suppose that we are given three $C^{*}$-algebras $\cA, \cB$ and
${\mathcal C}$ together with an $(\cA, \cB)$-correspondence $E$ and
a $(\cB,\cC)$-correspondence $F$. Then we define the
{\em tensor-product correspondence} $E \otimes F$ to be the completion of
the linear span of all tensors $\xi\otimes\ga$ (with $\xi\in E$ and
$\ga\in F$) subject to the identification
\begin{equation}\label{balance}
 (\xi\cdot b)\otimes\ga=\xi\otimes(b\cdot\ga)\quad(\xi\in E,\ga\in F,b\in\cB),
\end{equation}
with left $\cA$-action, right $\cC$-action and the $\cC$-valued
inner-product defined by setting for each $\xi,\xi'\in E$,
$\ga,\ga'\in F$, $a\in\cA$ and $c\in\cC$:
\[
\begin{array}{c}
a\cdot(\xi\otimes\ga)=(a\cdot\xi)\otimes\ga,\qquad
(\xi\otimes\ga)\cdot c=\xi\otimes(\ga\cdot c),\\[.1cm]
\inn{\xi\otimes\ga}{\xi'\otimes\ga'}_{E\otimes F}=
\inn{\inn{\xi}{\xi'}_E\cdot\ga}{\ga'}_F.
\end{array}
\]
In case the left action on $F$ is given by the $*$-homomorphism
$\vph$ we occasionally emphasize this by writing $E\otimes_\vph F$
for $E\otimes F$.

It is more complicated to characterize the bounded linear operators
between tensor-product correspondences. One way to construct such
operators is as follows.  Let $E$ and $E'$ be
$(\cA,\cB)$-correspondences and $F$ and $F'$
$(\cB,\cC)$-correspondences, for $C^*$-algebras $\cA$, $\cB$ and
$\cC$. Furthermore, let $X\in\cL(E,E')$ be a right module map and
$Y\in\cL(F,F')$ a left module map. Then we write $X\otimes Y$ for
the operator in $\cL(E\otimes F,E'\otimes F')$ which is determined
by
\begin{equation}\label{tensoroperator}
X\otimes Y (\xi\otimes\ga)=(X\xi)\otimes(Y\ga)\quad
(\xi\otimes\ga\in E \otimes F).
\end{equation}
The module map properties are needed to guarantee that the balancing
in the tensor-product (see \eqref{balance}) is respected by the
operator $X\otimes Y$.

If, in addition, $X$ is also a left module map, then $X\otimes Y$ is
a left module map, while $Y$ also being a right module map
guarantees that $X\otimes Y$ is a right module map. Moreover,
$X\otimes Y$ is adjointable in case $X$ and $Y$ are both adjointable
operators, with $(X\otimes Y)^*=X^*\otimes Y^*$.

Notice that the left action on $E\otimes F$ can now be written as
$a\mapsto\varphi(a)\otimes I_F\in\cL^a(E\otimes F)$,
where $I_F\in\cL^{a}(F)$ is the identity operator on $F$.

\subsection{The Fock space $\cF^{2}(E)$ and the Toeplitz algebra
$\cF^{\infty}(E)$}  \label{subS:Fock}

In this section we shall consider the situation where $\cA
= \cB$, i.e, $E$ is an $\cA$-correspondence.
We also restrict our attention to the case where $\cA$ is a
von Neumann algebra and let $E$ be an $\cA$-$W^{*}$-correspondence.
This means that $E$ is an $\cA$-correspondence which is also {\em
self-dual} in the sense that any right $\cA$-module map $\rho \colon
E \to \cA$ is given by taking the inner-product against some element
$e_{\rho}$ of $E$:
\begin{equation}  \label{rho}
\rho(e) = \langle e, e_{\rho} \rangle_{E} \in \cA.
\end{equation}
It is easily seen that such maps are adjointable with adjoint
$\rho^{*} \colon \cA \to E$  given by
\begin{equation}  \label{rho*}
\rho^{*}(a) = e_{\rho} \cdot a
\end{equation}
and hence also {\em any} right module map $\nu \colon \cA \to E$ has
the form $\nu = \rho^{*}$ as in \eqref{rho*}.
Moreover, the space $\cL^{a}(E)$ of adjointable operators on
the $W^{*}$-correspondence $E$ is in fact a
$W^{*}$-algebra, i.e., it is the
abstract version of a von Neumann algebra with an
ultra-weak topology
(see \cite{MS04}).

Since $E$ is an $\cA$-correspondence, we may define the self-tensor-product
$E^{\otimes 2} = E \otimes E$
to get another $\cA$-correspondence,
and, inductively, an $\cA$-correspondence
$E^{\otimes n} = E \otimes (E^{\otimes (n-1)})$ for each $n =
1,2, \dots$.  If we use $a \mapsto \varphi(a)$ to denote the left
$\cA$-action $\varphi(a) e = a \cdot e$ on $E$, we denote the left
$\cA$-action on $E^{\otimes n}$ by $\varphi^{(n)}$:
$$
  \varphi^{(n)}(a) \colon \xi_{n} \otimes \xi_{n-1} \otimes \cdots
  \otimes \xi_{1}\mapsto (\varphi(a) \xi_{n}) \otimes
  \xi_{n-1} \otimes \cdots \otimes \xi_{1}.
$$
Note that, using the notation in
\eqref{tensoroperator}, we may write $\varphi^{(n)}(a)=\varphi(a)\otimes
I_{E^{\otimes n-1}}$. We formally set $E^{\otimes 0} = \cA$. Then
the {\em Fock space} $\cF^{2}(E)$ is defined to be
\begin{equation}  \label{Fock}
   \cF^{2}(E) = \oplus_{n=0}^{\infty} E^{\otimes n}
\end{equation}
and is also an $\cA$-$W^{*}$-correspondence.
The left $\cA$-action on $\cF^2(E)$ is given by $\varphi_{\infty}$:
\begin{equation}  \label{ArepFock}
  \varphi_{\infty}(a) \colon \oplus_{n=0}^{\infty} \xi^{(n)} \mapsto
  \oplus_{n=0}^{\infty}( \varphi^{(n)}(a) \xi^{(n)}) \text{ for }
  \oplus_{n=0}^{\infty} \xi^{(n)} \in
  \oplus_{n=0}^{\infty} E^{\otimes n},
\end{equation}
or, more succinctly,
$$
\varphi_\infty(a)=\textup{diag}(a,\varphi^{(1)}(a),\varphi^{(2)}(a),\ldots).
$$

 In addition to the operators $\varphi_{\infty}(a) \in
       \cL^{a}(\cF^2(E))$, we introduce the so-called {\em
       creation operators} on $\cF^{2}(E)$ given, for each $\xi \in E$,
       by the subdiagonal (or shift) block matrix
       $$
      T_{\xi} = \begin{bmatrix} 0 & 0 & 0 & \cdots \\
                      T_{\xi}^{(0)} & 0 & 0  & \cdots \\
      0 & T_{\xi}^{(1)} & 0 & \cdots & \\
      \vdots & \ddots & \ddots  & \ddots  \end{bmatrix}
       $$
       where the block entry $T_{\xi}^{(n)} \colon E^{\otimes n} \to
       E^{\otimes n+1}$ is given by
       \begin{equation*}
    T_{\xi}^{(n)} \colon \xi_{n}\otimes \cdots \otimes \xi_{
    1} \mapsto
     \xi \otimes \xi_{n} \otimes \cdots \otimes \xi_{1}.
       \end{equation*}
       The operator $T_\xi$ is also in $\cL^{a}(\cF^2(E))$.  In summary,
       both $T_\xi$ and $\varphi_{\infty}(a)$ are right $\cA$-module maps
       with respect to the right $\cA$-action on $\cF^2(E)$ for each
       $\xi\in E$ and $a\in\cA$.  Moreover, one easily
       checks that
       \[
\varphi_{\infty}(a)T_\xi=T_{a\xi}=T_{\varphi(a)\xi}\quad\mbox{and}\quad
       T_\xi\varphi_{\infty}(a)=T_{\xi a}\quad\mbox{for each
}a\in\cA\mbox{ and }
       \xi\in E.
       \]

       We let $\cF^{\infty}(E)$ denote the {\em Toeplitz algebra}
       equal to the weak-$*$ closed algebra generated by the linear span of the
       collection of operators
       $$
     \{ \varphi_{\infty}(a), \, T_{\xi} \colon a \in \cA
      \text{ and } \xi
     \in E \}
       $$
       in the $W^{*}$-algebra $\cL^{a}(\cF(E))$.
       The justification for the term ``Toeplitz algebra'' comes from
       the following proposition, which is a variation on Proposition
       4.2 in \cite{BBFtH}.

       \begin{proposition} \label{P:Toeplitz-recipe}
       If $R \in \cL^{a}(\cF^{2}(E))$ is in the
       Toeplitz algebra $\cF^{\infty}(E)$ with matrix
       representation
       \begin{equation}\label{Rdec}
       R = [R_{i,j}]_{i,j=0,1,2,\dots}
       \end{equation}
       where $R_{i,j}\in\cL^a(E^{\otimes j},E^{\otimes i})$,
       then there exist a sequence $\xi^{(0)},\xi^{(1)},\xi^{(2)},\ldots$ with
       $\xi^{(n)}\in E^{\otimes n}$ such that
       \begin{equation}   \label{Toeplitz-recipe}
       R_{i,j} = \begin{cases} 0 &\text{if } i<j, \\
       T^{(0)}_{\xi^{(i-j)}} \otimes I_{E^{\otimes j}} &\text{if } i \ge j,
       \end{cases}
       \end{equation}
       where for $n=0,1,2,\ldots$ the operator $T^{(0)}_{\xi^{(n)}}\in\cL^a(\cA,E^{\otimes n})$
       is given by
       \begin{equation}\label{crea-n}
       T^{(0)}_{\xi^{(n)}} a=\xi^{(n)}a\quad (a\in\cA).
       \end{equation}
       In particular, $R$ is completely determined by the entries of its first column.
       Conversely, if $\xi^{(0)},\xi^{(1)},\xi^{(2)},\ldots$ is a sequence with
       $\xi^{(n)}\in E^{\otimes n}$ such that the infinite operator matrix $R$ given by
       (\ref{Rdec}), (\ref{Toeplitz-recipe}) and (\ref{crea-n}) induces a bounded operator
       on $\cF^2(E)$, then $R$ is in $\cF^\infty(E)$.
\end{proposition}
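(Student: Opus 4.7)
The plan is to prove each direction by characterizing the ``Toeplitz form'' as an algebraic and weak-$*$ closed condition. For the forward direction I would verify (i) that each generator itself has the Toeplitz form, (ii) that products of Toeplitz-form operators are again Toeplitz, and (iii) that the Toeplitz form is preserved under weak-$*$ limits. For the converse I would exhibit any bounded Toeplitz-form operator as a weak-$*$ limit of finite linear combinations of products of generators via a Fej\'er summation.

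For step (i), using the canonical identification $E^{\otimes n} \cong \cA \otimes_\varphi E^{\otimes n}$ via $\eta \leftrightarrow 1_\cA \otimes \eta$, the diagonal block $\varphi^{(n)}(a)$ of $\varphi_\infty(a)$ becomes $T^{(0)}_a \otimes I_{E^{\otimes n}}$ (corresponding to $\xi^{(0)} = a$, all other slots zero), while the subdiagonal block $T^{(n)}_\xi$ of $T_\xi$ becomes $T^{(0)}_\xi \otimes I_{E^{\otimes n}}$ (corresponding to $\xi^{(1)} = \xi$, all other slots zero). For step (ii), if $R$ and $S$ have Toeplitz sequences $\{r^{(n)}\}$ and $\{s^{(n)}\}$, the identity
\[
\bigl(T^{(0)}_{r^{(i-k)}} \otimes I_{E^{\otimes k}}\bigr)\bigl(T^{(0)}_{s^{(k-j)}} \otimes I_{E^{\otimes j}}\bigr) = T^{(0)}_{r^{(i-k)} \otimes s^{(k-j)}} \otimes I_{E^{\otimes j}},
\]
a direct consequence of associativity of the tensor product together with the balancing relation, shows $RS$ has Toeplitz sequence $t^{(n)} = \sum_{m=0}^n r^{(m)} \otimes s^{(n-m)} \in E^{\otimes n}$. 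For step (iii), the entry-extraction map $R \mapsto P_i R P_j|_{E^{\otimes j}}$ and the ampliation map $X \mapsto X \otimes I_{E^{\otimes j}}$ are both ultraweakly continuous, so the defining equations of the Toeplitz form persist under weak-$*$ limits.

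For the converse, first observe that for any simple tensor $\xi^{(n)} = \eta_n \otimes \cdots \otimes \eta_1 \in E^{\otimes n}$, the product $T_{\eta_n} \cdots T_{\eta_1}$ lies in $\cF^\infty(E)$ and has only its $n$-th Toeplitz coefficient nonzero and equal to $\xi^{(n)}$; combined with the norm bound $\|T_{\xi^{(n)}}\| \le \|\xi^{(n)}\|_{E^{\otimes n}}$ and the density of simple tensors, this yields $T_{\xi^{(n)}} \in \cF^\infty(E)$ for every $\xi^{(n)} \in E^{\otimes n}$. Next I would introduce the gauge automorphisms $\gamma_z \in \textup{Aut}(\cL^a(\cF^2(E)))$ for $z \in \BT$ acting by multiplication by $z^n$ on the summand $E^{\otimes n}$; these are unitarily implemented and leave $\cF^\infty(E)$ invariant. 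The Fej\'er means
\[
R_N := \frac{1}{2\pi}\int_0^{2\pi} K_N(\theta)\, \gamma_{e^{i\theta}}(R)\, d\theta = \sum_{n=0}^N \Bigl(1 - \tfrac{n}{N+1}\Bigr) T_{\xi^{(n)}}
\]
(with $K_N$ the Fej\'er kernel) therefore belong to $\cF^\infty(E)$ and satisfy $\|R_N\| \le \|R\|$. The main obstacle is to justify the weak-$*$ convergence $R_N \to R$: this is verified entrywise against vectors of the form $a \otimes \eta \in E^{\otimes j}$ using classical Fej\'er summation on the lower-triangular structure, and then extended to all of $\cF^2(E)$ via the uniform bound $\|R_N\| \le \|R\|$ and density.
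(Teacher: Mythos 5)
Your proposal is correct, and its overall architecture coincides with the paper's: for the forward inclusion you show the class of Toeplitz-form operators contains the generators, is an algebra with the convolution rule $t^{(n)}=\sum_{m}r^{(m)}\otimes s^{(n-m)}$, and is weak-$*$ closed (the paper states exactly these three steps, leaving the verifications to the reader); and for the converse you treat a single subdiagonal exactly as the paper does, via products $T_{\eta_n}\cdots T_{\eta_1}$ for pure tensors plus approximation in $E^{\otimes n}$ (the norm identity for generalized creation operators, or weak-$*$ closedness, handles the limit). Where you genuinely diverge is the reduction of a general Toeplitz-form operator to its diagonal parts: the paper simply asserts that, $\cF^{\infty}(E)$ being weak-$*$ closed, "it suffices to consider" an operator supported on one subdiagonal, whereas you exhibit the actual approximating net, namely the Fej\'er means $R_N=\sum_{n=0}^N\bigl(1-\tfrac{n}{N+1}\bigr)T_{\xi^{(n)}}$ obtained by averaging the gauge action $\gamma_z$ against the Fej\'er kernel, with the crucial uniform bound $\|R_N\|\le\|R\|$ that lets entrywise convergence upgrade to bounded weak-$*$ convergence. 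This buys a rigorous justification of the step the paper glosses over, at the cost of a little extra machinery (normality of the gauge automorphisms, a weak-sense operator-valued integral, and the standard fact that on bounded sets convergence tested against a dense family of "matrix entries" implies ultraweak convergence in the $W^*$-algebra $\cL^a(\cF^2(E))$); note also that, having shown $T_{\xi^{(n)}}\in\cF^\infty(E)$ directly, the gauge-averaging is needed only for the norm bound, not for membership of $R_N$ in $\cF^\infty(E)$, and that closure of the Toeplitz class under linear combinations (trivial, but used in the forward direction) should be recorded alongside closure under products.
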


    \begin{proof} Let $\cF^{\infty \prime}(E)$ be the set of all
        adjointable operators on $\cF^{2}(E)$ with matrix
        representation $R = [R_{i,j}]_{i,j=0,1, \dots}$ given by
        (\ref{Toeplitz-recipe}) and (\ref{crea-n}).
        Note first that $\cF^{\infty \prime}(E)$ is an algebra;
        indeed if $S$ and $R$ in $\cF^{\infty \prime}(E)$ are given
        by the sequences $\xi^{(0)},\xi^{(1)},\xi^{(2)},\ldots$ and
        $\ze^{(0)},\ze^{(1)},\ze^{(2)},\ldots$ with
        $\xi^{(n)},\ze^{(n)}\in E^{\otimes n}$, respectively, then it
        is straightforward to check that $SR$ is the element of
        $\cF^{\infty \prime}(E)$ given by the sequence
        $\rho^{(0)},\rho^{(1)},\rho^{(2)},\ldots$ with $\rho^{(n)}\in E^{\otimes n}$
        equal to
        \[
        \rho^{(n)}=\sum_{k=0}^n\xi^{(k)}\otimes\ze^{(n-k)}.
        \]

To show that $\cF^{\infty}(E) \subset \cF^{\infty\prime}(E)$, it therefore
suffices to check that
(1) each of the generators $\varphi_{\infty}(a)$ and $T_{\xi}$ (for $a \in \cA$ and
$\xi \in E$) is in $\cF^{\infty \prime}(E)$,
(2) that $\cF^{\infty \prime}(E)$ is closed under addition,
and that (3) $\cF^{\infty \prime}(E)$ is weak-$*$  closed.
These verifications are straightforward and are left to the reader.

Conversely, to show that $\cF^{\infty \prime}(E) \subset\cF^\infty(E)$, since
$\cF^{\infty}(E)$ is weak-$*$ closed by definition, it suffices to consider
that case of an operator $R \in \cF^{\infty \prime}(E)$ with support only on
a subdiagonal:  $R_{i,j} = 0$ unless $i-j = k$ for some $k\ge 0$. Moreover,
we can restrict to the case that $R_{k,0}\in\cL^a(\cA,E^{\otimes k})$ is
defined by some pure tensor $\xi_k\otimes\cdots\otimes\xi_1$ in $E^{\otimes k}$ via
$R_{k,0}a=\xi_k\otimes\cdots\otimes\xi_2\otimes (\xi_1 a)$ for $a\in\cA$. To see that
this is the case, first note that, since $E^{\otimes k}$ is an
$\cA$-$W^*$-correspondence, $R_{k,0}$ is of the form $R_{k,0}a=\xi^{(k)} a$ for
some $\xi^{(k)}\in E^{\otimes k}$. The claim then follows since any element
$\xi^{(k)}\in E^{\otimes k}$ can be approximated by linear combinations of pure
tensors in $E^{\otimes k}$, and because $\cF^{\infty}(E)$ is weak-$*$ closed.
Finally, assume that $R$ is only supported on the $k^\tu{th}$ diagonal and that
$R_{k,0}$ is defined by the pure tensor $\xi_k\otimes\cdots\otimes\xi_1\in E^{\otimes k}$
as described above. It is then straightforward to check that $R=T_{\xi_k}\cdots T_{\xi_1}$.
Thus $R$ is in $\cF^\infty(E)$.
\end{proof}

\subsection{Correspondence-representation pairs and their dual}\label{subS:dual}

In addition to the von Neumann algebra $\cA$ and the
$\cA$-correspondence $E$,
   suppose that we are also given an auxiliary Hilbert space
$\cE$ and a representation (meaning a
   nondegenerate $*$-homomorphism) $\sigma \colon \cA \to \cL(\cE)$; as
   this will be the setting for much of the analysis to
follow, we refer
   to such a pair $(E, \sigma)$ as a {\em correspondence-representation
   pair}. We further assume that $\si$ is faithful (injective) and
   normal ($\si$-weakly continuous).
   Then the Hilbert space $\cE$ equipped with $\sigma$ becomes an
   $(\cA,\C)$-correspondence with left $\cA$-action given by $\sigma$:
   $$
      a \cdot y = \sigma(a) y \text{ for all } a \in \cA \text{ and } y
      \in \cE.
   $$
Thus we can form the tensor-product $(\cA, {\mathbb C})$-correspondence
$E \otimes_{\sigma} \cE$. With $E^{\sigma}$ we denote the set of all
bounded linear operators $\mu\colon \cE\to E \otimes_{\sigma} \cE$ which
are also left $\cA$-module maps:
       \begin{equation} \label{Esigma0}
       E^{\sigma} = \{ \mu\colon \cE\to E \otimes_{\sigma} \cE
       \colon
           \mu \sigma(a)  = ( \varphi(a) \otimes I_{\cE}) \mu \}.
       \end{equation}
       It turns out that $E^{\sigma}$ is itself a $W^*$-correspondence (the
       correspondence {\em dual} to $E^{\sigma}$; see \cite[Section 3]{MS04}),
       not over $\cA$ but over the $W^*$-algebra
       $$
       \sigma(\cA)' = \{b \in \cL(\cE) \colon b \sigma(a) =
       \sigma(a) b \text{ for all } a \in \cA\} \subset \cL(\cE)
       $$
       (the {\em commutant} of the image $\sigma(\cA)$ of the
       representation $\sigma$ in $\cL(\cE)$) with left and right $\sigma(\cA)'$-action and
       $\sigma(\cA)'$-valued inner-product $\langle \cdot, \cdot
       \rangle_{E^{\sigma}}$ given by
\[
\begin{array}{c}
\mu\cdot b=\mu b,\quad b\cdot\mu=(I_\cE\otimes b)\mu,\\[.1cm]
\langle \mu, \nu \rangle_{E^{\sigma}} = \nu^{*}\mu.
\end{array}
\]
       The intertwining relations of elements $\mu$ and $\nu$ in $E^\si$ with
       $\si(a)$ and $a\otimes I_\cE$ for $a\in\cA$ imply that $\nu^*\mu$ is indeed
       in $\si(\cA)'$.

Notice that elements $b\in\si(\cA)'$ and $\mu\in E^\si$ define operators on the
$(\cA,\BC)$-correspondence
\[
\cF^2(E,\si):=\cF^2(E)\otimes_\si \cE=\oplus_{n=0}^\infty E^{\otimes n}\otimes_\si \cE
\]
via the operator matrices
\[
I_{\cF^2(E)}\otimes b=\tu{diag}(b,I_E \otimes b,I_{E^{\otimes 2}}\otimes b,\ldots)
\]
and
\[
I_{\cF^2(E)}\otimes \mu=
\mat{ccccc}{0&0&0&0&\cdots\\
\mu&0&0&0&\cdots\\
0&I_E\otimes\mu&0&0&\cdots\\
0&0&I_{E^{\otimes 2}}\otimes\mu&0&\cdots\\
\vdots&\vdots&\ddots&\ddots&\ddots}.
\]
Here, in interpreting $I_{E^{\otimes n}}\otimes\mu$, we use the identification
\begin{equation}\label{ident}
E^{\otimes n} \otimes_{\sigma} \cE = E^{\otimes n-1}
      \otimes (E \otimes_{\sigma} \cE).
\end{equation}
The operators $I_{E^{\otimes n}}\otimes b$ and $I_{E^{\otimes n}}\otimes\mu$ are well defined because
$b$ and $\mu$ are left $\cA$-module maps.

The following theorem provides another way of characterizing the elements of $\cF^\infty(E)$ using
the operators $I_{\cF^2(E)}\otimes b$ and $I_{\cF^2(E)}\otimes\mu$. The result follows directly from
the combination of Theorems 3.9 and 3.10 in \cite{MS04}.

\begin{proposition}\label{P:dual}
Given an operator $X\in\cL(\cF^2(E,\si))$, there exists an $R\in\cF^\infty(E)$ so that
$X=R\otimes I_\cE$ if and only if $X$ commutes with $I_{\cF^2(E)}\otimes b$ and
$I_{\cF^2(E)}\otimes\mu$ for each $b\in\si(\cA)'$ and $\mu\in E^\si$. Moreover, if
$X=R\otimes I_\cE$ for some $R\in\cF^\infty(E)$, then $\|X\|=\|R\|$.
\end{proposition}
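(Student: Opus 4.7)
The plan is to prove both implications by reducing to the generators of $\cF^\infty(E)$ for the forward direction, and by exploiting a double-commutant style identification for the reverse; the norm equality will follow for free from the fact that $\sigma$ is a faithful normal representation.

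For the forward direction, I would observe that the collection of $R \in \cF^\infty(E)$ for which $R \otimes I_\cE$ commutes with both families $\{I_{\cF^2(E)} \otimes b : b \in \sigma(\cA)'\}$ and $\{I_{\cF^2(E)} \otimes \mu : \mu \in E^\sigma\}$ is a weak-$*$ closed subalgebra of $\cF^\infty(E)$. Hence, by the very definition of $\cF^\infty(E)$ as the weak-$*$ closed algebra generated by the operators $\varphi_\infty(a)$ and $T_\xi$, it is enough to verify commutation on these generators. The commutation of $\varphi_\infty(a) \otimes I_\cE$ with $I_{\cF^2(E)} \otimes b$ is immediate because $b \in \sigma(\cA)'$ commutes with $\sigma(a)$ on $\cE$, while commutation with $I_{\cF^2(E)} \otimes \mu$ is exactly the defining intertwining $\mu \sigma(a) = (\varphi(a) \otimes I_\cE)\mu$ from \eqref{Esigma0}, lifted to $E^{\otimes n}$. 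The shift operators $T_\xi \otimes I_\cE$ commute with $I_{\cF^2(E)} \otimes b$ by an elementary tensor manipulation, and commute with $I_{\cF^2(E)} \otimes \mu$ via the identification \eqref{ident} together with the fact that $\mu$ is a left $\cA$-module map, so that the shift and the action of $\mu$ on the subsequent factor can be interchanged.

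For the reverse direction, suppose $X \in \cL(\cF^2(E,\sigma))$ commutes with both families. Decompose $X = [X_{i,j}]_{i,j \ge 0}$ with $X_{i,j} \colon E^{\otimes j} \otimes_\sigma \cE \to E^{\otimes i} \otimes_\sigma \cE$. Commutation with every $I_{\cF^2(E)} \otimes b$ for $b \in \sigma(\cA)'$ forces each block $X_{i,j}$ to intertwine the representations of $\sigma(\cA)'$ on source and target; by the fundamental duality for self-dual $W^*$-correspondences (namely that the commutant of $\{I_F \otimes b : b \in \sigma(\cA)'\}$ inside $\cL(F \otimes_\sigma \cE)$ is exactly $\cL^a(F) \otimes I_\cE$), we obtain unique $R_{i,j} \in \cL^a(E^{\otimes j}, E^{\otimes i})$ with $X_{i,j} = R_{i,j} \otimes I_\cE$. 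The additional commutation with every $I_{\cF^2(E)} \otimes \mu$, $\mu \in E^\sigma$, translates, after the identification \eqref{ident}, into a system of Toeplitz-type equations among the $R_{i,j}$ which forces $R_{i,j} = 0$ for $i<j$ and renders the remaining blocks of the subdiagonal form \eqref{Toeplitz-recipe}. By Proposition \ref{P:Toeplitz-recipe}, the assembled $R = [R_{i,j}]$ belongs to $\cF^\infty(E)$, and $X = R \otimes I_\cE$ by construction.

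Finally, $\|R \otimes I_\cE\| \le \|R\|$ is immediate, while the reverse inequality uses that $R \mapsto R \otimes I_\cE$, viewed as a map of $W^*$-algebras, is induced by the faithful normal representation $\sigma$ and is therefore isometric. The main obstacle is the reverse direction, and within it the two nontrivial ingredients are precisely the commutant identification $\{I_F \otimes b : b \in \sigma(\cA)'\}' = \cL^a(F) \otimes I_\cE$ on each summand $F = E^{\otimes n}$ (which is the substantive content of Theorem 3.9 of \cite{MS04}), and the fact that commutation with the family $\{I_{\cF^2(E)} \otimes \mu : \mu \in E^\sigma\}$ forces the block-Toeplitz structure (the content of Theorem 3.10 of \cite{MS04}); once these are in hand, Proposition \ref{P:Toeplitz-recipe} supplies the final identification with an element of $\cF^\infty(E)$.
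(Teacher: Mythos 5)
Your proposal is correct and follows essentially the same route as the paper, which simply derives the proposition by combining Theorems 3.9 and 3.10 of the Muhly--Solel paper \cite{MS04}; your two ``nontrivial ingredients'' are exactly those theorems, and the rest of your argument (checking commutation on generators, assembling the blocks via Proposition \ref{P:Toeplitz-recipe}, and the isometry of $R\mapsto R\otimes I_\cE$ from faithfulness and normality of $\si$) fills in routine details the paper leaves implicit.
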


We can then prove the following concrete version of the $C^*$-correspondence commutant lifting
theorem \cite[Theorem 4.4]{MS98}.

\begin{theorem}\label{T:CLT}
Given subspaces $\cM$ and $\cN$ of $\cF^2(E,\si)$ that are both invariant under
$I_{\cF^2(E)}\otimes b$ and $I_{\cF^2(E)}\otimes\mu^*$ for all $b\in\si(\cA)'$ and
$\mu\in E^\si$, and a contractive operator $X$ from $\cM$ into $\cN$ such that
\[
X^*(I_{\cF^2(E)}\otimes b)|_{\cN}=(I_{\cF^2(E)}\otimes b)X^*\ands
X^*(I_{\cF^2(E)}\otimes\mu^*)|_{\cN}=(I_{\cF^2(E)}\otimes\mu^*)X^*,
\]
for all $b\in\si(\cA)'$ and $\mu\in E^\si$, there exists an $S\in\cF^\infty(E)$
with $\|S\|\leq 1$ so that $(S^*\otimes I_\cE)\cN \subset \cM$ and $X= P_\cN(S\otimes I_\cE)|_\cM$.
\end{theorem}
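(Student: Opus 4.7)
The plan is to combine the abstract $C^*$-correspondence commutant lifting theorem of Muhly--Solel \cite[Theorem 4.4]{MS98} with the dual characterization in Proposition \ref{P:dual}. The idea is first to produce an unconstrained contractive lift $Y \in \cL(\cF^2(E,\si))$ of $X$ that commutes with both families of operators appearing in Proposition \ref{P:dual}, and then to identify $Y$ as $S \otimes I_\cE$ for some $S \in \cF^\infty(E)$ with $\|S\| \leq 1$.

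To set things up I would first recall that, by the Muhly--Solel dual construction, the maps $b \mapsto I_{\cF^2(E)} \otimes b$ (for $b \in \si(\cA)'$) and $\mu \mapsto I_{\cF^2(E)} \otimes \mu$ (for $\mu \in E^\si$) assemble into an isometric covariant representation of the dual $W^*$-correspondence $E^\si$ over $\si(\cA)'$ acting on $\cF^2(E,\si)$---this is exactly the representation whose commutant is characterized in Proposition \ref{P:dual}. Since $\si(\cA)'$ is $*$-closed, the assumption that $\cM$ and $\cN$ are invariant under every $I_{\cF^2(E)} \otimes b$ makes both subspaces reducing for the $\si(\cA)'$-action; together with invariance of $\cM$ and $\cN$ under each $I_{\cF^2(E)} \otimes \mu^*$ (equivalently, invariance of $\cM^\perp$ and $\cN^\perp$ under each $I_{\cF^2(E)} \otimes \mu$), this puts $\cM$ and $\cN$ in the standard position of being semi-invariant subspaces for the full dual representation. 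Taking adjoints in the two intertwining identities assumed for $X^*$ (and using the reducing property for $b$), one checks that $X$ becomes a compressive intertwining contraction of exactly the kind handled by the abstract commutant lifting theorem: $P_\cN(I_{\cF^2(E)} \otimes \mu)|_\cN \, X = X \, P_\cM(I_{\cF^2(E)} \otimes \mu)|_\cM$, and similarly for each $b$.

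Applying MS98 Theorem 4.4 in this setting should then yield a contraction $Y : \cF^2(E,\si) \to \cF^2(E,\si)$ with $\|Y\| \leq 1$, commuting with every $I_{\cF^2(E)} \otimes b$ and every $I_{\cF^2(E)} \otimes \mu$, satisfying $Y^* \cN \subseteq \cM$, and compressing to $X$ in the sense that $P_\cN Y|_\cM = X$. Because $Y$ commutes with all generators of the dual representation, Proposition \ref{P:dual} produces a (unique) $S \in \cF^\infty(E)$ with $Y = S \otimes I_\cE$ and $\|S\| = \|Y\| \leq 1$; the identity $(S \otimes I_\cE)^* = S^* \otimes I_\cE$ then supplies the desired conclusions $(S^* \otimes I_\cE)\cN \subseteq \cM$ and $X = P_\cN (S \otimes I_\cE)|_\cM$. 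The main obstacle, and what will require careful bookkeeping, is verifying that our ambient data fit the precise formalism of \cite[Theorem 4.4]{MS98}: one must identify the operators $I_{\cF^2(E)} \otimes b$ and $I_{\cF^2(E)} \otimes \mu$ as a genuine isometric covariant representation of $(E^\si, \si(\cA)')$ on $\cF^2(E,\si)$, confirm that the invariance hypotheses on $\cM, \cN$ deliver the correct semi-invariance, and translate the hypotheses on $X^*$ into the compressive-intertwining form the abstract theorem expects. Once this dictionary is in place, the deduction from Proposition \ref{P:dual} is essentially automatic.
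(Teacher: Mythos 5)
Your proposal is correct and follows essentially the same route as the paper: invoke the Muhly--Solel $C^*$-correspondence commutant lifting theorem \cite[Theorem 4.4]{MS98} to obtain a contractive lift $Y$ of $X$ commuting with all $I_{\cF^2(E)}\otimes b$ and $I_{\cF^2(E)}\otimes\mu$ with $Y^*\cN\subset\cM$ and $X=P_\cN Y|_\cM$, and then apply Proposition \ref{P:dual} to write $Y=S\otimes I_\cE$ with $S\in\cF^\infty(E)$, $\|S\|\leq 1$. The bookkeeping you flag (recognizing the dual covariant representation and translating the intertwining hypotheses) is exactly what the paper's proof leaves implicit, so there is no substantive difference.
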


\begin{proof}
Using the $C^*$-correspondence commutant lifting theorem \cite[Theorem 4.4]{MS98} and
the intertwining relations of $X$ it follows that $X$ can be lifted to a
contractive operator $Y$ on $\cF^2(E,\si)$ that commutes with $I_{\cF^2(E)}\otimes b$
and $I_{\cF^2(E)}\otimes\mu$ for each $b\in\si(\cA)'$ and $\mu\in E^\si$, with the
property that $Y^*\cN\subset\cM$ and  $X=P_\cN Y|_\cM$. The claim then follows
immediately with Proposition \ref{P:dual}.
\end{proof}

  \subsection{The generalized disk ${\mathbb D}((E^{\sigma})^{*})$ and
  the first Muhly-Solel point-evaluation}   \label{subS:points}

  For the definition of
       point-evaluations to follow, however, the important object is
       $(E^{\sigma})^*$,  the set of adjoints of elements of $E^\si$
       (which are also left $\cA$-module maps):
       \begin{equation} \label{Esigma*}
       (E^{\sigma})^*=\{\eta:E \otimes_{\sigma} \cE\to\cE
       \colon \eta^*\in E^{\sigma}\}.
       \end{equation}

       For a given $\eta \in (E^{\sigma})^{*}$ and a positive integer $n$,
       we may define the generalized power
       $\eta^{n} \colon E^{\otimes n} \otimes_{\sigma} \cE \to \cE$ by
       \begin{equation*}  
    \eta^{n} = \eta (I_{E} \otimes \eta) \cdots
(I_{E^{\otimes n-1}}
       \otimes \eta)
       \end{equation*}
       where we again use the identification (\ref{ident})
       in this definition. We also set
       $\eta^0=I_\cE\in\cL(\cE)$. Again the fact that $\eta$
       is a left $\cA$-module map ensures that $I_{E^{\otimes k}}\otimes\eta$
       is a well defined operator in $\cL(E^{\otimes k+1}\otimes_\sigma
       \cE, E^{\otimes k}\otimes_\sigma \cE)$. The defining $\cA$-module
       property of $\eta$ in \eqref{Esigma*} then extends to the
       generalized powers $\eta^{n}$ in the form
       \begin{equation}  \label{mod-prop-ext}
    \eta^{n} (\varphi^{(n)}(a) \otimes I_{\cE})
    = \sigma(a) \eta^{n},
       \end{equation}
       i.e., $\eta^{n}$ is also an $\cA$-module map.

       Denote by ${\mathbb D}((E^{\sigma})^{*})$ the set of strictly
       contractive elements of $(E^{\sigma})^{*}$:
       $$
     {\mathbb D}((E^{\sigma})^{*}) = \{ \eta \in
(E^{\sigma})^{*} \colon
     \| \eta \| < 1 \}.
       $$

     We then consider elements $R \in \cF^{\infty}(E)$ as functions
       $\widehat R$
       on ${\mathbb D}((E^{\sigma})^{*})$
       with values in $\cL(\cE)$
       according to the formula for the {\em first Muhly-Solel point
       evaluation}
        \begin{equation}  \label{MSpoint-eval}
       \widehat R(\eta) = \sum_{n=0}^{\infty} \eta^{n} (R_{n,0}
       \otimes I_{\cE})
       \text{ where } R = [R_{i,j}] \in \cF^{\infty}(E).
\end{equation}
{}From the facts that $\| \eta \| < 1$ and $\|R_{n,0}\| \le M < \infty$
(since $R$ is bounded on $\cF^{\infty}(E)$), one can see that the
series in \eqref{MSpoint-eval} converges in operator norm.

\begin{remark} {\em The definition of the first Muhly-Solel
    point-evaluation in \cite{MS04, MSSchur, MSPoisson} is actually more
    elaborate.  There it is observed that an element $\eta \in
    \overline{\mathbb D}((E^{\sigma})^{*})$ induces an ultraweakly
    continuous completely contractive bimodule map $\widehat \eta
    \colon E \to \cL(\cE)$ via the formula
    \begin{equation}  \label{etahat}
    \widehat \eta(\xi)e = \eta(\xi \otimes e) \text{ for } \xi \in E,
    \, e \in \cE.
    \end{equation}
    The bimodule property means that
    $$
    \widehat \eta (\varphi(a) \xi b) = \sigma(a) \widehat \eta(\xi)
    \sigma(b) \text{ for all } a,b \in \cA, \, \xi \in E.
    $$
    Conversely, if $S \colon E \to \cL(\cE)$ is an ultraweakly
    continuous completely contractive bimodule map, then the same
    formula \eqref{etahat} turned around
    \begin{equation}  \label{tildeS}
    \widetilde S(\xi \otimes e) = S(\xi) e
   \end{equation}
   can be used to define an element $\widetilde S$ of the closed generalized disk
   $\overline{\mathbb D}((E^{\sigma})^{*})$.  The pair $(\widehat
   \eta, \sigma)$ (or $(S, \sigma)$) is said to be a {\em
   covariant representation} of the correspondence $E$.   Given a
   covariant representation $(S, \sigma)$, in case $\|S\| < 1$ (or,
   equivalently, $\| \widetilde S\| < 1$), there is an associated
   completely contractive representation $S \times \sigma$ of the
   Toeplitz algebra $\cF^{\infty}(E)$ defined on generators via
   \begin{align}
      & (S \times \sigma) (\varphi_{\infty}(a)) = \sigma(a),  \notag \\
      & (S \times \sigma)(T_{\xi}) = S(\xi).
      \label{Sxsigma}
   \end{align}
   Conversely, if $\rho$ is any ultraweakly continuous completely
   contractive  $\cL(\cE)$-valued representation of
   $\cF^{\infty}(E)$, the same formulas \eqref{Sxsigma} can be turned
   around
   \begin{align}
       & \sigma(a) = \rho(\varphi_{\infty}(a)),\notag  \\
       & S(\xi) = \rho(T_{\xi})
       \label{rho-covrep}
   \end{align}
   to define a covariant representation $(S, \sigma)$ of $E$.  Given
   any covariant representation $(S, \sigma)$ of $E$, the formulas
   \eqref{Sxsigma} can always
   be extended to define a representation of the {\em uniform closure}
   $\cT^{\infty}(E)$
   of the algebra generated by $\varphi_{\infty}(a)$ and $T_{\xi}$
   ($a \in \cA$ and $\xi \in E$).  It is an unsolved problem in the
   theory to identify which covariant representations
   $(S,\sigma)$  have the property that \eqref{Sxsigma} can be
   extended to define an ultraweakly continuous completely
   contractive representation of the weak-$*$ closed Toeplitz algebra
   $\cF^{\infty}(E)$.

   If we fix a representation $\sigma \colon \cA \to \cL(\cE)$, and
   let $\eta \in {\mathbb D}((E^{\sigma})^{*})$ and $R \in \cF^{\infty}(E)$,
   then the value $\widehat R(\eta)$ of $R$ at $\eta$ is defined to be the
   element of $\cL(\cE)$ assigned to $R$ by the representation
   $\widehat \eta \times \sigma$:
   $$
   \widehat R(\eta) = (\widehat \eta \times \sigma)(R).
   $$
   In the end this definition agrees with the definition
   \eqref{MSpoint-eval}.   The formula \eqref{MSpoint-eval} for the
   point-evaluation is called the {\em Cauchy transform} in \cite{MS04}.
   } \end{remark}

  Following \cite{MSSchur} (see also \cite{BBFtH}),
   given a correspondence-representation
   pair $(E, \sigma)$, we define the {\em Schur class} $\cS_{E,
   \sigma}$ to be the class of all functions $S \colon {\mathbb
   D}((E^{\sigma})^{*}) \to \cL(\cE)$ which can be expressed in the
   form $S(\eta) = \widehat R(\eta)$ for some $R \in \cF^{\infty}(E)$
   with $\|R\| \le 1$.  The associated left-tangential Nevanlinna-Pick
   interpolation problem is:   {\em Given a subset $\Om$ of ${\mathbb
   D}((E^{\sigma})^{*})$ and two functions $F \colon \Om \to \cL(\cE)$
   and $G \colon \Om \to \cL(\cE)$,
   determine when there exists a function $S$ in the Schur class
   $\cS_{E,\sigma}$ so that $G S|_{\Om} = F$.}
   We shall explain the solution to this problem due to Muhly-Solel
   \cite{MS04} in Subsection \ref{subS:MS-NP}.  For the moment we note the
   following two extreme cases:
   \begin{enumerate}
       \item[\textbf{Case 1:}] $\Om = {\mathbb D}((E^{\sigma})^{*})$ \textbf{and}
       $G(\eta)=I_\cE$.
       In this case the function $F$ is defined on all of ${\mathbb
       D}((E^{\sigma})^{*})$ and we seek a test to decide if $F \in
       \cS_{E, \sigma}$.

       \item[\textbf{Case 2:}] $\Om$ \textbf{finite.}  In this case
       $\Om$ consists of finitely many points, say $\eta_{1}, \dots,
       \eta_{N} \in {\mathbb D}((E^{\sigma})^{*})$, and we are given
       operators $Y_1=F(\eta_1),\ldots,Y_N=F(\eta_N)$ and
       $Y_{1} = F(\eta_{1}),\dots,Y_{N}=F(\eta_{N})$
       in $\cL(\cE)$.  We seek a test to decide if there
       is a function $S$ (or, more ambitiously, a description of all
       such functions $S$) in the Schur class $\cS_{E, \sigma}$
       satisfying the interpolation conditions
       $$
        X_j S(\eta_{j}) = Y_{j} \text{ for } j = 1, \dots, N.
       $$
   \end{enumerate}

  \subsection{Positive and completely positive kernels/maps}
  \label{subS:cp}

  The solution of the Nevanlinna-Pick interpolation problem in
  \cite{MS04} involves the notion of a completely positive map while
  the characterization of the Schur class $\cS_{E, \sigma}$ in
  \cite{MSSchur} involves the notion of {\em completely positive
  kernel} introduced in \cite{BBLS}.

  In the following discussion $\cA$, $\cB$ and $\cC$ are
  $C^{*}$-algebras (in particular, possibly $W^{*}$-algebras) and $\Om$
  is a set.  We begin with the notion of {\em positive kernel} which
  goes back at least to Aronszajn \cite{Aron};
  a function $K \colon \Om \times \Om \to {\mathcal C}$ is said to be
  a {\em positive kernel} if
  $$
  \sum_{i,j=1}^{n} c_{i}^{*} K(\om_{i},\om_{j}) c_{j}  \geq 0
  $$
  for all choices of $\om_{1}, \dots, \om_{n} \in \Om$ and $c_{1}, \dots,
  c_{n} \in \cC$ for $n = 1, 2, \dots$.  Equivalently, for every
  choice of $n$ points $\om_{1}, \dots, \om_{n} \in \Om$, the matrix
  $$
   [K(\om_{i},\om_{j})]_{i,j=1, \dots, n}
  $$
  is a positive element of $\cC^{n \times n}$ where $n=1,2, \dots$.
  Given two $C^{*}$-algebras $\cA$ and $\cB$, a map $\varphi \colon
  \cA \to \cB$ is said to  be a {\em positive map} if
  $$
  a \geq 0 \text{ in } \cA \Longrightarrow \varphi(a) \geq 0
  \text{ in } \cB.
  $$
  Such a map $\varphi \colon \cA \to \cB$ is said to be {\em
  completely positive} if
\begin{equation}\label{n-positive}
\begin{bmatrix} a_{11} & \dots &  a_{1n} \\ \vdots & & \vdots \\
      a_{n1} & \dots & a_{nn} \end{bmatrix} \succeq 0 \text{ in }
      \cA^{n \times n} \Longrightarrow \begin{bmatrix}
      \varphi(a_{11}) & \dots & \varphi(a_{1n}) \\ \vdots & & \vdots
      \\ \varphi(a_{n1}) & \dots & \varphi(a_{nn}) \end{bmatrix}
      \succeq 0 \text{ in } \cB^{n \times n}
\end{equation}
  for every $n = 1, 2, \dots$. If the implication (\ref{n-positive}) holds
  for a fixed $n$ in $\BZ_+$, then we say that $\vph$ is {\em $n$-positive}.
  In case $\cA = {\mathbb C}^{N \times
  N}$, a result of Choi \cite{Choi75} (see also \cite[Theorem
  3.14]{Paulsen-book}) says that {\em $\varphi \colon \cA \to \cB$ is
  completely positive if and only if the single block matrix
  $$
    \begin{bmatrix} \varphi(e_{11}) & \dots & \varphi(e_{1N}) \\
    \vdots & & \vdots \\
    \varphi(e_{N1}) & \dots & \varphi(e_{NN}) \end{bmatrix}
   $$
   is positive in $\cB^{N \times N}$}, where $e_{ij}$ are the standard
   matrix units
   $$
   [e_{ij}]_{\alpha, \beta} = \delta_{i,\alpha} \delta_{j, \beta}
   $$
   in ${\mathbb C}^{N \times N}$
   (here we make use of the Kronecker delta---$\delta_{i,j} = 1 $
   for $i=j$ and $\delta_{i,j} = 0$ for $i \ne j$). There is an
   alternative characterization in terms of positive kernels
   (see \cite{Stinespring}):
   {\em A map $\varphi \colon \cA \to \cB$ is completely positive if
   and only if the kernel $k_{\varphi} \colon \cA \times \cA \to \cB$
   defined by
   $$
    k_{\varphi}(a, a') = \varphi(a a^{\prime *})
    $$
   is a positive kernel.}  Barreto-Bhat-Liebscher-Skeide in
   \cite{BBLS} (with motivation from quantum physics which need not
   concern us here) combined these notions as follows:  we say that a
   kernel ${\mathbb K} \colon \Om \times \Om\to \cL(\cA, \cB)$ is {\em
   completely positive} if the associated kernel $k \colon (\Om\times
   \cA) \times (\Om \times \cA) \to \cB$ given by
   $$
   k((\om,a), (\om',a')) = {\mathbb K}(\om,\om')[a a^{\prime *}]
   $$
   is a positive kernel, i.e., if, for all choices $(\om_{1},a_{1})$,
   $\dots$, $(\om_{n},a_{n})$ in $\Om \times \cA$ and for all choices
   $b_{1}$, $\dots$, $b_{n}$ in $\cB$ it is the case that
   \begin{equation}  \label{cp}
   \sum_{i,j=1}^{n} b_{i}^{*} {\mathbb K}(\om_{i},\om_{j})[a_{i}
   a_{j}^{*}] b_{j} \geq 0 \text{ in } \cB
   \end{equation}
   for $n=1, 2, \dots$.  Note that the notion of completely positive
   kernel contains the notions of positive kernel and of completely
   positive map as special cases: in case the set $\Om$ consists of a
   single point, then ${\mathbb K}$ can be considered simply as a linear map
   from $\cA$ to $\cB$, and the condition that ${\mathbb K}$ be a
   completely positive kernel collapses to the condition that
   ${\mathbb K}$ be a completely positive map from $\cA$ to $\cB$ by
   the positive-kernel formulation of completely positive map
   mentioned above.  On the other extreme, if $\cA$ is just the
   complex numbers ${\mathbb C}$, then the condition that ${\mathbb
   K}$ be completely positive collapses to the condition that the
   kernel $k(\cdot, \cdot):= {\mathbb K}(\cdot, \cdot)[1]$ be a
   positive kernel.

   A number of equivalent characterizations of complete positivity
   for a kernel ${\mathbb K} \colon \Om \times \Om \to \cL(\cA, \cB)$ is
   given in \cite{BBLS}.  Let us mention some of these
   which will be convenient for our analysis of various generalized
   Nevanlinna-Pick interpolation problems.

   \begin{proposition}  \label{P:cp}
       Suppose that ${\mathbb
       K}$ is a function from $\Om \times \Om$ into $\cL(\cA, \cB)$.
       Then the following are equivalent:
       \begin{enumerate}
       \item ${\mathbb K}$ is a completely positive kernel, i.e.,
       the kernel $k \colon (\Om \times \cA) \times (\Om \times\cA) \to \cB$
       given by
       $$
       k((\om,a), (\om',a')) = {\mathbb K}(\om,\om')[a a^{\prime *}]
       $$
       is a positive kernel.

       \item For every choice of $n$ points $\om_{1}, \dots, \om_{n}$
       in $\Om$, the map $\varphi_{\om_{1}, \dots, \om_{n}} \colon
       \cA^{n \times n}
       \to \cB^{n \times n}$ given by
\begin{equation}   \label{varphix's}
\varphi_{\om_{1}, \dots, \om_{n}}([a_{ij}]_{i,j=1, \dots n}) =
\left[ {\mathbb K}(\om_{i},\om_{j})[a_{ij}]
\right]_{i,j=1, \dots, n}
\end{equation}
is a completely positive map for any $n=1,2, \dots$.

\item For every choice of $n$ points $\om_{1}, \dots, \om_{n}$ in $\Om$,
the map $\varphi_{\om_{1}, \dots, \om_{n}} \colon \cA^{n \times n} \to
\cB^{n \times n}$ given by \eqref{varphix's} is a positive map for
any $n=1,2, \dots$.
\end{enumerate}
\end{proposition}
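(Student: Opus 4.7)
My plan is to establish the cycle of implications $(1) \Rightarrow (2) \Rightarrow (3) \Rightarrow (1)$. The implication $(2) \Rightarrow (3)$ is immediate, since any completely positive map is in particular a positive map. For $(3) \Rightarrow (1)$ I would argue as follows: given $(\om_1,a_1),\dots,(\om_N,a_N) \in \Om \times \cA$ and $b_1,\dots,b_N \in \cB$, the matrix $[a_i a_j^{*}]_{i,j=1}^{N}$ is a positive element of $\cA^{N\times N}$, being of the form $vv^{*}$ for the column $v = \tu{col}_{i}(a_i)$. Applying the hypothesized positivity of $\vph_{\om_1,\dots,\om_N}$ yields that $[\BK(\om_i,\om_j)[a_i a_j^{*}]]_{i,j=1}^{N}$ is positive in $\cB^{N\times N}$; factoring this as $M^{*}M$ and contracting with $b = \tu{col}_i(b_i)$ then gives $\sum_{i,j} b_i^{*}\BK(\om_i,\om_j)[a_i a_j^{*}] b_j = (Mb)^{*}(Mb) \ge 0$, which is exactly condition \eqref{cp}.

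The main work lies in $(1) \Rightarrow (2)$. To prove this I would invoke the Stinespring-type characterization of complete positivity already recalled in the paragraph preceding the proposition: a map $\vph \colon \cA_{0} \to \cB_{0}$ between $C^{*}$-algebras is completely positive if and only if the kernel $(X,Y) \mapsto \vph(XY^{*})$ is positive. Applied to $\vph = \vph_{\om_1,\dots,\om_n}$ with $\cA_{0} = \cA^{n\times n}$ and $\cB_{0} = \cB^{n\times n}$, this reduces complete positivity to verifying
\[
\sum_{k,l=1}^{m} B_k^{*}\,\vph_{\om_1,\dots,\om_n}(A_k A_l^{*})\,B_l \geq 0 \quad \text{in } \cB^{n\times n}
\]
for arbitrary finite families $A_1,\dots,A_m \in \cA^{n\times n}$ and $B_1,\dots,B_m \in \cB^{n\times n}$. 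Writing $A_k = [a^{(k)}_{ij}]$, expanding the entrywise identity $(A_k A_l^{*})_{ij} = \sum_{p} a^{(k)}_{ip}(a^{(l)}_{jp})^{*}$, and testing against an arbitrary column $c = \tu{col}_r(c_r) \in \cB^{n}$, the resulting element of $\cB$ rearranges, for each fixed $p$, into a sum of the shape appearing in the defining inequality \eqref{cp} of completely positive kernels, with the multi-index $\alpha = (k,i)$ playing the role of $i$, assignments $\om_\alpha := \om_i$, $a_\alpha := a^{(k)}_{ip}$, and coefficients $\xi_\alpha := (B_k c)_i \in \cB$. Each such inner sum is nonnegative by (1), and summing over the finitely many values of $p$ preserves positivity.

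The only real obstacle is careful multi-index bookkeeping in the $(1) \Rightarrow (2)$ step. The key subtlety is that the algebra elements $a^{(k)}_{ip}$ participating in the expansion depend on the extra summation index $p$, whereas the coefficients $\xi_\alpha = (B_k c)_i$ do not, so one must invoke the completely positive kernel hypothesis once for each fixed $p$ rather than trying to absorb $p$ into the combined multi-index $\alpha$.
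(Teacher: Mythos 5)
Your proof is correct, but it takes a genuinely different route from the paper: the paper disposes of Proposition \ref{P:cp} in a single line by citing the equivalence of conditions (1), (4) and (5) in Lemma 3.2.1 of \cite{BBLS}, whereas you give a direct, self-contained argument. Your cycle $(1)\Rightarrow(2)\Rightarrow(3)\Rightarrow(1)$ is sound: $(2)\Rightarrow(3)$ is trivial; $(3)\Rightarrow(1)$ uses only the easy direction of positivity (factor the positive matrix $[\BK(\om_i,\om_j)[a_ia_j^*]]$ and compress with the column of the $b_i$'s); and in $(1)\Rightarrow(2)$ your bookkeeping---expanding $(A_kA_l^*)_{ij}=\sum_p a^{(k)}_{ip}(a^{(l)}_{jp})^*$ and invoking \eqref{cp} once for each fixed $p$, with combined index $(k,i)$, points $\om_{(k,i)}=\om_i$, elements $a^{(k)}_{ip}$ and coefficients $(B_kc)_i$---is exactly right; absorbing $p$ into the multi-index would indeed create unwanted cross terms $p\neq q$. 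Two ingredients you rely on deserve explicit acknowledgment: the Stinespring-type equivalence that $\varphi$ is completely positive if and only if $(X,Y)\mapsto\varphi(XY^{*})$ is a positive kernel, which the paper records with a citation to \cite{Stinespring} just before the proposition, and the standard fact that $T\in\cB^{n\times n}$ is positive if and only if $\sum_{i,j}c_i^*T_{ij}c_j\geq 0$ for all $c_1,\dots,c_n\in\cB$; the nontrivial direction of the latter is what lets you pass from testing against columns $c\in\cB^{n}$ to positivity in $\cB^{n\times n}$, and while it is the same equivalence the paper builds into its definition of positive kernel, a reference (e.g., \cite{Paulsen-book}) or a one-line GNS argument would make the step airtight. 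What your approach buys is independence from \cite{BBLS} and an elementary proof readable within the paper's own preliminaries; what the citation buys is brevity and access to the further characterizations (such as Kolmogorov decompositions) established there.
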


\begin{proof}
The equivalence of (1), (2) and (3) correspond to the equivalence of
(1), (4) and (5) in Lemma 3.2.1 from \cite{BBLS}.
\end{proof}


 An important example of a completely positive kernel for our
 purposes here appearing implicitly in the work of Muhly-Solel
 \cite{MS04, MSSchur} and developed explicitly in \cite{BBFtH} is the
 Szeg\"o kernel associated with a correspondence-representation
 pair $(E,\sigma)$ and defined as follows. We suppose that we are
 given an $\cA$-$W^{*}$-correspondence $E$ together with a
 representation $\sigma \colon \cA \to \cL(\cE)$ for a Hilbert space
 $\cE$.  We then define the associated {\em Szeg\"o kernel}
 $$
 {\mathbb K}_{E, \sigma} \colon {\mathbb D}((E^{\sigma})^{*}) \times
 {\mathbb D}((E^{\sigma})^{*}) \to \cL(\sigma(\cA)', \cL(\cE))
 $$
 by
 \begin{equation}  \label{Szego-ker}
   {\mathbb K}_{E, \sigma}(\eta, \zeta)[b] = \sum_{n=0}^{\infty}
 \eta^{n} (I_{E^{\otimes n}} \otimes b) \zeta^{n *}.
 \end{equation}
 Then it can be seen that ${\mathbb K}_{E, \sigma}$ is a completely
 positive kernel and in fact is the reproducing kernel for a
 reproducing kernel correspondence $H^{2}(E, \sigma)$ whose elements
 are $\cE$-valued functions on the generalized disk ${\mathbb
 D}((E^{\sigma})^{*})$ which also carries a representation
 $\iota_{\infty}$ of $\sigma(\cA)'$.  This construction is based on yet another and
 perhaps the most fundamental characterization of completely positive
 kernel which is not mentioned in Proposition \ref{P:cp}, namely:
 {\em ${\mathbb K}$ is completely positive if and only if ${\mathbb K}$ has a {\em
 Kolmogorov decomposition} in the sense of \cite{BBLS}}; the
 reproducing-kernel correspondence associated with the completely
 positive kernel can then be taken to be the middle space in its
 Kolmogorov decomposition
 and the Schur class $\cS_{E,\sigma}$ can
 alternatively be characterized as the space of contractive-multiplier
 $\sigma(\cA)'$-module maps acting on $H^{2}(E, \sigma)$. For further
 details on these ideas, we refer to \cite{BBLS}.

 In short there are at least two points of view to the Schur class
 $\cS_{E,\sigma}$.  The first is the connection with the
 representation theory for $\cF^{\infty}(E)$ as developed in the work
 of Muhly-Solel \cite{MS04, MSSchur, MSPoisson}, while the second is
 the view of the Schur class as contractive-multiplier module maps
 from \cite{BBFtH}.  Our purpose here is to ignore all this finer
 structure and use the most direct definition \eqref{MSpoint-eval}
 of the point-evaluation to make explicit how the Muhly-Solel
 Nevanlinna-Pick interpolation theorem hooks up with the existing
 literature on assorted   generalizations (in particular, to
 multivariable contexts) of Nevanlinna-Pick interpolation.

  \subsection{A $W^{*}$-correspondence Nevanlinna-Pick interpolation theorem}
  \label{subS:MS-NP}

  The solution to the $W^{*}$-correspondence version of the Nevanlinna-Pick
  interpolation problem is as follows.  In the statement of the result we make use of
  the Szeg\"o kernel ${\mathbb K}_{E,\sigma}$ defined as in \eqref{Szego-ker}.

 \begin{theorem} \label{T:MS-NP}
     Suppose that we are given a correspondence-representation pair
     $(E, \sigma)$ together with a subset $\Om$ of $\BD((E^{\sigma})^{*})$
     and two functions $F \colon \Om \to \cL(\cE)$ and $G \colon \Om \to \cL(\cE)$.
     Then the following are equivalent.
     \begin{enumerate}
     \item[(1)] There exists a function $S \colon {\mathbb
     D}((E^{\sigma})^{*}) \to \cL(\cE)$ in the Schur class
     $\cS_{E, \sigma}$ such that
     $$
       G S|_{\Om} = F.
     $$

     \item[(2)] The kernel ${\mathbb K}_{F} \colon \Om \times \Om \to
     \cL(\sigma(\cA)', \cL(\cE))$ given by
     $$
     {\mathbb K}_{F}(\eta, \eta')[b] = G(\eta){\mathbb
     K}_{E,\sigma}(\eta, \eta')[b]G(\zeta)^{*} - F(\eta) {\mathbb K}_{E,
     \sigma}(\eta, \zeta)[b] F(\zeta)^{*}
     $$
    is completely positive.

    \item[(3)] For each choice of $\eta_{1}, \dots, \eta_{n} \in \om$,
    the map $\varphi_{\eta_{1}, \dots, \eta_{n}}$ from
    $(\sigma(\cA)')^{n \times n}$ to $\cL(\cE)^{n \times n}$ given by
    $$
    \varphi_{\eta_{1}, \dots, \eta_{n}}\left([b_{ij}]_{i,j=1}^n\right) =
    \left[ G(\eta_{i}){\mathbb
    K}_{E,\sigma}(\eta_{i,}\eta_{j})[b_{ij}]G(\eta_{j})^{*} - F(\eta_{i}) {\mathbb
    K}_{E,\sigma}(\eta_{i}, \eta_{j})[b_{ij}] F(\eta_{j})^{*}
    \right]_{i,j=1}^n
    $$
    is a completely positive (or even just a positive) map for any $n=1,2,3,\ldots$.
  \end{enumerate}
  \end{theorem}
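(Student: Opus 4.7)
The equivalence $(2)\Leftrightarrow(3)$ is immediate from Proposition~\ref{P:cp} applied to $\mathbb{K}_F$, so the task reduces to $(1)\Leftrightarrow(2)$. For both directions the workhorse is the family of Cauchy kernel vectors in $\cF^2(E,\si)$ given, for $\eta\in\BD((E^\si)^*)$, $b\in\si(\cA)'$ and $e\in\cE$, by
\begin{equation*}
v_{\eta,b}(e) := \sum_{n=0}^{\infty}(I_{E^{\otimes n}}\otimes b^*)\eta^{*n} e.
\end{equation*}
A direct computation using the module-map property \eqref{mod-prop-ext} of $\eta^n$ yields the reproducing identity $\langle v_{\eta,b}(e), v_{\zeta,c}(f) \rangle_{\cF^2(E,\si)} = \langle e, \mathbb{K}_{E,\si}(\eta,\zeta)[bc^*]f \rangle_\cE$, and, since $I_{\cF^2(E)}\otimes b^*$ commutes with $R\otimes I_\cE$ for every $R\in\cF^\infty(E)$ by Proposition~\ref{P:dual}, one obtains the adjoint-action identity
\begin{equation*}
(R\otimes I_\cE)^* v_{\eta,b}(e) = v_{\eta,b}\bigl(\widehat R(\eta)^* e\bigr).
\end{equation*}

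For $(1)\Rightarrow(2)$, write $S=\widehat R$ with $R\in\cF^\infty(E)$ and $\|R\|\le 1$. Applying the positive operator $I-(R\otimes I_\cE)(R\otimes I_\cE)^*$ to an arbitrary finite sum $\sum_i v_{\eta_i,b_i}(e_i)$ and expanding via the two identities above proves that the ``universal'' kernel $\mathbb{K}_{E,\si}(\cdot,\cdot)[\cdot] - S(\cdot)\mathbb{K}_{E,\si}(\cdot,\cdot)[\cdot]S(\cdot)^*$ is completely positive; substituting the hypothesis $GS|_\Om=F$ into the sandwiched kernel then delivers (2).

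For $(2)\Rightarrow(1)$, a standard weak-$*$ compactness argument on the unit ball of the $W^*$-algebra $\cF^\infty(E)$ (using that $R\mapsto \widehat R(\eta)$ is weak-$*$ continuous) reduces the task to the case $\Om=\{\eta_1,\dots,\eta_N\}$ finite. Form the subspaces $\cM_G:=\overline{\mathrm{span}}\{v_{\eta_j,b}(G(\eta_j)^*e)\}$ and $\cM_F:=\overline{\mathrm{span}}\{v_{\eta_j,b}(F(\eta_j)^*e)\}$ (over $j,b,e$) of $\cF^2(E,\si)$ and define the substitution map $Y\colon v_{\eta_j,b}(G(\eta_j)^*e)\mapsto v_{\eta_j,b}(F(\eta_j)^*e)$ on the spanning set. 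The reproducing identity combined with $(2)$ yields $\|v\|^2-\|Yv\|^2\ge 0$, so $Y$ extends to a contraction $\cM_G\to\cM_F$. A direct calculation using the left-$\cA$-module properties of $\eta_j^*,\mu\in E^\si$ shows
\begin{equation*}
(I_{\cF^2(E)}\otimes b_0) v_{\eta_j,b}(e) = v_{\eta_j,bb_0^*}(e),\qquad
(I_{\cF^2(E)}\otimes \mu^*) v_{\eta_j,b}(e) = v_{\eta_j,\,\eta_j(I_E\otimes b)\mu}(e),
\end{equation*}
with the new twisting parameters $bb_0^*$ and $\eta_j(I_E\otimes b)\mu$ again in $\si(\cA)'$; hence $\cM_F$ and $\cM_G$ are invariant under $I_{\cF^2(E)}\otimes b_0$ and $I_{\cF^2(E)}\otimes\mu^*$, and $Y$ intertwines these operators on the spanning vectors. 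Applying the concrete commutant lifting Theorem~\ref{T:CLT} to $X:=Y^*\colon\cM_F\to\cM_G$ produces $S\in\cF^\infty(E)$ with $\|S\|\le 1$, $(S^*\otimes I_\cE)\cM_G\subset\cM_F$ and $Y=(S\otimes I_\cE)^*|_{\cM_G}$. Specializing to $b=I_\cE$ in the adjoint-action identity and comparing zeroth components forces $G(\eta_j)\widehat S(\eta_j)=F(\eta_j)$ for every $j$, as required. The central obstacle is the explicit verification of the two module-map identities displayed above, which is where the $\si(\cA)'$-$W^*$-correspondence structure of $E^\si$ is essentially used; once these are in hand, Theorem~\ref{T:CLT} delivers the interpolant essentially automatically.
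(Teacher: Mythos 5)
Your proposal is correct, but it takes a genuinely different route from the paper: the paper does not give a self-contained argument for Theorem \ref{T:MS-NP} at all, instead quoting Proposition \ref{P:cp} for $(2)\Leftrightarrow(3)$, citing Muhly--Solel \cite{MS04} for $(1)\Leftrightarrow(3)$ with $\Om$ finite and \cite{MSSchur} (and \cite{BBFtH}) for the case $\Om={\mathbb D}((E^\si)^*)$, $G\equiv I_\cE$, and then invoking a weak-$*$ compactness (Kurosh) argument for general $\Om$. You instead re-prove the core finite-point case directly by transplanting the technique the paper itself uses for the \emph{second} point-evaluation (Theorem \ref{T:MSOA-NP}): in place of the operators $W_{\ze^*,a^*}$ you build the twisted Cauchy vectors $v_{\eta,b}(e)=\sum_n (I_{E^{\otimes n}}\otimes b^*)\eta^{*n}e$, verify the reproducing identity against ${\mathbb K}_{E,\si}$, the eigenvector-type identity $(R\otimes I_\cE)^*v_{\eta,b}(e)=v_{\eta,b}(\widehat R(\eta)^*e)$ (this one genuinely needs the Toeplitz-structure computation of Proposition \ref{P:Toeplitz-recipe} together with the commutation from Proposition \ref{P:dual}, not Proposition \ref{P:dual} alone), and the two module identities $(I\otimes b_0)v_{\eta,b}=v_{\eta,bb_0^*}$, $(I\otimes\mu^*)v_{\eta,b}=v_{\eta,\eta(I_E\otimes b)\mu}$ (both check out, and $\eta(I_E\otimes b)\mu\in\si(\cA)'$ by the intertwining properties), after which Theorem \ref{T:CLT} delivers the interpolant exactly as you say. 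The $b$-twist is the right device: it is precisely what makes ${\cM}_F,{\cM}_G$ invariant and what forces the full $\si(\cA)'$-coefficient (i.e.\ completely positive) hypothesis (2)/(3) rather than a single Pick-matrix positivity, so your argument uses the hypothesis at exactly the needed strength; note also that the vector-coefficient positivity you use is equivalent to the operator-coefficient form \eqref{cp} by the rank-one substitution $c_i=e_i\langle\cdot,x\rangle$. What your route buys is a self-contained proof (avoiding the dual-correspondence representation theory of \cite{MS04}) that runs parallel to the paper's proof of Theorem \ref{T:MSOA-NP}; what the paper's route buys is brevity and, via \cite{MSSchur}, the whole-disk Schur-class characterization. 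Two small points you should make explicit if you write this up: well-definedness of $Y$ on the span follows from the same positivity estimate that gives contractivity, and the weak-$*$ compactness reduction needs the (routine, but worth a sentence) fact that $R\mapsto\widehat R(\eta)$ is continuous from the weak-$*$ topology on the unit ball to the weak operator topology, which follows from normality of the block compressions $R\mapsto R_{n,0}$ plus the uniform tail bound $\|\eta\|^{n}$.
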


\begin{remark}\label{R:Xfin}
{\em In case $\Om$ is finite, say $\Om=\{\eta_1,\ldots,\eta_N\}$, and $X_1=G(\eta_1),\ldots,X_N=G(\eta_N)$
and $Y_1=F(\eta_1),\ldots,Y_N=F(\eta_N)$, we can always assume $n$ in condition (3) to be of the form
$n=kN$ for some positive integer $k$ and the sequence of points from $X$ to be the sequence
$\eta_1,\ldots,\eta_N$ repeated $k$ times. It then follows that condition (3) can be written as:} The map
$\vph:\si(\cA)'^{N\times N}\to\cL(\cE)^{N\times N}$ given by
\begin{equation}\label{XfinCriterion}
\vph\left(\mat{c}{b_{i,j}}_{i,j=1}^N\right)=\left[ X_i{\mathbb
    K}_{E,\sigma}(\eta_{i,}\eta_{j})[b_{ij}]X_j^* - Y_i {\mathbb
    K}_{E,\sigma}(\eta_{i}, \eta_{j})[b_{ij}] Y_j^*
    \right]_{i,j=1}^N
\end{equation}
is a completely positive map.
\end{remark}

  \begin{proof}[Proof of Theorem \ref{T:MS-NP}]  Note that the equivalence of (2) and (3) is just a
      particular case of Proposition \ref{P:cp}.
      The equivalence of (1) and (3) for the case that $\Om$
      is finite is given in \cite{MS04} (see also Remark \ref{R:Xfin}).
      For the case where $\Om = {\mathbb D}((E^{\sigma})^{*})$ and
      $G(\eta)=I_\cE$ for each $\eta\in \Om$ the equivalence of (1) and (2) is given in
      \cite{MSSchur} (see also \cite{BBFtH} for the
      reproducing-kernel point of view).  The case of a general $\Om$ can be
      seen to follow from the case of a finite $\Om$ via a standard weak-$*$
      compactness argument; one way to organize this argument is as an
      application of  Kurosh's Theorem (see \cite[page 30]{AgMcC}).
   \end{proof}


\subsection{The second Muhly-Solel point-evaluation and associated Nevanlinna-Pick theorem}
\label{subS:MS-alt}

In the recent paper \cite{MSPoisson} Muhly and Solel introduced a
Poisson kernel, and applied
this object to define a second point-evaluation for elements of the Toeplitz algebra
$H^\infty (E)$. As pointed out in \cite{MSPoisson}, this point-evaluation has certain
characteristics that resemble those of  the point-evaluation used in discrete-time time-varying
interpolation and system theory, as developed in the 1990s; cf. \cite{BGK92,ADD90,FFGK98}.
In \cite{FFGK96} it was observed that many time-varying interpolation problems can be recast as
classical interpolation problems with an operator argument. In this section we prove a
Nevanlinna-Pick interpolation theorem for the alternative point-evaluation of \cite{MSPoisson};
in the examples to follow (see Subsections \ref{subS:MS-1var} and \ref{subS:quiver2} below) we
show that this Nevanlinna-Pick theorem indeed corresponds to the operator-argument versions in
the settings considered there.

For the second point-evaluation, the points are formed by pairs $(\ze,a)$,
where $a$ is from the $W^*$-algebra $\cA$ and $\ze$ is from the set
\[
\BD(E^*):=\{\ze \colon\ze^*\in E,\ \|\ze^*\|_E<1\}.
\]
Given such a pair $(\ze,a)$ we set
\[
\ze_{(n)}^*:=\ze^*\otimes\cdots\otimes\ze^*\in E^{\otimes n}\text{ for }n=1,2,\ldots,
\]
and define
\[
W_{\ze^*,a^*}=\vph_{\infty}(a^*)\mat{c}{1_\cA\\T^{(0)}_{\ze_{(1)}^*}\\T^{(0)}_{\ze_{(2)}^*}\\\vdots}
\in\cL^a(\cA,\cF^2(E)),
\]
where $\vph_\infty(a^*)$ and $T^{(0)}_{\ze_{(n)}^*}$ are given by (\ref{ArepFock}) and
(\ref{crea-n}), respectively. The Poisson kernel defined in \cite{MSPoisson} is the special
case of $W_{\ze^*,a^*}$ with $a^*=T^{(0)*}_{\ze^*} T^{(0)}_{\ze^*}$. In that case $W_{\ze^*,a^*}$
is well-defined even if $\|\ze^*\|=1$, and $W_{\ze^*,a^*}$ is a contractive operator. It is easy to
see that for each $\ze\in\BD(E^*)$ and $a\in\cA$ the operator $W_{\ze^*,a^*}$ still defines a bounded
operator, but not necessarily contractive.

For an $R\in\cF^\infty(E)$ we define the evaluation of $R$ in a point $(\ze,a)$ from $\BD(E^*)\times \cA$
by
\[
\widehat{R}(\ze,a)=W_{\ze^*,a^*}^* R|_\cA\in \cL^a(\cA)=\cA.
\]
Here we used the fact that for any $W^*$-algebra $\cA$, considered in the standard way as a
$W^*$-correspondence over itself, we can identify $\cL^a(\cA)$ with the
$C^*$-algebra $\cA$ itself.
If $R\in\cF^\infty(E)$ is given by a sequence of elements of $E^{\otimes n}$, $n=0,1,\ldots$,
as in Proposition \ref{P:Toeplitz-recipe}, then $\widehat{R}(\ze,a)$ can be written more concretely.

\begin{proposition}\label{P:altpoint}
Let $R\in\cF^\infty(E)$ be given by (\ref{Rdec})-(\ref{crea-n}) with $\xi^{(n)}\in E^{\otimes n}$
for $n=0,1,2,\ldots$. Then for any $(\ze,a)\in\BD(E^*)\times\cA$ we have
\[
\widehat{R}(\ze,a)=\sum_{n=0}^\infty\inn{a\xi^{(n)}}{\ze_{(n)}^*}.
\]
\end{proposition}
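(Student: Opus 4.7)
The plan is to unravel the definition $\widehat R(\ze,a)=W_{\ze^*,a^*}^*\,R|_{\cA}$ directly, exploiting the identification $\cL^a(\cA)\cong\cA$ under which a right $\cA$-module map $T\in\cL^a(\cA)$ corresponds to $T(1_\cA)\in\cA$. Under this identification, $\widehat R(\ze,a)$ is simply the vector $W_{\ze^*,a^*}^*\bigl(R|_{\cA}(1_\cA)\bigr)\in\cA$, so it suffices to compute the two vectors $R|_{\cA}(1_\cA)$ and $W_{\ze^*,a^*}(1_\cA)$ in $\cF^2(E)$ and pair them via the $\cA$-valued inner product.

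First, Proposition \ref{P:Toeplitz-recipe} together with the matrix representation \eqref{Rdec}--\eqref{crea-n} immediately gives $R|_{\cA}(1_\cA)=\oplus_{n\ge 0}\,\xi^{(n)}$. For the second vector, applying the column of creation operators to $1_\cA$ yields $\oplus_{n\ge 0}\,\ze^*_{(n)}$, where we adopt the natural convention $\ze^*_{(0)}:=1_\cA$. Then applying $\vph_\infty(a^*)=\diag\bigl(a^*,\vph^{(1)}(a^*),\vph^{(2)}(a^*),\dots\bigr)$, and recalling that $\vph^{(n)}(a^*)$ acts as $a^*$ on the leftmost tensor factor of $E^{\otimes n}$, one obtains
\[
W_{\ze^*,a^*}(1_\cA)=\oplus_{n\ge 0}\,(a^*\cdot\ze^*_{(n)}).
\]
Using the direct-sum form of the inner product on $\cF^2(E)$, adjointness of $W_{\ze^*,a^*}$, and the fact that $\inn{x}{1_\cA}_\cA=x$ for $x\in\cA$, we find
\[
\widehat R(\ze,a)=\bigl\langle R|_{\cA}(1_\cA),\,W_{\ze^*,a^*}(1_\cA)\bigr\rangle_{\cF^2(E)}=\sum_{n\ge 0}\bigl\langle\xi^{(n)},\,a^*\cdot\ze^*_{(n)}\bigr\rangle_{E^{\otimes n}}.
\]
Finally, axiom~(3) of the correspondence inner product, $\inn{a\cdot\xi}{\eta}=\inn{\xi}{a^*\cdot\eta}$, moves the $a^*$ out of the right slot into an $a$ on the left slot, producing precisely the stated formula.

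Convergence of the series is straightforward. Since $R$ is a bounded operator on $\cF^2(E)$, we have $R|_{\cA}(1_\cA)=\oplus_n\xi^{(n)}\in\cF^2(E)$, whence $\sum_n\|\xi^{(n)}\|^2\le\|R\|^2<\infty$; combined with $\|\ze^*_{(n)}\|\le\|\ze^*\|^n$ and $\|\ze^*\|<1$, a Cauchy--Schwarz estimate against a geometric series yields absolute norm convergence in $\cA$. There is no real obstacle beyond careful bookkeeping: the only genuine subtleties are tracking the inner-product conventions from axioms (2)--(3) when passing between $W_{\ze^*,a^*}$ and its adjoint, and unifying the $n=0$ summand with the general formula via the convention $\ze^*_{(0)}:=1_\cA$.
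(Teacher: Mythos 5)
Your proof is correct and is essentially the same computation as the paper's: both simply unwind $\widehat R(\ze,a)=W_{\ze^*,a^*}^*R|_\cA$ entrywise (the paper applies the composition to an arbitrary $a'\in\cA$ and pulls $a'$ out with axiom (2), while you evaluate at $1_\cA$ and pair against $W_{\ze^*,a^*}(1_\cA)$, then shift $a^*$ across the inner product with axiom (3) -- a cosmetic difference). One small blemish in your convergence aside: the inequality $\sum_n\|\xi^{(n)}\|^2\le\|R\|^2$ is not valid in general, since for $\cA$-valued inner products the norms of the summands $\inn{\xi^{(n)}}{\xi^{(n)}}$ need not be dominated by the norm of their sum in that additive way; what is true, and all you need, is $\|\xi^{(n)}\|\le\|R\|$ for each $n$ (each positive summand is dominated by the total), after which your Cauchy--Schwarz estimate against the geometric series $\|\ze^*\|^n$ gives absolute convergence exactly as in the paper's remark following the definition of the first point-evaluation.
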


\begin{proof}
The statement follows directly from the fact that for any $a'\in\cA$ we have
\begin{eqnarray*}
\widehat{R}(\ze,a)a'&=&\sum_{n=0}^\infty T^{(0)*}_{\ze_{(n)}^*}\vph_{n}(a)T^{(0)}_{\xi^{(n)}}a'
=\sum_{n=0}^\infty T^{(0)*}_{\ze_{(n)}^*} a\cdot\xi^{(n)}\cdot a'\\
&=&\sum_{n=0}^\infty\inn{a\cdot\xi^{(n)}\cdot a'}{\ze_{(n)}^*}
=\sum_{n=0}^\infty\inn{a\cdot\xi^{(n)}}{\ze_{(n)}^*}a'.
\end{eqnarray*}
\end{proof}

The following lemma will be useful in the sequel.

\begin{lemma}\label{L:inter}
For any $(\ze,a)\in\BD(E^*)\times \cA$ and $R\in\cF^\infty(E)$ we have
\begin{equation}\label{indent}
W_{\ze^*,a^*}^*R=W_{\ze^*,\widehat{R}(\ze,a)^*}^*.
\end{equation}
\end{lemma}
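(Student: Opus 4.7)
My plan is to verify the identity by evaluating both sides on an arbitrary element $\eta^{(k)} \in E^{\otimes k}$ viewed as sitting in the $k$-th summand of $\cF^2(E)$; by linearity and continuity of all operators involved, this will imply the identity on all of $\cF^2(E)$. The key simplification is the factorization
\[
W_{\zeta^*,a^*}^* = L_\zeta\,\varphi_\infty(a), \qquad \text{where } L_\zeta := W_{\zeta^*,1_\cA}^* \colon \cF^2(E) \to \cA,
\]
which holds immediately from the definition of $W_{\zeta^*,a^*}$. A direct computation using that $T^{(0)*}_{\xi^{(n)}}\eta = \langle \eta,\xi^{(n)}\rangle$ gives the row-operator formula $L_\zeta(\oplus_n \eta^{(n)}) = \sum_n \langle \eta^{(n)}, \zeta_{(n)}^* \rangle$.

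Next, I will use the Toeplitz representation of $R$ from Proposition \ref{P:Toeplitz-recipe}: writing $R$ via the sequence $\{\xi^{(n)}\}_{n\ge 0}$, the image of $\eta^{(k)}$ under $R$ is $\oplus_{n\ge k}\,\xi^{(n-k)}\otimes \eta^{(k)}$. Applying $\varphi_\infty(a)$ pulls an $a$ into each component, and then $L_\zeta$ collects the inner products against $\zeta_{(n)}^*$. Decomposing $\zeta_{(n)}^* = \zeta_{(n-k)}^*\otimes \zeta_{(k)}^*$ and using the tensor product inner-product formula $\langle \xi\otimes\gamma, \xi'\otimes\gamma'\rangle = \langle \langle \xi,\xi'\rangle\cdot\gamma, \gamma'\rangle$, I obtain, after substituting $m=n-k$,
\[
W_{\zeta^*,a^*}^* R\,\eta^{(k)} = \sum_{m\ge 0}\bigl\langle \langle a\cdot\xi^{(m)},\zeta_{(m)}^*\rangle\cdot \eta^{(k)},\,\zeta_{(k)}^*\bigr\rangle.
\]

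The series $\sum_{m}\langle a\cdot\xi^{(m)},\zeta_{(m)}^*\rangle$ converges in norm in $\cA$, since $\|\xi^{(m)}\| = \|R_{m,0}\| \le \|R\|$ while $\|\zeta_{(m)}^*\| \le \|\zeta^*\|^m$ decays geometrically; thus continuity of the left action and of the inner product allows pulling the sum out, yielding
\[
W_{\zeta^*,a^*}^* R\,\eta^{(k)} = \bigl\langle \widehat R(\zeta,a)\cdot \eta^{(k)},\,\zeta_{(k)}^*\bigr\rangle,
\]
where $\widehat R(\zeta,a)$ is the series of Proposition \ref{P:altpoint}. The right-hand side of \eqref{indent}, applied to $\eta^{(k)}$, is $L_\zeta\,\varphi_\infty(\widehat R(\zeta,a))\,\eta^{(k)} = \langle \widehat R(\zeta,a)\cdot \eta^{(k)},\zeta_{(k)}^*\rangle$ by the same row-operator formula, so the two expressions coincide.

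There is no deep obstacle here: the work is essentially bookkeeping with the tensor balancing rule and the $\cA$-valued inner product. The one point requiring care is the norm convergence argument that justifies interchanging the sum over $m$ with the outer inner product; this is exactly where the strict contractivity $\|\zeta^*\|<1$ is used, ensuring both that $\widehat R(\zeta,a)$ is a well-defined element of $\cA$ and that the rearrangement of the double sum produced by the Toeplitz structure of $R$ is legitimate.
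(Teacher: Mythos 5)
Your proof is correct and follows essentially the same route as the paper's: the central step---splitting $\ze^*_{(n+k)}=\ze^*_{(n)}\otimes\ze^*_{(k)}$ and using the balanced inner-product identity $\inn{(a\cdot\xi^{(n)})\otimes\rho}{\ze^*_{(n)}\otimes\ze^*_{(k)}}=\inn{\inn{a\cdot\xi^{(n)}}{\ze^*_{(n)}}\cdot\rho}{\ze^*_{(k)}}$, then summing over $n$ using $\|\ze^*\|<1$---is exactly the paper's entrywise computation of the $k^{\tu{th}}$ block of $W_{\ze^*,a^*}^*R$. The only cosmetic difference is that you evaluate both sides on elements $\eta^{(k)}\in E^{\otimes k}$ (with the factorization $W_{\ze^*,a^*}^*=L_\ze\varphi_\infty(a)$) rather than equating the operator entries of the block rows directly.
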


\begin{proof}
Let $(\ze,a)\in\BD(E^*)\times \cA$ and $R\in\cF^\infty(E)$. Let $\xi^{(n)}\in E^{\otimes n}$,
for $n=0,1,2,\ldots$, such that $R$ is given by (\ref{Rdec})-(\ref{crea-n}). First observe that for
$n,k=0,1,2,\ldots$ and for any $\rho\in E^{\otimes k}$ we have
\begin{eqnarray*}
T^{(0)*}_{\ze^*_{(n+k)}}\vph_{n+k}(a)(T^{(0)}_{\xi^{(n)}}\otimes I_{E^{\otimes k}})\rho
&=&T^{(0)*}_{\ze^*_{(n+k)}} (a\cdot\xi^{(n)})\otimes \rho\\
&=&\inn{(a\cdot\xi^{(n)})\otimes \rho}{\ze^*_{(n+k)}}\\
&=&\inn{(a\cdot\xi^{(n)})\otimes \rho}{\ze^*_{(n)}\otimes\ze^*_{(k)}}\\
&=&\inn{\inn{a\cdot\xi^{(n)}}{\ze^*_{(n)}}\cdot\rho}{\ze^*_{(k)}}\\
&=&T^{(0)*}_{\ze^*_{(k)}}\vph_{k}(\inn{a\cdot\xi^{(n)}}{\ze^*_{(n)}})\rho.
\end{eqnarray*}
It then follows that the $k^\tu{th}$ entry in the infinite bock row matrix $W_{\ze^*,a^*}^*R$
is equal to
\begin{eqnarray*}
\sum_{n=0}^\infty T^{(0)*}_{\ze^*_{(n+k)}}\vph_{n+k}(a)(T^{(0)}_{\xi^{(n)}}\otimes I_{E^{\otimes k}})
&=&\sum_{n=0}^\infty T^{(0)*}_{\ze^*_{(k)}}\vph_{k}(\inn{a\cdot\xi^{(n)}}{\ze^*_{(n)}})\\
&=&T^{(0)*}_{\ze^*_{(k)}}\vph_{k}\left(\sum_{n=0}^\infty\inn{a\cdot\xi^{(n)}}{\ze^*_{(n)}}\right)\\
&=&T^{(0)*}_{\ze^*_{(k)}}\vph_{k}(\widehat{R}(\ze,a)).
\end{eqnarray*}
This proves our claim.
\end{proof}

One application of Lemma \ref{L:inter} is the following multiplicative law for elements in
the Toeplitz algebra $\cF^\infty(E)$ with respect to the point-evaluation considered in the
present subsection.

\begin{proposition}\label{P:multilaw}
Let $R,S\in\cF^\infty(E)$ and $(\ze,a)\in\BD(E^*)\times \cA$. Then
\[
(\widehat{RS})(\ze,a)=\widehat{S}(\ze,\widehat{R}(\ze,a)).
\]
\end{proposition}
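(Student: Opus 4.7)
The plan is to derive this multiplicative law as an almost immediate consequence of Lemma \ref{L:inter}, which is the key tool already established. The definition of the point-evaluation gives
\[
(\widehat{RS})(\ze,a) = W_{\ze^*,a^*}^*(RS)|_\cA = \bigl(W_{\ze^*,a^*}^* R\bigr) S|_\cA,
\]
where I am simply using associativity of operator composition and restricting to $\cA \subset \cF^2(E)$ at the end.

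Next I would invoke Lemma \ref{L:inter} to rewrite the operator $W_{\ze^*,a^*}^* R$ as $W_{\ze^*,\widehat{R}(\ze,a)^*}^*$. This is the whole content of the identity \eqref{indent}, so no additional work is needed here; the lemma precisely says that sliding $R$ through $W_{\ze^*,a^*}^*$ absorbs $R$ into the second coordinate of the $W$-operator, replacing $a$ by $\widehat{R}(\ze,a)$. Substituting this back gives
\[
(\widehat{RS})(\ze,a) = W_{\ze^*,\widehat{R}(\ze,a)^*}^* \, S|_\cA,
\]
and by the definition of the point-evaluation applied to $S$ at the point $(\ze, \widehat{R}(\ze,a)) \in \BD(E^*) \times \cA$, the right-hand side equals $\widehat{S}(\ze, \widehat{R}(\ze,a))$, which is exactly what we want.

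There is essentially no obstacle here since Lemma \ref{L:inter} has already done the real work of computing how $W_{\ze^*,a^*}^*$ interacts with a generic Toeplitz-algebra element. The only thing one should perhaps be mildly careful about is verifying that $(\ze, \widehat{R}(\ze,a))$ is an admissible point for the second evaluation, i.e., that $\widehat{R}(\ze,a) \in \cA$ (which follows from the identification $\cL^a(\cA) = \cA$ already invoked when the first point-evaluation was defined), and that $\ze \in \BD(E^*)$ remains unchanged. Thus the proposition is a one-line corollary of Lemma \ref{L:inter}, and the plan is simply to spell out these three substitutions cleanly.
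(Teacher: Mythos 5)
Your proof is correct and is essentially identical to the paper's: both apply Lemma \ref{L:inter} to rewrite $W_{\ze^*,a^*}^*R$ as $W_{\ze^*,\widehat{R}(\ze,a)^*}^*$ and then invoke the definition of the point-evaluation for $S$. The extra remark about $\widehat{R}(\ze,a)\in\cA$ is a harmless clarification the paper leaves implicit.
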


\begin{proof}
It follows from Lemma \ref{L:inter} that
\[
(\widehat{RS})(\ze,a)=W_{\ze,a^*}^*R S|_\cA=W_{\ze,\widehat{R}(\ze,a)^*}^*S|_\cA
=\widehat{S}(\ze,\widehat{R}(\ze,a)).
\]
\end{proof}

It follows, in particular, from this proposition that the alternative point-evaluation is not
multiplicative; unlike the first Muhly-Solel point-evaluation discussed in
Subsection \ref{subS:points}; cf., \cite[Proposition 4.4]{BBFtH}. The multiplicative law
obtained in this case shows more resemblance to that appearing in the operator-argument
functional calculus; cf., formula I.2.7 in \cite{FFGK98}.

We now prove the following Nevanlinna-Pick interpolation theorem.

%

\begin{theorem}\label{T:MSOA-NP}
Suppose that we are given $(\ze_1,a_1),\ldots,(\ze_N,a_N)\in\BD(E^*)\times \cA$ and
$w_1,\ldots,w_N\in\cA$. Then there exists an $S\in\cF^\infty(E)$ with $\|S\|\leq1$ and
$\widehat{S}(\ze_k,a_k)=w_k$ for $k=1,\ldots,N$ if and only if the operator matrix
\begin{equation}\label{MSOA-Pick}
\left[\sum_{n=0}^\infty\inn{(a_ia_j^*-w_iw_j^*)\ze_{j\,(n)}^*}{\ze_{i\,(n)}^*}_{E^{\otimes n}} \right]_{i,j=0}^N
\end{equation}
is a positive element of $\cA^{N\times N}$.
\end{theorem}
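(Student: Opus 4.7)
My plan is to recast the interpolation conditions $\widehat S(\ze_k,a_k)=w_k$ as a single operator identity and then invoke the $W^*$-correspondence commutant lifting theorem \ref{T:CLT}. Define the adjointable right-$\cA$-module maps
\[
\mathbb W,\widetilde{\mathbb W}\colon\bigoplus_{k=1}^{N}\cA\to\cF^2(E),\qquad \mathbb W((a_k'))=\sum_{k=1}^{N}W_{\ze_k^*,a_k^*}a_k',\qquad \widetilde{\mathbb W}((a_k'))=\sum_{k=1}^{N}W_{\ze_k^*,w_k^*}a_k'.
\]
A direct calculation using the $\cA$-valued inner-product axioms and the $*$-homomorphism property of $\vph_n$ identifies the $(i,j)$-block entry of $\mathbb W^*\mathbb W-\widetilde{\mathbb W}^*\widetilde{\mathbb W}\in\cL^a(\oplus_1^N\cA)=\cA^{N\times N}$ with the $(i,j)$-entry of the Pick matrix in \eqref{MSOA-Pick}; thus the hypothesis reads $\widetilde{\mathbb W}^*\widetilde{\mathbb W}\preceq\mathbb W^*\mathbb W$. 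The necessity direction is then essentially free: given a solution $S$, taking the adjoint in Lemma \ref{L:inter} at $(\ze_k,a_k)$ gives $S^*W_{\ze_k^*,a_k^*}=W_{\ze_k^*,w_k^*}$ for each $k$, hence $S^*\mathbb W=\widetilde{\mathbb W}$ and $\widetilde{\mathbb W}^*\widetilde{\mathbb W}=\mathbb W^*SS^*\mathbb W\preceq\mathbb W^*\mathbb W$ since $\|S\|\leq 1$.

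For sufficiency I would fix a faithful normal $*$-representation $\si\colon\cA\to\cL(\cE)$; then $\bigoplus_1^N\cA\otimes_\si\cE\cong\cE^N$, and amplifying the Pick inequality by $\si$ yields $(\widetilde{\mathbb W}\otimes I_\cE)^*(\widetilde{\mathbb W}\otimes I_\cE)\leq(\mathbb W\otimes I_\cE)^*(\mathbb W\otimes I_\cE)$ on $\cE^N$. Douglas' lemma supplies a contraction $C$ from $\cN:=\overline{\mathrm{ran}}(\mathbb W\otimes I_\cE)$ into $\cM:=\overline{\mathrm{ran}}(\widetilde{\mathbb W}\otimes I_\cE)$ with $C(\mathbb W\otimes I_\cE)=\widetilde{\mathbb W}\otimes I_\cE$. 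The plan is to lift $C$ via Theorem \ref{T:CLT} to obtain $S\in\cF^\infty(E)$ with $\|S\|\leq 1$ and $(S^*\otimes I_\cE)(\mathbb W\otimes I_\cE)=\widetilde{\mathbb W}\otimes I_\cE$; faithfulness of $\si$ then removes the $I_\cE$ and gives $S^*\mathbb W=\widetilde{\mathbb W}$. Since the map $a\mapsto W_{\ze^*,a^*}$ is injective (the degree-zero component of $W_{\ze^*,a^*}1$ equals $a^*$), one more application of Lemma \ref{L:inter} delivers $\widehat S(\ze_k,a_k)=w_k$ for every $k$.

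The hard part will be verifying the hypotheses of Theorem \ref{T:CLT}, namely that $\cN$ and $\cM$ are invariant under $I_{\cF^2(E)}\otimes b$ and $I_{\cF^2(E)}\otimes\mu^*$ for all $b\in\si(\cA)'$ and $\mu\in E^\si$, and that $C$ intertwines the corresponding restrictions. The $b$-case follows immediately from the tensor-product identity $(I_{\cF^2(E)}\otimes b)(W_{\ze^*,a^*}\otimes I_\cE)=(W_{\ze^*,a^*}\otimes I_\cE)(I_\cA\otimes b)$. The $\mu^*$-case is the main computation: splitting off the last factor of $\ze^*_{(n)}$ at each level of the Fock space and using the intertwining property of $\mu\in E^\si$, I expect to obtain
\[
(I_{\cF^2(E)}\otimes\mu^*)(W_{\ze^*,a^*}\otimes I_\cE)=(W_{\ze^*,a^*}\otimes I_\cE)\,M_{\ze^*,\mu},\qquad M_{\ze^*,\mu}\colon e\mapsto\mu^*(\ze^*\otimes e).
\]
The crucial observation is that $M_{\ze^*,\mu}$ depends on $\ze^*$ but not on $a$ or $w$; so the block-diagonal operator $\mathrm{diag}(M_{\ze_k^*,\mu})$ on $\cE^N$ simultaneously governs the restriction of $I_{\cF^2(E)}\otimes\mu^*$ to both $\cN$ and $\cM$, and $C$ automatically intertwines them. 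Once this verification is complete, Theorem \ref{T:CLT} produces the required $S$ and the chain of implications above closes the proof.
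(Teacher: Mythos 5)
Your proposal is correct and follows essentially the same route as the paper's own proof: necessity via Lemma \ref{L:inter} plus $\|S\|\le 1$, and sufficiency by passing to a faithful normal representation, applying the Douglas factorization lemma, verifying precisely the intertwining relations \eqref{eqs} (your operator $M_{\ze^*,\mu}$ is the adjoint form of the paper's second relation, namely $M_{\ze^*,\mu}=\mu^*(T^{(0)}_{\ze^*}\otimes I_\cE)$, and your key observation that it is independent of $a$ is exactly what makes the intertwining of the compressions automatic), and then invoking the commutant lifting Theorem \ref{T:CLT}. The only cosmetic difference is the direction of your contraction $C$ (the paper's $V$ is essentially its adjoint), which is harmless bookkeeping.
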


\begin{proof}
Assume we have an $S\in\cF^\infty(E)$ with $\|S\|\leq1$ and $\widehat{S}(\ze_k,a_k)=w_k$ for
$k=1,\ldots,N$. Then, by Lemma \ref{L:inter},
$W_{\ze_k^*,a_k^*}^*S=W_{\ze_k^*,\widehat{S}(\ze_k,a_k)^*}^*=W_{\ze_k^*,w_k^*}^*$ for $k=1,\ldots,N$,
and thus
\[
S^*\mat{ccc}{W_{\ze_1^*,a_1^*}&\cdots&W_{\ze_N^*,a_N^*}}
=\mat{ccc}{W_{\ze_1^*,w_1^*}&\cdots&W_{\ze_N^*,w_N^*}}.
\]
Since $\|S\|\leq 1$, this implies that
\begin{equation}\label{eqn1}
\begin{array}{l}
\mat{c}{W_{\ze_1^*,a_1^*}^*\\\vdots\\W_{\ze_N^*,a_N^*}^*}
\mat{ccc}{W_{\ze_1^*,a_1^*}\!\! &\!\! \cdots\!\! &\!\! W_{\ze_N^*,a_N^*}}
-\mat{c}{W_{\ze_1^*,w_1^*}^*\\\vdots\\W_{\ze_N^*,w_N^*}^*}
\mat{ccc}{W_{\ze_1^*,w_1^*}\!\! &\!\! \cdots\!\! &\!\! W_{\ze_N^*,w_N^*}}\\[.2cm]
\hspace*{1.5cm}=\mat{c}{W_{\ze_1^*,a_1^*}^*\\\vdots\\W_{\ze_N^*,a_N^*}^*}(I-SS^*)
\mat{ccc}{W_{\ze_1^*,a_1^*}\!\! &\!\! \cdots\!\! &\!\!
W_{\ze_N^*,a_N^*}} \succeq 0\quad \text{(in $\cA^{N\times N}$)}.
\end{array}
\end{equation}
Next observe that for any $(\ze,a),(\ze',a')\in\BD(E^*)\times \cA$ we have
\[
W^*_{\ze^*,a^*}W_{\ze'^*,a'^*}=\sum_{n=0}^\infty\inn{aa'^* \ze'^{*}_{(n)}}{\ze_{(n)}^*}\in\cA=\cL^a(\cA).
\]
{}From this computation it follows that the left hand side of the identity in (\ref{eqn1})
is equal to the Pick matrix (\ref{MSOA-Pick}). Thus (\ref{MSOA-Pick}) is a positive element
of $\cA^{N\times N}$.

Conversely, assume that (\ref{MSOA-Pick}) is positive in $\cA^{N\times N}$.
Let $\si:\cA\to\cL(\cE)$ be a faithful (i.e., injective) representation of $\cA$
into $\cL(\cE)$ for some Hilbert space $\cE$. Fix $(\ze,a)\in\BD(E^*)\times \cA$.
For notational convenience we set
$\widetilde W_{\ze^*,a^*}=W_{\ze^*,a^*}\otimes I_\cE\in\cL^a(\cE,\cF^2(E,\si))$, where
$\cF^2(E,\si):=\cF^2(E)\otimes_\si\cE$.
We claim that for each $b\in\si(\cA)'$ and $\mu\in E^\si$ we have
\begin{equation}\label{eqs}
\begin{array}{c}
\widetilde W_{\ze^*,a^*}^*(I_{\cF^2(E)}\otimes b)=b\widetilde W_{\ze^*,a^*}^*,\\[.2cm]
\widetilde W_{\ze^*,a^*}^*(I_{\cF^2(E)}\otimes \mu)
=((T^{(0)*}_{\ze^*}\otimes I_\cE)\mu)\widetilde W_{\ze^*,a^*}^*.
\end{array}
\end{equation}
The first identity follows directly from the computation
\[
\widetilde W_{\ze^*,a^*}^*(I_{\cF^2(E)}\otimes b)
=(W_{\ze^*,a^*}^*\otimes I_\cE)(I_{\cF^2(E)}\otimes b)
=(I_\cA\otimes b)(W_{\ze^*,a^*}^*\otimes I_\cE)=b \widetilde W_{\ze^*,a^*}^*.
\]
Here we used the fact that $\cA\otimes_\si\cE$ can be identified with $\cE$.
To see that the second identity in (\ref{eqs}) holds we show that the $n^\tu{th}$ entry
\[
(T^{(0)*}_{\ze_{(n+1)}^*}\vph_{n+1}(a)\otimes I_\cE)(I_{E^{\otimes n}}\otimes \mu)
:E^{\otimes n}\otimes_\si\cE\to\cA
\]
in the infinite block row matrix $\widetilde W_{\ze^*,a^*}^*(I_{\cF^2(E)}\otimes \mu)$ is
equal to
\[
((T^{(0)*}_{\ze^*}\otimes I_\cE)\mu)(T_{\ze_{(n)}^*}^{(0)*}\vph_{n}(a)\otimes I_\cE).
\]
It suffices to check the equality for elements from $E^{\otimes n}\otimes_\si\cE$
of the form $\xi^{(n)}\otimes e$ with $\xi^{(n)}\in E^{\otimes n}$ and $e\in\cE$.
For such elements $\xi^{(n)}\otimes e\in E^{\otimes n}\otimes_\si\cE$ and for
each $e'\in\cE$ we obtain that the
\begin{equation*}
\begin{array}{l}
\inn{(T^{(0)*}_{\ze_{(n+1)}^*}\vph_{n+1}(a)\otimes I_\cE)(I_{E^{\otimes n}}\otimes \mu)
(\xi^{(n)}\otimes e)}{e'}_{\cE}=\\[.3cm]
\hspace*{1.5cm}=\inn{(a\cdot\xi^{(n)})\otimes\mu e}{\ze_{(n+1)}^*\otimes e'}
=\inn{\inn{a\cdot\xi^{(n)}}{\ze_{(n)}^*}\cdot\mu e}{\ze^*\otimes e'}\\[.3cm]
\hspace*{1.5cm}=\inn{\mu \si(\inn{a\cdot\xi^{(n)}}{\ze_{(n)}^*})e}{\ze^*\otimes e'}
=\inn{((T_{\ze_{(1)}^*}^{(0)*}\otimes I_\cE)\mu)\si(T_{\ze_{(n)}^*}^{(0)*}\vph_{n}(a)\xi^{(n)})e}{e'}\\[.3cm]
\hspace*{1.5cm}=\inn{((T_{\ze_{(1)}^*}^{(0)*}\otimes I_\cE)\mu)
(T_{\ze_{(n)}^*}^{(0)*}\vph_{n}(a)\otimes I_\cE)(\xi^{(n)}\otimes e)}{e'}.
\end{array}
\end{equation*}
Thus our claim follows.
Now set
\[
\cH_{\ze,a}:=\ov{\im \widetilde W_{\ze,a}}\subset \cF^2(E,\si):=\cF^2(E)\otimes_\si\cE.
\]
Using the intertwining relations (\ref{eqs}) we see that $\cH_{\ze,a}$ is an invariant subspace
for the operators $I_{\cF^2(E)}\otimes \mu^*$ and $I_{\cF^2(E)}\otimes b$ for each $\mu\in E^\si$ and
$b\in\si(\cA)'$.

Identifying $\cA$ with its image in $\cL(\cE)$ under the representation $\si$,
we see that
the Pick matrix (\ref{MSOA-Pick}) defines a positive semidefinite element in $\cL(\cE)^{N\times N}$.
Moreover, from the first part of the proof we see that, equivalently, the left hand side of the
identity in (\ref{eqn1}), with $W_{\ze_k^*,a_k^*}^*$ replaced by $\widetilde W_{\ze_k^*,a_k^*}^*$,
defines positive semidefinite element in $\cL(\cE)^{N\times N}$. Thus, by the Douglas
factorization lemma \cite{D66}, we can define a contraction
\[
V:\cM\to\cN
\quad\text{by}\quad\widetilde{W}_{\ze_k^*,a_k^*}V=\widetilde{W}_{\ze_k^*,w_k^*}^*\text{ for }k=1,\ldots,N,
\]
where we set
\[
\cM:=\bigvee_{k=1}^N \im \widetilde{W}_{\ze_k^*,w_k^*}\ands
\cN:=\bigvee_{k=1}^N \im \widetilde{W}_{\ze_k^*,a_k^*}.
\]
Now, for $b\in\si(\cA)'$ and $\mu\in E^\si$, let $T_b$, $\widetilde{T}_b$ and
$T_\mu$, $\widetilde{T}_\mu$ be the compressions of $I_{\cF^2(E)}\otimes b$ and
$I_{\cF^2(E)}\otimes \mu$ to $\cM$ and $\cN$, respectively. Then, from the
intertwining relations (\ref{eqs}) it follows for $k=1,\ldots,N$ that
\[
\widetilde W_{\ze_k^*,a_k^*}^* T_b V=b\widetilde W_{\ze_k^*,a_k^*}^* V=b\widetilde W_{\ze_k^*,w_k^*}^*
=\widetilde W_{\ze_k^*,w_k^*}^*\widetilde{T}_b=\widetilde W_{\ze_k^*,a_k^*}^*V\widetilde{T}_b,
\]
and with a similar computation $\widetilde W_{\ze_k^*,a_k^*}^* T_\mu V
=\widetilde W_{\ze_k^*,a_k^*}^*V\widetilde{T}_\mu$. Since these identities hold for all $k=1,\ldots,N$,
we obtain that $V$ intertwines $T_b$ with $\widetilde{T}_b$ and $T_\mu$ with $\widetilde{T}_\mu$
for each $b\in\si(\cA)'$ and $\mu\in E^\si$. It then follows from Theorem \ref{T:CLT} that there
exists an $S\in\cF^\infty(E)$ with $\|S\|\leq1$, such that $(S^*\otimes I_\cE)\cN\subset\cM$
and $V=P_\cN(S\otimes I_\cE)|_\cM$. Hence for $k=1,\ldots,N$ we have
\begin{eqnarray*}
\si(\widehat S(\ze_k,a_k))&=&\widehat S(\ze_k^*,a_k)\otimes I_\cE
=(W_{\ze_k^*,a_k^*}^*S|_\cA)\otimes I_\cE=\widetilde W_{\ze_k^*,a_k^*}^*(S\otimes I_\cE)|_\cE\\
&=&\widetilde W_{\ze_k^*,w_k^*}^*|_\cE=\si(w_k).
\end{eqnarray*}
Thus $\widehat S(\ze_k,a_k)=w_k$ for $k=1,\ldots,N$, because $\si$ is injective.
\end{proof}

\subsection{Example:~Operator-valued Nevanlinna-Pick interpolation on the unit disk ${\mathbb D}$.}
\label{subS:MS-1var}

As a first example we consider the case where $E=\cA=\cL(\cV)$ for $\cV$ some Hilbert space.
Then $E$ is a $W^*$-$\cA$-correspondence with left and right actions given by the usual left and
right multiplication in $\cL(\cV)$ and with with $\cL(\cV)$-valued inner product given by
\[
\inn{R}{Q}=Q^*R\text{ for all }R,Q\in\cL(\cV).
\]
The preliminaries of this case have been spelled out in Subsection 6.1 of \cite{BBFtH}. We
recall here the results needed in the sequel. The balancing (\ref{balance}) in the tensor
products causes the tensor spaces $E^{\otimes n}$ to reduce to $E=\cL(\cV)$ via the
identification
\begin{equation}\label{1var-ident}
R_n\otimes\ldots\otimes R_1= I_\cV\otimes\cdots\otimes I_\cV\otimes (R_n\cdots R_1)
\equiv R_n\cdots R_1.
\end{equation}
Then the Fock space $\cF^2(E)$ is equal to $\cL(\cV,\ell^2_\cV(\BZ_+))$, and the Toeplitz
algebra $\cF^\infty(E)$ is the algebra of block Toeplitz matrices
\begin{equation}  \label{Rmatrix}
R = \begin{bmatrix} R_{0} & 0& 0&\cdots \\[.05cm]
R_{1} & R_{0} &0&\cdots \\[-.1cm]
R_{2} & R_{1} & R_{0} &\ddots \\[-.1cm]
\vdots & \ddots & \ddots & \ddots \end{bmatrix}, \quad  R_{j} \in
\cL(\cV)
\end{equation}
that induce bounded operators on $\cL(\cV,\ell^2_\cV(\BZ_+))$ via
matrix multiplication:  if we write $X \in \cL(\cV,
\ell^{2}_{\cV}(\BZ_{+}))$ in block column-matrix form
$$
  X = \begin{bmatrix} X_{1} \\ X_{2} \\  \vdots \end{bmatrix} \text{
  with } X_{k} \in \cL(\cV) \text{ for } k = 1,2, \dots,
$$
then  $R \colon X \mapsto  R \cdot X$ where the matrix for $R$ is given by
\eqref{Rmatrix}.
By the closed-graph theorem, for any such fixed
$X$, $R \cdot X \in \cL(\cV, \ell^{2}_{\cV}(\BZ_{+}))$ if and only if
$(R \cdot X) \cdot v = R \cdot (X \cdot v) \in \ell^2_\cV(\BZ_+)$ for each $v \in
\cV$.  As $X$ and $v$ are arbitrary, an equivalent condition is that
$R \cdot w \in \ell^{2}_{\cV}(\BZ_{+})$ for each $w \in
\ell^{2}_{\cV}(\BZ_{+})$, i.e. (again by the closed-graph theorem)
$R$ is bounded when viewed as an ordinary Toeplitz matrix acting on
$\ell^{2}_{\cV}(\bbZ_{+})$.
Furthermore one can check that the norm
of $R$ viewed as an element of $\cL(\cL(\cV, \ell^{2}_{\cV}(\BZ_{+})))$ is
equal to its norm when viewed as an element of
$\cL(\ell^{2}_{\cV}(\BZ_{+}))$.
Thus a sequence of coefficients
$\{R_{n}\}_{n \in {\mathbb Z}}$ with $R_{n} \in \cL(\cV)$ and $R_{n}=0$ for $n < 0$
gives rise to an element $[R_{i-j}]_{i,j \in {\mathbb Z}_{+}}$ in $\cF^{\infty}(E)$
exactly when $R(\lambda): = \sum_{n=0}^{\infty} R_{n} \lambda^{n}$ is in the
operator-valued Schur class $\cS(\cV)$.

Let $\cG$ be another Hilbert space. We take $\cE$ to be the tensor product
Hilbert space $\cV\otimes\cG$, and define a representation $\si:\cA\to\cL(\cE)$ by
\[
\si(R)=R\otimes I_\cG\text{ for }R\in\cL(\cV)=\cA.
\]
Then $(E \otimes_{\sigma} \cE)^{*}$ can be identified with $\cE$ via the identification
\[
R \otimes_{\sigma} (v \otimes g) = I_{\cU} \otimes_{\sigma}((Rv)\otimes g)\equiv (Rv)\otimes g
\text{ for } R \in \cL(\cV),\, v \in \cV, \, g \in\cG.
\]
We identify $E^{\sigma}$ as
\[
E^{\sigma} = \{ \eta \in \cL(\cE) \colon \eta (R \otimes
I_{\cG}) = (R \otimes I_{\cG})\eta \text{ for all } R \in \cL(\cV)\} =
I_{\cV} \otimes \cL(\cG),
\]
and hence the generalized disk ${\mathbb D}((E^{\sigma})^{*})$ can
be identified with the set of strict contraction operators on $\cG$.
For $\eta \in {\mathbb D}((E^{\sigma})^{*})$ identified with a
 strict contraction operator $Z \in \cL(\cG)$ and for
 $R = [R_{i-j}]_{i,j \in\BZ_+} \in \cF^{\infty}(E)$, one can work out
 that the associated first point-evaluation $\widehat R(Z)$ is given by a
 tensor functional-calculus
 $$
   \widehat R(Z) = \sum_{n=0}^{\infty}R_{n} \otimes Z^{n}.
 $$

Thus, the correspondence Nevanlinna-Pick interpolation problem for
this setting becomes:  {\em Given strict contraction operators
$Z_{1}$, $\dots$, $Z_{N}$ in $\cL(\cG)$ and operators $X_{1},\dots,X_{N}$
and $Y_{1},\dots,Y_{N}$ in $\cL(\cV \otimes \cG)$, find a Schur class function
$S(\lambda) = \sum_{n=0}^{\infty} S_{n} \lambda^{n} \in \cS(\cV)$
 so that}
\begin{equation}\label{LTT}
   X_i S(Z_{i}):= X_i\sum_{n=0}^{\infty} S_{n} \otimes Z_{i}^{n} = Y_{i} \text{
   for } i = 1, \dots, n.
\end{equation}
  We call this the {\em left-tangential tensor functional-calculus Nevanlinna-Pick
 interpolation problem} (\textbf{LTT-NP}).  Note that this contains the
 \textbf{LT-NP} (take $\cG = {\mathbb C}$) and \textbf{LTRD-NP} (take $\cV = {\mathbb C}$)
 problems discussed in Subsections \ref{subS:standard-1var} and
 \ref{subS:RD-1var}, respectively, as special cases.   The solution to the
 \textbf{LTT-NP} problem is readily obtained as a direct application of
 Theorem \ref{T:MS-NP}.

\begin{theorem} \label{T:TFC-NP}
     Suppose that we are given the data set
  $${\mathfrak D}: \quad  Z_{1}, \dots, Z_{N} \in \cL(\cG), \quad
  X_{1}, \dots, X_{N},Y_1,\ldots,Y_N \in \cL(\cV \otimes \cG)
  $$
  for an {\em \textbf{LTT-NP}} problem.  Then a solution $S \in
  \cS(\cV)$ exists if and only if any of the following equivalent
  conditions holds:
  \begin{enumerate}
      \item[(1)] The kernel ${\mathbb K}_{{\mathfrak D}}$ mapping $ \{1,
      \dots, N\} \times \{1, \dots, N\}$ into $\cL^a(\cL(\cG), \cL(\cV
      \otimes \cG))$ given by
      $$
      {\mathbb K}_{{\mathfrak D}}(i,j)[B] =\sum_{n=0}^{\infty}
      X_i\left(I_{\cV} \otimes \left(  Z_{i}^{n} B
      Z_{j}^{*n} \right)\right)X_j^* - Y_{i} \left(
      I_{\cV} \otimes \left( Z_{i}^{n} B
        Z_{j}^{*n}\right) \right) Y_{j}^{*}
      $$
      is completely positive.

      \item[(2)] The map $\varphi \colon \cL(\cG)^{N \times N} \to \cL(\cV
      \otimes \cG)^{N \times N}$ given by
 \begin{align}\label{vph}
  & \varphi \left( \left[B_{ij}\right]_{i,j=1, \dots, N}\right)=  \\
  &\nonumber \quad =
   \left[\sum_{n=0}^{\infty} X_i\left(I_{\cV} \otimes \left(  Z_{i}^{n}
   B_{ij}Z_{j}^{*n} \right)\right)X_j^* - Y_{i} \left(
   I_{\cV} \otimes \left(  Z_{i}^{n}
      B_{ij}Z_{j}^{*n} \right) \right) Y_{j}^{*} \right]_{i,j=1}^N
   \end{align}
   is a completely positive map.
   \end{enumerate}
      \end{theorem}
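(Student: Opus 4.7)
The plan is to recognize the \textbf{LTT-NP} problem as a particular instance of the $W^{*}$-correspondence Nevanlinna-Pick problem handled by Theorem \ref{T:MS-NP} (in its finite version as formulated in Remark \ref{R:Xfin}), with the equivalence of the two stated criteria being an immediate consequence of Proposition \ref{P:cp}. All the necessary identifications for the correspondence-representation pair $(E,\sigma)$ with $E=\cA=\cL(\cV)$ and $\sigma(R)=R\otimes I_\cG$ have already been set up in the preceding discussion; what remains is to verify that under these identifications the data of the \textbf{LTT-NP} problem translates precisely to the data of the Muhly-Solel problem, and that the Szeg\"o kernel $\BK_{E,\sigma}$ takes the form appearing in (\ref{vph}).

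First I would confirm that the Schur class $\cS_{E,\sigma}$ in this setting coincides, via the tensor point-evaluation $\widehat R(Z)=\sum_{n=0}^{\infty}R_n\otimes Z^n$, with the operator-valued Schur class $\cS(\cV)$ restricted to strict contractions $Z\in\cL(\cG)$. This follows from the identification $\cF^\infty(E)\cong \cS(\cV)$ of Section \ref{subS:Fock}, together with the fact that the first Muhly-Solel point-evaluation specializes to the tensor-type evaluation under the identifications $(E^\sigma)^*\cong\cL(\cG)$ and $E^{\otimes n}\otimes_\sigma\cE\cong\cV\otimes\cG$ coming from the balancing collapse \eqref{1var-ident}. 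In particular, an $S\in\cS(\cV)$ satisfies the interpolation conditions \eqref{LTT} if and only if the associated element of $\cS_{E,\sigma}$ satisfies $X_i\, S(\eta_i)=Y_i$ with $\eta_i\in\BD((E^\sigma)^*)$ the element corresponding to $Z_i$.

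Next I would compute the Szeg\"o kernel. Since $\sigma(\cA)'=I_\cV\otimes\cL(\cG)$, I identify it with $\cL(\cG)$ by $B\leftrightarrow I_\cV\otimes B$; and since $\eta^n$ acts on $E^{\otimes n}\otimes_\sigma\cE\cong \cV\otimes\cG$ by $I_\cV\otimes Z^n$, the definition (\ref{Szego-ker}) yields
\[
\BK_{E,\sigma}(Z_i,Z_j)[B]=\sum_{n=0}^{\infty}I_\cV\otimes\left(Z_i^n B Z_j^{*n}\right).
\]
Substituting this into the criterion of Theorem \ref{T:MS-NP}(3) as reformulated in Remark \ref{R:Xfin} (with $X_i$ playing the role of $G(\eta_i)$ and $Y_i$ that of $F(\eta_i)$), I obtain precisely the map $\varphi$ displayed in (\ref{vph}). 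Hence the existence of a solution is equivalent to complete positivity of this map, which is statement (2). The equivalence of (2) with the complete positivity of the kernel $\BK_{\fD}$ in (1) is the equivalence of items (1) and (2) in Proposition \ref{P:cp}.

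The main thing that needs care is the chain of identifications: the collapse $E^{\otimes n}\cong \cL(\cV)$ via (\ref{1var-ident}) must be combined correctly with the tensoring by $\cE$ and with the description of $E^\sigma$ as $I_\cV\otimes\cL(\cG)$, so that the abstract formula (\ref{MSpoint-eval}) for $\widehat R(\eta)$ really does match the tensor functional calculus $\sum_n R_n\otimes Z^n$. Once these bookkeeping steps are checked, the theorem drops out as a direct corollary of Theorem \ref{T:MS-NP} and Proposition \ref{P:cp}; no further estimation or construction is required.
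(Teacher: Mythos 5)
Your proposal is correct and follows exactly the paper's route: the paper also obtains Theorem \ref{T:TFC-NP} as a direct application of Theorem \ref{T:MS-NP} (via Remark \ref{R:Xfin}) using the identifications $E=\cA=\cL(\cV)$, $\cE=\cV\otimes\cG$, $\sigma(R)=R\otimes I_\cG$, $\sigma(\cA)'\cong\cL(\cG)$, and the Szeg\"o kernel $\BK_{E,\sigma}(Z_i,Z_j)[B]=\sum_{n}I_\cV\otimes(Z_i^nBZ_j^{*n})$, with the equivalence of (1) and (2) coming from Proposition \ref{P:cp}. Your explicit checks of the bookkeeping (the collapse \eqref{1var-ident}, the identification of $(E^\sigma)^*$ with strict contractions on $\cG$, and the matching of \eqref{MSpoint-eval} with the tensor functional calculus) are precisely the steps the paper leaves implicit.
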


Note that criteria of Theorem \ref{T:TFC-NP} give seemingly
different criteria than the ones obtained in Section \ref{S:1var}.
We now show how, after some reductions, one can see directly the
equivalence of the criteria in Theorem \ref{T:TFC-NP} with the
Pick-matrix criteria of Section \ref{S:1var}.

First assume that $\cV$ and $\cG$ admit direct sum decompositions
\[
\cV=\cU\oplus\cY\oplus\cC\quad\text{and}\quad \cG=\cZ\oplus\cD,
\]
and that the operators in the data set ${\mathfrak D}$ are actually operators
\[
Z_i\in\cL(\cZ),\quad X_i\in\cL(\cY\otimes\cZ,\cC\otimes \cD),\quad
Y_i\in\cL(\cU\otimes\cZ,\cC\otimes \cD)\text{ for }i=1,\ldots N,
\]
identified with operators in $\cL(\cG)$ and $\cL(\cE)$, respectively, by adding
zero-operators in the operator matrix decompositions. A solution to the ``non-square''
\textbf{LTT-NP} problem: {\em Given strict contraction operators $Z_{1},\dots,Z_{N}$
in $\cL(\cZ)$, and operators $X_{1},\dots,X_{N}\in\cL(\cY\otimes\cZ,\cC\otimes \cD)$
and $Y_{1},\dots,Y_{N}\in\cL(\cU\otimes\cZ,\cC\otimes \cD)$, find a Schur class function
$S(\lambda) = \sum_{n=0}^{\infty} S_{n} \lambda^{n} \in \cS(\cU,\cY)$
satisfying (\ref{LTT})}, can then be obtained from a solution $\tilde S\in\cS(\cE)$ just by
assigning $S(\la)$ to be the compression of $\tilde  S(\la)$ to $\cL(\cU,\cY)$.
Conversely, a solution $S\in\cL(\cU,\cY)$ to the ``non-square'' problem defines a solution
$\tilde S\in\cL(\cE)$ to the ``square'' problem by embedding the values $S(\la)\in\cL(\cU,\cY)$
into $\cL(\cE)$. It thus follows that a solution to the ``non-square'' problem exists if and only
if the map $\vph$ in criterion 2 of Theorem \ref{T:TFC-NP} (which now can also be seen as a map
from $\cL(\cZ)^{N\times N}$ into $\cL(\cZ\otimes\cD)^{N\times N}$) is completely positive.

Transforming a result from a ``square'' Nevanlinna-Pick
problem to one for Schur class functions that take ``non-square'' values can even be
performed on the level of $W^*$-correspondence Nevanlinna-Pick interpolation using
techniques developed in \cite{MSSchur}.

\paragraph{Standard functional calculus Nevanlinna-Pick interpolation}
Assume that $\cG=\BC$. Hence the data for our Nevanlinna-Pick interpolation is of the
{\bf LT-NP} form:
\[
\fD:\ \la_1,\ldots,\la_N\in\BD,\ X_1,\ldots,X_N\in\cL(\cY,\cC),\ Y_1,\ldots,Y_N\in\cL(\cU,\cC).
\]
In that case, the kernel $\BK_\fD:\{1,\ldots,N\}\times\{1,\ldots,N\}\to\cL(\BC,\cL(\cL(\cV)))$
reduces to
\[
\BK_\fD(i,j)[c]=c\cdot\frac{X_iX_j^*-Y_iY_i^*}{i-\la_i\ov{\la}_j}.
\]
It is then straightforward to see that complete positivity of the kernel $\BK_\fD$
collapses to positive semidefiniteness of the standard Pick matrix
\[
\mat{c}{\frac{X_iX_j^*-Y_iY_i^*}{i-\la_i\ov{\la}_j}}_{j,i=1}^N.
\]
Thus we recover the criterion of Theorem \ref{T:FOV/LT/RT} (Part 2).

\paragraph{Riesz-Dunford functional calculus Nevanlinna-Pick interpolation}
Next we consider the case that $\cV=\BC$. In that case the data for our
Nevanlinna-Pick interpolation problem takes the {\bf LTRD-NP} form:
\[
\fD:\ Z_1,\ldots,Z_N\in\cL(\cZ),\ X_1,\ldots,X_N,Y_1,\ldots,Y_N\in\cL(\cZ,\cC).
\]
To see that Theorem \ref{T:TFC-NP} also contains the result of Theorem \ref{T:RD-NP} (Part 2), it is
convenient to introduce a third criterion.

 \begin{theorem}  \label{T:TFC-NP2}
 In case $\cV=\BC$,
     in addition to the two conditions (1) and (2) in Theorem \ref{T:TFC-NP},
     a third condition equivalent to the existence of an
     $S\in\cS(\BC,\BC)$ that satisfies (\ref{LTT}) is that the map $\varphi_{*}$ from
     $\cL(\cC)^{N \times N}$ to $\cL(\cC)^{N \times N}$ given by
 \begin{equation}  \label{clasA-Pick3}
     \varphi_{*}\left( \left[ C_{ij}
     \right]_{i,j=1}^N \right) =
     \left[ \sum_{n=0}^{\infty}
 Z_{i}^{*n}( X_i^* C_{ij} X_j-Y_{i}^{*} C_{ij} Y_{j})
 Z_{j}^{n}
     \right]_{i,j=1}^N
  \end{equation}
  be a completely positive map.
 \end{theorem}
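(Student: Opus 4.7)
The strategy is to reduce condition (3) to the positivity of the Pick matrix $\BP_{\tu{LTRD}}$ appearing in Part 2 of Theorem \ref{T:RD-NP}, and thereby conclude equivalence with the existence of an $s \in \cS(\BC, \BC)$ satisfying $X_i s(Z_i) = Y_i$ for $i = 1, \dots, N$ (which by Theorem \ref{T:TFC-NP} is the same as (1) and (2)). The bridge between complete positivity of $\vph_*$ and positivity of $\BP_{\tu{LTRD}}$ will be Choi's theorem \cite{Choi75}.

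I first treat the case $\dim \cC = \kappa < \infty$, with orthonormal basis $\{e_1, \dots, e_\kappa\}$. Then $\cL(\cC)^{N \times N}$ is identified with $M_{N\kappa}(\BC)$, whose matrix units $E_{(i,i'),(j,j')}$ are the $N \times N$ block matrices having $e_{i'}e_{j'}^*$ in block position $(i, j)$ and zero elsewhere. A direct substitution into the defining formula for $\vph_*$ shows that $\vph_*(E_{(i,i'),(j,j')})$ is an $N\times N$ block matrix whose only nonzero entry sits in block position $(i,j)$ and equals
\[
\sum_{n=0}^\infty Z_i^{*n}\bigl(X_i^* e_{i'} e_{j'}^* X_j - Y_i^* e_{i'}e_{j'}^* Y_j\bigr) Z_j^n,
\]
which is precisely the $\bigl((i,i'),(j,j')\bigr)$-entry of $\BP_{\tu{LTRD}}$. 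By Choi's theorem, $\vph_*$ is completely positive if and only if the Choi matrix $\bigl[\vph_*(E_{(i,i'),(j,j')})\bigr]_{(i,i'),(j,j')}$ is a positive operator on $\bigoplus_{(i,i',i'')} \cZ$ (with $i'' \in \{1,\dots,N\}$). The computation just made shows that its entry at position $\bigl((i,i',i''),(j,j',j'')\bigr)$ equals $\delta_{i,i''}\delta_{j,j''}\,[\BP_{\tu{LTRD}}]_{(i,i'),(j,j')}$. Hence, defining the isometry $J\colon \bigoplus_{(i,i')}\cZ \to \bigoplus_{(i,i',i'')}\cZ$ that sends the $(i,i')$-summand into the $(i,i',i)$-summand, the Choi matrix equals $J\,\BP_{\tu{LTRD}}\,J^*$. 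Since $J^*J = I$, this is positive if and only if $\BP_{\tu{LTRD}} \succeq 0$, which by Part 2 of Theorem \ref{T:RD-NP} is equivalent to the existence of the required $s \in \cS(\BC,\BC)$.

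For the remaining case where $\cC$ is separable but infinite-dimensional, both the complete positivity of $\vph_*$ (through the characterization in Proposition \ref{P:cp}(3) applied to arbitrary finite collections of basis vectors) and the positivity of the infinite Pick matrix $\BP_{\tu{LTRD}}$ (interpreted as positivity of all finite sections, as in the remark following Theorem \ref{T:RD-NP}) probe only finitely many basis vectors at a time, reducing the question to the finite-dimensional argument just given. The main obstacle is the careful index bookkeeping needed to identify the Choi matrix of $\vph_*$ with $\BP_{\tu{LTRD}}$ up to the isometric similarity implemented by $J$; once this identification is made explicit, the proof is simply the concatenation of Choi's theorem with Part 2 of Theorem \ref{T:RD-NP}.
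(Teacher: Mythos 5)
Your finite-dimensional argument is correct, but it takes a genuinely different route from the paper, and this is worth noting before the gap. The paper proves Theorem \ref{T:TFC-NP2} by a trace-duality computation showing that positivity (and, analogously, $k$-positivity, hence complete positivity) of $\varphi_{*}$ is equivalent to that of the map $\varphi$ of Theorem \ref{T:TFC-NP}, and then simply quotes Theorem \ref{T:TFC-NP}; the identification of complete positivity of $\varphi_{*}$ with positivity of $\BP_{\tu{LTRD}}$ via a Choi-type argument is carried out only afterwards (via Theorem \ref{T:ChoiExt}), precisely in order to \emph{rederive} Part 2 of Theorem \ref{T:RD-NP} from the correspondence machinery. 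You run the logic in the opposite direction, taking Part 2 of Theorem \ref{T:RD-NP} (established independently in Section \ref{S:1var}) as an input. That is not circular, and for finite-dimensional $\cC$ your bookkeeping with the matrix units $E_{(i,i'),(j,j')}$, the identification of the Choi matrix with $J\,\BP_{\tu{LTRD}}\,J^{*}$, and the equivalence $J\BP J^{*}\succeq 0 \Leftrightarrow \BP\succeq 0$ are all sound; but note that the paper's trace argument is dimension-free, whereas yours is not.

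The genuine gap is the infinite-dimensional case, which the statement does not exclude. Your assertion that complete positivity of $\varphi_{*}$ ``probes only finitely many basis vectors at a time,'' justified by Proposition \ref{P:cp}(3), misreads that proposition: it localizes in the points $\om_{1},\dots,\om_{n}$ of $\Om$ (here the interpolation nodes), not in the operator arguments. To verify complete positivity one must test $\varphi_{*}$ against positive matrices $[B_{ij}]$ whose entries are arbitrary operators on the infinite-dimensional space $\cC^{N}$, not finite linear combinations of matrix units, so the implication ``all finite sections of $\BP_{\tu{LTRD}}$ positive $\Rightarrow \varphi_{*}$ completely positive'' does not follow from Choi's theorem alone. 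This is exactly the link you need for the half ``existence of $S$ satisfying \eqref{LTT} $\Rightarrow$ complete positivity of $\varphi_{*}$'' of the equivalence (the other half, complete positivity implies positivity of every finite section, is fine, since the relevant arrays of matrix units are positive). Closing the gap requires an approximation argument: weak-$*$ continuity of $\varphi_{*}$ combined with weak-$*$ convergence of the compressions $P_{M}B_{ij}P_{M}$ to $B_{ij}$, which is precisely the content of the paper's Theorem \ref{T:ChoiExt} (and even there the weak-$*$ continuity of the specific map \eqref{clasA-Pick3} has to be verified separately). So either restrict your argument to $\dim\cC<\infty$, or invoke the weak-$*$ extension of Choi's theorem and check the continuity hypothesis.
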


\begin{proof} We prove that positivity of $\vph$ is equivalent to positivity of $\vph_*$.
The proof of the equivalence for $k$-positivity with $k\in\BZ_+$ arbitrary
(and thus for complete positivity) goes analogously.
The map $\varphi$ in \eqref{vph} being positive implies that
 \begin{equation}   \label{trace-pos1}
 \sum_{i,j=1}^{N} \operatorname{trace}\left(C_{ji} \left(
\sum_{n=0}^{\infty} X_i Z_{i}^{n}B_{ij}Z_{j}^{*n}X_j^*
- Y_{i}Z_{i}^{n}B_{ij}Z_{j}^{*n}Y_{j}^{*}\right)\right)\geq0
 \end{equation}
for all ${\bf B}=\left[B_{ij}\right]_{i,j=1}^N \succeq 0$ and
${\bf C}=\left[C_{ij} \right]_{i,j=1}^N \succeq 0$ in $\cL(\cG)^{N\times N}$
with ${\bf B}$ also in the trace class. Using the invariance of the trace under cyclic
permutations, we may rewrite \eqref{trace-pos1} as
\begin{equation}   \label{trace-pos2}
  \sum_{i,j=1}^{N} \operatorname{trace}  \left(
  \left(\sum_{n=0}^{\infty}
Z_{j}^{*n}(X_j^* C_{ji} X_i-Y_{j}^{*} C_{ji} Y_{i})Z_{i}^{n}\right)B_{ij} \right)\geq0.
  \end{equation}
From this it follows that $\vph_*$ is a positive map. To see the converse implication
one just reverses the above computation but now with ${\bf C}$ in the trace class.
\end{proof}

In order to transform the completely positive map criterion of Theorem \ref{T:TFC-NP2} into one
of checking positivity of a (possibly infinite) operator matrix we need the following
extension of Choi's theorem \cite{Choi75}.

\begin{theorem}\label{T:ChoiExt}
A weak-$*$ continuous map $\psi:\cL(\cH)\to\cA$, where $\cH$ a separable Hilbert space with
orthonormal basis $\{e_1,\ldots,e_\kappa\}$ (with possibly $\kappa=\infty$) and $\cA$ is a
$C^*$-algebra, is completely positive if and only if the (possibly infinite) block matrix
\begin{equation}\label{compos}
\mat{c}{\psi(e_ie_j^*)}_{i,j=1}^\kappa
\end{equation}
is a positive element of $\cA^{\kappa\times\kappa}$ (i.e., if $\kappa=\infty$, then all
$M$-finite sections are positive in $\cA^{M\times M}$ for each $M\in\BZ_+$).
\end{theorem}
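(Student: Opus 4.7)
The plan is to reduce Theorem \ref{T:ChoiExt} to the finite-dimensional Choi theorem already quoted in the paper by truncating with the finite-rank projections onto $\operatorname{span}\{e_1,\ldots,e_M\}$ and then passing to the weak-$*$ limit using the assumed continuity of $\psi$. When $\kappa<\infty$ there is nothing to prove (this is exactly Choi), so throughout I focus on $\kappa=\infty$, where ``positive'' for the infinite block matrix means positivity of every $M\times M$ upper-left corner.

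First I would prove necessity. For each $M$, let $\iota_M\colon\BC^M\to\cH$ be the isometry onto $\operatorname{span}\{e_1,\ldots,e_M\}$ and define $\psi_M\colon \BC^{M\times M}\to\cA$ by $\psi_M(a)=\psi(\iota_M a\iota_M^*)$. Since $a\mapsto \iota_M a\iota_M^*$ is a $*$-homomorphism followed by a compression (hence completely positive), and complete positivity is preserved under composition, $\psi_M$ is completely positive. The finite-dimensional Choi theorem then forces $[\psi(e_ie_j^*)]_{i,j=1}^M=[\psi_M(e_ie_j^*)]_{i,j=1}^M$ to be positive in $\cA^{M\times M}$, giving the stated positivity condition.

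For sufficiency, suppose every section $[\psi(e_ie_j^*)]_{i,j=1}^M$ is positive, fix $n$, and let $[a_{ij}]_{i,j=1}^n$ be a positive element of $\cL(\cH)^{n\times n}$; the goal is $[\psi(a_{ij})]_{i,j=1}^n\succeq 0$ in $\cA^{n\times n}$. Let $P_M$ be the projection onto $\operatorname{span}\{e_1,\ldots,e_M\}$. Compressing by the diagonal block $I_n\otimes P_M$ preserves positivity, so $[P_Ma_{ij}P_M]_{i,j=1}^n$ is positive and, through $\iota_M$, lies in $\cL(\BC^M)^{n\times n}$. The hypothesis combined with finite Choi says $\psi_M$ is completely positive, hence $n$-positive, so
\[
[\psi(P_M a_{ij} P_M)]_{i,j=1}^n \;\succeq\; 0 \qquad \text{in } \cA^{n\times n}.
\]
Since $P_M\to I_\cH$ strongly, $P_M a_{ij} P_M\to a_{ij}$ in the ultraweak topology of $\cL(\cH)$ (for any trace-class $t$, one has $\operatorname{tr}(P_M a_{ij} P_M\, t)=\operatorname{tr}(a_{ij}\,P_M t P_M)\to \operatorname{tr}(a_{ij} t)$ because $P_M t P_M\to t$ in trace norm). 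Weak-$*$ continuity of $\psi$ then yields $\psi(P_M a_{ij} P_M)\to \psi(a_{ij})$ in the pertinent topology on $\cA$, and positivity of matrices over $\cA$ is preserved under such limits.

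The one subtle point is making sense of this last preservation-of-positivity step for a general $C^*$-algebra $\cA$: weak-$*$ continuity of $\psi$ literally requires $\cA$ to carry a suitable predual/weak topology. I would handle it by fixing a faithful $*$-representation $\pi\colon \cA\to\cL(\cF)$ on some Hilbert space $\cF$; then $\pi\circ\psi$ is ultraweakly-to-weak-operator continuous, positivity of a self-adjoint element of $\pi(\cA)^{n\times n}$ is closed in the weak operator topology (it reduces to the non-negativity of the scalar quantities $\sum_{i,j}\langle \pi(b_{ij})v_j,v_i\rangle_\cF$ for all finite tuples $(v_1,\ldots,v_n)\in\cF^n$), and faithfulness of $\pi$ pulls positivity back to $\cA^{n\times n}$. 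This is the main technical obstacle in the argument; once it is resolved, the combination of truncation, finite Choi, and ultraweak limits gives the theorem.
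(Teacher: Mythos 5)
Your proposal is correct and takes essentially the same route as the paper's proof: truncate by the projections $P_M$ onto $\operatorname{span}\{e_1,\ldots,e_M\}$, apply the finite-dimensional Choi theorem to the compressed maps, and pass to the weak-$*$ limit using the continuity of $\psi$ and the fact that positivity survives weak-$*$ convergence. Your closing discussion via a faithful representation merely elaborates a point the paper simply asserts (``positivity is preserved under weak-$*$ convergence''), so it is a refinement rather than a different argument.
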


\begin{proof}
If $\kappa<\infty$, then the statement is just Choi's theorem \cite{Choi75}. So assume that
$\kappa=\infty$.
Complete positivity of $\psi$ means that for each $N\in\BZ_+$ the map
\[
\mat{c}{B_{i,j}}_{i,j=1}^N\mapsto \mat{c}{\psi(B_{i,j})}_{i,j=1}^N
\]
from $\cL(\cH)^{N\times N}$ into $\cA^{N\times N}$ is positive. In particular, using
$B_{i,j}=e_ie_j^*$ it follows directly that $\psi$ being completely positive implies that
the block matrix (\ref{compos}) is a positive element of $\cA^{\kappa\times\kappa}$.

Now assume that the block matrix (\ref{compos}) is positive.
For each $M\in\BZ_+$, let $P_M$ denote the projection onto
$\cH_M:=\tu{span}\{e_1,\ldots,e_M\}$. Using Choi's theorem and the fact that the $M$-finite
section $\sbm{\vph(e_ie_j^*)}_{i,j=1}^M$ of (\ref{compos}) is a positive element of $\cA^{M\times M}$ we obtain that the map
$\tilde B\mapsto\psi(P_M \tilde BP_M)$ from $\cH_M$ into $\cA$ is completely positive.
Fix a $N\in\BZ_+$ and a positive $\sbm{B_{i,j}}_{i,j=1}^N$ in $\cL(\cH)^{N\times N}$.
Then for each $M\in\BZ_+$ we have $\sbm{\psi(P_MB_{i,j}P_M)}_{i,j=1}^N\succeq0$.
Moreover, $P_M B_{i,j}P_M$ converges to $B_{i,j}$ in the weak-$*$ topology as $M\to\infty$,
and thus, by hypothesis $\psi(P_M B_{i,j}P_M)\to\psi(B_{i,j})$ in the weak-$*$ topology as
$M\to\infty$ for all $i,j=1,\ldots,N$. This implies that
\[
\mat{c}{\psi(P_M B_{i,j}P_M)}_{i,j=1}^N\to\mat{c}{\psi(B_{i,j})}_{i,j=1}^N\text{ weak-$*$ as }M\to\infty.
\]
Since positivity is preserved under weak-$*$ convergence it follows that
$\sbm{\psi(B_{i,j})}_{i,j=1}^N$ is a positive element of $\cA^{N\times N}$, and thus, because $N$ was
chosen arbitrarily, that $\psi$ is a completely positive map.
\end{proof}

Now assume that $\cC$ is a separable Hilbert space with orthonormal basis $\{e_1,\ldots,e_\kappa\}$.
It then follows from Theorem \ref{T:ChoiExt} that complete positivity of the maps $\vph_*$ given by
(\ref{clasA-Pick3}) reduces to positivity of the Pick matrix
\[
\mat{c}{\sum_{n=0}^{\infty}
 Z_{i}^{*n}( X_i^* e_{i'}e_{j'}^* X_j-Y_{i}^{*} e_{i'}e_{j'}^* Y_{j})
 Z_{j}^{n}}_{(i,i'),(j,j')\in\{1,\ldots,N\}\times\{1,\ldots,\kappa\}}.
\]
Hence we recover Part 2 of Theorem \ref{T:RD-NP}. Indeed, the one thing left to verify is that
the map $\vph_*$ is weak-$*$ continuous, which we leave as an exercise to the reader.

\paragraph{Operator-argument functional calculus Nevanlinna-Pick interpolation}
Finally, we show that the second point-evaluation for $\cF^\infty(E)$ discussed in
Subsection $\ref{subS:MS-alt}$ gives us the operator-argument Nevanlinna-Pick theorem
for the operator-valued Schur class. To see that this is the case, note that
the points are elements of
\[
\BD(E^*)\times \cA=\{T\in\cL(\cV) \colon \|T\|<1 \}\times \cL(\cV).
\]
For each $\ze\in\BD(E^*)$ identified with a strict contraction operator $T\in\cL(\cV)$
the element $\ze_{(n)}^*=\ze^*\otimes\cdots\otimes\ze^*$ of $E^{\otimes n}$  corresponds to
$T^{*n}$ (via the identification (\ref{1var-ident})), and the operator $T^{(0)}_{\ze_{(n)}^*}$
in (\ref{crea-n})
is just multiplication with $T^{*n}$. Thus, we obtain that for $(T,X)\in \BD(E^*)\times \cA$
and $R=\mat{c}{R_{i-j}}_{i,j\in\BZ_+}\in \cF^\infty(E)$ the value of $R$ in $(T,X)$ is equal
to the left-tangential point-evaluation:
\[
\widehat R(T,X)=\sum_{n=0}^\infty T^nXR_n=(X R)^{\wedge L}(T),
\]
where we also use $R$ to indicate the Schur-class function in $\cS(\cV)$ corresponding to
$R\in\cF^\infty(E)$. So, in this case, the Nevanlinna-Pick problem with data
$\ze_k=T_k\in\BD(E^*)$, $a_k=X_k,w_k=Y_k\in\cA$, for $k=1,\ldots,N$, considered in Subsection
\ref{subS:MS-alt} is the {\bf LTOA} problem of Subsection \ref{subS:OA-1var}. One easily
computes that the operator matrix (\ref{MSOA-Pick}) reduces to the Pick matrix $\BP_\text{LTOA}$
in (\ref{Pick-LTOA}). Thus, Theorem \ref{T:MSOA-NP} here gives us precisely Part 1 of
Theorem \ref{T:LTOA/RTOA} (for the case $\cU=\cY=\cC$; the non-square case can be obtained
as explained above).

\subsection{Example:~Nevanlinna-Pick interpolation for Toeplitz
algebras associated with directed graphs}\label{subS:quiver2}

Next, following Section 5 in \cite{MSSchur}, we show how the
Toeplitz algebra associated with a directed graph/quiver can be seen
as an example of the $W^*$-correspondence formalism, and we prove
Theorem \ref{T:Q-NP}. We follow the notation and terminology used in
Subsection \ref{subS:quiver1}. Let $G=\{Q_0,Q_1,s,r\}$ be a quiver
and $\cV$ a given Hilbert space that admits an orthogonal
decomposition $\cV=\oplus_{v\in Q_0}\cV_v$. For each nonnegative
integer $n$ we associate with $Q_n$, the set of paths of length $n$,
the space $C_{G,\cV}(Q_n)$ of continuous $\cL(\cV)$-valued functions
$f$ on $Q_n$, where $f(\ga)$ maps $\cV_{s(\ga)}$ into $\cV_{r(\ga)}$
and $f(\ga)|_{\cV_{s(\ga)}^\perp}=0$ for each $\ga\in Q_n$; of
course the continuity is automatic as $Q_{n}$ is a finite set with
the discrete topology---this notation is used for consistency with
more general settings where $Q_{n}$ is a more general topological
space as in \cite{KP04a,KP04b}. Usually we will just write
$C_G(Q_n)$, rather than $C_{G,\cV}(Q_n)$, for notational
convenience.

The space $C_G(Q_n)$ can be seen as a
$W^*$-$C_G(Q_0)$-correspondence with left and right multiplication and
$C_G(Q_0)$-valued inner-product given by:
\begin{equation*}
\begin{array}{rcl}
(f\cdot\xi)(\gamma)&=&f(r_n(\gamma))\xi(\gamma)\\[.2cm]
(\xi\cdot f)(\gamma)&=&\xi(\gamma)f(s_n(\gamma))\\[.2cm]
\inn{\xi}{\eta}(v)&=&\displaystyle\sum_{s_n(\gamma')=v}\ov{\eta(\gamma')}\xi(\gamma')
\end{array}\quad(\xi,\eta\in C_G(Q_n),f\in C_G(Q_0),\gamma\in Q_n,v\in Q_0).
\end{equation*}
Tensoring $C_G(Q_n)$ with $C_G(Q_m)$ gives $C_G(Q_{n+m})$, hence, in
particular, we obtain for each $n\in\BZ_+$ that
$C_G(Q_n)=C_G(Q_1)^{\otimes n}$. More precisely, for
$\xi_n,\ldots,\xi_1\in C_G(Q_1)$ we can identify
$\xi_1\otimes\ldots\otimes \xi_n\in C_G(Q_1)^{\otimes n}$ with the
element $\xi^{(n)}\in C_G(Q_n)$ given by
\begin{equation}\label{quivIden}
\xi^{(n)}(\ga):=\xi_n(\al_n)\cdots\xi_1(\al_1)\quad\text{for}\quad\ga=(\al_n,\ldots,\al_1)\in Q_n.
\end{equation}
To see that this is the case, note that, rather than viewing elements of $C_G(Q_n)$ as functions,
they are also given by tuples $F=(F_\gamma\in\cL(\cV_{s(\ga)},\cV_{r(\ga)})\colon \gamma\in Q_n)$
that we usually identify with operator matrices
\begin{equation}\label{QuivE}
{\bf F}=\mat{c}{F_{\ga,v}}_{\ga\in Q_n,v\in Q_0}:\bigoplus_{v\in Q_0}\cV_v\to\bigoplus_{\ga\in Q_n}\cV_{r(\ga)},
\text{ where }
F_{\gamma,v}=
   \begin{cases}  F_{\gamma} &\text{if }
       s_n(\gamma)=v, \\
       0 &\text{otherwise}.
       \end{cases}
\end{equation}
The advantage of the operator matrix representation is that for $F,F'\in C_G(Q_n)$ the norm
$\|F\|_{C_G(Q_n)}$ is equal to the operator norm of ${\bf F}$, and $\inn{F}{F'}$ can be
identified with ${\bf F'^*F}$; the operator matrix ${\bf F'^*F}\in\cL(\cV)$ is block diagonal
and, for each $v\in Q_0$, the diagonal entry from $\cL(\cV_v)$ corresponds to the value of
$\inn{F}{F'}(v)$. Note that $C_G(Q_0)$ corresponds to the $W^*$-algebra of block
diagonal operators $\diag_{v\in Q_0}(A_v)$ on $\cV=\oplus_{v\in Q_0}\cV_v$. Moreover, the
operator $T_F^{(0)}$ defined by (\ref{crea-n}) mapping $C_G(Q_0)$ (block diagonal operators) into
$C_G(Q_n)$ can be identified with multiplication with ${\bf F}$.
Subject to this identification, the operator ${\bf F}\otimes I_{C_G(Q_1)}$ from $C_G(Q_1)$ to
$C_G(Q_{n+1})$ corresponds to multiplication with the block operator matrix
\[
{\bf F}\otimes I_{C_G(Q_1)}=\mat{c}{F_{\ga,\al}}_{\ga\in Q_{n+1},\al\in Q_1},\text{ where }
F_{\gamma,\al}=
   \begin{cases}  F_{\gamma'} &\text{if }
       \ga=(\ga',\al), \\
       0 &\text{otherwise}.
       \end{cases}
\]
Analogously, one obtains formulas for ${\bf F}\otimes I_{C_G(Q_1)^{\otimes n}}={\bf F}\otimes I_{C_G(Q_n)}$
for each $n\in\BZ_+$.
Taking a product of such operators
${\bf F_1},{\bf F_2}\otimes I_{C_G(Q_1)},\ldots,{\bf F_n}\otimes I_{C_G(Q_1)^{\otimes n}}$,
with $F_1,F_2,\ldots, F_n\in C_G(Q_1)$, we finally arrive at (\ref{quivIden}).

We now take $E$ to be the $W^*$-$C_G(Q_0)$-correspondence $C_G(Q_1)$.
Elements of the Fock space $\cF^2(C_G(Q_1))=\oplus_{n\in\BZ_+} C_G(Q_n)$ are then given by
tuples $F=(F_\gamma\in\cL(\cV_{s(\ga)},\cV_{r(\ga)})\colon \gamma\in\Ga)$, where $\Ga$ is
the collection of all finite paths of whatever length, such that the operator matrix
\[
{\bf F}=\mat{c}{F_{\gamma,v}}_{v\in Q_0,\gamma\in\Gamma}
\]
is in $\cL(\cV,\oplus_{\ga\in\Ga}\cV_{r(\ga)})$ (i.e., bounded). Here $F_{\gamma,v}$ is
as defined in (\ref{QuivE}). As in Subsection \ref{subS:quiver1} we denote the Hilbert space
$\oplus_{\ga\in\Ga}\cV_{r(\ga)}$ by $\ell^2_\cV(\Ga)$.
The Toeplitz algebra $\cF^\infty(E)$, seen as a subalgebra of $\cL(\ell^2_\cV(\Gamma))$,
is then precisely the Toeplitz algebra $\fL_\Gamma(\cV,\cV)$ defined in Subsection \ref{subS:quiver1}.
That is, an element $R$ in $\cF^\infty(E)$ is given by a tuple
$R=(R_\gamma\in\cL(\cV_{s(\ga)},\cV_{r(\ga)})\colon \gamma\in\Ga)$ with the property that
the operator matrix
\[
{\bf R}=\mat{c}{R_{\ga,\ga'}}_{\ga,\ga'\in\Ga},\text{ where }R_{\ga,\ga'}=R_{\ga\ga'^{-1}}
\text{ (with $R_\tu{undefined}=0$)},
\]
induces a bounded operator on $\ell^2_\cV(\Gamma)$. Here $\ga\ga'^{-1}$ is defined by (\ref{QuivInv}).

In addition to $\cG$ we assume that we are given a Hilbert space $\cG$ that also has an
orthogonal-sum decomposition of the form $\cG=\oplus_{v\in Q_0}\cG_v$. We then set
$\cE_v=\cV_v\otimes\cG_v$ for each $v\in Q_0$ and $\cE=\oplus_{v\in Q_0}\cE_v$. Now let
$\si:C_G(Q_0)\to\cL(\cE)$ be the representation of $C_G(Q_0)$ given by
\[
\si(\diag_{v\in Q_0}(A_v))=\diag_{v\in Q_0}(A_v\otimes I_{\cG_v}).
\]
Then, for each $n\in\BZ_+$, we have
$C_G(Q_n)\otimes_\si\cE=\oplus_{\ga\in Q_n}(\cV_{r_n(\ga)}\otimes \cG_{s_n(\ga)})$ and thus
\[
\cF^2(E,\si):=\cF^2(E)\otimes_\si\cE=\oplus_{\gamma\in\Gamma}(\cV_{r(\ga)}\otimes \cG_{s(\ga)}).
\]
It is straightforward that
\[
\si(C_G(Q_0))'=\left\{ \diag_{v\in Q_0} (I_{\cV_v}\otimes B_v)\colon B_v\in\cL(\cG_v)\right\}.
\]
Moreover, the space $C_G(Q_1)^\si$ consists of those operator matrices mapping
$\cE=\oplus_{v\in Q_0}(\cV_v\otimes\cG_v)$ into
$C_G(Q_1)\otimes\cE=\oplus_{\al\in Q_1}(\cV_{r(\al)}\otimes\cG_{s(\al)})$
that are of the form
\[
\mat{c}{I_{\cV_{r(\al)}}\otimes K_{\al,v}}_{\al\in Q_1,v\in Q_0}
\]
with $K_{\al,v}\in\cL(\cG_{r(\al)},\cG_{s(\al)})$ and $K_{\al,v}=0$ in case $r(\al)\not=v$.
Leaving out the identity operators $I_{\cV_v}$, we can identify $\si(C_G(Q_0))'$ with $C_{\tilG,\cG}(Q_0)$
and $C_G(Q_1)^\si$ with $C_{\tilG,\cG}(Q_1)$, where
$\tilG=\{Q_0,Q_1,r,s\}$ is the transposed quiver of $G$
(i.e., with the source and range maps interchanged).
Note that the generalized disk $\BD((C_G(Q_1)^\si)^*)$ of strictly
contractive operators with adjoint in $C_G(Q_1)^\si$ can be identified with the set
$\BD_{G,\cG}$ defined in Subsection \ref{subS:quiver1}.
Next observe that for
 $Z=(Z_\al\in\cL(\cG_{s(\al)},\cG_{r(\al)})\colon \al\in Q_1)\in \BD_{G,\cG}$, the $n^\tu{th}$
generalized power $Z^n$ of $Z$ then corresponds to the tuple
$(Z_\ga\in\cL(\cG_{s_n(\ga)},\cG_{r_n(\ga)})\colon \ga\in Q_n)$ with $Z_\ga=Z^\ga$, where
$Z^\ga$ is defined by (\ref{quiverpowers}).

Now let $R=(R_\ga\colon \ga\in\Ga)\in\cF^\infty(E)=\fL_\Ga(\cV,\cV)$ and
$Z=(Z_\al\colon\al\in Q_1)\in\BD_{G,\cG}$.
It then follows that the first Muhly-Solel point-evaluation $\widehat R(Z)$ of $R$ in $Z$ is given
by the tensor-product functional-calculus for $\fL_\Ga(\cV,\cV)$:
\[
\widehat R(Z)=\sum_{\ga\in\Ga}
i_{\cV_{r(\ga)}\otimes\cG_{r(\ga)}}(R_\ga\otimes Z^\ga)i^*_{\cV_{s(\ga)}\otimes\cG_{s(\ga)}},
\]
where, as in Subsection \ref{subS:quiver1}, we use the general notation: for a subspace $\cH$ of
a Hilbert space $\cK$ we write $i_\cH$ for the canonical embedding of $\cH$ into $\cK$. Thus the
$W^*$-correspondence Schur class $\cS(E,\si)$ corresponds to the free semigroupoid algebra
Schur class $\cS_G(\cV,\cV)$ in combination with the point-evaluation in the generalized
disk $\BD_{G,\cG}$.

The $W^*$-correspondence Nevanlinna-pick problem in this case thus turns out to be the
left-tangential tensor-product functional-calculus
free semigroupoid algebra Nevanlinna-Pick problem ({\bf QLTT-NP}): {\it Given a data set
\begin{equation}\label{QuivData1}
\fD: Z^{(1)},\ldots,Z^{(N)}\in\BD_{G,\cG},\ X_1,\ldots,X_N,Y_1,\ldots,Y_N\in\cL(\cE),
\end{equation}
determine when there exists a Schur class function $S\in\cS_G(\cV,\cV)$ such that
\begin{equation}\label{QLTT}
X_iS(Z^{(i)})=Y_i\quad\text{for}\quad i=1,\ldots,N.
\end{equation}
}

As in the ``unit disk'' example of Subsection \ref{subS:MS-1var}, one can deduce from the
solution to this problem the analogous result for the ``non-square'' case considered in
Subsection \ref{subS:quiver1}, but we will not work out those details here.

In order to state the solution we remark that
 the Szeg\"o kernel (\ref{Szego-ker}) specified for this setting has the form
  $$
    {\BK}_{C_G(Q_{1}), \sigma} \colon
    \BD_{G,\cG} \times \BD_{G,\cG} \to \cL^a(C_\tilG(Q_0), \cL(\cE))
  $$
  and can be written as
 \begin{equation}   \label{Szego-quiver}
\BK_{C_G(Q_{1}), \sigma}(Z,Z')[B]=
\sum_{\ga\in\Ga}
i_{\cE_{r(\ga)}}\left(I_{\cV_{r(\ga)}}\otimes Z^\ga i^*_{\cG_{s(\ga)}}
B i_{\cG_{s(\ga)}}(Z'^{\ga})^*\right)i^*_{\cG_{r(\ga)}}.
\end{equation}

An application of the general Theorem \ref{T:MS-NP} then leads to the
following solution of the problem.

\begin{theorem}\label{T:MS-Quiv1}
Suppose we are given data as in \tu(\ref{QuivData1}). Then there exists a
solution $S\in\cS_G(\cV,\cV)$ to the {\bf QLTT-NP} interpolation problem
if and only if one of the following equivalent conditions holds:
 \begin{enumerate}
      \item[(1)]
   the kernel ${\mathbb K}_{{\mathfrak D}} \colon \{1, \dots, N\} \times
  \{1, \dots, N\} \to \cL^a(C_\tilG(Q_0), \cL(\cE))$ given by
  \begin{align*}
      & {\mathbb K}_{{\mathfrak D}}(i, j)[B]
   = X_i{\mathbb K}_{C_G(Q_{1}), \sigma}(Z^{(i)},
    Z^{(j)})[B]X_j^* -
    Y_{i} K_{C_G(Q_{1}), \sigma} (Z^{(i)}, Z^{(j)}) [B]
    Y_{j}^{*}
    \end{align*}
    is a completely positive kernel.

    \item[(2)]
    the map $\varphi$ from $C_\tilG(Q_0)^{N \times N}$ to $\cL(\cE)^{N \times N}$ given by
\[
\varphi\left([B_{ij}]_{i,j=1}^N \right)=\mat{c}{\BK_\fD(i,j)[B_{i,j}]}_{i,j=1}^N
\]
    is completely positive.
  \end{enumerate}
\end{theorem}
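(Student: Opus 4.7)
The plan is to deduce Theorem \ref{T:MS-Quiv1} as a direct application of the general $W^{*}$-correspondence Nevanlinna-Pick theorem (Theorem \ref{T:MS-NP}) once all identifications between the quiver setting and the correspondence-representation pair formalism have been verified. The bulk of the proof is therefore bookkeeping: translating every object in the {\bf QLTT-NP} data into its $W^{*}$-correspondence counterpart and checking that the two notions of point-evaluation agree.

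First I would record the identifications. Take $\cA = C_{G}(Q_{0})$ (block-diagonal operators on $\cV = \bigoplus_{v\in Q_{0}}\cV_{v}$), $E = C_{G}(Q_{1})$ as the $W^{*}$-$\cA$-correspondence, and $\sigma$ the amplification representation on $\cE = \bigoplus_{v} \cV_{v}\otimes \cG_{v}$ already defined in the text. As the excerpt explicitly verifies, under these choices one has $E^{\otimes n} = C_{G}(Q_{n})$, $\cF^{\infty}(E) = \fL_{\Gamma}(\cV,\cV)$, $\sigma(\cA)' \cong C_{\widetilde G}(Q_{0})$, and $E^{\sigma} \cong C_{\widetilde G}(Q_{1})$, so that the generalized disk $\BD((E^{\sigma})^{*})$ is identified with $\BD_{G,\cG}$. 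I would then check that the first Muhly-Solel point-evaluation \eqref{MSpoint-eval}, when specialized to a tuple $R = (R_{\gamma}\colon \gamma\in\Gamma)\in\fL_{\Gamma}(\cV,\cV)$ and a $Z\in\BD_{G,\cG}$, reduces exactly to the tensor functional calculus \eqref{Quiv-TFC}; this is a direct computation, noting that $\eta^{n}$ corresponds to the operator $Z^{n}$ built from generalized powers \eqref{quiverpowers} by matching the balancing in \eqref{ident}. In particular, the Schur class $\cS_{E,\sigma}$ coincides with $\cS_{G}(\cV,\cV)$ under these identifications, so the {\bf QLTT-NP} problem for data $\fD$ coincides with the problem treated by Theorem \ref{T:MS-NP} (in the finite-$\Omega$ formulation of Remark \ref{R:Xfin}, with $G(\eta_{i})= X_{i}$ and $F(\eta_{i})=Y_{i}$).

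Next I would plug in the explicit form of the Szeg\"o kernel obtained in \eqref{Szego-quiver}. Substituting this into the criterion of Theorem \ref{T:MS-NP} gives exactly the kernel ${\mathbb K}_{\fD}$ appearing in condition~(1) of Theorem \ref{T:MS-Quiv1}; hence (1) is literally the complete positivity criterion of Theorem \ref{T:MS-NP}(2) in disguise. The equivalence of conditions (1) and (2) is then an immediate instance of Proposition \ref{P:cp}, which asserts that complete positivity of a kernel $\mathbb{K}\colon \Omega\times\Omega\to\cL(\cA,\cB)$ is equivalent to complete positivity of the associated map $\varphi_{\omega_{1},\dots,\omega_{n}}$ for every finite tuple of base points (and here $\Omega=\{1,\dots,N\}$ so a single map suffices). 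Putting these pieces together completes the proof.

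The main obstacle is not conceptual but rather careful verification that the tensor $\eta^{n}(R_{n,0}\otimes I_{\cE})$ in \eqref{MSpoint-eval} matches the concrete sum $\sum_{\gamma\in Q_{n}} i_{\cQ_{r(\gamma)}}(R_{\gamma}\otimes Z^{\gamma})i^{*}_{\cR_{s(\gamma)}}$ \emph{after} the balancing conventions $C_{G}(Q_{n})\otimes_{\sigma}\cE = \bigoplus_{\gamma\in Q_{n}}\cV_{r(\gamma)}\otimes\cG_{s(\gamma)}$ are taken into account; this requires tracking how the left $C_{G}(Q_{0})$-module structure on each $C_{G}(Q_{n})$ interacts with $\sigma$ and with the range/source decomposition of each path. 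Once this dictionary is in place the rest of the argument is mechanical, and I would leave the detailed paragraph-by-paragraph bookkeeping to the reader or confine it to a single technical lemma.
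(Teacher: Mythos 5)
Your proposal matches the paper's own argument: the paper proves Theorem \ref{T:MS-Quiv1} precisely by carrying out the identifications $\cA=C_G(Q_0)$, $E=C_G(Q_1)$, $\cF^\infty(E)=\fL_\Gamma(\cV,\cV)$, $\sigma(\cA)'\cong C_{\tilG}(Q_0)$, $E^\sigma\cong C_{\tilG}(Q_1)$, $\BD((E^\sigma)^*)\cong\BD_{G,\cG}$, checking that the first Muhly-Solel point-evaluation becomes the tensor functional calculus with Szeg\"o kernel \eqref{Szego-quiver}, and then invoking Theorem \ref{T:MS-NP} (with the finite-$\Om$ reduction of Remark \ref{R:Xfin} and Proposition \ref{P:cp} for the kernel/map equivalence). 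So your route is correct and essentially identical to the paper's.
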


As a consequence of the extension of Choi's theorem (see Theorem \ref{T:ChoiExt})
we obtain the following third condition.

\begin{theorem}\label{T:MS-Quiv2}
Assume that $\cV$ is separable, and that for each $v\in Q_0$ we have an
orthonormal basis $\{e_1^{(v)},\ldots,e_{\kappa_v}^{(v)}\}$ for $\cV_v$.
Then, in addition to the two conditions (1) and (2) in Theorem
\ref{T:MS-Quiv1}, a third condition equivalent to the existence of an
$S\in\cS_G(\cV,\cV)$ that satisfies (\ref{QLTT}) is that for each $v\in Q_0$
the operator matrix
\begin{equation}\label{QLTT-Pick}
\mat{c}{\BK_\fD(i,j)[e_{i'}^{(v)}e_{j'}^{(v)*}]}_{(i,i'),(j,j')\in\{1,\ldots,N\}\times\{1,\ldots,\kappa_v\}}
\in\cL(\cE)^{\kappa_v N\times\kappa_v N}
\end{equation}
is positive. Here $\BK_\fD$ is the kernel defined in Part 1 of Theorem \ref{T:MS-Quiv1}.
\end{theorem}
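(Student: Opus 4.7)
The plan is to derive Theorem \ref{T:MS-Quiv2} from Theorem \ref{T:MS-Quiv1} by decomposing the map $\varphi$ of Theorem \ref{T:MS-Quiv1}(2) along the vertices of the quiver and then applying the extended Choi theorem (Theorem \ref{T:ChoiExt}) to each summand. By Theorem \ref{T:MS-Quiv1}, a solution $S\in\cS_G(\cV,\cV)$ exists if and only if
\[
\varphi\colon C_\tilG(Q_0)^{N\times N}\to\cL(\cE)^{N\times N},\qquad \varphi([B_{ij}])=[\BK_\fD(i,j)[B_{ij}]]_{i,j=1}^N,
\]
is completely positive. Thus the entire task is to show that complete positivity of $\varphi$ is equivalent to positivity of the Pick matrices (\ref{QLTT-Pick}), which I will denote by $\BP_v$, for every $v\in Q_0$.

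First, exploit the $W^*$-algebra decomposition $C_\tilG(Q_0)=\bigoplus_{v\in Q_0}\cL(\cV_v)$, which induces $C_\tilG(Q_0)^{N\times N}=\bigoplus_{v\in Q_0}\cL(\cV_v)^{N\times N}$ with positivity taken componentwise. Writing $\varphi_v\colon\cL(\cV_v)^{N\times N}\to\cL(\cE)^{N\times N}$ for the restriction of $\varphi$ to the $v$-summand, so that $\varphi([B_{ij}])=\sum_v\varphi_v([B_{ij}^{(v)}])$, one checks in a few lines that $\varphi$ is $k$-positive if and only if each $\varphi_v$ is $k$-positive: one direction uses inputs concentrated on a single summand, the other uses that a sum of positive operator matrices is positive. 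Hence complete positivity of $\varphi$ is equivalent to complete positivity of every $\varphi_v$.

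Next, identify $\cL(\cV_v)^{N\times N}=\cL(\cV_v\otimes\BC^N)$ so that $\varphi_v$ is a weak-$*$ continuous linear map between $\cL(\cH)$'s with $\cH=\cV_v\otimes\BC^N$ separable (weak-$*$ continuity coming from the geometric norm-convergence of the Szeg\"o-kernel series defining $\BK_\fD$, since the $Z^{(i)}$ are strict contractions). The family $\{e_{i'}^{(v)}\otimes\hat e_i : 1\le i\le N,\ 1\le i'\le\kappa_v\}$, with $\hat e_1,\ldots,\hat e_N$ the standard basis of $\BC^N$, is an orthonormal basis of $\cH$, so Theorem \ref{T:ChoiExt} applies: $\varphi_v$ is completely positive if and only if its Choi matrix
\[
C_v=\bigl[\varphi_v\bigl(e_{i'}^{(v)}(e_{j'}^{(v)})^*\otimes\hat e_i\hat e_j^*\bigr)\bigr]_{(i,i'),(j,j')}
\]
is positive, with $(i,i')$ and $(j,j')$ ranging over $\{1,\ldots,N\}\times\{1,\ldots,\kappa_v\}$.

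The final step is a sparsity observation that relates $C_v$ to $\BP_v$. The matrix unit $e_{i'}^{(v)}(e_{j'}^{(v)})^*\otimes\hat e_i\hat e_j^*$, viewed as an element of $\cL(\cV_v)^{N\times N}$, has its sole nonzero entry $e_{i'}^{(v)}(e_{j'}^{(v)})^*$ in position $(i,j)$; applying $\varphi_v$ therefore produces an $N\times N$ matrix in $\cL(\cE)^{N\times N}$ whose sole nonzero entry is $\BK_\fD(i,j)[e_{i'}^{(v)}(e_{j'}^{(v)})^*]$, again in position $(i,j)$. Flattening $C_v$ to $\cL(\cE)^{N^2\kappa_v\times N^2\kappa_v}$ and indexing rows and columns by $((i,i'),k)$ and $((j,j'),l)$ respectively, one sees that $C_v$ is supported on the ``diagonal'' subspace $\{k=i,\ l=j\}$, and on that subspace its entries coincide with those of $\BP_v$. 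Consequently $C_v=V\BP_v V^*$ for the isometry $V\colon(i,i')\mapsto((i,i'),i)$, so $C_v\succeq 0$ if and only if $\BP_v\succeq 0$, completing the proof. The main delicacy is this index bookkeeping, where three layers of indices (the outer $N\times N$ structure inherited from $\varphi$, the matrix-unit structure of $\cL(\cV_v)$, and the basis of $\cV_v$) must be aligned correctly; the weak-$*$ continuity of $\varphi_v$ needed to invoke Theorem \ref{T:ChoiExt} is a separate routine check.
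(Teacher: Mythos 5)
Your proof is correct, but it organizes the reduction differently from the paper. You decompose the domain algebra $C_{\tilG}(Q_0)^{N\times N}\cong\oplus_{v\in Q_0}\cL(\cV_v)^{N\times N}$ and the map $\vph$ of Theorem \ref{T:MS-Quiv1}(2) vertex-by-vertex, check that complete positivity of $\vph$ is equivalent to complete positivity of each summand $\vph_v$, and then apply the extended Choi theorem (Theorem \ref{T:ChoiExt}) to each $\vph_v$ on $\cL(\cV_v\otimes\BC^N)$ separately, finishing with the explicit observation that the Choi matrix of $\vph_v$ equals $V\BP_v V^*$ for an isometry $V$, where $\BP_v$ denotes the Pick matrix \eqref{QLTT-Pick}. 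The paper goes in the opposite direction: since the domain of $\vph$ is only the block-diagonal subalgebra (not of the form $\cL(\cH)$), it first extends $\vph$ to a map $\ov{\vph}$ defined on the full matrix algebra by composing with the conditional expectation onto the block diagonal (Lemmas \ref{L:composmap} and \ref{L:ExtMap}), applies Theorem \ref{T:ChoiExt} once to $\ov{\vph}$ with the merged orthonormal basis, and then notes that the cross-vertex entries of the resulting Choi matrix vanish, so that after reordering it is block diagonal with the matrices \eqref{QLTT-Pick} as blocks. Your route is more economical for this particular theorem---it needs neither the complete positivity of the conditional expectation nor the extension lemma---and it makes explicit the compression step (full Choi matrix versus the matrix \eqref{QLTT-Pick}) that the paper passes over silently; the paper's detour through $\ov{\vph}$ is, however, reused immediately afterwards in the trace-duality argument for the Riesz-Dunford case (Theorem \ref{T:MS-Quiv3}), so it is not wasted there. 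Two minor remarks, both at the same level of rigor as the paper: the weak-$*$ continuity of $\vph_v$, which Theorem \ref{T:ChoiExt} requires when some $\kappa_v=\infty$, is indeed the routine check you indicate via the geometrically convergent kernel series; and the matrix units $e_{i'}^{(v)}e_{j'}^{(v)*}$ should act on the summands of the auxiliary space carrying the points $Z^{(i)}$ (denoted $\cG_v$ earlier in the subsection)---you have simply inherited the statement's own notational slip between $\cV$ and $\cG$, and your argument is unaffected by it.
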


If one writes out the definition of the kernel $\BK_\fD$ (and that of $\BK_{C_G(Q_{1}), \sigma}$)
the operator matrix (\ref{QLTT-Pick}) turns out to be exactly the Pick matrix $\BP_{QLTT}$ of
Part 1 of Theorem \ref{T:Q-NP}. Thus we obtain the first solution
criterion of Theorem \ref{T:Q-NP}.

In order to prove Theorem \ref{T:MS-Quiv2} it is convenient to first prove two lemmas.

\begin{lemma}\label{L:composmap}
Let $\cH$ be a Hilbert space with orthogonal sum decomposition
$\cH=\oplus_{i=1}^N\cH_i$. Then the map $\psi:\cL(\cH)\to\cL(\cH)$ defined by
\[
\psi(\mat{c}{B_{i,j}}_{i,j=1}^N)=\diag_{i=1}^N (B_{i,i}),
\]
where $\sbm{B_{i,j}}_{i,j=1}^N\in\cL(\cH)$ is an operator
matrix with $B_{i,j}\in\cL(\cH_j,\cH_i)$, is a completely positive map.
\end{lemma}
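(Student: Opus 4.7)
The plan is to exhibit $\psi$ as a sum of conjugation-type maps, each of which is manifestly completely positive, and then use the fact that a sum of completely positive maps is completely positive.

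Specifically, for $i=1,\dots,N$, let $P_i\in\cL(\cH)$ denote the orthogonal projection of $\cH$ onto the summand $\cH_i$, viewed as a bounded operator from $\cH$ to $\cH$. The key observation is that with respect to the decomposition $\cH=\oplus_{i=1}^N\cH_i$, compressing by $P_i$ picks out precisely the $(i,i)$-diagonal block: for $B=[B_{k,l}]_{k,l=1}^N\in\cL(\cH)$, we have $P_iBP_i$ equal to the operator matrix with $B_{i,i}$ in the $(i,i)$-slot and zeros elsewhere. Summing over $i$ gives
\[
\psi(B)=\sum_{i=1}^N P_iBP_i,
\]
which is the desired representation.

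Next I would invoke the fact that for any Hilbert spaces $\cH,\cK$ and any $V\in\cL(\cH,\cK)$, the map $\Phi_V\colon\cL(\cK)\to\cL(\cH)$ defined by $\Phi_V(A)=V^*AV$ is completely positive. This is a standard fact (indeed the one-summand Stinespring form): if $[A_{k,l}]_{k,l=1}^M\succeq 0$ in $\cL(\cK)^{M\times M}$, then conjugation by the block-diagonal operator $\diag(V,\dots,V)\in\cL(\cH^M,\cK^M)$ yields $[V^*A_{k,l}V]_{k,l=1}^M\succeq 0$ in $\cL(\cH)^{M\times M}$. Applying this with $V=P_i$ (self-adjoint) shows that each summand $B\mapsto P_iBP_i$ is completely positive.

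Finally, since the class of completely positive maps between two given $C^*$-algebras is closed under finite sums (a trivial check against the defining condition \eqref{n-positive}), $\psi=\sum_{i=1}^N \Phi_{P_i}$ is completely positive, as required. There is no serious obstacle in this argument; the only mild subtlety is to verify at the outset that $\sum_{i=1}^N P_iBP_i$ indeed equals the block-diagonal part of $B$, which is an immediate matrix computation.
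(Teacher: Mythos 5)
Your proof is correct, and it takes a genuinely different route from the paper. You write $\psi$ in Kraus form, $\psi(B)=\sum_{i=1}^N P_iBP_i$ with $P_i$ the orthogonal projection onto $\cH_i$, and then invoke two standard facts: conjugation $A\mapsto V^*AV$ is completely positive (verified by conjugating the amplified matrix with $\diag(V,\dots,V)$), and finite sums of completely positive maps are completely positive. The paper instead argues directly from the definition: for a positive $M\times M$ block matrix $\BB=\bigl[[B_{\alpha,i;\beta,j}]_{i,j}\bigr]_{\alpha,\beta}$, each matrix $\widetilde\BB_i=[B_{\alpha,i;\beta,i}]_{\alpha,\beta}$ is a principal submatrix of $\BB$ and hence positive, and the amplified image $\psi^{(M)}(\BB)$ is unitarily equivalent, via a permutation of rows and columns, to $\diag_{i=1}^N(\widetilde\BB_i)$, which is positive. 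Your argument is the more conceptual one and generalizes immediately (it exhibits $\psi$ as a sum of conjugations, essentially the Kraus form of this conditional expectation), at the cost of importing the standard CP facts; the paper's argument is deliberately self-contained and elementary, which matches its stated intent of giving an independent proof rather than citing the general result that conditional expectations are completely positive. Both the block-diagonal identity $\sum_i P_iBP_i=\diag_{i=1}^N(B_{i,i})$ and the remaining steps in your write-up check out, so there is no gap.
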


More generally, the statement remains true for any conditional expectation operator
$\psi:\cA\to\cA$, where $\cA$ is an arbitrary $C^*$-algebra \cite{S97} (see also
\cite{T59}), of which the map $\psi$ in Lemma \ref{L:composmap} is just a particular example.
We give the following independent proof.

\begin{proof}[\it Proof of Lemma \ref{L:composmap}]
 It is immediately clear that $\psi$ is positive. Let $M\in\BZ_+$. Assume that we have an
operator matrix
$$
\BB= \big[ [B_{\alpha, i; \beta, j}]_{i,j=1, \dots, N}
\big]_{\alpha, \beta = 1, \dots, M}
$$
that is a positive element of $\cL(\oplus_{\alpha = 1}^{M} \cH)$,
where each $B_{\alpha, i; \beta, j} \in \cL(\cH_{j}, \cH_{i})$.  The
assumption that $\BB$ is positive implies that the matrix
$$
 \widetilde \BB_{i} = [ B_{\alpha, i; \beta, i} ]_{\alpha, \beta = 1,
 \dots, M}
$$
is positive for each $i = 1, \dots, N$, since $\widetilde \BB_{i}$ is a
principal submatrix of $\BB$.  On the other hand by definition we have
$$
\psi(\BB) = \left[ \diag_{i=1}^{N}
[B_{\alpha, i; \beta, i}] \right]_{\alpha, \beta = 1, \dots, M}.
$$
which is unitarily equivalent via a permutation matrix to
$$ \diag_{i=1}^{N} \left([ B_{\alpha, i; \beta, i}]_{\alpha, \beta = 1,
\dots, M} \right) = \diag_{i=1}^{N} (\widetilde \BB_{i}).
$$
Thus positivity of $\BB$ implies positivity of $\psi(\BB)$ as required.
\end{proof}

Now observe that we can extend the Szeg\"o kernel ${\mathbb K}_{C_G(Q_{1}), \sigma}$
to a kernel $\ov{\mathbb K}_{C_G(Q_{1}), \sigma}$ of the form
\[
\ov{\mathbb K}_{C_G(Q_{1}), \sigma}\colon \BD_{G,\cG} \times \BD_{G,\cG} \to \cL(\cL(\cG), \cL(\cE))
\]
using the same formula, i.e., in the right hand side of (\ref{Szego-quiver}) we allow $B$
to be in $\cL(\cG)$ rather than just a block diagonal operator. We can then also extend the
kernel $\BK_\fD$ in condition (1) and the map $\vph$ in condition (2) of Theorem \ref{T:MS-Quiv1}
to a kernel $\ov{\BK}_\fD$ and a map $\ov{\vph}$ of the form
\[
\ov{\BK}_\fD:\{1,\ldots,N\}\times\{1,\ldots,N\}\to\cL^a(\cL(\cG),\cL(\cE)),\quad
\ov{\vph}:\cL(\cG)^{N\times N}\to\cL(\cE)^{N \times N},
\]
simply by replacing ${\mathbb K}_{C_G(Q_{1}), \sigma}$ by $\ov{\mathbb K}_{C_G(Q_{1}), \sigma}$ in
the definitions of $\ov{\BK}_\fD$ and $\vph$.

\begin{lemma}\label{L:ExtMap}
The map $\vph$ defined in part 2 of Theorem \ref{T:MS-Quiv1} is completely positive if and only if the
map $\ov{\vph}$ is completely positive.
\end{lemma}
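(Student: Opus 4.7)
The plan is to establish this equivalence by observing that $\overline{\varphi}$ factors through the conditional expectation onto block-diagonal operators, so that complete positivity transfers via the composition with the completely positive projection map from Lemma \ref{L:composmap}.

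First I would analyze the defining formula \eqref{Szego-quiver} for the Szeg\"o kernel $\BK_{C_G(Q_1),\sigma}(Z,Z')[B]$ and note that the only way $B$ enters is through the sandwich $i^*_{\cG_{s(\gamma)}} B\, i_{\cG_{s(\gamma)}}$, which extracts the diagonal block $B_{s(\gamma),s(\gamma)}$ of $B$ corresponding to the single source vertex of the path $\gamma$. Consequently, if $\psi\colon\cL(\cG)\to C_\tilG(Q_0)$ denotes the block-diagonal projection $B=[B_{v,w}]_{v,w\in Q_0}\mapsto \diag_{v\in Q_0}(B_{v,v})$, then
\[
\overline{\BK}_{C_G(Q_1),\sigma}(Z,Z')[B]=\BK_{C_G(Q_1),\sigma}(Z,Z')[\psi(B)]\quad\text{for all }B\in\cL(\cG),
\]
and therefore $\overline{\BK}_\fD(i,j)[B_{ij}]=\BK_\fD(i,j)[\psi(B_{ij})]$ for every $B_{ij}\in\cL(\cG)$. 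Writing $\Psi_N\colon\cL(\cG)^{N\times N}\to C_\tilG(Q_0)^{N\times N}$ for the entrywise application of $\psi$, this yields the factorization
\[
\overline{\varphi}=\varphi\circ\Psi_N.
\]

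For the forward direction, if $\overline{\varphi}$ is completely positive then so is its restriction $\varphi$, since any positive element of $C_\tilG(Q_0)^{N\times N}$ is automatically a positive element of $\cL(\cG)^{N\times N}$ (the inclusion $C_\tilG(Q_0)\hookrightarrow\cL(\cG)$ is a $*$-homomorphism, hence completely positive). For the reverse direction, Lemma \ref{L:composmap} shows that $\psi$ is completely positive, and a standard argument (or direct verification from the block-diagonal nature of $\psi$) extends this to $\Psi_N$ being completely positive on $\cL(\cG)^{N\times N}$. Since the composition of completely positive maps is completely positive, the factorization $\overline{\varphi}=\varphi\circ\Psi_N$ then propagates complete positivity from $\varphi$ to $\overline{\varphi}$.

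The only mild obstacle is to confirm cleanly the claim that $\Psi_N$ inherits complete positivity from $\psi$; this is not a difficulty in principle but requires identifying amplifications of $\Psi_N$ with amplifications of $\psi$ under a permutation of tensor factors, so that one applies Lemma \ref{L:composmap} (applied to the appropriate Hilbert space) at the amplified level. Alternatively, one can avoid this bookkeeping altogether by observing that $\psi$, being the conditional expectation onto a $C^*$-subalgebra given by compression against the block-diagonal projection, is a $*$-homomorphism composed with unital $*$-preserving maps and therefore automatically completely positive on matrix amplifications by the standard Stinespring/Tomiyama theory.
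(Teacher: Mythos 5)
Your proposal is correct and is essentially the paper's own argument: the paper likewise observes that $\ov{\BK}_{C_G(Q_1),\sigma}(Z,Z')[B]=\BK_{C_G(Q_1),\sigma}(Z,Z')[\psi(B)]$ with $\psi$ the block-diagonal compression of Lemma \ref{L:composmap} applied to $\cG=\oplus_{v\in Q_0}\cG_v$, so that $\ov{\vph}$ is $\vph$ composed with the entrywise application of $\psi$, complete positivity passing one way by composition of completely positive maps and the other way by restriction to $C_\tilG(Q_0)^{N\times N}$. The ``mild obstacle'' you flag is vacuous: the entrywise map $\Psi_N$ is just the $N$-th amplification of $\psi$ (equivalently $\Psi_N(\cdot)=\sum_{v\in Q_0}(I_N\otimes P_v)(\cdot)(I_N\otimes P_v)$ with $P_v$ the projection of $\cG$ onto $\cG_v$), so its amplifications are again amplifications of $\psi$ and complete positivity is immediate.
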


\begin{proof}
Let $\psi$ be the completely positive map of Lemma \ref{L:composmap} with $\cH$
replaced by $\cG$, relative to the orthogonal decomposition $\cG=\oplus_{v\in Q_0}\cG_v$.
Notice that complete positivity of $\ov{\vph}$ automatically implies complete positivity
of $\vph$. To see that the converse is also true, observe that for any $B\in\cL(\cG)$ and
any $Z,Z'\in\BD_{G,\cG}$ we have
$\ov\BK_{C_G(Q_{1}),\sigma}(Z,Z')[B]=\BK_{C_G(Q_{1}),\sigma}(Z,Z')[\psi(B)]$, and thus
also $\ov{\vph}[\sbm{B_{i,j}}_{i,j=1}^N]=\vph[\sbm{\psi(B_{i,j})}_{i,j=1}^N]$. Hence the
converse statement follows from Lemma \ref{L:composmap} and the fact that compositions of
completely positive maps are again completely positive maps.
\end{proof}

\begin{proof}[Proof of Theorem \ref{T:MS-Quiv2}]
Let $\{e_1,\ldots,e_\kappa\}$ be a reordering of the orthonormal basis
$\{e_i^{(v)}\colon v\in Q_0,\ i=1,\ldots,\kappa_v\}$ of $\cV$.
Since $\ov{\vph}$ is defined on $\cL(\cV)^{N\times N}=\cL(\cV^N)$, we can apply
Theorem \ref{T:ChoiExt} to $\ov{\vph}$, obtaining that $\ov{\vph}$ is completely
positive if and only the operator matrix
\begin{equation}\label{opmat}
\mat{c}{\ov{\BK}_\fD(i,j)[e_{i'}e_{j'}^*]}_{(i,i'),(j,j')\in\{1,\dots,N\}\times\{1,\ldots,\kappa\}}
\end{equation}
is positive. Next observe that $\ov{\BK}_\fD(i,j)[e_{i'}e_{j'}^*]=\BK_\fD(i,j)[e_{i'}e_{j'}^*]$ in case
$e_{i'},e_{j'}\in\{e_1^{(v)},\ldots,e_{\kappa_v}^{(v)}\}$ for some $v\in Q_0$, and that
$\ov{\BK}_\fD(i,j)[e_{i'}e_{j'}^*]=0$ otherwise. Thus, after a reordering in the basis
$\{e_1,\ldots,e_\kappa\}$, we can identify the operator matrix (\ref{opmat}) with the
block diagonal operator matrix with the operator matrices (\ref{QLTT-Pick}) on the diagonal.
This proves our claim.
\end{proof}

We now show how the solution criterion obtained above can be used to derive the
results for the Riesz-Dunford and operator-argument functional calculus listed in
Subsection \ref{subS:quiver1}.

\paragraph{Riesz-Dunford functional calculus Nevanlinna-Pick interpolation}

The Riesz-Dun\-ford functional calculus is the special case of the tensor functional calculus
with $\cV_v=\BC$ for each $v\in Q_0$, i.e., $\cE=\cG$.
In this case, the same argument as used in the proof of Theorem \ref{T:TFC-NP2}
(i.e., invariance under cyclic permutations of the trace) now applied
to the map $\ov{\vph}$, in combination with Theorem \ref{T:ChoiExt},
gives us the following result.

\begin{theorem}\label{T:MS-Quiv3}
In case $\cV_v=\BC$ for each $v\in Q_0$, then there exists an $S\in\cS_G(\BC,\BC)$
that satisfies (\ref{QLTT}) if and only if the map $\varphi_{*}$ from
$\cL(\cG)^{N \times N}$ to $\cL(\cG)^{N \times N}$ given by
\begin{equation}
\begin{array}{l}
\varphi_{*}\left( \left[ C_{ij}\right]_{i,j=1}^N \right)=\\
     \hspace*{.5cm}=\mat{c}{\displaystyle \sum_{\ga\in\Ga}
i_{\cG_{s(\ga)}}(Z_i^{\ga})^*i^*_{\cG_{r(\ga)}}
(X_i^*C_{i,j}X_j-Y_i^*C_{i,j}Y_j)
i_{\cG_{r(\ga)}}Z^\ga i^*_{\cG_{s(\ga)}}}_{i,j=1}^N
\end{array}
\end{equation}
is a completely positive map.
If $\cG$ is separable and $\{e_1,\ldots,e_\kappa\}$ is an orthonormal basis
for $\cG$, then complete positivity of $\vph_*$ is equivalent to positivity
of the operator matrix $\BP_{QLTRD}\in\cL(\cG)_{\kappa N\times\kappa N}$
for which the entry corresponding to the pairs
$(i,i'),(j,j,)\in\{1,\ldots,N\}\times\{1,\ldots,\kappa\}$ is given by
\begin{equation}
\begin{array}{l}
\mat{c}{\BP_{QLTRD}}_{(i,i'),(j,j')}=\\[.2cm]
     \hspace*{1.2cm}=\displaystyle
\sum_{\ga\in\Ga}
     i_{\cG_{s(\ga)}}(Z_i^{\ga})^*i^*_{\cG_{r(\ga)}}
(X_i^*e_{i}e_{j'}^*X_j-Y_i^*e_{i}e_{j'}^*Y_j)
i_{\cG_{r(\ga)}}Z^\ga i^*_{\cG_{s(\ga)}}.
\end{array}
\end{equation}
\end{theorem}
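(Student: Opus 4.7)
The plan is to follow the same trace-duality argument as in the proof of Theorem \ref{T:TFC-NP2}, applied to the extended map $\ov{\vph}$ from Lemma \ref{L:ExtMap}, and then finish by invoking the extended Choi theorem (Theorem \ref{T:ChoiExt}).

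First I would combine Theorem \ref{T:MS-Quiv1} with Lemma \ref{L:ExtMap}: the {\bf QLTT-NP} problem for the present data is solvable if and only if the extended map $\ov{\vph}\colon \cL(\cG)^{N\times N}\to\cL(\cG)^{N\times N}$ is completely positive. Since $\cV_v=\BC$ for every $v\in Q_0$, in the definition of $\ov{\BK}_\fD$ the identity factors $I_{\cV_{r(\ga)}}$ disappear and $\ov{\vph}$ takes the explicit form
\[
\ov{\vph}\left([B_{ij}]_{i,j=1}^N\right)=\left[\sum_{\ga\in\Ga}
i_{\cG_{r(\ga)}}Z^{(i)\ga}i^*_{\cG_{s(\ga)}}(X_iB_{ij}X_j^*-Y_iB_{ij}Y_j^*)i_{\cG_{s(\ga)}}(Z^{(j)\ga})^*i^*_{\cG_{r(\ga)}}\right]_{i,j=1}^N.
\]

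Next I would argue the equivalence of complete positivity of $\ov{\vph}$ and of $\vph_*$ by a trace-duality computation. Fix $k\in\BZ_+$ and let $\BB=[B_{ij}]_{i,j=1}^N$ range over positive elements of $(\cL(\cG)^{k\times k})^{N\times N}$ of trace class, and $\BC=[C_{ij}]_{i,j=1}^N$ range over positive elements of $(\cL(\cG)^{k\times k})^{N\times N}$. Then $k$-positivity of $\ov{\vph}$ is equivalent to
\[
\sum_{i,j=1}^N\tu{trace}\left(C_{ji}\sum_{\ga\in\Ga}i_{\cG_{r(\ga)}}Z^{(i)\ga}i^*_{\cG_{s(\ga)}}(X_iB_{ij}X_j^*-Y_iB_{ij}Y_j^*)i_{\cG_{s(\ga)}}(Z^{(j)\ga})^*i^*_{\cG_{r(\ga)}}\right)\ge0,
\]
which by cyclic invariance of the trace rearranges to the analogous $k$-positivity condition for $\vph_*$; running the computation in the other direction (with $\BC$ in the trace class) yields the converse. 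Since $k$ is arbitrary, complete positivity of $\ov{\vph}$ is equivalent to complete positivity of $\vph_*$. Combined with the first step, this gives the first equivalence claimed in the theorem.

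Finally, assuming $\cG$ separable with orthonormal basis $\{e_1,\ldots,e_\kappa\}$, I would note that $\vph_*$ is weak-$*$ continuous (left as the same routine verification mentioned after Theorem \ref{T:TFC-NP2}), so Theorem \ref{T:ChoiExt} applied to the map $B\mapsto\vph_*(B)$ on $\cL(\cG^N)=\cL(\cG)^{N\times N}$ reduces complete positivity of $\vph_*$ to positivity of the block matrix
\[
\left[\vph_*\!\left(E_{(i,i'),(j,j')}\right)\right]_{(i,i'),(j,j')},
\qquad
E_{(i,i'),(j,j')}=\bigl[\delta_{i,p}\delta_{j,q}\,e_{i'}e_{j'}^*\bigr]_{p,q=1}^N.
\]
Reading off this expression gives exactly the matrix $\BP_{QLTRD}$ in the statement. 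The main (and essentially only) obstacle is checking the weak-$*$ continuity hypothesis needed to apply Theorem \ref{T:ChoiExt} when $\kappa=\infty$; this follows from the uniform norm bound $\|Z^{(i)\ga}\|\le\|Z^{(i)}\|^{|\ga|}$ with $\|Z^{(i)}\|<1$, which makes the defining series absolutely convergent and permits interchange of weak-$*$ limits with the sum.
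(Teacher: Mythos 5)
Your proof follows exactly the route the paper takes: Theorem \ref{T:MS-Quiv1} together with Lemma \ref{L:ExtMap} reduces solvability to complete positivity of $\ov{\vph}$, the cyclic-trace duality argument from the proof of Theorem \ref{T:TFC-NP2} transfers this to $\vph_*$, and Theorem \ref{T:ChoiExt} (after the weak-$*$ continuity check) converts complete positivity of $\vph_*$ into positivity of $\BP_{QLTRD}$; this is precisely the paper's (only sketched) argument.

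One slip should be corrected: your displayed formula for $\ov{\vph}$ places $X_i,X_j^*$ (and $Y_i,Y_j^*$) inside the Szeg\"o-kernel sum, sandwiched between the powers $Z^{(i)\ga}$ and $(Z^{(j)\ga})^*$. By the definition of $\BK_\fD$ in Theorem \ref{T:MS-Quiv1} they must sit outside that sum, i.e.\ the $(i,j)$ entry of $\ov{\vph}([B_{ij}])$ is $X_i\bigl(\sum_{\ga\in\Ga}i_{\cG_{r(\ga)}}Z^{(i)\ga}i^*_{\cG_{s(\ga)}}B_{ij}\,i_{\cG_{s(\ga)}}(Z^{(j)\ga})^*i^*_{\cG_{r(\ga)}}\bigr)X_j^* - Y_i\bigl(\cdots\bigr)Y_j^*$. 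This matters for the duality step: if one cyclically permutes the trace with your version of the formula, the $Z$-powers end up adjacent to $C_{ij}$ and the $X$'s end up outside, which is not the map $\vph_*$ in the statement. With the corrected formula the permutation produces exactly $\vph_*$, and the rest of your argument (the trace-class pairing, the identification of the matrix units' images with $\BP_{QLTRD}$, and the weak-$*$ continuity of $\vph_*$ needed to invoke Theorem \ref{T:ChoiExt}, which the paper likewise leaves as a routine verification) goes through as claimed.
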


The definition of $\BP_{QLTRD}$ in Theorem \ref{T:MS-Quiv2} is the same as that in
Theorem \ref{T:Q-NP}. Thus we obtain the second statement of Theorem \ref{T:Q-NP}.

\paragraph{Operator-argument functional calculus Nevanlinna-Pick interpolation}

Note that in the case of the second Muhly-Solel point-evaluation of Subsection \ref{subS:MS-alt},
specified for the setting considered here, the points are pairs
$(T,A)\in \BD(C_G(Q_1)^*)\times C_G(Q_0)$, i.e.,  $T$ corresponds to a tuple
$(T_\al\in\cL(\cV_{r(\al)},\cV_{s(\al)})\colon \al\in Q_1)$ so that $F=(T_\al^*\colon \al\in Q_1)$
is in $C_G(Q_1)$ and the operator matrix ${\bf F}$ corresponding to $F$ is a strict contraction.
Thus $T$ is an element of the set $\BD_{\tilG,\cV}$ defined in Subsection \ref{subS:quiver1}.
The element $T^*_{(n)}=T^*\otimes\cdots\otimes T^*=F\otimes\cdots\otimes F\in C_G(Q_n)$ is then given by
the tuple $(T_\ga^*\colon \ga\in Q_n)$ with $T_\ga=T^\ga$, following the notation (\ref{QOAgenpow}),
and the operator $T_{T^*_{(n)}}^{(0)}$ (as in (\ref{crea-n})) corresponds to multiplication with the
operator matrix ${\bf T^*_{(n)}}$ associated with $T^*_{(n)}$.

It is then not difficult to see that the evaluation of an element $R=(R_\ga\colon \ga\in\Ga)$ of
the Toeplitz algebra $\cF^\infty(E)=\fL_\Ga(\cV,\cV)$ in a pair
$(T,A)\in C_G(Q_0)\times\BD_{G,{\bf OA}}$ is given by the left-tangential operator-argument
functional calculus:
\[
\hat{R}(T,A)=\sum_{\ga\in\Ga} i_{s(\ga)}T^{\ga^{\top}} A_{r(\ga)}R_\ga i_{s(\ga)}^*,
\]
where $A=\diag_{v\in Q_0}(A_v)$ and $T=(T_\al\colon \al\in Q_1)$.
The corresponding Nevanlinna-Pick problem of Subsection \ref{subS:MS-alt} thus turns out to
be the {\bf QLTOA-NP} problem considered in Subsection \ref{subS:quiver1}, and one easily sees
that the Pick matrix criterion of Theorem \ref{T:MSOA-NP} is exactly the Pick matrix criterion
of Part 3 of Theorem \ref{T:Q-NP}.

%
%

  \subsection{Still more examples.}  \label{S:still}
  There are still more seemingly different examples of
  generalized Nevanlinna-Pick interpolation covered by the Muhly-Solel
  correspondence-representation formalism.
  We mention in particular the semicrossed product algebras of Peters
  \cite{Peters} (see Example 2.6 in \cite{MS98}).  A particular instance
  of this setup yields as the Toeplitz algebra $\cF^{\infty}(E)$ the algebra of
  operators on $\ell^{2}({\mathbb Z})$ having lower-triangular matrix representation
  with points equal to strictly contractive bilateral weighted shift
  operators on $\ell^{2}({\mathbb Z})$;  this algebra can also be
  seen as the Toeplitz algebra associated with the infinite quiver:
  \begin{align*}
  & Q_{0} = \{v_{k} \colon k \in {\mathbb Z}\}, \quad
  Q_{1} = \{ \alpha_{k} \colon k \in {\mathbb Z} \}, \\
  & s(\alpha_{k}) = v_{k}, \quad r(\alpha_{k}) = v_{k+1}.
  \end{align*}
  The point-evaluation
  $\widehat R(\eta)$ in this
  context has a neat interpretation in terms of the realization of
  the lower triangular operator $R$ as the input-output operator for a
  conservative time-varying linear system
  $$
  \Sigma \colon \left\{ \begin{array}{ccl}
  x(n+1) & = & A(n)  x(n) + B(n) u(n) \\
    y(n) & = & C(n) x(n) + D(n) u(n)
    \end{array}
    \right.
  $$
  (see \cite[Section 6.3]{BBFtH}).
  In order to recover the time-varying
  interpolation theory related to time-varying $H^{\infty}$-control
  and model reduction carried out in the 1990s
  \cite{ADD90, BGK92, DD92, SCK, DvdV}, one needs to work with the
  second Muhly-Solel point-evaluation specified for this setting.  WE
  leave details to another occasion.

\section{More general Schur classes}
\label{S:testfunc}

For all the classes of Schur functions discussed to this point with
the exception of the (commutative and noncommutative) polydisk
examples in Subsection \ref{subS:polydisk}, the Schur class can be
isometrically identified with the space of contractive multipliers
between two reproducing kernel Hilbert spaces.  Although there are
some issues with different kinds of point-evaluations, one can say
that the most general of these is the Muhly-Solel
correspondence-representation setup in Section \ref{S:C*-NP}, and that
all these are tied down to the setting of a {\em complete Pick kernel}
(see \cite{AgMcC}).  There are a number of generalized Hardy algebras
which go beyond these limitations.  We mention a few:

\paragraph{Higher-rank graph algebras:}
These include direct products of the graph algebras considered in Subsection
\ref{subS:quiver1} above and much more---see \cite{KP06}.
Whether these more general algebras are of interest for robust
control theory, as those coming from SNMLs (see \cite{BGM3}),
remains to be seen.  For this general setting it remains to work out
the nature of possible point-evaluations, the Schur class and a
Schur-Agler type interpolation theory.

\paragraph{Hardy algebras associated with product systems over over
more general semigroups:}
We mention that the Fock space $\cF^{2}(E)$ of
Section \ref{S:C*-NP} is a product decomposition over the semigroup
${\mathbb Z}_{+}$.  Similar constructions but over more general
semigroups, such as ${\mathbb Z}^{n}_{+}$, pick up the higher-rank
graphs of \cite{KP06} as examples.  Here also it remains to work out
the point-evaluations, Schur class and interpolation theory.  These results
should include the commutative and noncommutative interpolation
theory for the polydisk  discussed in Subsection \ref{subS:polydisk}.

\paragraph{Schur classes based on a family of test functions:}
In this approach we assume that we are given a set $X$ and a family
$\Psi$ of functions on $X$.  We then say that a positive kernel $k
\colon X \times X \to {\mathbb C}$ is {\em admissible} whenever
$k_{\psi}(x,x'):= (1 - \psi(x) \overline{\psi(x')}) k(x, x')$ is also
a positive kernel; we denote the class of all admissible kernels by
$\cK_{\Psi}$.   A function $\varphi$ on $X$ is then said to be
in the Schur-Agler class $\mathcal{SA}_{\Psi}$ associated with $\Psi$
if $k_{\varphi}(x,x'):= (1 -\varphi(x) \overline{\varphi(x')})
k(x,x')$ is a positive kernel whenever $k \in \cK_{\Psi}$.  If one
takes $X = {\mathbb D}^{d}$ and $\Psi = \{ \psi_{k}(\lambda):=
\lambda_{k} \colon k = 1, \dots, d\}$, then the associated Schur-Agler
class $\mathcal{SA}_{\Psi}$ is just the Schur-Agler class
$\mathcal{SA}_{d}$ defined in Subsection \ref{subS:polydisk}.  In addition
to the Schur-Agler class on the polydisk as discussed in Subsection
\ref{subS:polydisk}, it has been known since the paper of Abrahamse
\cite{Abrahamse} that the Schur class over a finitely-connected planar
domain fits into this framework, but with an infinite family of test
functions.
It turns out that many
of the original ideas of Agler giving rise to transfer-function
realization and Nevanlinna-Pick interpolation theorems via Agler
decompositions go through for this general setting (see \cite{CW, DMcC,
McCS}); in the case of finitely-connected planar domains, one even
gets a continuous analogue of the Agler decomposition
\eqref{Agler-decom} (see \cite{DMcC05}).
The paper \cite{DMMcC} handles a more general scenario where
the underlying set is a semigroupoid (satisfying some additional
hypotheses) and pointwise multiplication of functions is replaced by
semigroupoid convolution.  The setup gives a unified formalism
(an alternative to the \textbf{LTOA/RTOA} setup
discussed in Section \ref{S:1var}) for the simultaneous encoding of
interpolation problems of Nevanlinna-Pick and of
Carath\'eodory-Fej\'er type inspired by the ideas of Jury \cite{Jury}.
For this class to include examples of interest (e.g., the Hardy
algebras associated with the higher-rank graph algebras mentioned
above, the (unit balls of) Toeplitz algebras
$\cF^{\infty}(E)$ appearing in Section \ref{S:C*-NP} as well as the
more general commutative Schur-Agler classes in  \cite{BB05, BB07}
and noncommutative Schur-Agler classes in \cite{BGM2}),
one must identify the appropriate collection $\Psi$ of test functions
to get started.  To handle these examples, the theory from
\cite{DMMcC, DMcC, McCS} must be extended to handle matrix- or
operator-valued test functions.  Even after this is done, it appears
that something more must be incorporated in the test-function
approach in order to handle the Muhly-Solel tensor-type point-evaluation.
Work has begun on finding a single formalism containing all these
examples as special cases (see \cite{BBDtHT}).

\paragraph{Acknowledgement} We thank David Sherman for the reference
\cite{S97}, and Vladimir Bolotnikov and Gilbert Groenewald for their useful
comments and suggestions.



\begin{thebibliography}{10}

    \bibitem{Abrahamse}  M.B.~Abrahamse, The Pick interpolation
    theorem for finitely connected domains, {\em Michigan Math. J.}
    \textbf{26} (1979), 195--203.

    \bibitem{Ag88} J.~Agler, Some interpolation theorems of
    Nevanlinna-Pick type, Preprint, 1988.

    \bibitem{Ag90} J.~Agler, On the representation of certain
    holomorphic functions defined on a polydisk, in {\em Topics in
    Operator Theory: Ernst D.~Hellinger Memorial Volume} (L.~de
    Branges, I.~Gohberg and J.~Rovnyak, ed.), pp. 47--66, \textbf{OT48},
    Birkh\"auser, Basel-Berlin-Boston, 1990.

    \bibitem{AgMcC99}   J.~Agler and J.E.~McCarthy, Nevanlinna-Pick
    interpolation on the bidisk, {\em J.~reine angew.~Math.}
    \textbf{506} (1999), 191--204.

    \bibitem{AgMcC00}
    J.~Agler and J.E.~McCarthy,  Complete
    Nevanlinna-Pick kernels, {\em J. Funct. Anal.}, {\bf  175} (2000),
    111--124.

  \bibitem{AgMcC} J.~Agler and J.E.~McCarthy, {\em Pick Interpolation
  and Hilbert Function Spaces}, Graduate Studies in Mathematics
  Volume \textbf{44}, Amer.~Mat.~Soc., Providence, 2002.

  \bibitem{ABGR} D. Alpay, J.A. Ball, I. Gohberg and L. Rodman, $J$-unitary preserving
automorphisms of rational matrix functions: state space theory,
interpolation and factorization, {\em Linear Algebra Appl.} {\bf 197-198} (1994), 531-566.

  \bibitem{ADD90} D.~Alpay, P.~Dewilde and H.~Dym, Lossless inverse
  scattering and reproducing kernels for upper triangular operators,
  in:  {\em Extensions and Interpolation of Linear Operators and
  Matrix Functions} (I.~Gohberg, ed.), pp. 61--133,
  \textbf{OT 47}, Birkh\"auser, Basel-Berlin-Boston, 1990.

 \bibitem{A-KV} D.~Alpay and D.S.~Kalyuzhny\u{\i}-Verbovetzki\u{\i},
 Matrix $J$-unitary non-commutative rational formal power series, in:
 {\em  The State Space Method Generalizations and Applications}
 (D.~Alpay and I.~Gohberg, ed.), pp. 49--113, \textbf{OT 161}, Birkh\"auser,
 Basel-Berlin-Boston, 2005.

  \bibitem{AT} C.-G.~Ambrozie and D.~Timotin, A von Neumann type
  inequality for certain domains in ${\mathbb C}^{n}$, {\em
  Proc.~Amer.~Math.~Soc.} \textbf{131} (2003), 859--869.

  \bibitem{AP00} A.~Arias and G.~Popescu, Noncommutative
  interpolation and Poisson transforms, {\em Israel J.~Math.}
  \textbf{115} (2000), 205--234.

    \bibitem{Aron} N.~Aronszajn, Theory of reproducing kernels, {\em
    Trans.~Amer.~Math.~Soc.} \textbf{68} (1950), 337--404.


    \bibitem{Arv98} W.~Arveson, Subalgebras of $C^{*}$ algebras III:
    Multivariable operator theory, {\em Acta Math.} \textbf{181}
    (1998), 159-228.

    \bibitem{Arv00} W.~Arveson, The curvature invariant of a Hilbert
    module over ${\mathbb C}[z_{1}, \dots, z_{d}]$, {\em J.~reine
    angew.~Math.} \textbf{522} (2000), 173--226.

 \bibitem{BBDtHT} J.A.~Ball, T.~Bhattacharyya, M.A.~Dritschel, S.~ter
 Horst and C.S.~Todd, The Schur-Agler class in a
 correspondence/test-function setting:  transfer-function
 realization, in preparation.

\bibitem{BBFtH} J.A.~Ball, A.~Biswas, Q.~Fang and S.~ter Horst,
Multivariable generalizations of the Schur class:  positive kernel
characterization and transfer function realization, {\em
Proceedings of IWOTA 2006, Seoul, South Korea}, Birkh\"auser OT volume to
appear.

\bibitem{BB02} J.A.~Ball and V.~Bolotnikov, On a bitangential
interpolation problem for contractive-valued functions on the unit
ball, {\em Linear Algebra Appl.} \textbf{353} (2002), 107--147.

\bibitem{BB04} J.A.~Ball and V.~Bolotnikov, Realization and
interpolation for Schur-Agler class functions on domains with matrix
polynomial defining function in ${\mathbb C}^{n}$, {\em J.~Funct.~Anal.}
\textbf{213} (2004), 45--87.

\bibitem{BB05}  J.A.~Ball and V.~Bolotnikov, Nevanlinna-Pick
interpolation for Schur-Agler class functions on domains with matrix
polynomial defining function in ${\mathbb C}^{n}$, {\em New York
J.~Math.} \textbf{11} (2005), 247--290.

\bibitem{BB07}  J.A.~Ball and V.~Bolotnikov, Interpolation in the
noncommutative Schur-Agler class, {\em J.~Operator Theory}
\textbf{58}  (2007), 83--126.


\bibitem{BB-AIP} J.A.~Ball and V.~Bolotnikov, Interpolation problems
for Schur multipliers on the Drury-Arveson space:  from
Nevanlinna-Pick to Abstract Interpolation Problem, Preprint, 2007.

\bibitem{BBF1} J.A. Ball, V. Bolotnikov and Q. Fang, Multivariable
backward-shift-invariant subspaces and observability operators,
{\it Multidimens.~Syst.~Signal Process.} {\bf 18} (2007), 191--248.

\bibitem{BGK92}  J.A.~Ball, I.~Gohberg and M.A.~Kaashoek,
Nevanlinna-Pick interpolation for time-varying input-output maps:
The discrete case, in: {\em Time-Variant Systems and Interpolation}
(I.~Goh\-berg, ed.), pp. 1--51, \textbf{OT 56}, Birkh\"auser, Basel-Berlin-Boston,
1992.

\bibitem{BGR} J.A.~Ball, I.~Gohberg and L.~Rodman,  {\em Interpolation
of Rational Matrix Functions}, \textbf{OT45}, Birkh\"auser,
Basel-Berlin-Boston, 1990.

\bibitem{BGR93} J.A.~Ball, I.~Gohberg and L.~Rodman, Two-sided
tangential interpolation of real rational matrix functions, in: {\em
New Aspects in Interpolation and Completion Theories}, pp. 73--102,
\textbf{OT 64}, Birkh\"auser, Basel-Berlin-Boston, 1993.

\bibitem{BGM2} J.A.~Ball, G.~Groenewald and T.~Malakorn, Conservative
structured noncommutative multidimensional linear systems, in: {\em
The State Space Method Generalizations and Applications} (D.~Alpay
and I.~Gohberg, ed.), pp. 179--223, \textbf{OT 161}, Birkh\"auser,
Basel-Berlin-Boston, 2005.

\bibitem{BGM3}  J.A.~Ball, G.~Groenewald and T.~Malakorn, Bounded real lemma for
structured noncommutative multidimensional linear systems and robust
control, {\it Multidimens.~Syst.~Signal Process.} \textbf{17} (2006), 119--150.

\bibitem{BT} J.A.~Ball and T.T.~Trent, Unitary colligations,
reproducing kernel Hilbert spaces and Nevanlinna-Pick interpolation in
several variables, {\em J.~Funct.~Anal.} \textbf{157} (1998),
1--61.

\bibitem{BTV}
J.A.~Ball, T.~T.~Trent and V.~Vinnikov, \emph{Interpolation and
commutant lifting for multipliers on reproducing kernel Hilbert
spaces}, in \emph {Operator Theory and Analysis} (H. Bart,
I. Gohberg and A.C.M. Ran, ed.), pp. 89--138, {\bf OT 122},
Birkh\"auser, Basel, 2001.

\bibitem{BBLS}  S.~Barreto, B.V.R.~Bhat, V.~Liebscher and M.~Skeide,
Type I product systems of Hilbert modules, {\em J.~Funct.~Anal.} \textbf{212} (2004), 121-181.

\bibitem{Bolo03} V.~Bolotnikov, Interpolation for multipliers on
reproducing kernel Hilbert spaces, {\em Proc.~Amer.~Math.~Soc.}
\textbf{131} (2003), 1373--1383.

\bibitem{Caratheodory}  C.~Carath\'eodory, \"Uber den
Variabilit\"atsbereich der Koefficienten von Potenzreihen, die
gegebene Werte nicht annehmen, {\em Mat.~Ann.} \textbf{64} (1907),
95--115.

\bibitem{CL} N.~Cohen  and I.~Lewkowicz, Nevanlinna-Pick
interpolation:  a matrix-theoretic approach, preprint, 2008.

\bibitem{CJ} T.~Constantinescu and J.L.~Johnson, A note on
noncommutative interpolation, {\em Canad.~Math.~Bull.} \textbf{46}
 (2003), 59--70.

\bibitem{Choi75}  M.D.~Choi, Completely positive linear maps on complex
matrices, {\em Linear Algebra Appl.} \textbf{10} (1975),
285--290.

\bibitem{CW} B.J.~Cole and J.~Wermer, Pick interpolation, von Neumann
inequalities, and hyperconvex sets, in {\em Complex Potential Theory
(Montreal, PQ, 1993)} (P.J.~Gauthier and G. Sabidussi, ed.) pp.
89--129, NATO Advanced Scientific Instruments Series. C Mathematics
and Physics Science, vol. \textbf{439}, Kluwer Academic Publisher,
Dordrecht, 1994.

\bibitem{DP98}  K.~Davidson and D.~Pitts, Nevanlinna-Pick
interpolation for noncommutative analytic Toeplitz algebras, {\em
Integral Equations Operator Theory} \textbf{31} (1998), 321--337.

\bibitem{DD92} P.~Dewilde and H.~Dym, Interpolation for upper
triangular operators, in {\em Time-Variant Systems and Interpolation}
(I.~Gohberg, ed.), pp. 153--260, \textbf{OT 56}, Birkh\"auser, Basel-Berlin-Boston,
1992.

\bibitem{DvdV} P.~Dewilde and A.-J.~van der Veen, {\em Time-Varying
Systems and Computation}, Kluwer, Boston, 1998.

\bibitem{D66} R.G. Douglas, On majorization, factorization, and range inclusion
of operators on Hilbert space, {\em Proc.~Amer.~Math.~Soc.} {\bf 17} (1966), 413--415.

\bibitem{DMMcC} M.A.Dritschel, S.A.M.~Marcantognini and
S.~McCullough, Interpolation in semigroupoid algebras,
{\em J.~reine angew.~Math.} \textbf{606} (2007), 1--40.

\bibitem{DMcC05} M.A.~Dritschel and S.~McCullough, The failure of
rational dilation on a triply connected domain,
{\em J.~Amer.~Math.~Soc.} \textbf{18} (2005), 873--918.

\bibitem{DMcC} M.A.~Dritschel and S.~McCullough, Test functions,
kernels, realizations and interpolation, in: {\em Operator Theory,
Structured Matrices, and Dilations. Tiberiu Constantinescu Memorial
volume} (M. Bakonyi, A.~Gheondea, M.~Putinar and J.~Rovnyak, ed.), pp.
153-179, Theta Foundation, Bucharest, 2007.

\bibitem{Dr} S.W.~Drury, A generalization of von Neumann's inequality
to the complex ball, {\em Proc.~Amer.~Math.~Soc.} \textbf{68}
(1978), 300-304.

\bibitem{EP} J.~Eschmeier and M.~Putinar, Spherical contractions and
interpolation problems on the unit ball,
{\em J.~reine angew.~Math.} \textbf{542} (2002), 219--236.

\bibitem{Fang}  Q.~Fang, {\em Multivariable Interpolation Problems},
Ph.D.~dissertation, Virginia Tech, 2008.

\bibitem{Fejer} L.~Fej\'er, Die Absch\"atzung einer Polynoms in einem
Intervalle, wenn Schranken f\"ur seine Werter und ersten
Ableitungswerter in einzelnen Punkten des Intervalles gegeben sind,
und ihre Anwendung auf die Konvergenzfrage Hermitescher
Interpolationsreihen, {\em Math.~Zeitschrift} \textbf{32} (1930) no.
1, 426--457.


\bibitem{FF}  C.~Foias and A.~Frazho, {\em The Commutant Lifting
Approach to Interpolation Problems}, \textbf{OT44},
Birkh\"auser, Basel-Berlin-Boston, 1990.

\bibitem{FFGK96}
C.~Foias, A.~Frazho, I.~Gohberg, and M.A.~ Kaashoek,
Discrete time-variant interpolation as classical interpolation with an operator argument,
{\em Integral Equations Operator Theory} \textbf{26} (1996), 371--403.

\bibitem{FFGK98}  C.~Foias, A.~Frazho, I.~Gohberg, and M.A.~ Kaashoek,
{\em Metric Constrained Interpolation, Commutant Lifting and Systems},
\textbf{OT100}, Birkh\"auser, Basel-Berlin-Boston, 1998.

\bibitem{GRS}
D.C.~Greene, S.~Richter, and C.~Sundberg,
\emph{The structure of inner multipliers on spaces with complete
Nevanlinna-Pick kernels}, J. Funct. Anal. {\bf 194} (2002),
311--331.

\bibitem{Jury}  M.~Jury, {\em Matrix Products and Interpolation
Problems in Hilbert Function Spaces}, Ph.D.~dissertation, Washington
University at St.~Louis, 2002.

\bibitem{KV} D.S.~Kalyuzhny\u{\i}-Verbovetzki\u{\i}, Carath\'eodory
interpolation on the non-commutative polydisk, {\em J.~Funct.~Anal.}
\textbf{229} (2005), 241--276.

\bibitem{KKY} V.~Katsnelson, A.~Kheifets and P. Yudiski\u{\i}, An
abstract interpolation problem and extension theory of isometric
operators, in:  {\em Operators in Spaces of Functions and Problems in
Function Theory} (V.A.~Marchenko, ed.), pp. 83--96, \textbf{146}, Naukova Dumka,
Kiev, 1987; English Transl. in: {\em Topics in
Interpolation Theory} (H.~Dym, B.~Fritzsche, V.~Katsnelson and
B.~Kirstein, ed.), pp.~283--298, \textbf{OT 95}, Birkh\"auser,
Basel-Berlin-Boston, 1997.

\bibitem{KP04a}
D.W. Kribs and S.C. Power, Free semigroupoid algebras,
{\em J. Ramanujan Math. Soc.} {\bf 19} (2004), 117--159.

\bibitem{KP04b}
D.W. Kribs and S.C. Power, Partly free algebras from directed graphs, in:
{\em Current trends in operator theory and its applications}, pp. 373--385,
{\bf OT149}, Birkh\" auser, Basel-Berlin-Boston, 2004.

\bibitem{KP06}
D.W. Kribs and S.C. Power, The analytic algebras of higher rank graphs,
{\em Math. Proc. R. Ir. Acad.} {\bf 106A} (2006), 199--218.

\bibitem{McC00}  S.~McCullough, The local de Branges-Rovnyak
construction and complete Nevanlinna-Pick kernels, in: {\em Algebraic
Methods in Operator Theory} (R.~Curto and P.E.T.~Jorgensen, eds.),
Birkh\"auser, Basel-Berlin-Boston, 1994, 15--24.

\bibitem{McCS} S.~McCullough and S.~Sultanic, Ersatz commutant
lifting with test functions, {\em Complex Anal.~Oper.~Theory}
\textbf{1} (2007), 581--620.

\bibitem{mt}
S.~McCullough and T.T.~Trent, Invariant subspaces and
Nevanlinna-Pick kernels, {\em J.~Funct.~Anal.} {\bf 178} (2000),
226--249.

\bibitem{Muhly97} P.~Muhly, A finite dimensional introduction to
operator algebras, in: {\em Operator algebras and applications (Samos, 1996)},
pp. 313--354, {\em NATO Adv.~Sci.~Inst.~Ser.~C Math.~Phys.~Sci.}, {\bf 495},
Kluwer Acad.~Publ., Dordrecht, 1997.

\bibitem{MS98} P.S.~Muhly and B.~Solel, Tensor algebras over $C^{*}$-correspondences:
representations, dilations and $C^{*}$-envelopes, {\em J. Funct.~Anal.} \textbf{158}
(1998), 389--457.

\bibitem{MS99} P.S.~Muhly and B.~Solel, Tensor algebras, induced
representations, and the Wold decomposition, {\em Canadian J.~Math.}
\textbf{51} (1999), 850--880.

\bibitem{MS04} P.S.~Muhly and B.~Solel, Hardy algebras,
$W^{*}$-correspondences and interpolation theory, {\em Math.~Annalen}
\textbf{330} (2004), 353--415.

\bibitem{MSPoisson} P.S.~Muhly and B.~Solel, The Poisson kernel for Hardy
algebras, {\em Complex Anal.~Oper.~Theory}, available online.

\bibitem{MSSchur}  P.S.~Muhly and B.~Solel,  Schur class operator
functions and automorphisms of Hardy algebras, preprint.

\bibitem{NF68} B. Sz.-Nagy and C. Foias, Dilation des
commutants d'op\'{e}rateurs, {\em C. R. Acad. Sci. Paris,
s\'{e}rie A, } {\bf 266} (1968), 493-495.

\bibitem{NTU60} M. Nakamura, M. Takesaki, and H. Umegaki,
A remark on the expectations of operator algebras,
{\em K\=odai Math.~Sem.~Rep.} {\bf 12} (1960), 82--90.

\bibitem{Nev19} R.~Nevanlinna,  \"Uber beschr\"ankte
Funktionen, die in gegebene Punkten vorgeschriebene Werte annehmen,
{\em Ann.~Acad.~Sci.~Fenn.~Ser. A}  \textbf{13} (1919), no.~1.

\bibitem{Nev29} R. Nevanlinna,  \"Uber beschr\"ankte
Funktionen,
{\em Ann.~Acad.~Sci.~Fenn.~Ser.~A} \textbf{32}  (1929), no.~7.

\bibitem{Paulsen-book} V.~Paulsen, {\em Completely Bounded Maps and
Operator Algebras}, Cambridge Studies in Advances Mathematics
\textbf{76}, Cambridge University Press, Cambridge, 2002.

\bibitem{Peters} J.~Peters, Semi-crossed products of $C^{*}$-algebras,
{\em J.~Funct.~Anal.} \textbf{59} (1984), 498-534.

\bibitem{Pick} G.~Pick, \"Uber die Beschr\"ankungen analytischer
Funktionen, welche durch vorgegebene Funktionswerte bewirkt werden,
{\em Math.~Ann.} \textbf{77} (1916), 7--23.

\bibitem{Popescu89a}  G.~Popescu, Isometric dilations for infinite
sequences of noncommuting operators,  {\em Trans.~Amer.~Math.~Soc.}
\textbf{316} (1989), 523--536.

\bibitem{Popescu89b}  G.~Popescu, Characteristic functions for
infinite sequences of noncommuting operators, {\em J.~Operator
Theory} \textbf{22} (1989), 51--71.

\bibitem{Popescu91} G.~Popescu, Von Neumann inequality for
$(B(H)^{n})_{1}$, {\em Math.~Scand.} \textbf{68} (1991), 292--304.

\bibitem{Popescu98} G.~Popescu, Interpolation problems in several
variables, {\em J.~Math.~Anal.~Appl.} \textbf{227} (1998), 227--250.

\bibitem{Popescu03} G.~Popescu, Multivariable Nehari problem and
interpolation, {\em J.~Funct.~Anal.} \textbf{200} (2003),
536--581.

\bibitem{Popescu06} G.~Popescu, {\em Entropy and Multivariable
Interpolation}, Memoirs of the American Mathematical Society Number
{\bf 868}, American Mathematical Society, Providence, 2006.

\bibitem{P06}
S.C. Power, Classifying higher rank analytic Toeplitz algebras,
{\em New York J. Math.} {\bf 13} (2007), 271--298.

\bibitem{Quiggen} P.~Quiggen, For which reproducing kernel Hilbert
spaces is Pick's theorem true?, {\em Integral Equations Operator
Theory} \textbf{16} (1993), 244--266.

\bibitem{RR} M.~Rosenblum and J.~Rovnyak, {\em Hardy Classes and
Operator Theory}, Oxford University Press, Oxford, 1985; reprinted:
Dover Publications, Mineola, NY, 1997.

\bibitem{Sarason} D.~Sarason, {\em Generalized interpolation in
$H^{\infty}$}, Trans.~Amer.~Math.~Soc. \textbf{127} (1967), 179--203.

\bibitem{SCK} A.H.~Sayed, T.~Constantinescu and T.~Kailath,
Time-variant displacement structure and interpolation problems, {\em
IEEE Trans.~Automat.~Control} \textbf{39} (1994), 960--976.

\bibitem{SchurI} I.~Schur, \"Uber Potenzreihen, die im Innern des
Einheitskreises beschr\"ankt sind I, {\em J. reine und
angew.~Mathematik} \textbf{147} (1917), 205--232.

\bibitem{SchurII} I.~Schur, \"Uber Potenzreihen, die im Innern des
Einheitskreises beschr\"ankt sind II, {\em J. reine und
angew.~Mathematik} \textbf{148} (1918), 122--145.

\bibitem{S04}
B.~Solel, You can see the arrows in a quiver operator algebra,
{\em J. Aust. Math. Soc.} {\bf 77} (2004), 111--122.

\bibitem{S06}  B.~Solel, Representations of product systems over
semigroups and dilations of commuting CP maps, {\em J.~Funct.~Anal.}
\textbf{235} (2006), 593--618.

\bibitem{Stinespring} W.S.~Stinespring, Positive functions on $C\sp *$-algebras,
{\em Proc.~Amer.~Math.~Soc.} {\bf 6} (1955), 211--216.

\bibitem{S97} E. St\o rmer, Conditional expectations and projection maps of von Neumann algebras,
in: {\em Operator algebras and applications (Samos, 1996)}, pp. 449--461,
{\em NATO Adv.~Sci.~Inst.~Ser.~C Math.~Phys.~Sci.}, {\bf 495},
Kluwer Acad.~Publ., Dordrecht, 1997.


\bibitem{T59} J. Tomiyama, On the projection of norm one in $W\sp*$-algebras. III,
{\em T\^{o}hoku Math. J. (2)} {\bf 11} (1959), 125--129.

\end{thebibliography}
\end{document}